\newcommand{\E}{{\mathbb E}}
\newcommand{\R}{{\mathbb R}}
\renewcommand{\P}{{\mathbb P}}
\newcommand{\T}{{\mathcal{T}}}
\newcommand{\F}{{\mathcal F}}
\newcommand{\tcd}{{\mathcal{F}^d_{\text{TC}}}}
\newcommand{\tcds}{{\mathcal{F}^{d, s}_{\text{TC}}}}
\newcommand{\tcdd}{{\mathcal{F}^{d, d}_{\text{TC}}}}
\newcommand{\tcp}{{\mathcal{F}^{d}_{\text{TCP}}}}
\newcommand{\infmars}{{\mathcal{F}^{d, s}_{\infty-\text{mars}}}}
\newcommand{\ind}{\mathbf{1}}
\newcommand{\zerovec}{{\mathbf{0}}}
\def\qt#1{\qquad\text{#1}}
\def\argmin{\mathop{\rm argmin}}
\newtheorem{theorem}{Theorem}%
\newtheorem{proposition}{Proposition}%
\newtheorem{lemma}{Lemma}%
\newtheorem{example}{Example}%
\newtheorem{remark}{Remark}%
\newtheorem{definition}{Definition}
\begin{document}

\title{Totally Concave Regression}
\author{Dohyeong Ki\thanks{\texttt{dohyeong\_ki@berkeley.edu}} }
\author{Adityanand Guntuboyina\thanks{\texttt{aditya@stat.berkeley.edu}}}

\affil{Department of Statistics, University of California, Berkeley}

\date{} 

\maketitle
\vspace{-1.5em}

\begin{abstract}
Shape constraints in nonparametric regression provide a powerful
framework for estimating regression functions under realistic
assumptions without tuning parameters. However, most existing
methods—except additive models—impose too weak restrictions, often
leading to overfitting in high dimensions. Conversely, additive models
can be too rigid, failing to capture covariate interactions. This
paper introduces a novel multivariate shape-constrained regression
approach based on total concavity, originally studied by
T. Popoviciu. Our method allows interactions while mitigating the
curse of dimensionality, with convergence rates that depend only
logarithmically on the number of covariates. We characterize and
compute the least squares estimator over totally concave functions,
derive theoretical guarantees, and demonstrate its practical
effectiveness through empirical studies on real-world datasets. 
\vspace{0.5em} \\
\noindent\textbf{MSC 2010 subject classifications:} Primary 62G08. \\
\noindent\textbf{Keywords and phrases:} Interaction effect modeling, 
mixed partial derivative, multivariate convex regression, 
Popoviciu's convex function, shape-constrained estimation. 
\end{abstract}

\section{Introduction}
Before describing our multivariate generalization of univariate concave
regression, let us first review 
the univariate case. Consider a response 
variable $y$ and a single covariate $x$ on which we observe data
$(x^{(1)}, y_1), \dots, (x^{(n)}, y_n)$. Throughout, we assume
covariates are scaled to take values in $[0, 1]$. Univariate concave 
regression \citep{Hildreth54, HanPled76} fits the best concave 
function on $[0, 1]$ to the data using least squares:  
\begin{equation*}
  \hat{f}_{\text{concave}}^1 \in \argmin_{f \text{ is concave}}
  \sum_{i=1}^n \big(y_i - f(x^{(i)}) \big)^2.
\end{equation*}
The estimate $\hat{f}_{\text{concave}}^1$
($1$ indicates covariate dimension) suits settings with
diminishing returns where the rate of change of $y$ on $x$ decreases
as $x$ grows.  The convex analogue
$\hat{f}_{\text{convex}}^1$ covers increasing returns; we focus on
concave regression but our ideas also apply to
convex regression. The estimate
$\hat{f}_{\text{concave}}^1$ is transparent (relying only on
concavity), tuning-free, interpretable (yielding continuous piecewise
affine fits), and has good theoretical properties under standard
regression models when the true regression function is concave (see,
e.g., \cite{DuembgenEtAl04, groeneboom2014nonparametric, 
  guntuboyina2018nonparametric}). 
  
Our goal is to extend $\hat{f}_{\text{concave}}^1$ to
multiple regression where, instead of a single covariate 
$x$, one has $d$ covariates $x_1, \dots, x_d$ with $d \ge 1$. We
assume each $x_j$ takes values in $[0, 1]$. The data are
$(\mathbf{x}^{(1)}, y_1), \dots, 
(\mathbf{x}^{(n)}, y_n)$ with $y_i \in \R$ and each 
$\mathbf{x}^{(i)} \in 
[0, 1]^d$. Our extension, which we call \textit{totally concave}
regression, involves the least squares estimator (LSE) over the class of
totally concave functions. Total concavity is a multivariate notion of
concavity introduced by \cite{popoviciu1933quelques} (see
\cite{gal2010shape} for a book-length reference). As explained in
Section \ref{comparison-concavity}, it is quite different from the
three usual multivariate notions of concavity: general concavity
(which is just standard concavity: $f((1-\alpha)\mathbf{x} +
\alpha \mathbf{y}) \ge (1 - \alpha) f(\mathbf{x}) + \alpha
f(\mathbf{y})$ for all $\alpha \in [0, 1]$ and $\mathbf{x}, \mathbf{y}
\in [0, 1]^d$), axial concavity (which refers to concavity in each
coordinate when all other coordinates are held fixed), and additive
concavity (which corresponds to functions $\sum_{j=1}^d f_j(x_j)$
where each $f_j$ is a univariate concave function). 

Totally concave functions are obtained by the inclusion of simple axially concave interactions to additive concave functions. Consider the additive concave function: 
\begin{equation}\label{addmod}
  f(x_1, \dots, x_d) = f_1(x_1) + \dots + f_d(x_d)
\end{equation}
where each $f_j:[0, 1] \rightarrow \R$ is concave. To formulate
interactions in \eqref{addmod}, recall the basic  
representation theorem for univariate concave functions (e.g.,
\citet[Section 1.6]{niculescu2006convex}), which states that, for every
concave function $g:[0, 1] \rightarrow \R$ with 
\begin{equation}\label{1dreg}
    g'(0+) := \lim_{t \rightarrow 0+} \frac{g(t) - g(0)}{t} < \infty \quad \text{and} \quad g'(1-) := \lim_{t \rightarrow 1-} \frac{g(1) - g(t)}{1 - t} > -\infty,
\end{equation}
there exists a unique finite
 Borel measure $\nu$ on $(0, 1)$ and $a_0,
a_1 \in \mathbb{R}$ such that  
\begin{equation}\label{1drepconcave}
  g(x) = a_0 + a_1 x - \int_{(0, 1)} (x - t)_+ \, d\nu(t) \qt{for all $x
    \in [0, 1]$}
\end{equation}
where $(x - t)_+ := \max(x - t, 0)$. Intuitively, \eqref{1drepconcave}
states that every concave function on $[0, 1]$ satisfying
\eqref{1dreg} is a linear combination of the primitives $x \mapsto -(x -
t)_+, t \in [0, 1]$. The condition \eqref{1dreg} is essential for
validity of \eqref{1drepconcave} and is satisfied by all
functions we consider. 

Using the representation
\eqref{1drepconcave} for each $f_j$, we can rewrite 
\eqref{addmod} as
\begin{equation}\label{addrep}
  f^*(x_1, \dots, x_d) = \beta_0 + \sum_{j=1}^d \beta_j x_j -
\sum_{j=1}^d  \int_{(0, 1)} 
  (x_j - t_j)_+ \, d\nu_j(t_j) 
\end{equation}
where each $\nu_j$ is a finite Borel measure on $(0, 1)$ and $\beta_0,
\dots, \beta_d$ are real-valued coefficients. The 
right-hand side of \eqref{addrep} is a linear model 
in $x_1, \dots, x_d$ (with coefficients 
$\beta_1, \dots, \beta_d$) and the modified
variables $-(x_j - t_j)_+$ for $t_j \in (0,
1)$ (with coefficients given by $\nu_j$). 

We now introduce to \eqref{addrep} interactions leading to total
concavity. As \eqref{addrep} describes a linear model, it is natural
to add interaction terms by taking products of the variables appearing
in \eqref{addrep}.  When $d = 2$, there are four kinds of product
terms: $x_1 x_2$, $(x_1 - t_1)_+ x_2$ for $t_1 \in (0, 1)$, $x_1 (x_2
- t_2)_+$ for $t_2 \in (0, 1)$, and $(x_1 - t_1)_+ (x_2 - t_2)_+$ for
$t_1, t_2 \in (0, 1)$.  Appending these terms to the right-hand side
of \eqref{addrep} results in the following modified model for $d = 2$:
\begingroup
\allowdisplaybreaks
\begin{align}\label{intermod2d}
\begin{split}
  &f^*(x_1, x_2) = \beta_0 + \beta_1 x_1 + \beta_2 x_2 
  - \int_{(0, 1)} (x_1 - t_1)_+ \, d\nu_1(t_1) 
  - \int_{(0, 1)} (x_2 - t_2)_+ \, d\nu_2(t_2) 
  + \beta_{12} x_1 x_2 \\
  & \qquad - \int_{(0, 1)} (x_1 - t_1)_+ x_2 \, d\widetilde{\nu}_{1}(t_1) 
  - \int_{(0, 1)} x_1 (x_2 - t_2)_+ \, d\widetilde{\nu}_{2}(t_2) 
  - \int_{(0, 1)^2} (x_1 - t_1)_+ (x_2 - t_2)_+ \, d\nu_{12}(t_1, t_2)
\end{split}
\end{align}
\endgroup
where $\beta_{12} \in \R$ and $\widetilde{\nu}_1, \widetilde{\nu}_2,
\nu_{12}$ are finite \textit{signed} Borel measures (on $(0, 1), (0, 1), (0,
1)^2$ respectively) representing the coefficients of the interaction
terms.

Total concavity arises from imposing
\textit{nonnegativity} on the signed measures $\widetilde{\nu}_1,
\widetilde{\nu}_2, \nu_{12}$, so that they simply become measures. This
constraint ensures that each 
interaction term in \eqref{intermod2d} satisfies axial concavity
because $(x_1, x_2) \mapsto -\beta(x_1 - t_1)_+
x_2$, $(x_1, x_2) \mapsto -\beta x_1 (x_2 - t_2)_+$, and $(x_1, x_2)
\mapsto -\beta(x_1 - t_1)_+ (x_2 - t_2)_+$ are axially concave if and
only if $\beta \geq 0$.   
 
Let $\F_{\text{TC}}^2$ be the set 
of functions on $[0, 1]^2$ of the form \eqref{intermod2d} with
$\beta_0, \beta_1, \beta_2, \beta_{12} \in \R$ and finite 
Borel measures $\nu_1, \nu_2, \widetilde{\nu}_1, \widetilde{\nu}_2, 
\nu_{12}$; this is the class of totally concave functions on $[0, 1]^2$.  

For $d \ge 2$, one can define totally concave functions in a similar
fashion by adding interaction terms to the additive model
\eqref{addrep}. If we include product-form
interaction terms of all orders $s$ for $s = 2,
\dots, d$, we obtain the class $\tcd$ of all totally concave functions on
$[0, 1]^d$. If instead we only add interactions up to order $s$ for a
fixed $s \le d$, then we obtain the subclass $\tcds$ of 
totally concave functions on $[0, 1]^d$, which do not include
interaction terms of order greater than $s$. Formal definitions of $\tcd$
and $\tcds$ are given in Section \ref{measdef}. 

In the classical references
\citet{popoviciu1933quelques} and \citet[Chapter 2]{gal2010shape}, total concavity is defined in a different way in terms of nonpositivity of
certain divided differences. We recall this classical definition in
Section \ref{subsec:tcp} and prove that it is equivalent to the measure-based definition under a mild regularity condition. To the
best of our knowledge, this equivalence has not been previously
observed. For
sufficiently smooth functions, total concavity can be
characterized using derivatives (similar to other notions of
multivariate concavity). Specifically, we prove that a smooth function
$f$ belongs to $\tcd$ if and only if 
\begin{equation}\label{mixparder2}
 f^{(p_1, \dots, p_d)} := \frac{\partial^{p_1 + \cdots + p_d}
   f}{\partial x_1^{p_1} \cdots \partial x_d^{p_d}}  \le 0 \qt{for all
   $(p_1, \dots, p_d) \in \mathbb{Z}_{\ge 0}^d$ with $\max_{1 \le j \le d} p_j = 2$}.  
\end{equation}
In words, total concavity is nonpositivity of all partial
derivatives of \textit{maximum} order $\max_{1 \leq j \leq d} p_j$
equal to 2. In contrast, the 
other notions of multivariate concavity are characterized via partial
derivatives with \textit{total} order $\sum_{j=1}^d p_j$ at most 2
(see Section \ref{comparison-concavity} for details). 

The goal of this paper is a systematic study of the least squares
estimator $\hat{f}_{\text{TC}}^{d, s}$ over the class $\tcds$ for each
fixed $s$, and to explore its uses for practical regression. The
estimators $\hat{f}_{\text{TC}}^{d, s}$ with $2 \le s \le d$  are novel
multivariate extensions of univariate concave regression.  

The function class $\tcds$ is convex (albeit infinite-dimensional),
and hence, $\hat{f}_{\text{TC}}^{d, s}$ is a solution to an
infinite-dimensional convex optimization problem. Section
\ref{excomp} describes reduction to a finite-dimensional convex optimization problem
(specifically, nonnegative least squares 
\citep{slawski2013non, meinshausen2013sign} with an appropriate 
design matrix), to which standard optimization software is applicable. The fitted
functions are of the form: 
\begin{equation}\label{intro-fittedfunctions} 
\hat{\beta}_0 + \sum_{S : 1 \le |S|
      \le s} \hat{\beta}_S \bigg[\prod_{j \in S} 
    x_j \bigg] - \sum_{S: 1 \le |S| \le s} \sum_{\mathbf{t} = (t_j, j \in S)
    \in T_S} \hat{\beta}^{(\mathbf{t})} \bigg[\prod_{j \in S} (x_j - t_j)_+\bigg], 
\end{equation}
where $T_S \subseteq [0, 1)^{|S|} \setminus \{(0, \dots, 0)\}$ is a finite subset, $\hat{\beta}_0, \hat{\beta}_S \in \R$,
and $\hat{\beta}^{(\mathbf{t})} \ge 0$ for all $\mathbf{t} \in T_S$. These functions are similar to functions produced by the MARS method of  \cite{friedman1991multivariate} (see also
\cite{ki2024mars}). When $n$ is   
large, the number of parameters in this 
optimization problem can be huge, and we provide some strategies to cope with this. 

In Section \ref{rates}, we prove bounds on the rate of convergence of
 $\hat{f}_{\text{TC}}^{d, s}$ as an estimator of the true regression
 function 
 $f^*$ under the well-specified assumption $f^* \in \tcds$ and
under a standard fixed-design regression model with squared error
loss (we also prove a random-design result for a regularized
version of $\hat{f}_{\text{TC}}^{d, s}$). The rate of convergence
turns out to be $n^{-4/5} (\log 
n)^{3(2s-1)/5}$ for $2 \le s \le d$, meaning that the usual curse of
dimensionality is largely avoided by these estimators even when $s$ is
taken to be as large as $d$. Intuition behind this rate, mitigating the curse of dimensionality, can be drawn from the
characterization \eqref{mixparder2} of total concavity in terms of
nonpositivity of partial  derivatives of maximum order two. The number
of these partial derivatives increases exponentially in 
$d$, and the total order of these partial derivatives can also be as
large as $2d$. Thus, with $d$ increasing, the constraints on the class
also become proportionally restrictive, leading to the overall rate
affected by $d$ only in the logarithmic factor. This relatively fast
rate makes total concavity  
a promising constraint to use in multiple regression.
Note, however, that even 
though the constraints become proportionally restrictive with
$d$ increasing, the function class $\tcds$
contains many non-smooth functions with points of
non-differentiability. In contrast to totally concave regression,
rates of convergence for generally and axially
concave regressions (based on general and axial concavity) are slower
and affected by the curse of dimensionality (see, e.g.,
\cite{lim2014convergence, balazs2015near, han2016multivariate, kur2024convex}).

Beyond achieving theoretical convergence rates immune to the curse of dimensionality, we argue that totally concave regression is also effective in practice. In Section \ref{realdata}, we apply it to real-world regression problems and compare its performance to standard methods.

In improving practical prediction performance, we found it helpful to
further regularize $\hat{f}_{\text{TC}}^{d, s}$. Section
\ref{overfitting} introduces a regularized variant that imposes an
upper bound on the size of the measures defining totally concave
functions. This approach, combining total concavity with the
smoothness constraint of \citet{ki2024mars}, helps mitigate
overfitting near the boundary of the covariate domain, a common issue
with shape-constrained estimators. This variant also enables
theoretical analysis on random-design accuracy (Section
\ref{rdtheory}). 

For better performance when $d$ is large, Section \ref{extensions} 
discusses hybrid approaches that impose total concavity on a subset 
of covariates while assuming linearity on the others. Their utility 
in data analysis is explored in Section \ref{realdata}.

This paper builds on the work of \citet{ki2024mars} and \citet{fang2021multivariate}. The former introduces a smoothness-constrained estimator that shares key features with totally concave regression, while the latter proposes entirely monotonic regression, a monotonicity-based analogue of totally concave regression. Connections to these works are given in Section \ref{comparison-ki}.

As observed in \eqref{mixparder2}, total concavity can be framed as a 
nonpositivity constraint on all partial derivatives of maximum order two. 
Nonparametric regression methods with $L^p$ constraints on partial 
derivatives of maximum order $r$ (for fixed $r \geq 1$) have been widely 
studied (e.g., \cite{donoho2000high, lin2000tensor, benkeser2016highly, 
van2023higher, ki2024mars}). Under suitable assumptions, they can
achieve convergence rates that mitigate the curse of
dimensionality. In approximation theory, function classes with $L^p$
norm constraints on such mixed partial derivatives have also received
significant attention (see, e.g., \cite{bungartz2004sparse,
  dung2018hyperbolic, temlyakov2018multivariate}). 

The rest of the paper is organized thus. Section
\ref{totconcdef} introduces total concavity, while
Section \ref{comparison-concavity} compares it with standard
concavity notions. Section \ref{excomp} presents the
totally concave LSEs, and discusses existence
and computation. Sections \ref{overfitting} and \ref{rates} address
overfitting issues and convergence rates, respectively. Section
\ref{comparison-ki} relates our work to \citet{ki2024mars} and
\citet{fang2021multivariate}. Section \ref{extensions} details
variants for high-dimensional settings, and Section \ref{realdata}
presents applications. Computational details, additional plots, and
all proofs appear in Appendices \ref{axcon}, \ref{additional-plots}, and \ref{proofs}.

\section{Total Concavity: Definitions and Characterizations} \label{totconcdef} 
We present two definitions of total concavity.
The first, based on measures, is an extension of \eqref{intermod2d} to
$d \ge 2$. The second involves nonpositivity of divided
differences \cite[Chapter 2]{gal2010shape}. We prove  equivalence of
the two definitions (under a minor regularity condition).

\subsection{Measure-Based Definition}\label{measdef}
Before describing the extension of \eqref{intermod2d} to $d \ge 2$,
let us note the following simplification of the right-hand side of
\eqref{intermod2d}. The term $(x_1 - t_1)_+ (x_2 - t_2)_+$ coincides
with $x_1 (x_2 - t_2)_+$ when $t_1 = 0$ and with $(x_1 - t_1)_+ x_2$
when $t_2 = 0$. Thus, we can subsume $\widetilde{\nu}_1, \widetilde{\nu}_2,
\nu_{12}$ under a single measure on $[0, 1)^2 \setminus \{(0,
0)\}$, which leads to the following form that is equivalent to
\eqref{intermod2d}: 
\begin{align*}
  f^*(x_1, x_2) &= \beta_0 + \beta_1 x_1 + \beta_2 x_2 - \int_{(0, 1)}
  (x_1 - t_1)_+ \, d\nu_1(t_1) - \int_{(0, 1)}
  (x_2 - t_2)_+ \, d\nu_2(t_2)  \\ 
  &\quad+ \beta_{12} x_1 
  x_2 - \int_{[0, 1)^2 \setminus \{(0, 0)\}} (x_1 - t_1)_+ (x_2 -
    t_2)_+ \, d\nu_{12}(t_1, t_2). 
\end{align*}
Here, $\beta_0, \beta_1, \beta_2, \beta_{12}$ are real numbers, 
$\nu_1$, $\nu_2$ are finite measures on $(0, 1)$, and $\nu_{12}$ is a finite
measure on $[0, 1)^2 \setminus \{(0, 0)\}$. The class of all
such functions is denoted by $\mathcal{F}_{\text{TC}}^2$. The
$d$-dimensional analogue of this function class consists of all
functions of the form: 
\begin{equation}\label{intermodd}
  f^*(x_1, \dots, x_d) = \beta_0 + \sum_{\emptyset \neq S \subseteq [d]} \beta_S \prod_{j \in S} x_j -\sum_{\emptyset \neq S \subseteq [d]} \int_{[0, 1)^{|S|} \setminus \{\zerovec\}} \prod_{j \in S} (x_j - t_j)_+ \, d\nu_S(t_j, j \in S)     
\end{equation}
where $[d] := \{1, \dots, d\}$, $\zerovec := (0, \dots, 0)$, $\beta_S$
is a coefficient for the interaction  
$\prod_{j \in S} x_j$, and $\nu_S$ is a finite Borel measure on $[0,
1)^{|S|} \setminus \{\zerovec\}$ ($|S|$ denotes the size of $S$) representing
coefficients for the interactions $-\prod_{j \in S}(x_j -
t_j)_+$ for each nonempty $S \subseteq [d]$. We 
denote by
$\tcd$ the class of all functions of the form \eqref{intermodd} and
call it the class of totally concave functions on $[0, 1]^d$.

Functions of the form \eqref{intermodd} are obtained by
adding interaction terms of all orders to the additive model
\eqref{addrep} 
along with sign constraints ensuring axial concavity of each
interaction term. In practice, interactions
of order greater than $2$ or $3$ are rarely used. It is therefore natural
to consider a subclass of $\tcd$ where we only include interactions
of order at most $s$ for a pre-specified $s \in \{1, \dots, d\}$
(typically $s = 2$ or $3$). 
This leads to functions of the form 
\begin{equation}\label{intermodds}
  \beta_0 + \sum_{S : 1 \le |S| \le s} \beta_S \prod_{j \in S} x_j - \sum_{S : 1 \le |S| \le s} \int_{[0, 1)^{|S|} \setminus \{\zerovec\}} 
  \prod_{j \in S} (x_j - t_j)_+ \, d\nu_S(t_j, j \in S).                  
\end{equation}
We denote by $\tcds$ the class of all functions of the form
\eqref{intermodds}. $\tcdd = \tcd$ and $\tcds$ is strictly smaller than
$\tcd$ for $s < d$. When $s = 1$, $\tcds$ is the class of additive
concave functions.

\subsection{Total Concavity in the sense of Popoviciu}\label{subsec:tcp}
We now present the second definition of total concavity, originally
due to \cite{popoviciu1933quelques}. We refer to this as \textit{total
  concavity in the sense of Popoviciu}. 

\begin{definition}[Total concavity in the sense of
  Popoviciu]\label{defn-tc-popoviciu} 
  We say $f: [0, 1]^d \rightarrow \R$ is totally
  concave in the 
  sense of Popoviciu if for every $(p_1, \dots, p_d) \in \mathbb{Z}_{\ge 0}^d$ with $\max_k
p_k = 2$, we have 
\begin{equation}\label{divdiff}
    \sum_{i_1 = 1}^{p_1 + 1} \cdots \sum_{i_d = 1}^{p_d + 1} \frac{f(x_{i_1}^{(1)}, \dots, x_{i_d}^{(d)})}{\prod_{j_1 \neq i_1} (x_{i_1}^{(1)} - x_{j_1}^{(1)}) \times \cdots \times \prod_{j_d \neq i_d} (x_{i_d}^{(d)} - x_{j_d}^{(d)})} 
    \le 0
\end{equation}
for every $0 \le x_1^{(k)} < \cdots < x_{p_k + 1}^{(k)} \le 1$ for $k
= 1, \dots, d$.
\end{definition}

The left-hand side of \eqref{divdiff} is a divided difference of $f$
of order $(p_1, \dots, p_d)$  
(see \citet[Section 6.6]{isaacson1994analysis} or 
\citet[Chapter 2]{gal2010shape}
for more information on these tensor-product kind of divided differences). In words, $f$ is totally concave in
the sense of Popoviciu if the divided difference of $f$ of order
$(p_1, \dots, p_d)$ is nonpositive for all $(p_1, \dots, p_d)$ 
with $\max_{k} p_k = 2$.
We denote by $\tcp$ the class of all functions $f$
on $[0, 1]^d$ that are totally concave in the sense of Popoviciu.

\subsection{Equivalence with the Measure-Based
  Definition}\label{total-concavity-equiv}

The two definitions of total concavity are equivalent under a minor
regularity condition: 
  

\begin{proposition}\label{prop:tc-equiv}
    Every function $f \in \tcd$ is totally concave in the sense of Popoviciu, i.e., $f \in \tcp$. 
    Also, every function $f \in \tcp$ satisfying the following additional regularity conditions belongs to $\tcd$: 
    for each nonempty subset $S$ of $[d]$,
   \begin{equation}\label{eq:finite-derivative-at-zero}
    \sup_{t_k > 0,\, k \in S} 
    \bigg\{
    \frac{1}{\prod_{k \in S} t_k} 
    \sum_{R \subseteq S} (-1)^{|S| - |R|} \cdot
    f\Big( \big( t_k \cdot \ind\{k \in R\}\big)_{k=1}^d \Big)
    \bigg\} < +\infty, 
\end{equation}
\begin{equation}\label{eq:finite-derivative-at-one}
    \inf_{t_k < 1,\, k \in S} 
    \bigg\{
    \frac{1}{\prod_{k \in S} (1 - t_k)} 
    \sum_{R \subseteq S} (-1)^{|S| - |R|} \cdot
    f\Big( \big( \ind\{k \in R\} 
    + t_k \cdot \ind\{k \in S \setminus R\}\big)_{k=1}^d \Big)
    \bigg\} > -\infty.
  \end{equation}
\end{proposition}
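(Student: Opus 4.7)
My plan is to split the proof into the two implications. For the forward direction $\tcd \subseteq \tcp$, I would exploit linearity to reduce to the generating functions. Both \eqref{intermodd} and the divided-difference inequality \eqref{divdiff} are linear in $f$, so it suffices to check \eqref{divdiff} on constants, monomials $\prod_{j \in S} x_j$, and $-\prod_{j \in S}(x_j - t_j)_+$ (the last integrated against the nonnegative measure $\nu_S$, which preserves the sign). The left-hand side of \eqref{divdiff} is a tensor product of univariate divided-difference operators, one per variable $x_k$ of order $p_k$. For the linear factor $x_k$ or a constant, the divided difference of order $2$ vanishes; for $(x_k - t_k)_+$, the divided differences of orders $0$, $1$, and $2$ are all nonnegative (the order-$2$ case by convexity of $(x-t)_+$); and for $k \notin S$, divided differences of order $\geq 1$ of a constant vanish. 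Since $\max_k p_k = 2$, each primitive's divided difference is therefore either zero or a product of nonnegatives, and the leading $-1$ in front of the truncated-power primitive yields the required $\leq 0$.

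For the reverse direction, I would argue by induction on $d$. The base case $d = 1$ is the classical representation \eqref{1drepconcave}; the regularity conditions \eqref{eq:finite-derivative-at-zero}--\eqref{eq:finite-derivative-at-one} for $S = \{1\}$ reduce to \eqref{1dreg}, ensuring finiteness of the measure. For the inductive step, given $f \in \tcp$ on $[0,1]^d$ with the stated regularity, I would fix $\tilde{\mathbf x} = (x_1, \dots, x_{d-1})$ and view $f(\tilde{\mathbf x}, \cdot)$ as a function of $x_d$; this slice is concave (by taking $(p_1, \dots, p_{d-1}, p_d) = (0, \dots, 0, 2)$ in \eqref{divdiff}) and satisfies \eqref{1dreg} (by the $S = \{d\}$ regularity of $f$). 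Applying \eqref{1drepconcave} yields
\begin{equation*}
    f(\tilde{\mathbf x}, x_d) = A(\tilde{\mathbf x}) + B(\tilde{\mathbf x})\, x_d - \int_{(0,1)} (x_d - t_d)_+\, d\mu^{(\tilde{\mathbf x})}(t_d),
\end{equation*}
with $A(\tilde{\mathbf x}) = f(\tilde{\mathbf x}, 0)$, $B(\tilde{\mathbf x}) = \partial_{x_d}^+ f(\tilde{\mathbf x}, 0)$, and $\mu^{(\tilde{\mathbf x})}$ a finite nonnegative Borel measure on $(0,1)$. Reading off divided differences in $\tilde{\mathbf x}$ from \eqref{divdiff} with $p_d = 0, 1,$ and $2$ would then show that $A, B \in \tcp$ on $[0,1]^{d-1}$ with inherited regularity, and that for each $t_d \in (0,1)$ the cumulative mapping $\tilde{\mathbf x} \mapsto \mu^{(\tilde{\mathbf x})}((t_d, 1))$ carries a similar totally-concave-in-$\tilde{\mathbf x}$ structure. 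Applying the induction hypothesis to $A$, $B$, and to the $\mu^{(\cdot)}$-family, and then using Fubini to regroup the resulting terms by subsets $S \subseteq [d]$, would reproduce \eqref{intermodd}.

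The hard part will be promoting the parametrized slicewise family $\{\mu^{(\tilde{\mathbf x})}\}$ into a single nonnegative Borel measure on the product domain, with total mass uniformly controlled by \eqref{eq:finite-derivative-at-zero}--\eqref{eq:finite-derivative-at-one}, and tracking how the lower-dimensional boundary contributions (where some $t_j = 0$) recombine to yield the non-product domains $[0,1)^{|S|} \setminus \{\zerovec\}$ of the $\nu_S$ in \eqref{intermodd}. For smooth $f$ this is transparent via $d\mu^{(\tilde{\mathbf x})}(t_d) = -\partial_{x_d}^2 f(\tilde{\mathbf x}, t_d)\, dt_d$ together with the derivative characterization \eqref{mixparder2}, which makes all the signs immediate. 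For a general $f \in \tcp$ satisfying only the stated regularity, I would approximate by mollification, apply the smooth case to the mollified functions, and pass to the limit using weak convergence of measures together with the uniform total-variation bounds supplied by the regularity conditions.
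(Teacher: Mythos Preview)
Your forward direction is correct and in fact simpler than the paper's. The paper first establishes an alternative characterization of $\tcd$ as integrals of entirely monotone functions (Lemma~\ref{lem:alt-characterization}) and then computes the divided differences from that representation. Your direct check on the generators $-\prod_{j \in S}(x_j-t_j)_+$ and $\prod_{j\in S}x_j$, using that the left side of \eqref{divdiff} factors as a tensor product of univariate divided-difference operators on product functions, is more elementary and works immediately.

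For the reverse direction the paper takes a different route, with neither induction on $d$ nor mollification. It writes $f(\mathbf{x})=f(\mathbf{0})+\sum_{\emptyset\neq S\subseteq[d]}\triangle_Sf\big(\prod_{k\in S}[0,x_k]\big)$, shows that each rectangle increment $\triangle_Sf$ is an absolutely continuous interval function in the sense of \cite{lojasiewicz1988introduction} (the regularity hypotheses together with the monotonicity of Lemma~\ref{lem:div-diff-mono} supply the needed uniform bound), and applies a multivariate fundamental theorem of calculus (Theorem~\ref{thm:lojasiewicz}) to obtain an a.e.\ derivative $D\triangle_Sf$. It then builds an entirely monotone, coordinatewise right-continuous version $g_S$ of $-D\triangle_Sf$ by hand, using Lemma~\ref{lem:div-diff-first-order-ineq} to transfer the Popoviciu sign constraints from $f$ to $D\triangle_Sf$; the representation \eqref{intermodd} follows via Lemma~\ref{lem:alt-characterization}. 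This constructs all the $\nu_S$ at once and never needs to glue a parametrized family of slice measures.

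Your inductive sketch has a real gap at the step you yourself flag. Applying the $(d-1)$-dimensional hypothesis to $A$ and $B$ is legitimate, but ``applying the induction hypothesis to the $\mu^{(\cdot)}$-family'' is not well-posed: the hypothesis is about a single function on $[0,1]^{d-1}$ lying in the Popoviciu class, not about a $t_d$-indexed family of measures. The scalar functions one might extract, such as $\tilde{\mathbf x}\mapsto\mu^{(\tilde{\mathbf x})}\big((0,t_d]\big)$, are \emph{not} in the Popoviciu class (they carry the opposite, entirely-monotone sign pattern), so the induction does not close. Your mollification fallback is more promising and could likely be pushed through---the regularity conditions \eqref{eq:finite-derivative-at-zero}--\eqref{eq:finite-derivative-at-one} do yield the uniform total-mass bounds needed for weak-$*$ compactness of the approximating $\nu_S^{\epsilon}$---but then the induction on $d$ is doing no real work, and the boundary needs care: mollification shrinks the domain, whereas the hypotheses control behavior precisely at $0$ and $1$. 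The paper's interval-function construction sidesteps both issues.
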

In \eqref{eq:finite-derivative-at-zero} and
\eqref{eq:finite-derivative-at-one}, $(a_k)_{k=1}^d$ denotes the
vector with $k^{\text{th}}$ component $a_k$. Thus, 
$(t_k \cdot \ind\{k \in R\})_{k=1}^d$ has entries $t_k$ if $k \in R$ and $0$ 
otherwise, while 
$(\ind\{k \in R\} + t_k \cdot \ind\{k \in S \setminus R\})_{k=1}^d$ has entries 
$1$ if $k \in R$, $t_k$ if $k \in S \setminus R$, and $0$ otherwise. 
See the beginning of Appendix \ref{proofs} for other equivalent representations of 
\eqref{eq:finite-derivative-at-zero} and \eqref{eq:finite-derivative-at-one}.

The conditions \eqref{eq:finite-derivative-at-zero} and
\eqref{eq:finite-derivative-at-one} reduce to \eqref{1dreg} when $d =
1$, so that Proposition \ref{prop:tc-equiv} reduces to the univariate
representation theorem \eqref{1drepconcave}. Totally concave functions
with restricted interaction—functions in 
$\tcds$ (see \eqref{intermodds})—can also be described as totally
concave functions in the 
Popoviciu sense that additionally satisfy the following \textit{interaction restriction condition}: 
\begin{equation}\label{int-rest-cond}
    \sum_{R \subseteq S} (-1)^{|S| - |R|} \cdot
    f\Big( \big( y_k \cdot \ind\{k \in R\} 
    + x_k \cdot \ind\{k \in S \setminus R\}\big)_{k=1}^d \Big) = 0
\end{equation}
for every subset $S$ of $[d]$ with
$|S| > s$, and for all $0 \le x_k < y_k \le 1$, $k \in S$. 
\begin{proposition}\label{prop:tc-equiv-restricted-interaction}
    The function class $\tcds$ consists of all functions $f \in \tcp$ satisfying 
    \eqref{eq:finite-derivative-at-zero} and \eqref{eq:finite-derivative-at-one} 
    for all nonempty $S \subseteq [d]$, and \eqref{int-rest-cond} for every 
    $S \subseteq [d]$ with $|S| > s$.
  \end{proposition}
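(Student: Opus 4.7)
The plan is to leverage Proposition \ref{prop:tc-equiv}---which identifies $\tcd$ with functions in $\tcp$ satisfying the regularity conditions---and supplement it with one further argument showing that, within $\tcd$, the condition \eqref{int-rest-cond} for $|S| > s$ is equivalent to vanishing of all $|S|$-th order terms in the representation \eqref{intermodd}. The central computation in both directions is the action of the left-hand side of \eqref{int-rest-cond}---a tensor-product first-difference in the variables $k \in S$, with coordinates outside $S$ fixed at $0$---on each summand of \eqref{intermodd}.

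The key algebraic observation, which I would formulate as a short lemma, is the following. For a product $\prod_{j \in T} g_j(x_j)$ with each $g_j$ either $g_j(x) = x$ or $g_j(x) = (x - t_j)_+$ for some $t_j \in [0, 1)$, the expression on the left-hand side of \eqref{int-rest-cond} (applied to this product in place of $f$) vanishes unless $T = S$, and equals $\prod_{k \in S}[g_k(y_k) - g_k(x_k)]$ when $T = S$. The two sources of vanishing are: if $S \setminus T \ne \emptyset$, the product is constant in some variable $k \in S$, so that coordinate's difference is $0$; if $T \setminus S \ne \emptyset$, some factor $g_j(0) = 0$ appears (note $t_j \ge 0$ makes $(0 - t_j)_+ = 0$).

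For the forward inclusion $\tcds \subseteq \{\text{conditions}\}$, membership in $\tcp$ and the regularity conditions follow from $\tcds \subseteq \tcd$ and Proposition \ref{prop:tc-equiv}. For \eqref{int-rest-cond} with $|S| > s$, every summand in \eqref{intermodds} has $|T| \le s < |S|$, so $T = S$ never occurs and every summand contributes $0$ by the lemma. For the reverse inclusion, Proposition \ref{prop:tc-equiv} gives $f$ a representation \eqref{intermodd} with specific $\beta_S$ and $\nu_S$. Fixing $|S| > s$ and applying \eqref{int-rest-cond} with the lemma term by term reduces to
\begin{equation*}
\beta_S \prod_{k \in S}(y_k - x_k) = \int_{[0, 1)^{|S|} \setminus \{\zerovec\}} \prod_{k \in S} \bigl[(y_k - t_k)_+ - (x_k - t_k)_+\bigr] \, d\nu_S(t)
\end{equation*}
for all $0 \le x_k < y_k \le 1$, $k \in S$. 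Setting $x_k = 0$, $y_k = \epsilon$, the right side is bounded by $\epsilon^{|S|} \nu_S([0, \epsilon)^{|S|} \setminus \{\zerovec\})$; dividing by $\epsilon^{|S|}$ and letting $\epsilon \downarrow 0$ forces $\beta_S = 0$ (finiteness of $\nu_S$ and exclusion of $\zerovec$ from its domain imply the shrinking $\nu_S$-mass vanishes). With $\beta_S = 0$, $x_k = 0$, and $y_k = 1$, the identity becomes $\int \prod_{k \in S}(1 - t_k) \, d\nu_S = 0$; since $\prod(1 - t_k)$ is strictly positive throughout $[0, 1)^{|S|} \setminus \{\zerovec\}$, this forces $\nu_S = 0$. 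Hence the representation of $f$ collapses to \eqref{intermodds}, placing $f$ in $\tcds$.

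The main obstacle is the joint extraction of $\beta_S = 0$ and $\nu_S = 0$ from the single scalar identity above. The $\epsilon \downarrow 0$ step is the delicate one: it couples the rate $\epsilon^{|S|}$ built into the integrand with the $\nu_S$-mass of a shrinking neighborhood of the origin, which is where both finiteness of $\nu_S$ and the exclusion of $\zerovec$ from its domain enter essentially. Non-uniqueness of the representation \eqref{intermodd} is not an obstacle, since the argument forces both coefficients to vanish in \emph{any} representation, so $f$ necessarily admits one of the form \eqref{intermodds}.
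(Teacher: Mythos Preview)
Your proposal is correct but takes a genuinely different route from the paper. The paper does not invoke Proposition~\ref{prop:tc-equiv} as a black box; rather, its proof of Proposition~\ref{prop:tc-equiv-restricted-interaction} \emph{contains} the proof of Proposition~\ref{prop:tc-equiv} (as Steps 1--3), building it up from an alternative characterization of $\tcds$ via entirely monotone integrands $g_S$ (Lemma~\ref{lem:alt-characterization}), absolute continuity of the interval function $\triangle f$, and the Łojasiewicz fundamental theorem. The interaction-restriction step (Step 4) is then handled via the decomposition $f = f(\zerovec) + \sum_{S} \triangle_S f(\prod_{k \in S}[0,x_k])$, observing that \eqref{int-rest-cond} forces $\triangle_S f \equiv 0$ for $|S| > s$, and feeding the surviving pieces back through the $g_S$ machinery. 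Your approach is more modular and elementary: you take Proposition~\ref{prop:tc-equiv} as given (legitimate, since it is stated first) and reduce the remaining work to a self-contained algebraic computation on the measure representation \eqref{intermodd}, extracting $\beta_S = 0$ via a shrinking-mass argument and then $\nu_S = 0$ via strict positivity of $\prod_k(1-t_k)$. What the paper's route buys is machinery (the $g_S$'s, the $\triangle_S$ decomposition, the divided-difference monotonicity Lemma~\ref{lem:div-diff-mono}) that is reused heavily in the later risk-bound proofs; what your route buys is a clean separation of concerns and an argument that does not touch entire monotonicity or interval-function calculus at all. One small point worth making explicit in your write-up: the $\epsilon \downarrow 0$ step gives $\beta_S \le 0$ from the upper bound, but you also need $\beta_S \ge 0$, which follows because the right-hand side integral is nonnegative; you allude to this only implicitly.
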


\subsection{Smoothness Characterization}

The next result shows that for smooth functions, total concavity is 
characterized by \eqref{mixparder2}, which is basically a continuous 
analogue of the divided difference constraint in 
Definition~\ref{defn-tc-popoviciu}.  

\begin{proposition}\label{prop:alt-characterization-smooth}
    Let $f: [0, 1]^d \rightarrow \mathbb{R}$ be such that $f^{(\mathbf{p})}$ 
    exists and is continuous for every 
    $\mathbf{p} = (p_1, \dots, p_d) \in \{0, 1, 2\}^d$. Then, $f \in \tcd$ if 
    $f^{(\mathbf{p})} \le 0$ for all $\mathbf{p} \in \{0, 1, 2\}^d$ with 
    $\max_j p_j = 2$. If, in addition, $f^{(\mathbf{p})} = 0$ for all 
    $\mathbf{p} \in \{0, 1\}^d$ with $\sum_{j} p_j > s$, then 
    $f \in \tcds$.
\end{proposition}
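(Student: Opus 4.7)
The strategy is to invoke Proposition~\ref{prop:tc-equiv} (and, for the second assertion, Proposition~\ref{prop:tc-equiv-restricted-interaction}) to reduce the claim to three separate checks: (i)~total concavity in the sense of Popoviciu, i.e., the divided-difference inequality \eqref{divdiff}; (ii)~the regularity conditions \eqref{eq:finite-derivative-at-zero} and \eqref{eq:finite-derivative-at-one}; and, for the restricted statement, (iii)~the interaction-restriction identity \eqref{int-rest-cond}. The unifying tool throughout is that under the smoothness hypothesis, tensor-product divided differences and iterated forward differences of $f$ admit integral representations in terms of mixed partial derivatives of $f$, at which point the pointwise sign/boundedness/vanishing assumptions apply directly.

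For (i), I plan to apply the Hermite--Genocchi formula one coordinate at a time. Along coordinate $k$, the univariate divided difference of order $p_k$ of a $C^{p_k}$ function of one variable equals the integral of its $p_k$-th derivative against the uniform measure on the standard simplex $\Delta_{p_k}$, evaluated at convex combinations of the nodes $x_1^{(k)},\dots,x_{p_k+1}^{(k)}$. Iterating this across all $d$ coordinates rewrites the left-hand side of \eqref{divdiff} as
\[
\int f^{(p_1,\dots,p_d)}\bigl(\text{convex combinations of the node tuples}\bigr)\, d\mu
\]
for a product-of-simplex (hence nonnegative) measure $\mu$ supported inside $[0,1]^d$. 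Since $\max_j p_j = 2$, the hypothesis $f^{(\mathbf{p})} \le 0$ applies, yielding nonpositivity of the divided difference.

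For (ii), I would write the alternating sum inside the supremum in \eqref{eq:finite-derivative-at-zero} as the iterated forward difference $\bigl(\prod_{k \in S}\Delta_{k,t_k}\bigr) f$ evaluated at the origin, where $\Delta_{k,t_k}$ is the forward difference in coordinate $k$ of step $t_k$. Repeated application of the fundamental theorem of calculus converts this into $\int_{0}^{t_{k_1}}\!\!\cdots\!\!\int_{0}^{t_{k_{|S|}}} f^{(\mathbf{1}_S)}\bigl(u_{k_1},\dots,u_{k_{|S|}};\mathbf{0}\bigr)\, du$, where $\mathbf{1}_S \in \{0,1\}^d$ is the indicator vector of $S$ and the coordinates outside $S$ are set to $0$. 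Dividing by $\prod_{k \in S} t_k$ realizes the quantity inside the supremum as an average of $f^{(\mathbf{1}_S)}$ over a box, hence bounded by $\sup_{[0,1]^d}|f^{(\mathbf{1}_S)}|$; this supremum is finite because $\mathbf{1}_S \in \{0,1,2\}^d$, so $f^{(\mathbf{1}_S)}$ is continuous on the compact cube. Condition \eqref{eq:finite-derivative-at-one} is treated by the symmetric expansion around the opposite corner. For (iii), the identical FTC calculation shows that the alternating sum in \eqref{int-rest-cond} equals $\int_{\prod_{k \in S}[x_k,y_k]} f^{(\mathbf{1}_S)}(u;\mathbf{0})\, du$; when $|S| > s$, the added vanishing hypothesis gives $f^{(\mathbf{1}_S)} \equiv 0$, so the integral and hence the alternating sum are $0$.

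The main obstacle I anticipate is a bookkeeping one in step (i): packaging the tensor-product Hermite--Genocchi representation cleanly when some $p_k$ equal $0$ (in which case no integration is introduced along that coordinate and one simply evaluates at the single given node), and verifying that the resulting evaluation points for $f^{(\mathbf{p})}$ stay inside $[0,1]^d$ so the sign hypothesis is applicable. Once this representation is written down, each of (i)--(iii) collapses to a pointwise observation about the integrand, and the proposition follows.
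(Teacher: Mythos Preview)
Your proposal is correct and will work, but it takes a different route from the paper. You go through Propositions~\ref{prop:tc-equiv} and~\ref{prop:tc-equiv-restricted-interaction}, verifying the Popoviciu divided-difference inequalities directly via the tensor-product Hermite--Genocchi representation, then checking the boundary regularity and interaction-restriction conditions via iterated applications of the fundamental theorem of calculus. The paper instead bypasses the Popoviciu characterization entirely and invokes Lemma~\ref{lem:alt-characterization}: it uses a multivariate fundamental-theorem-of-calculus expansion (Lemma~\ref{lem:mult-fund-calc}) to write $f$ in the integral form \eqref{eq:alt-characterization} with $g_S$ given by the restriction of $f^{(\mathbf{1}_S)}$ to the appropriate face, and then shows each $-g_S$ is entirely monotone by a second FTC expansion and the sign hypothesis on $f^{(\mathbf{p})}$ with $\max_j p_j = 2$. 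Your approach has the virtue of making the link to the classical divided-difference definition explicit via Hermite--Genocchi, and the bookkeeping obstacle you flag (coordinates with $p_k = 0$) is indeed only cosmetic. The paper's approach is somewhat more self-contained in that it relies on the lighter Lemma~\ref{lem:alt-characterization} rather than the heavier Propositions~\ref{prop:tc-equiv}--\ref{prop:tc-equiv-restricted-interaction} (whose proofs themselves pass through Lemma~\ref{lem:alt-characterization}), and it exposes the structural reason $f \in \tcds$ by exhibiting the defining integral representation directly.
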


\section{Other Notions of Multivariate Concavity}\label{comparison-concavity}
Every totally concave function is axially concave (i.e., it is concave
in each coordinate when all other coordinates are held fixed). In
fact, $f$ is axially concave if and only if \eqref{divdiff} holds for
$(p_1, \dots, p_d) = (2, 0, \dots, 0), \dots, (0, \dots, 0, 2)$ (see 
Appendix \ref{axcon_char} for an explanation), 
which is clearly weaker than total concavity, which requires 
\eqref{divdiff} for all $(p_1, \dots, p_d)$ with $\max_k p_k = 2$. On the 
other hand, every additive concave function satisfies the condition in 
Definition \ref{defn-tc-popoviciu} and is therefore totally
concave. There is no strict relation between total
concavity and general concavity (general concavity has the
usual definition: $f((1-\alpha)\mathbf{x} + \alpha \mathbf{y}) \ge (1
- \alpha) f(\mathbf{x}) + \alpha f(\mathbf{y})$ for all $\alpha \in
[0, 1]$ and $\mathbf{x}, \mathbf{y} \in [0, 1]^d$). There exist
totally concave functions (e.g., $(x_1, x_2) \mapsto x_1 x_2$) which
are not generally concave, and there also exist generally concave
functions (e.g., $(x_1, x_2) \mapsto (x_1 + x_2 - 0.5)_+$) which are
not totally concave. These relations are summarized in Figure
\ref{pictconc}.  
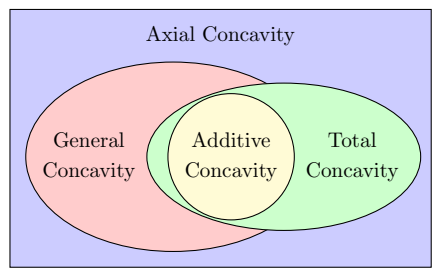
\begin{figure}[ht]
\begin{center}
\begin{tikzpicture}[scale=0.7]
    \draw[fill=blue!20, draw=black] (-4.2,-2.1) rectangle (3.8,2.8);
    \node[scale = 0.8] at (-0.2,2.3) {Axial Concavity};

    \draw[fill=red!20, draw=black] (-1.1,0) ellipse (2.8cm and 1.8cm);
    \node[scale = 0.8, text width=2cm, align=center] at (-2.7,0) {General\\Concavity};

    \draw[fill=green!20, draw=black] (1,0) ellipse (2.6cm and 1.4cm);
    \node[scale = 0.8, text width=2cm, align=center] at (2.3,0) {Total\\Concavity};

    \draw[fill=yellow!20, draw=black] (0,0) circle (1.2cm);
    \node[scale = 0.8, text width=2cm, align=center] at (0,0) {Additive\\Concavity};
\end{tikzpicture}
\end{center}
\caption{Four types of multivariate concavity.}
\label{pictconc}
\end{figure}

For smooth $f$, these can be characterized as (below $Hf$ is
the Hessian matrix of $f$): 
\begin{itemize}
\item \textbf{General Concavity}: 
  $-\mathit{H} f \succcurlyeq 0$ ($\succcurlyeq
0$ stands for positive semi-definiteness),
\item \textbf{Axial Concavity}: 
  $-\text{diag}(\mathit{H}f) \ge 0$ 
  (here, $\text{diag}(\mathit{H}f)$ indicates the diagonal of $\mathit{H}f$ and $\ge 0$
  represents element-wise nonnegativity),
\item \textbf{Additive Concavity}: 
  $-\mathit{H}f = -\text{diag}(\mathit{H}f) \ge 0$,
\item \textbf{Total Concavity}: 
  $-f^{(p_1, \dots, p_d)} \ge 0$ for
  every $(p_1, \dots, p_d)$ with $\max_j p_j = 2$. 
\end{itemize}
General, axial, and additive concavity can be characterized via the
Hessian matrix. Total concavity, on the other hand, is fundamentally
different because it involves higher 
order derivatives and cannot be characterized via the
Hessian. 

Nonparametric regression with general concavity has received 
attention \citep{kuosmanen2008representation, SS11,
  lim2012consistency, balazs2016convex} including determination of
its rates of convergence \citep{han2016multivariate,
  kur2024convex}, which are all affected by the usual curse of
dimensionality. Additive concavity for regression has been studied
in \cite{meyer2013semi, meyer2018framework, pya2015shape,
  chen2016generalized}; unfortunately, additive models do not allow
for interaction effects. Axial concavity is a straightforward concept
of multivariate concavity that, surprisingly, has not garnered much
attention, perhaps with
the exception of  \citet{iwanaga2016estimating}. By contrast, axial
monotonicity (recognized as the standard notion for
multivariate monotonicity) has been extensively studied (see, e.g., 
\cite{RWD88, chatterjee2018matrix, han2019isotonic}).

\section{Totally Concave Least Squares Estimators}\label{excomp}

Given data $({\mathbf{x}}^{(i)}, y_i), 1 \leq i \leq n$ with
${\mathbf{x}}^{(i)} \in [0, 1]^d$ and $y_i \in \R$, the
LSE over $\tcds$ is
\begin{equation}\label{tcreg}
  \hat{f}_{\text{TC}}^{d, s} \in \argmin_{f \in \tcds} \sum_{i=1}^n
  \big(y_i - f({\mathbf{x}}^{(i)}) \big)^2.
\end{equation}
The following result shows that 
$\hat{f}_{\text{TC}}^{d, s}$ also minimizes least squares over all
functions $f \in \tcp$ satisfying the interaction 
restriction condition \eqref{int-rest-cond} for every subset $S$ of
$[d]$ with $|S| > s$.
\begin{proposition}\label{prop:reduction-from-popoviciu}
    Every minimizer $\hat{f}_{\text{TC}}^{d, s}$ of \eqref{tcreg} satisfies
    \begin{equation*}
      \hat{f}_{\text{TC}}^{d, s} \in \argmin_f \bigg\{\sum_{i=1}^n 
      \big(y_i - f({\mathbf{x}}^{(i)}) \big)^2 : f \in \tcp \text{ and satisfies } \eqref{int-rest-cond} \bigg\}. 
    \end{equation*}
    In particular, $\hat{f}_{\text{TC}}^{d, d}$ also minimizes least
    squares over the entire class $\tcp$. 
  \end{proposition}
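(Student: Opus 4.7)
The plan is to show that the minimum of $\sum_{i=1}^n (y_i - f(\mathbf{x}^{(i)}))^2$ over $\tcds$ equals the minimum over $\mathcal{G} := \{f \in \tcp : f \text{ satisfies } \eqref{int-rest-cond} \text{ for every } S \subseteq [d] \text{ with } |S| > s\}$. Since Proposition \ref{prop:tc-equiv-restricted-interaction} gives the inclusion $\tcds \subseteq \mathcal{G}$, the inequality $\min_{\tcds} \ge \min_{\mathcal{G}}$ is immediate, and the work reduces to showing the reverse: that for every $f \in \mathcal{G}$ there exists $g \in \tcds$ with $g(\mathbf{x}^{(i)}) = f(\mathbf{x}^{(i)})$ for all $i$.

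To construct such a $g$, I would let $X_j := \{x_j^{(1)}, \ldots, x_j^{(n)}\} \cup \{0, 1\}$ and take $g$ to be the tensor-product piecewise-multilinear interpolant of $f$ on the grid $\prod_j X_j$. Writing $g = L_1 L_2 \cdots L_d f$, where each $L_k$ is the operator that, with all other coordinates held fixed, replaces the input by its univariate piecewise-linear interpolant at the nodes in $X_k$, I would verify via Proposition \ref{prop:tc-equiv-restricted-interaction} that $g \in \tcds$. The regularity conditions \eqref{eq:finite-derivative-at-zero}--\eqref{eq:finite-derivative-at-one} hold automatically since $g$ is bounded. The two nontrivial obligations are that $g \in \tcp$ and that $g$ satisfies \eqref{int-rest-cond} for every $|S| > s$, and once these hold, $g$ agrees with $f$ on all data points by construction.

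Both obligations are handled by induction on the applied operators, using two elementary facts. First, each $L_k$ acts linearly on function values via nonnegative (convex combination) weights in coordinate $k$; consequently, the mixed-difference identities \eqref{int-rest-cond} and divided-difference nonpositivity in coordinates other than $k$ are preserved by $L_k$ verbatim, since both are linear functionals of the values. Second, for a univariate piecewise-linear function, the divided difference of order $2$ at any three input points is a nonnegative combination of the order-$2$ divided differences at consecutive grid nodes. Applied in the coordinate-$k$ direction, this shows that nonpositivity of order-$(p_1, \ldots, p_d)$ divided differences (with $\max_k p_k = 2$) of the current iterate at arbitrary points---available by the previous inductive step---propagates to $L_k$ applied to it, at arbitrary points. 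Iterating from $f$ through $L_1, \ldots, L_d$ places $g$ in $\tcp$ and preserves \eqref{int-rest-cond}, so $g \in \tcds$ by Proposition \ref{prop:tc-equiv-restricted-interaction}.

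The main technical obstacle is the bookkeeping of commutation between $L_k$ and divided differences in other coordinates, together with the ``nonnegative combination'' decomposition for 1D order-$2$ divided differences across multiple cells; no deeper analytic tool is required, but one must keep the combinatorics straight across all $(p_1, \ldots, p_d)$ with $\max_k p_k = 2$. Once this is in place, $g \in \tcds$ with the same squared-error value as $f$ yields the reverse inequality. The final assertion of the proposition---that $\hat{f}_{\text{TC}}^{d, d}$ minimizes the least squares over the entire class $\tcp$---follows at once by specializing to $s = d$: the interaction restriction \eqref{int-rest-cond} is vacuous because no subset $S \subseteq [d]$ has $|S| > d$, so $\mathcal{G} = \tcp$.
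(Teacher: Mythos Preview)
Your approach is correct and reaches the same reduction lemma as the paper (for every $f$ in the Popoviciu class with the interaction restriction, there is $g\in\tcds$ agreeing with $f$ at the design points), but the construction and verification are genuinely different. The paper writes down the interpolant in the measure parameterization \eqref{intermodds}: it sets $\beta_S$ and discrete measures $\nu_S$ via explicit divided-difference formulas (see \eqref{popoviciu-discrete-beta}--\eqref{popoviciu-discrete-nu}), invokes $f\in\tcp$ to get $\nu_S\ge 0$, and proves interpolation by a telescoping identity (their Lemma~\ref{lem:red-from-pop-first}). Your route constructs the same piecewise-multilinear interpolant as $L_1\cdots L_d f$ and checks membership in $\tcds$ via Proposition~\ref{prop:tc-equiv-restricted-interaction}, by propagating sign constraints on divided differences through one-dimensional linear interpolation. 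What the paper's approach buys is explicit formulas for the measures, which it reuses later (e.g., to bound $V_{\text{design}}$ in Lemma~\ref{lem:complexity-upper-bound}); what your approach buys is a cleaner structural argument that sidesteps the telescoping combinatorics.

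Two small points to tighten. First, ``regularity holds since $g$ is bounded'' is not the right reason: boundedness alone does not control the ratios in \eqref{eq:finite-derivative-at-zero}--\eqref{eq:finite-derivative-at-one} (think $\sqrt{x}$ on $[0,1]$). The correct reason is that $g$ is piecewise multilinear on a grid containing $0$ and $1$, so near each corner it is multilinear and the relevant divided differences are finite slopes. Second, your two bullet facts cover $p_k=0$ (convex-combination weights) and $p_k=2$ (nonnegative combination of second differences at consecutive nodes), but you should also state the $p_k=1$ case explicitly: for a univariate piecewise-linear function, the first-order divided difference at arbitrary points is a convex combination of the slopes on the spanned intervals, i.e., of first-order divided differences at consecutive nodes. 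With that in hand, the induction over $L_1,\dots,L_d$ handles all $(p_1,\dots,p_d)$ with $\max_k p_k=2$.
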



As totally concave functions
\eqref{intermodds} are parameterized by  
measures, the least
squares problem \eqref{tcreg}  is an infinite-dimensional
optimization problem. The following result guarantees a finite-dimensional
reduction where each $\nu_S$ can be taken to be a
discrete measure supported on the lattice $L_S$ generated by ${\mathbf{x}}^{(1)}, \dots, {\mathbf{x}}^{(n)}$ (and $\zerovec = (0, \dots, 0)$): 
\begin{equation}\label{lattices-gen-by-data}
  L_S := \prod_{j \in S} \Big(\{0\} \cup \big\{x_j^{(i)}: i = 1, \dots, n \big\} \Big) \setminus \{\zerovec\}.
\end{equation}


\begin{proposition}\label{prop:existence-computation}
   The estimator $\hat{f}_{\text{TC}}^{d, s}$ exists and can be
   computed as the least squares minimizer over the class of all
   functions of the form:  
   \begin{equation}\label{eq:discrete-measure-subclass}
     \beta_0 + \sum_{S : 1 \le |S| \le s} \beta_S \bigg[\prod_{j \in S}
    x_j \bigg] - \sum_{S: 1 \le |S| \le s} \sum_{\mathbf{t} = (t_j, j \in S)
    \in L_S} \beta^{(\mathbf{t})} \bigg[\prod_{j \in S} (x_j - t_j)_+\bigg]
\end{equation}
where $\beta_0, \beta_S, 1 \leq |S| \leq s$ are real numbers, and $\beta^{(\mathbf{t})}, \mathbf{t} \in L_S$ are nonnegative. 
\end{proposition}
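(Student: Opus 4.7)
The plan is to reduce the infinite-dimensional problem \eqref{tcreg} to a finite-dimensional nonnegative least squares problem by showing that every $f \in \tcds$ agrees at the $n$ data points with some function of the form \eqref{eq:discrete-measure-subclass}. Since the latter subclass is parameterized by finitely many real and nonnegative coefficients, both existence and finite computability follow at once.

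To carry this out, I would fix a nonempty $S \subseteq [d]$ with $|S| \le s$ and any finite Borel measure $\nu_S$ on $[0, 1)^{|S|} \setminus \{\zerovec\}$. For each $j \in S$, let $0 = a_0^j < a_1^j < \cdots < a_{m_j}^j$ be the sorted distinct values in $\{0\} \cup \{x_j^{(i)} : 1 \le i \le n\}$, and partition the integration domain into axis-aligned half-open cells $C = \prod_{j \in S}[a_{k_j - 1}^j, a_{k_j}^j)$ (plus a residual region beyond $\max_j a_{m_j}^j$ in which the integrand vanishes and can be discarded). On each cell $C$ and for each data index $i$, the factor $(x_j^{(i)} - t_j)_+$ is either identically zero (when $x_j^{(i)} \le a_{k_j - 1}^j$) or the affine function $t_j \mapsto x_j^{(i)} - t_j$ (when $x_j^{(i)} \ge a_{k_j}^j$), so $\mathbf{t} \mapsto \prod_{j \in S}(x_j^{(i)} - t_j)_+$ is multilinear on $C$. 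Tensor-product linear interpolation then writes this map as a convex combination, with nonnegative weights $w_{\mathbf{c}}(\mathbf{t})$ depending only on $\mathbf{t}$, of its values at the $2^{|S|}$ vertices of $C$. Integrating term by term and summing over cells gives
\begin{equation*}
\int \prod_{j \in S}(x_j^{(i)} - t_j)_+ \, d\nu_S(\mathbf{t}) = \sum_{\mathbf{c}} \gamma_{\mathbf{c}} \prod_{j \in S}(x_j^{(i)} - c_j)_+,
\end{equation*}
where $\mathbf{c}$ ranges over the extended lattice $\prod_{j \in S}(\{0\} \cup \{x_j^{(i)} : 1 \le i \le n\})$ and each $\gamma_{\mathbf{c}} := \int w_{\mathbf{c}}(\mathbf{t}) \, d\nu_S(\mathbf{t}) \ge 0$. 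The only vertex outside $L_S$ is $\mathbf{c} = \zerovec$, whose contribution $\gamma_{\zerovec} \prod_{j \in S} x_j^{(i)}$ is absorbed into $\beta_S$ by the replacement $\beta_S \leftarrow \beta_S - \gamma_{\zerovec}$. This yields a function of the form \eqref{eq:discrete-measure-subclass} (with $\beta^{(\mathbf{t})} = \gamma_{\mathbf{t}} \ge 0$) that agrees with $f$ at every $\mathbf{x}^{(i)}$ and is itself in $\tcds$.

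Existence then follows because the set of achievable data vectors $(f(\mathbf{x}^{(1)}), \dots, f(\mathbf{x}^{(n)}))$ for $f$ of the form \eqref{eq:discrete-measure-subclass} is the Minkowski sum of the finite-dimensional linear subspace spanned by the free parameters $\beta_0, \beta_S$ and the finitely generated polyhedral cone generated by the vectors $(\prod_{j \in S}(x_j^{(i)} - t_j)_+)_{i=1}^n$ for $\mathbf{t} \in L_S$. Such a sum is a closed convex subset of $\R^n$, so the squared-error projection of $(y_1, \dots, y_n)$ onto it exists, and any function of the form \eqref{eq:discrete-measure-subclass} realizing that projection is a valid $\hat{f}_{\text{TC}}^{d, s}$.

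I expect the main obstacle to be the multilinearity-and-interpolation step: verifying multilinearity on cells abutting the boundary $\{t_j = 0\}$ or $\{t_j = 1\}$, confirming that the tensor-product weights are nonnegative, and correctly handling vertices with some—but not all—coordinates equal to zero, which lie in $L_S$ and thus constitute valid atoms of the discrete measure. Once this bookkeeping is done, the finite-dimensional reduction and the existence argument are routine.
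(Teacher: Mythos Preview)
Your proof is correct and takes a genuinely different route from the paper. You reduce each integral $\int \prod_{j \in S}(x_j^{(i)} - t_j)_+ \, d\nu_S$ directly: partition the measure domain into axis-aligned cells with vertices at the design-coordinate values, observe that the integrand is multilinear on each cell, and use tensor-product linear interpolation to push the mass of $\nu_S$ onto the cell vertices with nonnegative weights. The paper instead routes through its Popoviciu equivalence (Proposition~\ref{prop:tc-equiv-restricted-interaction}): it notes that any $f \in \tcds$ lies in $\tcp$ with the interaction restriction, and then invokes the explicit divided-difference construction from the proof of Lemma~\ref{lem:reduction-from-popoviciu} (formulas \eqref{popoviciu-discrete-beta}--\eqref{popoviciu-discrete-nu}) to produce the discrete measure, with nonnegativity coming from the sign of the order-two divided differences rather than from interpolation weights. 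Your approach is more self-contained for this proposition alone---it needs neither the equivalence result nor divided-difference algebra---while the paper's route has the advantage that its construction already applies to any $f \in \tcp$ satisfying the interaction restriction, which is exactly what Proposition~\ref{prop:reduction-from-popoviciu} requires; so in the paper the two propositions share one piece of machinery. Your existence argument via the Minkowski sum of a subspace and a finitely generated cone is standard and fine.
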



Functions of the form \eqref{eq:discrete-measure-subclass} arise by
constraining the measures $\nu_S$ in \eqref{intermodds} to be discrete and
supported on the lattices $L_S$. Restriction to these functions results in a
finite-dimensional convex optimization problem solvable with standard
optimization tools. Our implementation uses the software
\textsf{MOSEK} via its \textsf{R} interface 
\textsf{Rmosek}. Since this optimization involves $O(n^s)$ variables, computation can
be expensive for large $n$. For such cases, we employ an
approximate LSE by replacing each $L_S$ with a
proxy lattice obtained by taking $N_j$ equally spaced points  for the
$j^{\text{th}}$ coordinate, where each $N_j$ is pre-selected.  


\section{Overfitting Reduction}\label{overfitting}

Shape-constrained regression estimators tend to 
overfit near the boundary of the covariate domain, even in the
univariate case 
\citep{woodroofe1993penalized, sun1996adaptive, 
kulikov2006behavior, lim2012consistency, mazumder2019computational, 
bertsimas2021sparse, liao2024overfitting}.  
Totally concave regression shows a similar tendency.
The only restriction on the measures $\nu_S$ is nonnegativity, and hence, 
they are allowed to take arbitrarily large weights near the boundary to 
achieve nearly perfect fits in that region. This issue can be mitigated by 
constraining the sum of the sizes $\nu_S([0, 1)^{|S|} \setminus \{\zerovec\})$: 
\begin{equation}\label{modified-estimator}
  \hat{f}_{\text{TC}, V}^{d, s} \in \argmin_f \bigg\{\sum_{i=1}^n
  \big(y_i - f(\mathbf{x}^{(i)}) \big)^2: f \in \tcds 
  \text{ and } V(f) \le V \bigg\},
\end{equation}
where, for $f \in \tcds$ of the form \eqref{intermodds},
\begin{equation}\label{complexity-measure}
    V(f) := \sum_{S : 1 \le |S| \le s} 
    \nu_S \big([0, 1)^{|S|} \setminus \{\zerovec\}\big),
\end{equation}
and $V$ is a tuning parameter chosen via cross-validation in practice. 
Estimators with such additional constraints, similar to
$\hat{f}_{\text{TC}, V}^{d, s}$, have been used to reduce boundary overfitting 
\citep{woodroofe1993penalized, sun1996adaptive, liao2024overfitting}. 


\section{Theoretical Results}\label{rates}
We study accuracy of our estimators under the model (for both
fixed and random designs):
\begin{equation}\label{regression-setting}
  y_i = f^*(\mathbf{x}^{(i)}) + \xi_i,
\end{equation}
where $f^*$ is the unknown regression function and $\xi_i$ are mean-zero 
errors. Note that although we state our accuracy results for a fixed $f^*$
satisfying certain conditions, they actually hold uniformly over all
$f^*$ in that class, thereby implying minimax upper bounds.

\subsection{Fixed Design}\label{fdtheory}
Here, we assume that $\mathbf{x}^{(1)}, \dots, \mathbf{x}^{(n)}$ form the 
following equally-spaced lattice:
\begin{equation}\label{equally-spaced-lattice}
  \{\mathbf{x}^{(1)}, \dots, \mathbf{x}^{(n)}\} 
  = \prod_{k = 1}^{d} \Big\{0, \frac{1}{n_k}, \dots, 
  \frac{n_k - 1}{n_k} \Big\}
\end{equation}
for some integers $n_k \ge 2$, $k = 1, \dots, d$, and define loss and risk as
\begin{equation*}
  \|\hat{f} - f^*\|_{n}^2 
  := \frac{1}{n} \sum_{i = 1}^{n} \big(\hat{f}(\mathbf{x}^{(i)}) 
  - f^*(\mathbf{x}^{(i)})\big)^2 \ \text{ and } \ R_F(\hat{f}, f^*) 
  := \E\big[\|\hat{f} - f^*\|_{n}^2\big],
\end{equation*}
where the expectation is taken over $y_1, \dots, y_n$. The following result 
shows that the risk of  $\hat{f}_{\text{TC}}^{d, s}$ is of order $n^{-4/5}$ 
(up to log factors). This result involves the following quantity:
\begin{equation*}
  V_{\text{design}}(f) := \inf \big\{V(g): g \in \tcd \text{ such that } 
  g(\mathbf{x}^{(i)}) 
  = f(\mathbf{x}^{(i)}) \text{ for } i = 1, \dots, n\big\}, 
\end{equation*}
where $V(g)$ is as defined in \eqref{complexity-measure} for $s = d$. We 
provide a simple upper bound for $V_{\text{design}}(f)$ when $f \in \tcp$ in 
Lemma \ref{lem:complexity-upper-bound}. In all results in this
subsection, $C_d$ is a constant that depends only on $d$, whose
value may change from line to line. 

\begin{theorem}\label{thm:risk-bound-fixed} 
  Assume that $f^* \in \tcp$ and satisfies \eqref{int-rest-cond} for
  every $S \subseteq [d]$ with  
  $|S| > s$. Also, assume \eqref{equally-spaced-lattice} and 
  $\xi_i \overset{\text{i.i.d.}}{\sim} N(0, \sigma^2)$. Then, with 
  $V := V_{\text{design}}(f^*)$, 
  \begin{equation*}
    R_F(\hat{f}_{\text{TC}}^{d, s}, f^*) 
    \le C_d \bigg( \frac{\sigma^2 V^{1/2}}{n} \bigg)^{4/5} 
    \bigg[ \log\Big(\Big(1 + \frac{V}{\sigma}\Big) n \Big)\bigg]^{3(2s - 1)/5} 
    + C_d \cdot \frac{\sigma^2}{n} (\log n)^{5d/4} \bigg[ \log\Big(\Big(1 
    + \frac{V}{\sigma}\Big) n \Big)\bigg]^{3(2s - 1)/4}
  \end{equation*}
  if $s \ge 2$, and for $s = 1$, 
  \begin{equation*}
    R_F(\hat{f}_{\text{TC}}^{d, 1}, f^*) 
    \le C_d \bigg( \frac{\sigma^2 V^{1/2}}{n} \bigg)^{4/5} 
    + C_d \cdot \frac{\sigma^2}{n} (\log n)^{d} \bigg[ \log\Big(\Big(1 
    + \frac{V}{\sigma}\Big) n \Big)\bigg]^{2} 
    + C_d \cdot \frac{\sigma^2}{n} (\log n)^{5d/4}. 
  \end{equation*}
\end{theorem}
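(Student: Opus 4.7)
The plan is the standard three-step analysis of a least squares estimator over an infinite-dimensional convex class: reduce $f^*$ to a canonical representative, set up a peeling/localization argument, and control the resulting local Gaussian complexity via a metric-entropy bound. Since $\hat{f}_{\text{TC}}^{d, s}$ depends on $f^*$ only through its values at the $n$ design points and $f^* \in \tcds$ by Proposition \ref{prop:tc-equiv-restricted-interaction}, I would first use the definition of $V_{\text{design}}$ to replace $f^*$ by a function in $\tcds$ of the form \eqref{intermodds} with $V(\cdot) = V$ that agrees with $f^*$ on the design. With this reduction, the LSE basic inequality reads
\[
  \|\hat{f}_{\text{TC}}^{d, s} - f^*\|_n^2 \le \frac{2}{n} \sum_{i=1}^n \xi_i \bigl(\hat{f}_{\text{TC}}^{d, s}(\mathbf{x}^{(i)}) - f^*(\mathbf{x}^{(i)})\bigr),
\]
and a standard peeling argument reduces the risk bound to finding $t_*$ satisfying $t_*^2 \gtrsim \phi(t_*)$, where
\[
  \phi(t) := \E \sup_{g \in \tcds,\ \|g - f^*\|_n \le t} \frac{1}{n} \sum_{i=1}^n \xi_i \bigl(g(\mathbf{x}^{(i)}) - f^*(\mathbf{x}^{(i)})\bigr).
\]
Dudley's chaining then bounds $\phi(t)$ by an integral of the square root of a metric entropy of $\tcds$, truncated to sup-norm $O(M)$ with $M := \|f^*\|_\infty$ and complexity $O(V)$, with respect to the empirical $\|\cdot\|_n$ distance.

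The main obstacle is the entropy estimate. The target is
\[
  \log N\bigl(\epsilon,\ \{g \in \tcds : V(g) \le V,\ \|g\|_\infty \le M\},\ \|\cdot\|_n\bigr) \le C_d \Big(\frac{V}{\epsilon}\Big)^{1/2} \bigl(\log(n/\epsilon)\bigr)^{3(2s-1)/2} + C_d (\log n)^{5d/4},
\]
which extends the classical $\epsilon^{-1/2}$ Bronshtein-style bound for univariate concave functions to the totally concave class with $s$-way interactions. To build the covering I would first dyadically discretize $[0, 1)^{|S|}$ and reduce to the lattice-supported representation from Proposition \ref{prop:existence-computation}, then approximate each $\nu_S$ by a discrete nonnegative measure of bounded total mass, and finally sparsify via a Maurey-type empirical-process argument that uses $\nu_S \ge 0$ and $\sum_S \nu_S([0, 1)^{|S|} \setminus \{\zerovec\}) \le V$. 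The union bound over nonempty $S \subseteq [d]$ with $|S| \le s$ produces the $3(2s-1)/2$ exponent on $\log(n/\epsilon)$, tracking how many dyadic scales and coordinate directions interact in the $|S|$-fold products $\prod_{j \in S}(x_j - t_j)_+$; the additive $(\log n)^{5d/4}$ term accounts for the finite-dimensional linear/tensor span $\beta_0 + \sum_{|S| \le s} \beta_S \prod_{j \in S} x_j$ and for covering the axially concave scaffold that $\tcds$ sits inside.

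Plugging the entropy bound into Dudley gives $\phi(t) \le C_d (\sigma/\sqrt{n}) V^{1/4} [\log((1 + V/\sigma) n)]^{3(2s-1)/4} t^{3/4}$ in the leading regime, and balancing $\phi(t_*) \asymp t_*^2$ solves to $t_*^2 \asymp C_d (\sigma^2 V^{1/2}/n)^{4/5} [\log((1 + V/\sigma) n)]^{3(2s-1)/5}$, which is the main term of the theorem. A Borell--TIS deviation inequality promotes the expectation bound on $\phi$ to the in-expectation risk bound and contributes the residual $C_d \sigma^2 (\log n)^{5d/4} [\log((1 + V/\sigma) n)]^{3(2s-1)/4}/n$. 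The $s = 1$ case follows as the special instance where no interaction log factor appears in the leading term, with its remainder accounting for the $d$ unconstrained univariate concave components via a separate union bound that produces the $(\log n)^d$ piece.
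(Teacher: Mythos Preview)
Your outline has a genuine gap at the step where you pass from the localized class $\{g \in \tcds : \|g - f^*\|_n \le t\}$ to the class $\{g \in \tcds : V(g) \le V,\ \|g\|_\infty \le M\}$ in order to apply Dudley. The estimator $\hat{f}_{\text{TC}}^{d,s}$ is the LSE over \emph{all} of $\tcds$; there is no a priori bound on $V(g)$, and the localization $\|g - f^*\|_n \le t$ does \emph{not} imply $V(g) = O(V)$. In fact the paper emphasizes (Section~\ref{overfitting} and Section~\ref{comparison-ki}) that this is precisely what distinguishes the analysis of $\hat{f}_{\text{TC}}^{d,s}$ from that of the regularized estimator $\hat{f}_{\text{TC},V}^{d,s}$ or the MARS--LASSO estimator: near the boundary of $[0,1]^d$, functions in $\tcds$ can have arbitrarily large measures $\nu_S$ while remaining $\|\cdot\|_n$-close to $f^*$, because the design points near the boundary are few. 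Your entropy bound $(V/\epsilon)^{1/2}(\log)^{\cdots}$ is for the $V$-bounded class and cannot be applied directly.

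The paper's proof supplies exactly the missing ingredient. After reducing to Chatterjee's framework (which is equivalent to your peeling step), it decomposes the design lattice into $2^d$ corners and then dyadically into pieces $L_{\mathbf r}$, $\mathbf r \in \mathcal R$, indexed by distance-to-boundary scales. The key new lemma (Lemma~\ref{lem:bounds-finite-derivatives}) shows that total concavity \emph{together with} $\|\theta - \theta^*\|_2 \le t$ forces the first-order discrete differences of $\theta$ on $L_{\mathbf r}$ to be bounded by roughly $M + t \cdot 2^{3r_+/2}/\sqrt{n}$; this translates into a complexity bound $V(\theta^{(\mathbf r)}) \le V_{\mathbf r} := C_d(M + t \sqrt{2^{r_+}/n})$ on each dyadic piece. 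Only then can one invoke the expected-supremum bound for the $V$-bounded discrete class (Theorem~\ref{thm:emp-bound-ki}, borrowed from the earlier MARS analysis) piece by piece, and a careful weighted sum over $\mathbf r \in \mathcal R$ using $\sum_{\mathbf r} w_{\mathbf r} \le 2|\mathcal R|$ produces the $(\log n)^{5d/4}$ factor. The replacement $M \to V_{\mathrm{design}}(f^*)$ at the end is done via an affine shift, similar in spirit to your first step. In short: your reduction, basic inequality, and final balancing are on track, but the heart of the argument---why localization controls complexity despite the class being unconstrained---is missing, and this is where the dyadic decomposition and Lemma~\ref{lem:bounds-finite-derivatives} are essential.
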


The risk of the LSE in univariate concave
regression is of order $n^{-4/5}$
\citep{guntuboyina2015global, chatterjee2015risk, han2016multivariate, 
chatterjee2016improved, bellec2018sharp}. Theorem \ref{thm:risk-bound-fixed} 
shows that our estimators achieve nearly the same rate, 
differing only by logarithmic factors, even when $s = d$. Thus,
totally concave regression largely avoids  
the dimensionality curse, making it attractive for 
regression in higher dimensions. 

Shape-constrained estimators exhibit adaptivity by achieving 
nearly parametric convergence rates at specific points in the 
function space (\cite{samworth2018recent, guntuboyina2018nonparametric}). For 
$\hat{f}_{\text{TC}}^{d, s}$, adaptivity manifests when $f^* \in \tcds$ belongs 
to $\mathcal{R}^{d, s}$, the class of continuous rectangular piecewise 
multi-affine functions. A function $f$ is in $\mathcal{R}^{d, s}$ if it is 
continuous and there exists an axis-aligned split of $[0,1]^d$ into rectangles 
of the form $\prod_{k = 1}^{d} [u^{(k)}_{j_k - 1}, u^{(k)}_{j_k}]$ where 
$0 = u^{(k)}_0 < \dots < u^{(k)}_{m_k} = 1$ for $k = 1, \dots, d$, such that 
on each rectangle,
\begin{equation}\label{piecewise-multi-affine}
  f(x_1, \dots, x_d) = \beta_0^{(\mathbf{j})} + \sum_{S: 1 \le |S| \le s} 
  \beta_S^{(\mathbf{j})} \bigg[ \prod_{k \in S} x_k \bigg]
\end{equation}
for some constants $\beta_0^{(\mathbf{j})}$ and $\beta_S^{(\mathbf{j})}$. For 
$f \in \mathcal{R}^{d, s}$, we denote by $m(f)$ the minimum number of 
rectangles needed for such a representation. In the univariate case 
($d = s = 1$), $\mathcal{R}^{d, s}$ simply consists of continuous piecewise 
affine functions. The next theorem implies that if 
$f^* \in \tcds \cap \mathcal{R}^{d, s}$, then $\hat{f}_{\text{TC}}^{d, s}$ 
achieves the rate of order $n^{-1}$ (with logarithmic factors), which is much 
faster than the rate $n^{-4/5}$ given by Theorem \ref{thm:risk-bound-fixed}.
\begin{theorem}\label{thm:adaptive}
 Assume \eqref{equally-spaced-lattice} and 
  $\xi_i \overset{\text{i.i.d.}}{\sim} N(0, \sigma^2)$. Then, for every
  $f^* \in \tcds \cap \mathcal{R}^{d, s}$, 
\begin{equation*}
  R_F(\hat{f}_{\text{TC}}^{d, s}, f^*) \le 
  \begin{cases}
    C_d \cdot m(f^*) \cdot \sigma^2 (\log n)^{(5d + 6s - 3)/4} / n 
    & \text{if } s \ge 2 \\
    C_d \cdot m(f^*) \cdot \sigma^2 (\log n)^{\max(d + 2, 5d/4)} / n
    & \text{if } s = 1.
  \end{cases}
\end{equation*}
\end{theorem}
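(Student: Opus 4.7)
The plan is to follow the standard Chatterjee-style localization framework for least squares over closed convex classes, and then to exploit the piecewise multi-affine structure of $f^*$ to obtain a much tighter local complexity bound than the one underlying Theorem~\ref{thm:risk-bound-fixed}. Since $\tcds$ is closed and convex with $f^* \in \tcds$, the basic inequality gives
\begin{equation*}
\|\hat{f}^{d,s}_{\text{TC}} - f^*\|_n^2 \le 2\langle \xi, \hat{f}^{d,s}_{\text{TC}} - f^*\rangle_n,
\end{equation*}
and a standard peeling argument bounds the risk by $C\, t_*^2$, where $t_*$ is the smallest $t>0$ satisfying
\begin{equation*}
G(t) := \frac{1}{n}\, \E\!\sup\big\{ \langle \xi, f - f^*\rangle : f \in \tcds,\ \|f - f^*\|_n \le t \big\} \le t^2.
\end{equation*}
The task thus reduces to proving $t_*^2 \le C_d\, m(f^*)\, \sigma^2 (\log n)^{(5d+6s-3)/4}/n$.

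Next, I would control $G(t)$ by a rectangle-wise local entropy calculation. Partition $[0,1]^d$ into the $m := m(f^*)$ rectangles $R_{\mathbf{j}}$ on which $f^*$ has the multi-affine form \eqref{piecewise-multi-affine}. A multi-affine function of order $\le s$ satisfies $f^{(\mathbf{p})} \equiv 0$ for every $\mathbf{p}$ with $\max_k p_k = 2$, so by Proposition~\ref{prop:alt-characterization-smooth} combined with the measure representation \eqref{intermodds}, $f^*$ corresponds to \emph{zero} interaction measures on the interior of each $R_{\mathbf{j}}$. Hence for any $f \in \tcds$ with $\|f - f^*\|_n \le t$, the restriction of $f - f^*$ to the design points in $R_{\mathbf{j}}$ lies in a piecewise totally concave class whose measure mass is localized in size by $t$. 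Applying (inside each rectangle) the same bracketing-entropy bound of the form $\log N(\epsilon,\, \tcds \cap \{V(\cdot) \le V\},\, \|\cdot\|_n) \le C_d\, (V/\epsilon)^{1/2}(\log n)^{2s-1}$ that drives the proof of Theorem~\ref{thm:risk-bound-fixed}, and then summing across the $m$ pieces via a union bound (which costs an additional $(\log n)^{5d/4}$ factor from $L^\infty$-covering of the partition), one obtains a local entropy that grows essentially only polylogarithmically. Chaining this estimate through Dudley's integral yields the desired fixed point $t_*^2 \le C_d\, m(f^*)\, \sigma^2 (\log n)^{(5d+6s-3)/4}/n$.

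The main obstacle is the coupling between rectangles: the measures $\nu_S$ in \eqref{intermodds} are global Borel measures on $[0,1)^{|S|}\setminus\{\zerovec\}$, so the restriction of a function $f \in \tcds$ to a single rectangle $R_{\mathbf{j}}$ is not obviously totally concave on $R_{\mathbf{j}}$ with controlled complexity. To decouple the entropy, one invokes Proposition~\ref{prop:tc-equiv} applied rectangle-by-rectangle, noting that the contribution of measure mass placed at knots outside $R_{\mathbf{j}}$ produces, inside $R_{\mathbf{j}}$, only terms that are linear in each of the relevant coordinates and can therefore be absorbed into the multi-affine parametric part \eqref{piecewise-multi-affine}. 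A secondary obstacle is the $s = 1$ case, where the purely additive structure forces a different Dudley endpoint and produces the stated $\max(d+2, 5d/4)$ exponent. Modulo these technicalities---essentially a careful rectangle-wise bookkeeping of the representation \eqref{intermodds}---the risk bound follows by combining the refined local entropy estimate with the convex-LSE framework and converting the resulting high-probability statement into the stated expectation bound via standard tail integration.
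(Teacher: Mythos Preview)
Your high-level strategy---localize the empirical process, split over the rectangles of the piecewise multi-affine partition, and observe that $f-f^*$ restricted to each rectangle is again totally concave---is sound and matches the paper's geometric idea. However, the paper executes this via a different and cleaner technical route: rather than bounding the localized Gaussian width $G(t)$ directly through Chatterjee's framework, it invokes Bellec's sharp oracle inequality (Corollary~2.2 of \cite{bellec2018sharp}), which bounds the risk by $\inf_{\theta\in T}\{\|\theta-\theta^*\|^2 + \sigma^2\delta(\mathcal{T}_T(\theta))\}$ where $\delta$ is the statistical dimension of the tangent cone. The key step is then to show that the tangent cone at $\theta_{f^*}$ embeds into a \emph{product} of full totally-concave cones, one per rectangle; the statistical dimension of each factor is exactly the risk of the LSE at $f^*=0$ on that rectangle, which is already controlled by Theorem~\ref{thm:risk-bound-fixed} with $V=0$. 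This avoids re-doing any entropy or peeling calculation and yields the oracle inequality (Theorem~\ref{thm:sharp-oracle}) with Theorem~\ref{thm:adaptive} as the special case $f=f^*$.

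Your sketch has one genuine imprecision that would trip up a direct implementation: the assertion that on each rectangle the ``measure mass is localized in size by $t$.'' This is not true as stated---$V(f-f^*)$ on a rectangle is \emph{not} bounded by the local $L^2$ norm $t$. What \emph{is} true (and what the proof of Theorem~\ref{thm:risk-bound-fixed} establishes through its dyadic peeling) is that the localized Gaussian width of the unconstrained TC cone at $f^*=0$ is polylogarithmic in $n$; but obtaining this requires the full two-scale decomposition of that proof, not a single application of a $V$-restricted entropy bound. If you pursue your route, you must invoke the entire machinery of Theorem~\ref{thm:risk-bound-fixed} (with $M=0$) inside each rectangle, not just the entropy estimate you quote. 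The paper sidesteps this by recognizing that the required quantity is literally the risk at $f^*=0$, which has already been computed.
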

The next result generalizes Theorem \ref{thm:adaptive} to cases
where $f^*$ need not be rectangular piecewise multi-affine but may be
well-approximated by that class. It establishes a sharp oracle
inequality (in the sense of \cite{bellec2018sharp}) and applies even
when $f^*$ is not totally concave. 



\begin{theorem}\label{thm:sharp-oracle}
  Assume \eqref{equally-spaced-lattice} and 
  $\xi_i \overset{\text{i.i.d.}}{\sim} N(0, \sigma^2)$. For every
  function $f^*: [0, 1]^d \rightarrow \R$,
  \begin{equation*}
      R_F(\hat{f}_{\text{TC}}^{d, s}, f^*) 
      \le \inf_{f \in \tcds \cap \mathcal{R}^{d, s}} \Big\{\|f - f^*\|_n^2 
      + C_d \cdot m(f) \cdot \frac{\sigma^2}{n} (\log n)^{(5d + 6s - 3)/4}
      \Big\}
      \ \text{ if } s \ge 2, \text{ and }
  \end{equation*}
  \begin{equation*}
      R_F(\hat{f}_{\text{TC}}^{d, 1}, f^*) 
      \le \inf_{f \in \F^{d, 1}_{\text{TC}} \cap \mathcal{R}^{d, 1}} 
      \Big\{\|f - f^*\|_n^2 + C_d \cdot m(f) \cdot \frac{\sigma^2}{n} 
      (\log n)^{\max(d + 2, 5d/4)} \Big\}
      \ \text{ if } s = 1. 
  \end{equation*}
\end{theorem}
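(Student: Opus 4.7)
The plan is to derive the sharp oracle inequality from the Pythagorean-type projection inequality for least squares over a convex class, combined with the local Gaussian complexity estimates that already underlie Theorem \ref{thm:adaptive}. The class $\tcds$ is convex --- a convex combination of functions of the form \eqref{intermodds} is again of that form with averaged coefficients and measures --- so the standard LSE projection inequality gives, deterministically for every $f \in \tcds$,
\begin{equation*}
\|\hat{f}_{\text{TC}}^{d,s} - f^*\|_n^2 + \|\hat{f}_{\text{TC}}^{d,s} - f\|_n^2 \le \|f - f^*\|_n^2 + \frac{2}{n}\sum_{i=1}^n \xi_i \bigl(\hat{f}_{\text{TC}}^{d,s} - f\bigr)(\mathbf{x}^{(i)}).
\end{equation*}
Taking expectation and specializing to $f \in \tcds \cap \mathcal{R}^{d,s}$, the theorem reduces to showing that the expected localized excess
\begin{equation*}
\E\Big[\frac{2}{n}\sum_{i=1}^n \xi_i \bigl(\hat{f}_{\text{TC}}^{d,s} - f\bigr)(\mathbf{x}^{(i)}) - \|\hat{f}_{\text{TC}}^{d,s} - f\|_n^2\Big]
\end{equation*}
is at most $C_d \cdot m(f) \cdot \sigma^2 (\log n)^{(5d+6s-3)/4}/n$ when $s \ge 2$ (with the modified exponent for $s = 1$). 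Critically, the projection inequality does not require $f^* \in \tcds$, which is what allows the oracle inequality to cover arbitrary target functions.

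Next I would control this excess by a standard peeling / fixed-point argument: the excess is bounded up to an absolute constant by $r_n^2(f)$, where $r_n(f)$ is the smallest $r > 0$ such that
\begin{equation*}
\E\sup_{g \in \tcds,\,\|g - f\|_n \le r}\frac{1}{n}\sum_{i=1}^n \xi_i (g - f)(\mathbf{x}^{(i)}) \le \frac{r^2}{4}.
\end{equation*}
Dudley's chaining then converts this fixed-point condition into a bound on the local empirical $L^2$ bracketing entropy of $\tcds$ around $f$. The rectangular piecewise multi-affine structure of $f$ shrinks this local entropy dramatically: using Lemma \ref{lem:complexity-upper-bound} together with the smoothness characterization in Proposition \ref{prop:alt-characterization-smooth}, $f$ admits a totally concave representation with $V(f) \le C_d m(f)$, and for $g$ in the local ball the differences $g - f$ inherit a complexity bound of the same order. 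Feeding this into the global bracketing entropy bound for $\{g \in \tcds : V(g) \le V\}$ established in the proof of Theorem \ref{thm:risk-bound-fixed}, the local Gaussian width scales like $\sqrt{m(f)/n} \cdot r \cdot (\log n)^{(5d+6s-3)/8}$ up to constants, and solving the fixed point gives $r_n^2(f) \asymp C_d m(f) \sigma^2 (\log n)^{(5d+6s-3)/4}/n$, which is exactly the advertised rate.

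The main obstacle is controlling the local complexity around an arbitrary $f \in \tcds \cap \mathcal{R}^{d,s}$ by a quantity proportional to $m(f)$ rather than to $V(f)$, and doing so uniformly so that one may freely infimize over $f$. This is precisely the mechanism driving Theorem \ref{thm:adaptive}, except that here one localizes around an oracle $f$ instead of assuming $f = f^*$; once that rectangle-by-rectangle decomposition is available in the oracle form, the projection inequality cleanly absorbs the gap $\|f - f^*\|_n^2$. The case $s = 1$ is handled identically but with the sharper additive-concave entropy bound replacing the interacting one, which produces the different logarithmic exponent $\max(d+2, 5d/4)$. Everything else --- the projection inequality, the peeling / fixed-point argument, and Dudley chaining --- is by now standard machinery for sharp oracle inequalities in shape-constrained regression (cf.\ \cite{bellec2018sharp}).
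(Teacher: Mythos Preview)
Your high-level framework---projection inequality over a convex class followed by a local Gaussian complexity bound at the oracle point---is exactly what the paper does, by invoking Bellec's statistical-dimension oracle inequality \cite{bellec2018sharp}. The gap is in how you propose to bound that local complexity.

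Your steps 4--6 claim that $V(f) \le C_d\, m(f)$ via Lemma~\ref{lem:complexity-upper-bound} and Proposition~\ref{prop:alt-characterization-smooth}, and that the global entropy bound for $\{g: V(g)\le V\}$ then gives a local Gaussian width of order $\sqrt{m(f)/n}\cdot r\cdot(\log n)^{(5d+6s-3)/8}$. Both assertions fail. First, $V(f)$ for a piecewise multi-affine $f$ is the total mass of the knot measures and can be arbitrarily large for fixed $m(f)$ (already in $d=1$: a sawtooth with $m$ pieces and slopes $\pm K$ has $V\asymp mK$). Neither cited result bounds $V(f)$ by $m(f)$. Second, even granting a $V$-bound, the entropy estimate from the proof of Theorem~\ref{thm:risk-bound-fixed} scales like $\sigma V^{1/4} t^{3/4} n^{1/8}$, whose fixed point is $n^{-4/5}$, not the parametric $m(f)/n$ you need. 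Likewise, ``$g-f$ inherits a complexity bound of the same order'' is unjustified: nothing constrains $V(g)$ for $g$ in the local $\|\cdot\|_n$-ball.

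The mechanism the paper actually uses---which you allude to only at the end without carrying it out---is a tangent-cone decomposition over the $m(f)$ rectangles. Because $f$ is multi-affine on each rectangle, its second-order divided differences vanish there, so for \emph{every} $g\in\tcds$ the restriction of $g-f$ to a rectangle (after the obvious affine rescaling to $[0,1]^d$) is again in $\tcds$. Hence the tangent cone of $T$ at $\theta_f$ embeds into a product of $m(f)$ copies of the full TC cone on smaller grids, and the statistical dimension splits as a sum. Each summand is the risk of $\hat f_{\text{TC}}^{d,s}$ at the zero function ($V=0$) on that sub-grid, which by Theorem~\ref{thm:risk-bound-fixed} is $C_d\,\sigma^2(\log n)^{(5d+6s-3)/4}/n$ for $s\ge 2$ (and the analogous exponent for $s=1$). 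Summing over $m(f)$ rectangles gives the result. The parametric rate therefore comes not from any $V$-bound but from the structural fact that subtracting a multi-affine function preserves total concavity locally, so the effective complexity on each piece is that of the zero function.
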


\subsection{Random Design}\label{rdtheory}

Here, our design assumption and loss function are (below $b$, $B$ are fixed
positive constants):  
\begin{equation}\label{rd_assumption}
  \mathbf{x}^{(i)} \overset{\text{i.i.d.}}{\sim} \text{ density } p_0:
  [0, 1]^d \rightarrow [b, B] \ \text{ and } \ \|\hat{f} -
  f^*\|_{p_0, 2}^2  
    := \int \big(\hat{f}(x) - f^*(x)\big)^2 p_0(x) \, dx.
\end{equation}
We also analyze the regularized estimator 
$\hat{f}_{\text{TC}, V}^{d, s}$ (defined in
\eqref{modified-estimator}). This aligns with existing literature (e.g., \cite{han2016multivariate, mazumder2019computational, 
kur2019optimality}) where random-design analyses study constrained
variants of original LSEs. 

For our first result, we assume that the errors $\xi_i$ are
independent of $\mathbf{x}^{(i)}$ and satisfy:  
\begin{equation}\label{rd_errors}
 \xi_i \text{ are i.i.d. with } \E \xi_i = 0  \ \text{ and } \ 
 \|\xi_i\|_{5, 1} := 
 \int_0^{\infty} \left(\P \{|\xi_i| > t\} \right)^{1/5} dt < \infty. 
\end{equation}
This finite $L^{5, 1}$ norm condition—slightly stronger than the finite
$L^5$ norm condition (see, e.g., \citet[Chapter
1.4]{loukas2014classical})—is much weaker than the
Gaussian error
assumption used in our fixed design results. The extra
regularization in $\hat{f}_{\text{TC}, V}^{d, s}$ enables us to work under
this weaker assumption, using tools from
\citet{han2019convergence}. The following result is our random design 
analogue of Theorem \ref{thm:risk-bound-fixed}, and it yields the same 
qualitative conclusions. 

\begin{theorem}\label{thm:risk-bound-random}
    Assume $f^* \in \tcds$ with $V(f^*) \le V$,
    \eqref{rd_assumption} with $b = 0$, and \eqref{rd_errors}. Then, 
    \begin{equation*}
        \|\hat{f}_{\text{TC}, V}^{d, s} - f^*\|_{p_0, 2}^2 
        = O_p \big(n^{-4/5} (\log n)^{8(s - 1)/5}\big),
    \end{equation*}
    where the constants underlying $O_p(\cdot)$ depend on $d, V$, 
    $B$, and the moments of $\xi_i$.
\end{theorem}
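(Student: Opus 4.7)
The plan is to analyze $\hat{f}_{\text{TC}, V}^{d, s}$ via the random-design empirical process framework of \citet{han2019convergence}, which supports the weaker $L^{5,1}$ moment condition \eqref{rd_errors} in place of sub-Gaussian tails. Let $\mathcal{F}_V := \{f \in \tcds : V(f) \le V\}$. Since $f^* \in \mathcal{F}_V$ by assumption, the basic inequality for the constrained LSE yields $\|\hat{f}_{\text{TC}, V}^{d, s} - f^*\|_n^2 \le 2 \langle \xi, \hat{f}_{\text{TC}, V}^{d, s} - f^*\rangle_n$. The analysis then reduces to controlling a multiplier empirical process indexed by $\mathcal{F}_V - f^*$ and transferring the resulting empirical-$L^2$ rate to the population $L^2(p_0)$ norm.

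First I would establish sup-norm control of $\mathcal{F}_V$ on $[0, 1]^d$. The nonparametric part $\sum_S \int \prod_{j \in S}(x_j - t_j)_+ \, d\nu_S$ in \eqref{intermodds} is bounded by $V$ because each hinge product lies in $[0, 1]$ and the total mass of the measures is at most $V$. The polynomial part $\beta_0 + \sum_S \beta_S \prod_{j \in S} x_j$ is finite-dimensional with unconstrained coefficients, but since $f^* \in \mathcal{F}_V$ and the data are noisy realizations of $f^*$, the coefficients of $\hat{f}_{\text{TC}, V}^{d, s}$ can be implicitly controlled via a truncation argument (retain the LSE on the event it is sup-norm bounded; the complement has negligible probability by standard tail bounds). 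This effectively reduces the analysis to a uniformly bounded sub-class of $\mathcal{F}_V$.

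The main technical step is a bracketing entropy bound of the form
\begin{equation*}
  \log N_{[]}\!\big(\epsilon, \mathcal{F}_V, L^2(p_0)\big) \lesssim_{V, d, B} \epsilon^{-1/2} \big(\log(1/\epsilon)\big)^{2(s-1)}.
\end{equation*}
The class decomposes into a finite-dimensional polynomial piece (with parametric-rate entropy) plus, for each nonempty $S$ with $|S| \le s$, a piece lying in $V$ times the closed convex hull of the hinge products $\{\prod_{j \in S}(x_j - t_j)_+ : t \in [0, 1)^{|S|}\}$. I would derive the $\epsilon^{-1/2}$ factor from the TV/concavity structure of each piece (analogous to the $\epsilon^{-1/2}$ entropy of univariate concave functions) and pick up the $(\log(1/\epsilon))^{2(s-1)}$ factor by iterating an entropy bound over interaction orders $|S| = 1, \ldots, s$, exploiting the product structure to decouple contributions of individual coordinates; for $s = 1$ the log factor vanishes, consistent with the theorem recovering the classical univariate concave rate $n^{-4/5}$. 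Feeding this entropy bound into Dudley's chaining in its multiplier form from \citet{han2019convergence}, combined with a standard peeling argument over shells $\{\|f - f^*\|_n \le r\}$, yields the empirical $L^2$ rate $n^{-2/5} (\log n)^{4(s-1)/5}$; squaring and invoking Han's ratio-type inequality to pass from $\|\cdot\|_n$ to $\|\cdot\|_{p_0, 2}$ gives the claimed rate.

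The hardest step will be the metric entropy bound: pinning down the exponent $2(s-1)$ of the log factor requires a careful induction over interaction orders and tight use of the product structure $\prod_{j \in S}(x_j - t_j)_+$ to avoid losing powers of $\log(1/\epsilon)$ in naive convex hull estimates; arguments from \citet{ki2024mars} for the related smoothness-constrained MARS-type class should provide a template. A secondary subtlety is the transfer from empirical to population $L^2$ under only the upper bound $p_0 \le B$: the theorem permits $b = 0$, so $p_0$ may vanish on the boundary and no direct comparison $\|\cdot\|_n^2 \gtrsim b \|\cdot\|_{p_0, 2}^2$ is available, forcing reliance on the uniform boundedness of $\mathcal{F}_V$ together with Han's ratio-type inequality.
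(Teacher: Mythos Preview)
Your plan is essentially the same as the paper's: both reduce to the random-design framework of \citet{han2019convergence} (packaged as Theorem~11.12 in \citet{ki2024mars}), after first restricting to a sup-norm-bounded subclass via $\|\hat{f}_{\text{TC},V}^{d,s}-f^*\|_\infty = O_p(1)$, and both rely on the entropy/empirical-process machinery from \citet{ki2024mars} to obtain the rate-determining $t_n$.

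The one simplification you miss is that the paper does not re-derive any entropy bound for $\mathcal{F}_V$. It simply observes the class containment $\{f\in\tcds: V(f)\le V,\ \|f-f^*\|_\infty\le M\}\subseteq \{f\in\infmars: V(f)\le V,\ \|f-f^*\|_\infty\le M\}$, since a nonnegative measure is in particular a signed measure with the same total variation. This lets the paper invoke the empirical-process bounds already proved for the MARS class in \citet[Theorem~3.5]{ki2024mars} verbatim, bypassing your proposed induction over interaction orders entirely; the explicit $t_n$ and the exponent $8(s-1)/5$ then come for free. Your route would work but duplicates effort, and the convex-hull entropy argument you sketch would need care to land exactly on $(\log(1/\epsilon))^{2(s-1)}$ rather than a worse power.
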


Our next result shows adaptivity of $\hat{f}_{\text{TC}, V}^{d, s}$
when $f^* \in \tcds \cap \mathcal{R}^{d, s}$. For this, we assume 
that $\xi_i$ are sub-exponential errors, independent of 
$\mathbf{x}^{(i)}$ and satisfying the following:
\begin{equation}\label{sub-exponential}
 \xi_i \text{ are i.i.d. with } \E \xi_i = 0 \ \text{ and } \ 
 2K^2 \cdot \E\bigg[ \exp\Big(\frac{|\xi_i|}{K}\Big) - 1 
 - \frac{|\xi_i|}{K} \bigg] \le \sigma^2 
 \ \text{ for some } K, \sigma > 0. 
\end{equation}

\begin{theorem}\label{thm:adaptive-random}
  Assume $f^* \in \tcds \cap \mathcal{R}^{d, s}$ with $V(f^*) \le V$,
  \eqref{rd_assumption}, and \eqref{sub-exponential}. Then,
  \begin{equation*}
    \|\hat{f}_{\text{TC}, V}^{d, s} - f^*\|_{p_0, 2}^2 
    = O_p \big(n^{-1/2} (\log n)^{\max(5d/8 + (s - 1)/e, d/2 + 1/2e)}\big),
  \end{equation*}
  where the constants underlying $O_p(\cdot)$ depend on 
  $d, V, b, B, K, \sigma$, and $m(f^*)$.
\end{theorem}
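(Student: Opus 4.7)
The plan is to apply the random-design chaining framework of \citet{han2019convergence} to the constrained LSE $\hat{f}_{\text{TC}, V}^{d, s}$, using a localized bracketing entropy bound that exploits the low complexity of rectangular piecewise multi-affine $f^*$.

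The first step is the basic inequality. Since $f^* \in \tcds$ with $V(f^*) \le V$ is feasible in the program defining $\hat{f}_{\text{TC}, V}^{d, s}$, one obtains
\[
\|\hat{f}_{\text{TC}, V}^{d, s} - f^*\|_n^2
\le \frac{2}{n}\sum_{i=1}^{n}\xi_i\big(\hat{f}_{\text{TC}, V}^{d, s}(\mathbf{x}^{(i)}) - f^*(\mathbf{x}^{(i)})\big).
\]
Under \eqref{rd_assumption} with $p_0 \ge b > 0$, a routine localization transfers the loss from $\|\cdot\|_n$ to $\|\cdot\|_{p_0, 2}$. The rate is then pinned down by the fixed point of
\[
r^2 \gtrsim \E\bigg[\sup_{f \in \mathcal{G}(r)} \Big| \tfrac{1}{n}\sum_{i=1}^{n} \xi_i\big(f(\mathbf{x}^{(i)}) - f^*(\mathbf{x}^{(i)})\big)\Big|\bigg],
\]
where $\mathcal{G}(r) := \{f \in \tcds : V(f) \le V,\ \|f - f^*\|_{p_0, 2} \le r\}$. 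Under \eqref{sub-exponential}, the expected supremum is bounded by a Dudley-type integral involving bracketing entropy in an Orlicz $\psi_1$ norm; the $1/e$-type factors in the stated exponent arise from the standard optimization of $x \mapsto x \log(1/x)$ inside this chaining.

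The heart of the argument is a local bracketing entropy bound that exploits the piecewise structure of $f^*$. Let $R_1, \dots, R_{m(f^*)}$ be the axis-aligned rectangles on which $f^*$ is multi-affine. Using Proposition \ref{prop:existence-computation} to represent any $f \in \mathcal{G}(r)$ via nonnegative discrete measures $\nu_S$ of total mass at most $V$, I expect to show that any mass of $\nu_S$ placed in the interior of some $R_j$ produces a kink in $f$ that contributes non-negligibly to $\|f - f^*\|_{p_0, 2}$ (since $f^*$ has no kinks inside any $R_j$ and $p_0 \ge b$). This forces the representing measures of small-$r$ functions to be effectively concentrated on a lattice of cardinality $O(m(f^*)(\log n)^{c})$ for some $c = c(d, s)$, yielding an entropy estimate of the form
\[
H_{[\,]}\big(\epsilon, \mathcal{G}(r), \|\cdot\|_{p_0, 2}\big)
\lesssim m(f^*) \cdot (\log n)^{c(d, s)} \cdot \log(r/\epsilon).
\]
Substituting this into the rate equation and tracking the two regimes that arise in the Orlicz Dudley integral (one from the entropy integral itself, the other from a tail term involving the $\psi_1$-diameter of $\mathcal{G}(r)$) produces the stated $\max(5d/8 + (s - 1)/e, d/2 + 1/2e)$ exponent on $\log n$.

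The main obstacle will be making this localized entropy bound rigorous: quantitatively bounding the $L^2(p_0)$ contribution of mass placed by $\nu_S$ away from the partition boundaries of $f^*$, while respecting the global total-concavity constraint, so that per-rectangle bookkeeping cannot be done fully independently across the $R_j$. Once this is handled, the remaining ingredients—peeling across dyadic shells in $r$, the empirical-to-population norm conversion under $p_0 \ge b > 0$, and the Orlicz chaining tools of \cite{han2019convergence} under \eqref{sub-exponential}—follow the template already used for Theorem \ref{thm:risk-bound-random} and the fixed-design adaptivity proof of Theorem \ref{thm:adaptive}.
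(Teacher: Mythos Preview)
Your central entropy claim is incorrect, and this is a genuine gap. You assert that if $f \in \tcds$ with $V(f)\le V$ and $\|f-f^*\|_{p_0,2}\le r$, then the representing measures $\nu_S$ must be ``effectively concentrated on a lattice of cardinality $O(m(f^*)(\log n)^c)$,'' yielding a parametric-type entropy $H_{[\,]}\lesssim m(f^*)(\log n)^c\log(r/\epsilon)$. This is false: even when $f^*=0$ and $d=1$, one can take any continuous probability measure $\nu$ on $(0,1)$ and set $f(x)=-\epsilon\int(x-t)_+\,d\nu(t)+\epsilon x$; this has $V(f)=\epsilon\le V$, $\|f\|_2\asymp\epsilon$ arbitrarily small, yet $\nu_S$ is spread continuously. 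Proposition~\ref{prop:existence-computation} does not help here---it says the \emph{LSE} can be taken to have discrete support, not that arbitrary feasible functions do. A parametric entropy bound would also deliver a rate faster than the one stated in the theorem, which is a red flag.

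The paper's mechanism is entirely different and hinges on a self-bounding property of total concavity that you do not invoke: Lemma~\ref{lem:bounds-finite-derivatives-cont} shows that for $f\in\tcds$, the constraint $\|f\|_2\le t$ \emph{alone} (no $V$ constraint needed) bounds the divided differences of $f$ on each dyadic sub-rectangle $Q^{(\mathbf{r})}$ by $C_d\,t\cdot 2^{3r_+/2}$, so that the rescaled restriction of $f$ to $Q^{(\mathbf{r})}$ lies in $D(C_d t\,2^{r_+/2},\,\cdot)$. Combined with the polynomial-type entropy of $D(V,t)$ from Lemma~\ref{lem:fvm-bracketing-entropy}, this gives $\log N_{[\,]}(\epsilon,B_M(t),\|\cdot\|_2)\lesssim (t/\epsilon)^{1/2}(\log)^{\ldots}$, whose bracketing integral is $J_{[\,]}\lesssim t(\log(1/t))^{a(d,s)}$. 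The piecewise multi-affine structure of $f^*$ enters only to reduce the general case to $f^*=0$ on each of the $m(f^*)$ rectangles (since $f-f^*$ is again totally concave there after affine rescaling); it is not used to discretize the measures. The $1/e$ factors in the exponent come from bounding $\log\log(1/\epsilon)\le(\log(1/\epsilon))^{1/e}$ in the dyadic aggregation, not from any $\psi_1$-chaining optimization as you suggest.
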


In all our results (fixed and random design), the covariate domain is
$[0, 1]^d$. By scaling, the theorems can be extended
to any axis-aligned rectangle. However, extensions to more general shapes
(e.g., polytopes) do not follow from our proofs, which rely
on rectangular geometry—decompositions into subrectangles
and entropy bounds for total concavity classes on each subrectangle. 
It is unclear how to adapt our arguments beyond rectangles.

We also note that total concavity, like axial and additive concavity,
is inherently axis-aligned, unlike general concavity. While
results for least squares under general concavity exist on
non-rectangular domains (see \cite{han2016multivariate,
  kur2019optimality, kur2024convex}), establishing such results for
the axis-aligned notion of total concavity appears challenging. 

\section{Relations to \citet{ki2024mars} and 
\citet{fang2021multivariate}}\label{comparison-ki}

This paper builds on work in \citet{ki2024mars} and
\citet{fang2021multivariate}. Our function class $\tcds$ consists of
functions \eqref{intermodds} involving finite measures $\nu_S$. If 
the nonnegativity assumption on $\nu_S$ is removed, thereby allowing
them to be finite 
\textit{signed} measures instead, and a regularization constraint involving
the sum of the total variations of the signed measures is imposed,
then the method MARS-LASSO of \citet{ki2024mars} is
obtained. MARS-LASSO requires tuning and does not preserve any shape
constraints, while totally concave regression is tuning-free and
shape-constrained. Also, the MARS-LASSO estimator is an infinite-dimensional
LASSO estimator, while totally concave regression yields an
infinite-dimensional nonnegative least squares (NNLS) estimator. While
there has been prior work on general infinite-dimensional LASSO
estimators (\cite{rosset2007L1, bredies2013inverse}), there appears to
be no general treatment for 
infinite-dimensional NNLS estimators. 

Despite the differences outlined above, some of our results closely
resemble those established for the MARS-LASSO estimator in
\cite{ki2024mars}. Our existence and computation result (Proposition
\ref{prop:existence-computation}) parallels those in \cite[Section
2]{ki2024mars}, as both rely on the same discretization idea. While
the convergence rates match, proof techniques differ substantially due
to a key distinction: in totally concave regression, the sizes of the
measures are unconstrained, whereas MARS-LASSO imposes a
regularization constraint restricting the sizes of its measures. This
mirrors the relationship between LASSO and NNLS in finite-dimensional
regression, where similar results emerge from different assumptions
and proofs. 
Our regularized totally concave regression estimator \eqref{modified-estimator} from Section \ref{overfitting} incorporates both total concavity and the smoothness constraint of MARS-LASSO. 

Entirely monotonic regression introduced in
\cite{fang2021multivariate} is a monotonicity analogue of totally
concave regression. While total concavity requires partial derivatives of 
max order two to be nonpositive, entire monotonicity constrains
partial derivatives of max order one to be nonnegative. Entirely monotonic
regression fits discontinuous rectangular piecewise constant functions
in contrast to the continuous functions \eqref{intro-fittedfunctions}
produced by totally concave regression. Rates of convergence for
entirely monotonic regression are of order $n^{-2/3}$ (up to
logarithmic factors), whereas totally concave regression achieves the
rates of order $n^{-4/5}$ (up to logarithmic factors). Thus, totally
concave regression leads to more accurate estimators for smooth
functions (on the other hand, if the true regression function is
discontinuous, then totally concave regression may lead to
inconsistent estimates).

\section{Variants for Handling Large $d$}\label{extensions}

The computational complexity of $\hat{f}^{d, s}_{\text{TC}}$ increases
with $d$. When $d$ is large, it thus makes sense to assume total
concavity only on a subset of the covariates, while imposing the
stronger (and simpler) assumption of 
linearity on the remaining covariates. This leads to the model:
\begin{equation}\label{vari1}
  f^*(x_1, \dots, x_d) = f_1^*(x_1, \dots, x_p) + \beta_{p+1} x_{p+1}
  + \dots + \beta_d x_d
\end{equation}
where $f_1^* \in \mathcal{F}^{p, s}_{\text{TC}}$ and $\beta_{p+1},
\dots, \beta_d \in \R$.  We denote this class by $\F^{p,
  s}_{\text{TC}}(x_1, \dots, x_p) \oplus \mathcal{L}(x_{p + 1}, \dots,
x_d)$ and the LSE by $\hat{f}^{d,
  p, s}_{\text{TC-L}}$. 
Using \eqref{intermodds}, we rewrite \eqref{vari1} as
\begin{equation*}
    \beta_0 + \sum_{\substack{S \subseteq [p] \\ 1 \le |S| \le s}} \beta_S \prod_{j \in S} x_j - \sum_{\substack{S \subseteq [p] \\ 1 \le |S| \le s}} \int_{[0, 1)^{|S|} \setminus \{\zerovec\}} \prod_{j \in S} (x_j - t_j)_+ \, d\nu_S(t_j, j \in S) + \sum_{j = p + 1}^{d} \beta_j x_j.
\end{equation*}
We further extend this model using interactions between
covariates $x_1,\dots,x_p$ and a subset of the remaining covariates,
$x_{p+1},\dots,x_q$ (excluding $x_{q+1},\dots,x_d$). These interactions
 take the form $\prod_{j \in S} x_j \cdot \prod_{k \in T}
x_k$ and $\prod_{j \in S} (x_j - t_j)_+ \cdot \prod_{k \in T} x_k$,
where $S \subseteq [p]$ and $T \subseteq [q] \setminus [p]$, with $|S
\cup T| \leq s$ to maintain the interaction order limit. This results
in functions  
\begin{align*}
    \beta_0 &+ \sum_{\substack{S \subseteq [p], T \subseteq [q] \setminus [p] \\ 1 \le |S \cup T| \le s}} \beta_{S, T} \bigg[\prod_{j \in S} x_j \cdot \prod_{k \in T} x_k \bigg] \\ 
    &- \sum_{\substack{S \subseteq [p], T \subseteq [q] \setminus [p] \\ |S| \ge 1, |S \cup T| \le s}} \int_{[0, 1)^{|S|} \setminus \{\zerovec\}} \bigg[\prod_{j \in S} (x_j - t_j)_+ \cdot \prod_{k \in T} x_k \bigg] \, d\nu_{S, T}(t_j, j \in S) + \sum_{j = q + 1}^{d} \beta_j x_j.
\end{align*}
We denote by the collection of these functions $\F^{q, s}_{\text{TC}}(x_1,
\dots, x_p; x_{p + 1}, \dots, x_q) \oplus \mathcal{L}(x_{q + 1},
\dots, x_d)$ and the corresponding LSE
$\hat{f}^{d, q, p, s}_{\text{TC-L-I}}$. Similarly to
\eqref{complexity-measure}, these functions can also be regularized by
constraining the sizes of measures: 
\begin{align*}
    V(f) := \sum_{\substack{S \subseteq [p], T \subseteq [q] \setminus [p] \\ |S| \ge 1, |S \cup T| \le s}} \nu_{S, T} \big([0, 1)^{|S|} \setminus \{\zerovec\}\big). 
\end{align*}
The regularized variant $\hat{f}^{d, q, p, s}_{\text{TC-L-I}, V}$ of
$\hat{f}^{d, q, p, s}_{\text{TC-L-I}}$ is then defined as the
LSE over the class of functions $f \in \F^{q,
  s}_{\text{TC}}(x_1, \dots, x_p; x_{p + 1}, \dots, x_q) \oplus
\mathcal{L}(x_{q + 1}, \dots, x_d)$ that satisfy the additional
constraint $V(f) \le V$.

These function classes are substantially
smaller than $\tcds$ and thus provide stronger regularization, making them
better suited for higher-dimensional regression problems.

\section{Real Data Examples}\label{realdata}
We apply our methods to three real datasets and compare them to existing approaches. 
Implementations of our methods are available in the \textsf{R} package
\textsf{regmdc} (\url{https://github.com/DohyeongKi/regmdc}), which
also includes related methods from \citet{fang2021multivariate} and \citet{ki2024mars}. 
We first normalize each covariate to $[0, 1]$, and then,
after computing our estimators, we invert the transformation and
present results on the original scale.  

In regression, one typically begins with linear models and then adds non-linear terms in each covariate.
Among various non-linear terms, quadratic terms are one of the simplest and common choices.
A function that is quadratic in a variable $x_i$ is effectively convex or concave in $x_i$.
Additive models with convex/concave components thus represent
nonparametric generalizations of quadratic regression. On the other hand, adding simple
product-form interaction terms to a quadratic model lets us move beyond the
additive framework. 

Totally concave regression (and its variants)
provides a natural, nonparametric, shape-constrained alternative that
encompasses these additive models and parametric models with interactions, yet remains
simple and more interpretable 
than black-box approaches (e.g., random forests, deep neural
networks). 
Also, in contrast to black-box approaches, which often require extensive tuning, 
totally concave regression is fully shape-constrained and tuning-free. Our
real data examples show that totally concave regression performs comparably to
parametric models and improves further when regularization is
introduced.


\subsection{Earnings Data}\label{earnings}
Consider the classical regression problem (with $d = 2$) from
labour economics for which $y$ is the log of weekly earnings, $x_1$ is
years of education, and $x_2$ is years of experience, for adults in
the workforce. Labor economics literature suggests that the regression
function should be axially concave (see, e.g., \citet[Section
III]{lemieux2006mincer}). On the other hand, classical regression
models for this problem (e.g., \citet{Mincerbook, murphy1992structure}) 
are additive and thus inevitably miss crucial
interaction effects (see, e.g., \citet[Section
V]{lemieux2006mincer}). Totally concave regression
presents a natural approach here as it is capable of capturing interactions
while staying in the broad framework of axial concavity.   

We use the dataset \textsf{ex1029} from the \textsf{R} library 
\textsf{Sleuth3}, which contains data on full-time male workers in 1987. 
We pre-process the data (following \citet{ulrick2007measuring}) by restricting 
to non-black workers with $x_1 \geq 8$ and $x_2 \in (0, 40)$ and work with
$n = 20,967$ observations. We fit the following models and compare their 
performance: 
\begin{enumerate}
    \item \cite{Mincerbook}: $\E[y|\mathbf{x}] = \beta_0 + \beta_1 x_1 + \beta_2 x_2 + \beta_3 x_2^2$, where $\mathbf{x} = (x_1, x_2)$. 
    \item \cite{murphy1992structure} (MW): $\E[y|\mathbf{x}] = \beta_0 + \beta_1 x_1 + \beta_2 x_2 + \beta_3 x_2^2 + 
    \beta_4 x_2^3 + \beta_5 x_2^4$.
    \item Additive Concave Regression: $\E[y|\mathbf{x}] = f_1(x_1) + f_2(x_2)$ where $f_1$ and $f_2$ are concave. The LSE over all additive concave functions is $\hat{f}_{\text{TC}}^{d, 1}$. 
    \item Axially Concave Regression. We fit the LSE over the class of axially concave functions
      (implementation details are in Appendix \ref{axcon-implementation}).
    \item Totally Concave Regression. We compute the estimator $\hat{f}_{\text{TC}}^{d, s}$ for $d = s = 2$. 
\end{enumerate}

Model performance can vary significantly with sample size, making it
essential to assess prediction accuracy across different training data
sizes. For each $k = 0, \dots, 8$, we randomly select $0.9/2^k$ of the
observations for training and use the remaining $(1 - 0.9/2^k)$ for
testing. For instance, $k = 0$ corresponds to a 90-10 train-test
split, while $k = 4$ uses $0.9/16$ of the observations ($n = 1,179$)
for training and the rest for testing. We measure performance using
mean squared errors on the test set and repeat each experiment 100 times.

Plot A of Figure \ref{fig:earnings-top-and-bottom2-ratios-full}
displays the proportion of repetitions in which each method outperforms 
the other four ($x$-axis represents training data proportion); 
e.g., when the proportion of training data is $0.9/16$, the 
proportions of repetitions in which Mincer, MW, additive, axially, and
totally concave regressions perform the 
best are 0.01, 0.29, 0.06, 0.02, and 0.62. Plot B of Figure 
\ref{fig:earnings-top-and-bottom2-ratios-full} shows the proportion of 
repetitions in which each model ranks among the two
\textit{worst-performing} models. 

\begin{figure}[ht]
  \begin{center}
  \includegraphics{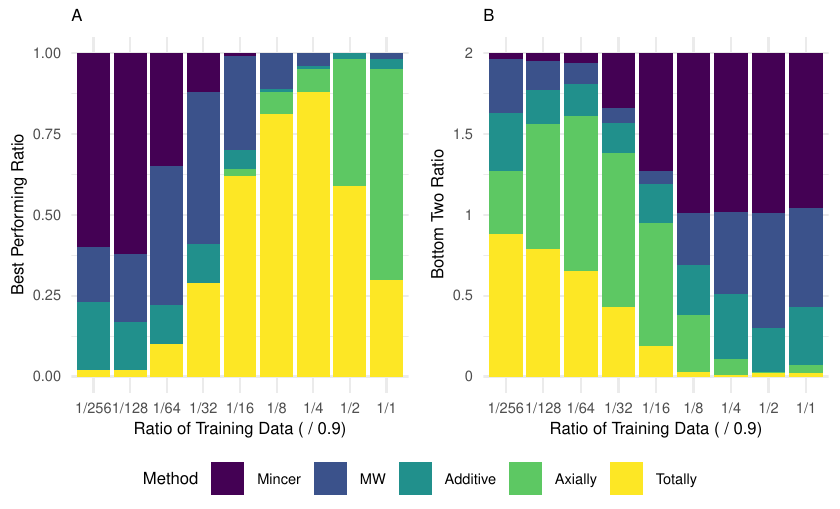}
  \end{center}
  \caption{Best performing and bottom two ratios of each method for various 
  sub-sample sizes. The $x$-axis and $y$-axis represent the proportion of 
  training data and the proportion of repetitions (out of 100) where (A) each 
  model outperforms all the other four models and (B) each model is one of 
  the two worst-performing models.}
  \label{fig:earnings-top-and-bottom2-ratios-full}
\end{figure}

From these plots, it is
clear that restrictive models like Mincer's parametric approach
perform the best with limited data, but totally concave regression
emerges as the preferred model at moderate sample sizes. It maintains
this advantage until sample sizes become very large, where axially
concave regression takes the lead. This pattern highlights totally
concave regression's balanced approach: it captures interactions while
maintaining enough structure to perform well with moderately-sized
datasets. Although the transition points may vary across datasets,
totally concave regression appears to occupy a sweet spot between
overly restrictive parametric models and the more flexible axially
concave regression. 


See Appendix \ref{additional-plots} for
additional plots for the analysis. 

The above comparison only addresses prediction accuracy without computational
considerations. Axially
concave regression becomes computationally prohibitive when $d \geq
3$. Totally concave regression's computational burden can be 
reduced by limiting interaction orders (as in $\tcds$ vs $\tcd$) or
using approximate versions (Sections \ref{excomp} and
\ref{extensions}). It remains unclear how to adapt such strategies
to axially concave regression. Due to this computational limitation,
axially concave regression will not be considered in the following
subsections.

\subsection{Housing Price Data}\label{housing-price}
We use the \textsf{hprice2} dataset from the \textsf{R} package \textsf{wooldridge}, 
which contains housing data from 506 Boston census tracts (1970 US census). 
The dataset includes median housing price (\textsf{price}), per-capita crime rate 
(\textsf{crime}), average number of rooms per dwelling (\textsf{room}), 
nitric oxides level (\textsf{nox}), weighted distance to major employment 
centers (\textsf{distance}), and student-teacher ratio (\textsf{stratio}). 
Here are some simple parametric regressions for $y =
\log(\textsf{price})$ using $x_1 = \textsf{crime}$, $x_2 = \textsf{room}$, 
$x_3 = \log(\textsf{nox})$, $x_4 = \log(\textsf{distance})$, and $x_5
= \textsf{stratio}$
as regressors: 



\begin{enumerate}
    \item Linear:
        $\E[y|\mathbf{x}] = \beta_0 + \beta_1 x_1 + \beta_2 x_2 + \beta_3 x_3 + \beta_4 x_4 + \beta_5 x_5$, where $\mathbf{x} = (x_1, \dots, x_5)$.
    \item Quadratic: $\E[y|\mathbf{x}] = \beta_0 + \beta_1 x_1 + \beta_{11} x_1^2 + \beta_2 x_2 + \beta_{22} x_2^2 + \beta_3 x_3 + \beta_4 x_4 + \beta_5 x_5$.
    \item Interaction 1 (Int 1): $\E[y|\mathbf{x}] = \beta_0 + \beta_1 x_1 + \beta_{11} x_1^2 + \beta_2 x_2 + \beta_{22} x_2^2  + \beta_{12} x_1 x_2 + \beta_3 x_3 + \beta_4 x_4 + \beta_5 x_5$.
    \item Interaction 2 (Int 2): $\E[y|\mathbf{x}] = \beta_0 + \beta_1 x_1 + \beta_{11} x_1^2 + \beta_2 x_2 + \beta_{22} x_2^2  + \beta_{12} x_1 x_2 + \beta_{112} x_1^2 x_2 + \beta_{122} x_1 x_2^2 + \beta_3 x_3 + \beta_4 x_4 + \beta_5 x_5$.
    \item Interaction 3 (Int 3): $\E[y|\mathbf{x}] = \beta_0 + \beta_1 x_1 + \beta_{11} x_1^2 + \beta_2 x_2 + \beta_{22} x_2^2  + \beta_{12} x_1 x_2 + \beta_3 x_3 + \beta_{13} x_1 x_3 + \beta_{113} x_1^2 x_3 + \beta_{23} x_2 x_3 + \beta_{223} x_2^2 x_3 + \beta_4 x_4 + \beta_5 x_5$.
    \item Interaction 4 (Int 4): $\E[y|\mathbf{x}] = \beta_0 + \beta_1
      x_1 + \beta_{11} x_1^2 + \beta_2 x_2 + \beta_{22} x_2^2  +
      \beta_{12} x_1 x_2 + \beta_{112} x_1^2 x_2 + \beta_{122} x_1
      x_2^2 + \beta_3 x_3 + \beta_{13} x_1 x_3 + \beta_{113} x_1^2 x_3
      + \beta_{23} x_2 x_3 + \beta_{223} x_2^2 x_3 + \beta_4 x_4 +
      \beta_5 x_5$.
    \end{enumerate}
We compare the above models to nonparametric approaches based on total
concavity. The quadratic model gives convex (as opposed to concave)
fits, so we work with convex variants of our function classes (these
are given by functions of the form \eqref{intermodds} where the negative sign
on the last term is replaced by the positive
sign). We indicate this by the subscript \textit{convex}.
\begin{enumerate}
    \item Additive ($\hat{f}^{5, 2, 1}_{\text{TC-L}}$):
      $\E[y|\mathbf{x}] \in \F^{2, 1}_{\text{TC, convex}}(x_1, x_2)
      \oplus \mathcal{L}(x_3, x_4, x_5)$.
    \item Our Model 1 ($\hat{f}^{5, 2, 2}_{\text{TC-L}}$) (Ours 1): $\E[y|\mathbf{x}] \in \F^{2, 2}_{\text{TC, convex}}(x_1, x_2) \oplus \mathcal{L}(x_3, x_4, x_5)$.
    \item Our Model 2 ($\hat{f}^{5, 3, 2, 2}_{\text{TC-L-I}}$) (Ours 2): $\E[y|\mathbf{x}] \in \F^{3, 2}_{\text{TC, convex}}(x_1, x_2; x_3) \oplus \mathcal{L}(x_4, x_5)$.
\end{enumerate}
We evaluated these models using 100 random training-test splits (90\% - 10\%) and ranked them by test mean squared errors (with rank 1 being the best, and rank
9 the worst). Figure \ref{fig:hprice-rank-plot-no-complexity} shows
the empirical cumulative distributions of these rankings across the
100 splits. For plots of this kind (including 
Figures \ref{fig:hprice-rank-plot-complexity} and \ref{fig:401k-rank-plot}
appearing later), an ideal procedure is one whose rank
distribution function dominates (is everywhere larger than) all the others.

\begin{figure}[ht]
\begin{center}
\includegraphics{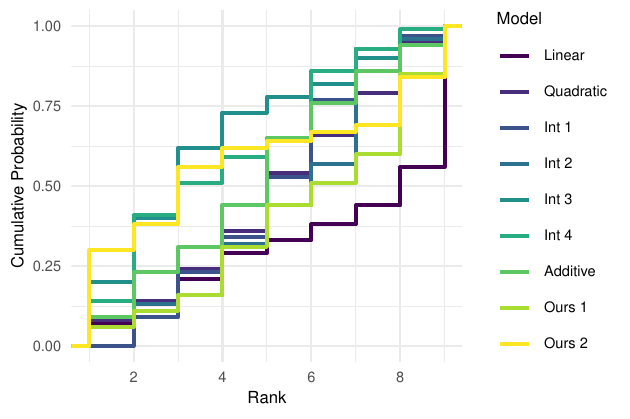}
\end{center}
\caption{Empirical cumulative distributions of model ranks across 100 splits into training and test sets. 
Ranks are determined by mean squared errors on the test sets.}
\label{fig:hprice-rank-plot-no-complexity}
\end{figure}

The model Ours 2 showed both promise and limitations—ranking first in
30\% of splits but in the bottom two for another 
31\%. By contrast, Int 3, Int 4, and Additive demonstrated better
consistency, rarely performing poorly though achieving fewer top
rankings as well. The remaining models showed no notable advantages. 


Figure \ref{fig:hprice-overfitting} illustrates why Ours 2 performed 
poorly sometimes. Comparing fitted functions from Ours 2 and Int 3 for
a split where Ours 2 ranked worst reveals severe overfitting at one corner of the covariate domain. The dramatically different scales of
$\log(\textsf{price})$ between the two panels highlight this problem. 


\begin{figure}[ht]
\begin{center}
\includegraphics{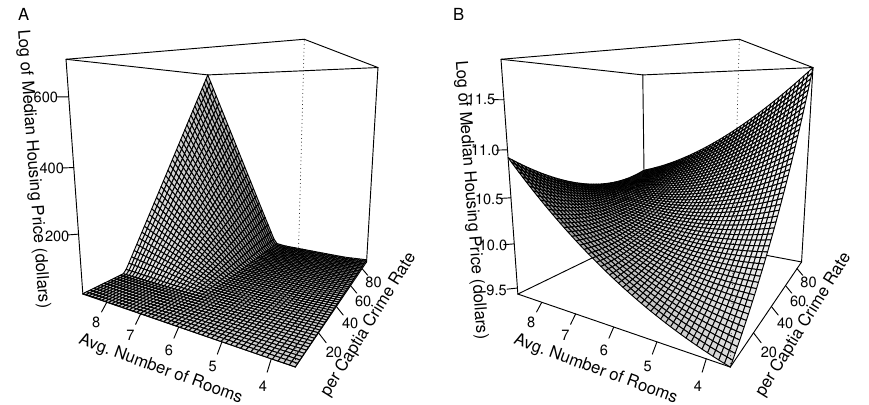}
\end{center}
\caption{Fitted regression functions of (A) Ours 2 and (B) Int 3 for a random split where Ours 2 performs the worst. 
The regression functions are plotted against \textsf{crime} and \textsf{room}, with $\log(\textsf{nox})$, $\log(\textsf{distance})$, and \textsf{stratio} fixed at their median.}
\label{fig:hprice-overfitting}
\end{figure}

To address overfitting, we introduced regularized variants of Ours 2
($\hat{f}^{5, 3, 2, 2}_{\text{TC-L-I}, V}$) and Additive
($\hat{f}^{5, 2, 1}_{\text{TC-L}, V}$) (the regularization parameter
$V$ is tuned by 10-fold cross validation, separately for each random
split). We excluded consistently underperforming models (Int 1, Int 2,
and Ours 1) for clarity and redid the comparison with the regularized variants.


\begin{figure}[ht]
\begin{center}
\includegraphics{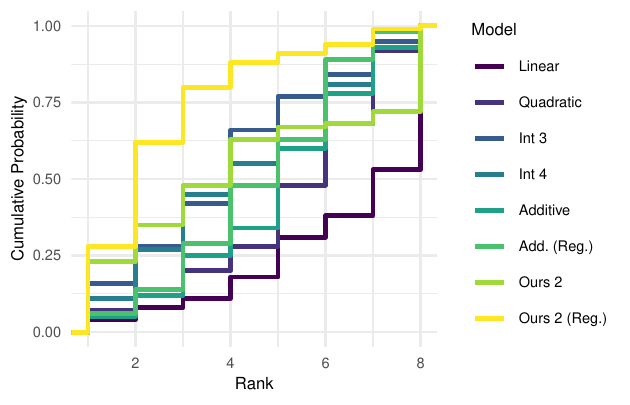}
\end{center}
\caption{Empirical cumulative model rank distributions across 100
  training-test splits. 
Add. (Reg.) and Ours 2 (Reg.) are the regularized variants \eqref{modified-estimator} of Additive and Ours 2.}
\label{fig:hprice-rank-plot-complexity}
\end{figure}

Figure \ref{fig:hprice-rank-plot-complexity} shows that the
regularized variant of  
Ours 2 clearly outperformed all other models. Its cumulative
distribution dominates all competitors, achieving top-two rankings in
62\% of splits. The regularization also effectively addressed the
overfitting issue—while the original Ours 2 ranked worst in 28\% of
splits, its regularized version appeared in the bottom four only 12\%
of the time. This analysis demonstrates
that variants of totally concave regression can outperform standard
regression approaches in practical applications.

\subsection{Retirement Saving Plan (401(k)) Data}\label{401k}

We analyzed the 401(k) dataset \textsf{k401ksubs} from the
R package \textsf{wooldridge}, containing $n = 9,275$ observations. Our
target variable was net total financial assets ($y$), and 
predictors were annual income ($x_1$), age ($x_2$),
401(k) eligibility ($x_3$; 1 if eligible, 0 otherwise), and
family size. We converted the family size variable (\textsf{fsize}) into five dummy
variables: \textsf{fsize1} ($x_4$) through \textsf{fsize4} ($x_8$) (each equal to 1 if
family size equals the corresponding number, 0 otherwise) and \textsf{fsize5}
(equal to 1 if family size $\geq$ 5). We compare the following models:

\begin{enumerate}
    \item Quadratic: 
    $\E[y|\mathbf{x}] = \beta_1 x_1 + \beta_{11} x_1^2 + \beta_2 x_2 + \beta_{22} x_2^2 + \beta_3 x_3 + \beta_4 x_4 + \cdots + \beta_8 x_8$.
    \item Int 1: 
    $\E[y|\mathbf{x}] = \beta_1 x_1 + \beta_{11} x_1^2 + \beta_2 x_2 + \beta_{22} x_2^2 + \beta_{12} x_1 x_2 + \beta_3 x_3 + \beta_4 x_4 + \cdots + \beta_8 x_8$.
    \item Int 2: 
    $\E[y|\mathbf{x}] = \beta_0 + \beta_1 x_1 + \beta_{11} x_1^2 + \beta_2 x_2 + \beta_{22} x_2^2  + \beta_{12} x_1 x_2 + \beta_{112} x_1^2 x_2 + \beta_{122} x_1 x_2^2 + \beta_3 x_3 + \beta_4 x_4 + \cdots + \beta_8 x_8$.
    \item Int 3: 
    $\E[y|\mathbf{x}] = \beta_0 + \beta_1 x_1 + \beta_{11} x_1^2 + \beta_2 x_2 + \beta_{22} x_2^2  + \beta_{12} x_1 x_2 + \beta_3 x_3 + \beta_{13} x_1 x_3 + \beta_{113} x_1^2 x_3 + \beta_{23} x_2 x_3 + \beta_{223} x_2^2 x_3 + \beta_4 x_4 + \cdots + \beta_8 x_8$.
    \item Int 4: 
    $\E[y|\mathbf{x}] = \beta_0 + \beta_1 x_1 + \beta_{11} x_1^2 + \beta_2 x_2 + \beta_{22} x_2^2  + \beta_{12} x_1 x_2 + \beta_{112} x_1^2 x_2 + \beta_{122} x_1 x_2^2 + \beta_3 x_3 + \beta_{13} x_1 x_3 + \beta_{113} x_1^2 x_3 + \beta_{23} x_2 x_3 + \beta_{223} x_2^2 x_3 + \beta_4 x_4 + \cdots + \beta_8 x_8$.
    \item Additive ($\hat{f}^{8, 2, 1}_{\text{TC-L}}$): 
    $\E[y|\mathbf{x}] \in \F^{2, 1}_{\text{TC, convex}}(x_1, x_2) \oplus \mathcal{L}(x_3, \dots, x_8)$.
    \item Ours 1 ($\hat{f}^{8, 2, 2}_{\text{TC-L}}$): 
    $\E[y|\mathbf{x}] \in \F^{2, 2}_{\text{TC, convex}}(x_1, x_2) \oplus \mathcal{L}(x_3, \dots, x_8)$.
    \item Ours 2 ($\hat{f}^{8, 3, 2, 2}_{\text{TC-L-I}}$): 
    $\E[y|\mathbf{x}] \in \F^{3, 2}_{\text{TC, convex}}(x_1, x_2; x_3)  \oplus \mathcal{L}(x_4, \dots, x_8)$.
    \item Regularized Variant of Ours 2 ($\hat{f}^{8, 3, 2, 2}_{\text{TC-L-I}, V}$) (Ours 2 (Reg.)).
\end{enumerate}

We focus on total convexity (as opposed to total
concavity) as convexity provides better fits (as can be easily
verified by inspecting the fits of the simple quadratic model). To
manage computational demands, we used approximate versions 
for Additive, Ours 1, Ours 2, and Ours 2 (Reg.) with proxy
lattices (replacing $L_S$ in
\eqref{lattices-gen-by-data}) with $N_1 = N_2 = 50$. Since total convexity is only imposed
on $x_1$ and $x_2$, there is no need for $N_3, \dots, N_8$.

\begin{figure}[ht]
\begin{center}
\includegraphics{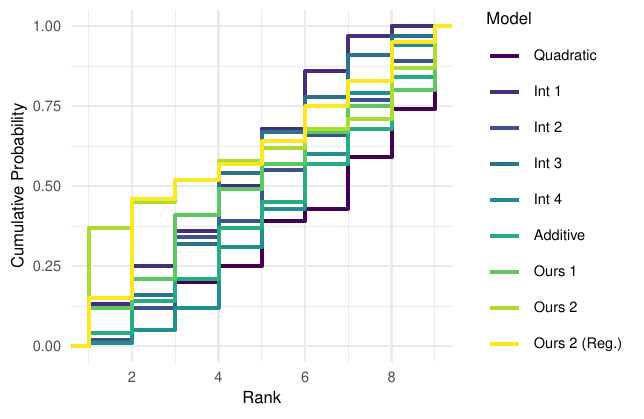}
\end{center}
\caption{Empirical cumulative model rank distributions for 401(k) data}
\label{fig:401k-rank-plot}
\end{figure}

We evaluated performance across 100 random training-test splits (90\%
- 10\%) and ranked models by test mean squared errors. Figure
\ref{fig:401k-rank-plot} 
shows the empirical rank distributions. The results (from Figure
\ref{fig:401k-rank-plot}) align with our findings on housing price data. Ours 2 showed 
high potential but inconsistency—ranking first in 37 splits but in
the bottom two for 29 others. Int 1 and Int 3 demonstrated greater
consistency but fewer top placements. The regularized variant of Ours 2
effectively balanced these extremes, ranking worst in only 5 splits
while maintaining strong performance. The regularization effect, however, was
less dramatic than in the housing price 
example, likely because the approximation with proxy lattices already
provided some overfitting protection by constraining boundary
weights. These results confirm totally concave regression's practical 
utility and demonstrate that approximate versions offer viable
alternatives when computational demands are high.

\section*{Acknowledgments}
The authors gratefully acknowledge support from the NSF grant 
DMS-2210504.



\bibliographystyle{chicago}
\bibliography{main}

\newpage
\appendix

\section{More on Axially Concave Regression}\label{axcon}

\subsection{On Axial Concavity}\label{axcon_char}
In Section \ref{comparison-concavity}, we claimed that $f$ is axially
concave if and only if \eqref{divdiff} holds for
$(p_1, \dots, p_d) = (2, 0, \dots, 0), \dots, (0, \dots, 0, 2)$. Here, we
provide a detailed explanation. 

For $(p_1, \dots, p_d) = (2, 0, \dots, 0)$, the condition \eqref{divdiff} 
becomes
\begin{equation*}
  \frac{f(x_1^{(1)}, x^{(2)}, \dots, x^{(d)})}{(x^{(1)}_1 -
    x_2^{(1)})(x^{(1)}_1 - x_3^{(1)})}  +   \frac{f(x_2^{(1)},
    x^{(2)}, \dots, x^{(d)})}{(x^{(1)}_2 - 
    x_1^{(1)})(x^{(1)}_2 - x_3^{(1)})}  
+   \frac{f(x_3^{(1)}, x^{(2)}, \dots, x^{(d)})}{(x^{(1)}_3 -
    x_1^{(1)})(x^{(1)}_3 - x_2^{(1)})}  \leq 0
\end{equation*}
for every $0 \leq x_1^{(1)} < x_2^{(1)} < x_3^{(1)} \leq 1$ and
$x^{(2)}, \dots, x^{(d)} \in [0, 1]$, which can also be written as
\begin{equation*}
  \frac{f(x_2^{(1)}, x^{(2)}, \dots, x^{(d)}) - f(x_1^{(1)}, x^{(2)},
    \dots, x^{(d)})}{x_2^{(1)} - x_1^{(1)}} \geq 
  \frac{f(x_3^{(1)}, x^{(2)}, \dots, x^{(d)}) - f(x_2^{(1)}, x^{(2)},
    \dots, x^{(d)})}{x_3^{(1)} - x_2^{(1)}}. 
\end{equation*}
Thus, the condition \eqref{divdiff} for $(p_1, \dots, p_d) = (2,
0, \dots, 0)$ is equivalent to $f(\cdot, x^{(2)}, \dots, x^{(d)})$ 
being concave for every fixed $x^{(2)}, \dots, x^{(d)} \in [0, 1]$. 

By the same argument, for any $(p_1, \dots, p_d)$ with $p_j = 2$ and 
$p_k = 0$ for $k \neq j$, the condition \eqref{divdiff} is equivalent to 
$f$ being concave in the $j^{\text{th}}$ coordinate while holding all other 
coordinates fixed. Therefore, $f$ is axially concave if and only if 
\eqref{divdiff} holds for 
$(p_1, \dots, p_d) = (2, 0, \dots, 0), \dots, (0, \dots, 0, 2)$.

\subsection{Implementation Details}\label{axcon-implementation}
Let $\hat{f}_{\text{AC}}^{d}$ denote the least squares estimator over the class of axially concave functions on $[0, 1]^d$:
\begin{equation}\label{axconreg}
  \hat{f}_{\text{AC}}^{d} \in \argmin_f \bigg\{ \sum_{i=1}^n
  \big(y_i - f({\mathbf{x}}^{(i)}) \big)^2: f \text{ is axially concave}\bigg\}.
\end{equation}
In this section, we provide details for the computation of $\hat{f}_{\text{AC}}^{d}$ that we deployed in Section \ref{realdata}. 

For each $k = 1, \dots, d$, write 
\begin{equation*}
    \big\{0, x_k^{(1)}, \dots, x_k^{(n)}, 1\big\} = \big\{u_0^{(k)}, \dots, u_{n_k}^{(k)}\big\}
\end{equation*}
where $0 = u_0^{(k)} < \cdots < u_{n_k}^{(k)} = 1$.
Then, for each $i = 1, \dots, n$, there exists $\mathbf{b}(i) = (b_1(i), \dots, b_d(i)) \in \widebar{I}_0 := \prod_{k = 1}^{d} \{0, \dots, n_k\}$ such that $x_k^{(i)} = u_{b_k(i)}^{(k)}$ for every $k = 1, \dots, d$. 
Also, for each real-valued function $f$ on $[0, 1]^d$, let $\theta_f \in \R^{|\widebar{I}_0|}$ denote the vector of evaluations of $f$ at $(u_{j_1}^{(1)}, \dots, u_{j_d}^{(d)})$ for $(j_1, \dots, j_d) \in \widebar{I}_0$; that is,
\begin{equation*}
    (\theta_f)_{j_1, \dots, j_d} = f\big(u_{j_1}^{(1)}, \dots, u_{j_d}^{(d)}\big)
\end{equation*}
for $(j_1, \dots, j_d) \in \widebar{I}_0$.
Note that $f({\mathbf{x}}^{(i)}) = (\theta_f)_{\mathbf{b}(i)}$ for every $i = 1, \dots, d$.

Suppose $f$ is an axially concave function on $[0, 1]^d$. 
It is clear from the axial concavity of $f$ that 
\begin{equation}\label{axcon-fin-cond}
\begin{split}
    &\frac{1}{u^{(l)}_{j_l + 1} - u^{(l)}_{j_l}} \cdot \big((\theta_f)_{j_1, \dots, j_{l - 1}, j_l + 1, j_{l + 1}, \dots, j_d} - (\theta_f)_{j_1, \dots, j_{l - 1}, j_l, j_{l + 1}, \dots, j_d}\big) \\
    &\qquad \ge \frac{1}{u^{(l)}_{j_l + 2} - u^{(l)}_{j_l + 1}} \cdot \big((\theta_f)_{j_1, \dots, j_{l - 1}, j_l + 2, j_{l + 1}, \dots, j_d} - (\theta_f)_{j_1, \dots, j_{l - 1}, j_l + 1, j_{l + 1}, \dots, j_d}\big)
\end{split}    
\end{equation}
for all $0 \le j_k \le n_k$ ($k \neq l$) and $0 \le j_l \le n_l - 2$.
Conversely, if $\theta \in \R^{|\widebar{I}_0|}$ satisfies \eqref{axcon-fin-cond},
then there exists an axially concave function $f$ on $[0, 1]^d$ such that $\theta_f = \theta$.
This can be readily proved as follows.

Let $f$ denote the function on $[0, 1]^d$ defined by 
\begin{equation}\label{axcon-interpolation}
    f(x_1, \dots, x_d) = \frac{1}{\prod_{k = 1}^{d} (u_{j_k + 1}^{(k)} - u_{j_k}^{(k)})} \cdot 
    \sum_{\delta \in \{0, 1\}^d} \bigg[\prod_{k = 1}^{d} \Big\{(1 - \delta_k) \big(u_{j_k + 1}^{(k)} - x_k\big) + \delta_k \big(x_k - u_{j_k}^{(k)}\big)\Big\}\bigg] \cdot \theta_{j_1 + \delta_1, \dots, j_d + \delta_d}
\end{equation}
for $(x_1, \dots, x_d) \in \prod_{k = 1}^{d} [u_{j_k}^{(k)}, u_{j_k + 1}^{(k)}]$ for each $(j_1, \dots, j_d) \in \prod_{k = 1}^{d} \{0, 1, \dots, n_k - 1\}$.
Observe that $\theta_f = \theta$.
Also, observe that, for every $l = 1, \dots, d$ and for every fixed $(x_1, \dots, x_{l - 1}, x_{l + 1}, \dots, x_d) \in [0, 1]^{d - 1}$, the function 
$g_l(t) := f(x_1, \dots, x_{l - 1}, t, x_{l + 1}, \dots, x_d)$ is continuous and piecewise affine in $t$ over $[0, 1]$.
In fact, the function $f$ is a continuous rectangular piecewise multi-affine function (introduced in Section \ref{rates}), interpolating the points $((u_{j_1}^{(1)}, \dots, u_{j_d}^{(d)}), \theta_{j_1, \dots, j_d})$ for $(j_1, \dots, j_d) \in \widebar{I}_0$. 

If $(x_1, \dots, x_l, x_{l + 1}, \dots, x_d) \in \prod_{k \neq l}[u_{j_k}^{(k)}, u_{j_k + 1}^{(k)}]$, 
then the slope of $g_l$ at $t$ is given by
\begin{align*}
    &\frac{1}{\prod_{k \neq l} (u_{j_k + 1}^{(k)} - u_{j_k}^{(k)})} \cdot 
    \sum_{\delta \in \{0, 1\}^{d - 1}} \bigg[\prod_{k \neq l}\Big\{(1 - \delta_k) \big(u_{j_k + 1}^{(k)} - x_k\big) + \delta_k \big(x_k - u_{j_k}^{(k)}\big)\Big\}\bigg] \\
    &\qquad \quad \cdot \frac{1}{u_{j_l + 1}^{(l)} - u_{j_l}^{(l)}} \cdot \big(\theta_{j_1 + \delta_1, \dots, j_{l - 1} + \delta_{l - 1}, j_l + 1, j_{l + 1} + \delta_{l + 1}, \dots, j_d + \delta_d} - \theta_{j_1 + \delta_1, \dots, j_{l - 1} + \delta_{l - 1}, j_l, j_{l + 1} + \delta_{l + 1}, \dots, j_d + \delta_d}\big)
\end{align*}
for $t \in (u_{j_l}^{(l)}, u_{j_l + 1}^{(l)})$.
By \eqref{axcon-fin-cond}, this slope is decreasing in $t$, implying that $g_l$ is concave.
Hence, $f$ is axially concave, as desired.

Now, consider the following finite-dimensional least squares problem:
\begin{equation}\label{axcon-fin-dim-prob}
    \hat{\theta}_{\text{AC}}^{d} \in \argmin_{\theta} \big\{ \|\mathbf{y} - \mathit{M} \theta\|_2: \theta \in \R^{|\widebar{I}_0|} \text{ satisfies \eqref{axcon-fin-cond}} \big\},
\end{equation}
where $\mathbf{y} = (y_1, \dots, y_n)$, and $\mathit{M}$ is the $n \times |\widebar{I}_0|$ matrix with 
\begin{equation*}
    \mathit{M}_{i, (j_1, \dots, j_d)} =
    \begin{cases}
        1 & \text{if } (j_1, \dots, j_d) = \mathbf{b}(i) \\
        0 & \text{otherwise} 
    \end{cases}
\end{equation*}
for $i = 1, \dots, n$ and for $(j_1, \dots, j_d) \in \widebar{I}_0$.
Recall that $\theta_f$ satisfies \eqref{axcon-fin-cond} for every axially concave function $f$, and there always exists some axially concave function $f$ on $[0, 1]^d$ such that $\theta_f = \theta$ for every $\theta \in \R^{|\widebar{I}_0|}$ satisfying \eqref{axcon-fin-cond}.
Also, recall that $f({\mathbf{x}}^{(i)}) = (\theta_f)_{\mathbf{b}(i)}$ for every $i = 1, \dots, d$.
It thus follows that if we define $f$ as in \eqref{axcon-interpolation} using $\hat{\theta}_{\text{AC}}^{d}$ instead of $\theta$, then such $f$ solves the original least squares problem \eqref{axconreg}.
Hence, once we solve \eqref{axcon-fin-dim-prob}, we can construct the the least squares estimator for axially concave regression using \eqref{axcon-interpolation}.
This is also what we used in Section \ref{realdata}.

However, it should be noted that solving \eqref{axcon-fin-dim-prob} can be computationally challenging when the dimension $d$ is not small. 
The number of components of the vector $\theta$ in \eqref{axcon-fin-dim-prob} is $\prod_{k = 1}^{d} (n_k + 1)$, which can grow as large as $(n + 2)^d$ in the worst case. 
Recall that, for each $k = 1, \dots, d$, $n_k + 1$ is the number of unique values observed in the $k^{\text{th}}$ component of ${\mathbf{x}}^{(1)}, \dots, {\mathbf{x}}^{(d)}$.
This computational burden is also the reason why axially concave regression was not considered for the housing price dataset and the 401(k) dataset in Sections \ref{housing-price} and \ref{401k}.
Therefore, developing more efficient methods for computing $\hat{f}_{\text{AC}}^{d}$ or computationally feasible variants is crucial for making axially concave regression practical.
Exploring these alternatives would be an interesting direction for future research.

\section{Additional Plots}\label{additional-plots}
In this section, we provide additional plots for the earnings dataset. 
Figure \ref{fig:earnings-top2-and-bottom-ratios-full} shows the proportion of 
repetitions in which each model is among the two best-performing models 
(Plot A) and the worst-performing model (Plot B). This figure is similar
to Figure \ref{fig:earnings-top-and-bottom2-ratios-full} (the only
differences are best vs. top-two and worst vs. bottom-two). As the plots in
Figure \ref{fig:earnings-top-and-bottom2-ratios-full}, those in
Figure \ref{fig:earnings-top2-and-bottom-ratios-full}  indicate that for 
small sample sizes, Mincer's and Murphy and Welch's parametric models, as 
well as additive concave regression, are often better options. For large sample 
sizes, totally and axially concave regressions generally perform best. More 
interestingly, for moderate sample sizes, totally concave regression 
consistently ranks among the top two models, whereas axially concave 
regression frequently becomes the worst-performing model. These observations 
reaffirm the findings from Figure
\ref{fig:earnings-top-and-bottom2-ratios-full}. 

\begin{figure}[ht]
    \begin{center}
    \includegraphics{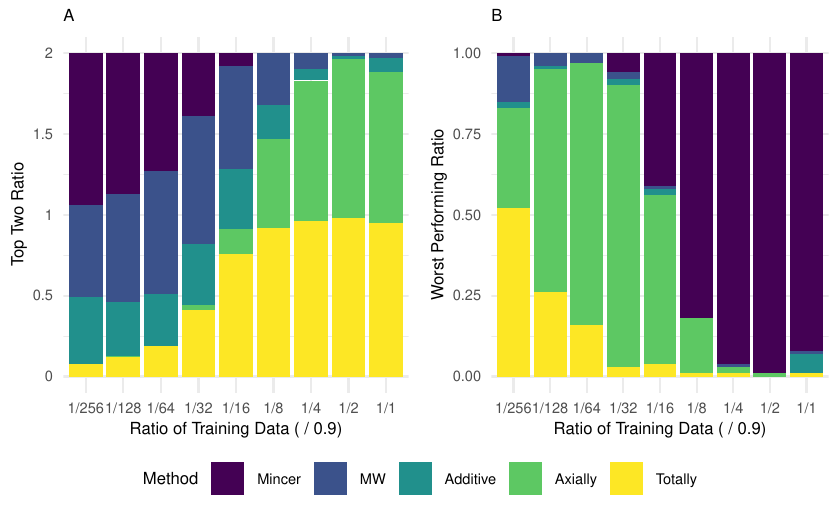}
    \end{center}
    \caption{Top two and worst performing ratios of each method for various 
    sub-sample sizes for the earnings dataset. The $x$-axis and $y$-axis 
    represent the proportion of training data and the proportion of repetitions 
    (out of 100) where (A) each model is one of the two best-performing models 
    and (B) each model is the worst-performing model.}
    \label{fig:earnings-top2-and-bottom-ratios-full}
\end{figure}

Figure \ref{fig:ex1029-rank-plot} displays the empirical cumulative 
distributions of model ranks across 100 splits into training and test sets, 
for four different training data proportions. As in the cumulative rank plots 
presented in Sections \ref{housing-price} and \ref{401k}, ranks are determined 
by mean squared errors on the test sets, with rank 1 indicating the best 
performing model and rank 5 the worst. Plot A corresponds to a training data 
proportion of $0.9/2^7$, representing a small sample size regime, while Plot D 
corresponds to $0.9/2^1$, representing a large sample size regime. Plots B 
and C correspond to intermediate training data proportions of $0.9/2^5$ and 
$0.9/2^3$, respectively.

An ideal method is one whose rank distribution function is everywhere larger 
than the others. Plot A shows that Mincer's model is the ideal method in the 
small sample size regime. As the sample size increases to the level of Plot B, 
Murphy and Welch's model becomes the most ideal. In both cases, axially 
concave regression is the worst choice. When the sample size reaches that of 
Plot C, totally concave regression is clearly the best option. Finally, 
for the largest sample size in Plot D, totally and axially concave regressions 
are the two best options that are comparable.

\begin{figure}[ht]
    \begin{center}
    \includegraphics{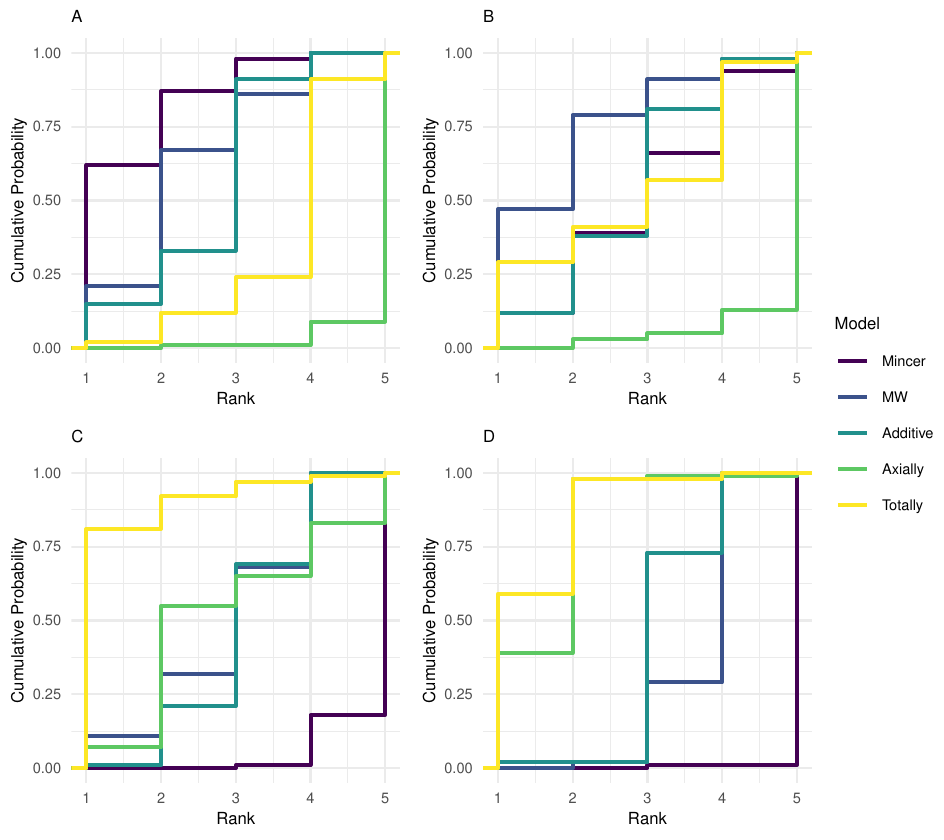}
    \end{center}
    \caption{Empirical cumulative distributions of model ranks across 100 
    splits into training and test sets, for various proportions of training data 
    for the earnings dataset. The training data proportions are 
    (A) $0.9/2^7$, (B) $0.9/2^5$, (C) $0.9/2^3$, and (D) $0.9/2^1$. Ranks are 
    determined by mean squared errors on the test sets.}
    \label{fig:ex1029-rank-plot}
\end{figure}

\section{Proofs}\label{proofs}
In our proofs, for each real-valued function $f$ on $[0, 1]^d$, we use the following notation for the divided difference of $f$ of order $(p_1, \dots, p_d)$:
\begin{equation*}
    \begin{bmatrix}
        x_1^{(1)}, \dots, x_{p_1 + 1}^{(1)} & \\ 
        \vdots                              &; \ f \\
        x_1^{(d)}, \dots, x_{p_d + 1}^{(d)} &
    \end{bmatrix}
    := \sum_{i_1 = 1}^{p_1 + 1} \cdots \sum_{i_d = 1}^{p_d + 1} \frac{f(x_{i_1}^{(1)}, \dots, x_{i_d}^{(d)})}{\prod_{j_1 \neq i_1} (x_{i_1}^{(1)} - x_{j_1}^{(1)}) \times \cdots \times \prod_{j_d \neq i_d} (x_{i_d}^{(d)} - x_{j_d}^{(d)})}
\end{equation*}  
for $0 \le x_1^{(k)} < \dots < x_{p_k+1}^{(k)} \le 1$, $k = 1, \dots, d$.
Using this notation, the conditions \eqref{eq:finite-derivative-at-zero} and \eqref{eq:finite-derivative-at-one} in Proposition \ref{prop:tc-equiv} can be represented as 
\begin{equation}\label{eq:finite-derivative-at-zero-restated}
    \sup \bigg\{
    \begin{bmatrix}
        0, t_k, & k \in S    & \multirow{2}{*}{; \ $f$ } \\ 
        0,      & k \notin S &
    \end{bmatrix}
    : t_k > 0 \ \text{ for } k \in S \bigg\} < +\infty
\end{equation}
and 
\begin{equation}\label{eq:finite-derivative-at-one-restated}
    \inf \bigg\{
    \begin{bmatrix}
        t_k, 1, & k \in S    & \multirow{2}{*}{; \ $f$ } \\ 
        0,      & k \notin S &
    \end{bmatrix}
    : t_k < 1 \ \text{ for } k \in S \bigg\} > -\infty
\end{equation}
for each nonempty subset $S$ of $[d]$.
Similarly, the interaction restriction condition \eqref{int-rest-cond} can be 
written as
\begin{equation}\label{int-rest-cond-restated}
    \begin{bmatrix}
        x_k, y_k, & k \in S    & \multirow{2}{*}{; \ $f$ } \\ 
        0,        & k \notin S &
    \end{bmatrix}
    = 0
    \quad \text{ for every } 0 \le x_k < y_k \le 1, k \in S
\end{equation}
for every subset $S$ of $[d]$ with $|S| > s$. 
It can be readily verified by induction that the conditions 
\eqref{int-rest-cond-restated} for all $S \subseteq [d]$ with $|S| > s$ are 
equivalent to the condition that the following holds for all subsets $S$ of 
$[d]$ with $|S| > s$:
\begin{equation}\label{int-rest-cond-restated-2}
    \begin{bmatrix}
        x_k, y_k, & k \in S    & \multirow{2}{*}{; \ $f$ } \\ 
        x_k,        & k \notin S &
    \end{bmatrix}
    = 0
\end{equation}
for every $0 \le x_k < y_k \le 1, k \in S$ and $x_k \in [0, 1], k \notin S$.

\subsection{Proofs of Propositions in Section \ref{totconcdef}}\label{pf:totconcdef}
\subsubsection{Proof of Proposition \ref{prop:tc-equiv-restricted-interaction}}\label{pf:tc-equiv-restricted-interaction}
An alternative representation of $\tcds$, as stated in Lemma \ref{lem:alt-characterization}, is helpful in proving Proposition \ref{prop:tc-equiv-restricted-interaction}. 
To describe this representation, we first recall the concept of \textit{entire monotonicity} of functions. 
Entire monotonicity is a multivariate generalization of univariate monotonicity, defined as follows (see also \cite{fang2021multivariate} and references therein).
Similar to total concavity in the sense of Popoviciu, entire monotonicity is defined using the divided differences of functions. 
While total concavity involves divided differences of order $\mathbf{p} = (p_1, \dots, p_d) \in \{0, 1, 2\}^d$ with $\max_k p_k = 2$, entire monotonicity is based on divided differences of order $\mathbf{p} \in \{0, 1\}^d$ with $\max_k p_k = 1$.
The proof of Lemma \ref{lem:alt-characterization} is provided in Appendix \ref{pf:alt-characterization}.

\begin{definition}[Entire Monotonicity]\label{defn-entire-mono}
  A real-valued function $f$ on $[0, 1]^d$ is said to be entirely monotone if, for every $(p_1, \dots, p_d)$ with $\max_k p_k = 1$ (i.e., $(p_1, \dots, p_d) \in \{0, 1\}^d \setminus \{\zerovec\}$), we have 
\begin{equation*}
    \begin{bmatrix}
        x_1^{(1)}, \dots, x_{p_1 + 1}^{(1)} & \\ 
        \vdots                              &; \ f \\
        x_1^{(d)}, \dots, x_{p_d + 1}^{(d)} &
    \end{bmatrix}
    \ge 0
\end{equation*}
for every $0 \le x_1^{(k)} < \cdots < x_{p_k + 1}^{(k)} \le 1$ for $k
= 1, \dots, d$.
\end{definition}


\begin{lemma}\label{lem:alt-characterization}
    The function class $\tcds$ consists of all functions of the form
    \begin{equation}\label{eq:alt-characterization}
        f(x_1, \dots, x_d) = \beta_0 - \sum_{S : 1 \le |S| \le s} \int_{\prod_{k \in S} [0, x_k]} g_{S}((t_k, k \in S)) \, d(t_k, k \in S)
    \end{equation}
    for some $\beta_0 \in \R$ and for some collection of functions $\{g_S: S \subseteq [d] \text{ and } 1 \le |S| \le s\}$, 
    where for each $S$, 
    \begin{enumerate}
        \item $g_S$ is a real-valued function on $[0, 1]^{|S|}$,
        \item $g_S$ is entirely monotone,
        \item $g_S$ is coordinate-wise right-continuous on $[0, 1]^{|S|}$, and 
        \item $g_S$ is coordinate-wise left-continuous at each point $(x_k, k \in S) \in [0, 1]^{|S|} \setminus [0, 1)^{|S|}$ with respect to all the $k^{\text{th}}$ coordinates where $x_k = 1$.
    \end{enumerate}
\end{lemma}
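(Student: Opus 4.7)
The plan is to prove the two set-inclusions in Lemma \ref{lem:alt-characterization} separately, with both directions hinging on a layer-cake (Fubini) manipulation combined with a Lebesgue--Stieltjes-style measure construction that converts between the measures $\nu_S$ in \eqref{intermodds} and the entirely monotone functions $g_S$ in \eqref{eq:alt-characterization}.

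\textbf{Forward direction (from \eqref{intermodds} to \eqref{eq:alt-characterization}).} Starting from $f\in\tcds$ in the form \eqref{intermodds}, I would apply the identities $(x_j-t_j)_+=\int_0^{x_j}\ind\{u_j>t_j\}\,du_j$ and $x_j=\int_0^{x_j}du_j$ to each $S$-term and swap the order of integration by Fubini, yielding
\begin{equation*}
\beta_S \prod_{j\in S}x_j - \int_{[0,1)^{|S|}\setminus\{\zerovec\}}\prod_{j\in S}(x_j-t_j)_+ \, d\nu_S = -\int_{\prod_{j\in S}[0,x_j]} g_S(u_j, j\in S)\, d(u_j, j\in S),
\end{equation*}
where $g_S(u) := -\beta_S + \nu_S\bigl(\prod_{j\in S}[0,u_j]\setminus\{\zerovec\}\bigr)$ (swapping between $\prod[0,u_j]$ and $\prod[0,u_j)$ alters the integrand only on a Lebesgue-null set). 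Verifying the four conditions on $g_S$ is then straightforward: the alternating sums that define the divided differences of $g_S$ equal $\nu_S$-measures of semi-open rectangles and are therefore nonnegative, giving entire monotonicity; coordinate-wise right-continuity follows from dominated convergence applied to $\nu_S(\prod[0,u_j]\setminus\{\zerovec\})$; and left-continuity at faces $\{u_j=1\}$ holds because $\nu_S$ is supported on $[0,1)^{|S|}\setminus\{\zerovec\}$ and hence charges no such face.

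\textbf{Reverse direction.} Given $f$ of the form \eqref{eq:alt-characterization} with a collection $\{g_S\}$ satisfying (1)--(4), I would set $\beta_S := -g_S(\zerovec)$ for each $S$, so that $\tilde g_S := g_S - g_S(\zerovec)$ vanishes at the origin and inherits the other three properties; entire monotonicity with $\tilde g_S(\zerovec)=0$ further yields $\tilde g_S\ge 0$. The key step is a multivariate Stieltjes/Carath\'eodory construction producing a finite Borel measure $\nu_S$ on $[0,1)^{|S|}\setminus\{\zerovec\}$ with $\tilde g_S(u)=\nu_S(\prod_{j\in S}[0,u_j]\setminus\{\zerovec\})$. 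Concretely, I would define $\nu_S$ on semi-open rectangles $\prod_{j\in S}(a_j,b_j]\subseteq[0,1)^{|S|}$ by the alternating sum $\sum_{R\subseteq S}(-1)^{|S|-|R|}\tilde g_S(\cdot)$, which is nonnegative by entire monotonicity and is countably additive on the rectangle semi-algebra thanks to right-continuity; extending via Carath\'eodory produces $\nu_S$. Left-continuity of $g_S$ on the face $\{u_j=1\}$ (condition (4)) ensures $\nu_S$ assigns no mass to that face, so $\nu_S$ is indeed supported on $[0,1)^{|S|}\setminus\{\zerovec\}$. Reversing the Fubini computation from the forward direction then converts \eqref{eq:alt-characterization} into \eqref{intermodds}, confirming $f\in\tcds$.

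\textbf{Main obstacle.} The principal technical hurdle is the Stieltjes construction in the reverse direction. While the univariate analogue underlying \eqref{1drepconcave} is classical, in the multivariate setting one must check that the mixed-difference pre-measure is countably additive on semi-open rectangles (this is where right-continuity enters) and carefully handle the lower-dimensional faces $\{u_j=0\}$, on which $\nu_S$ may legitimately place mass, as well as the excluded point $\zerovec$. Rather than building this correspondence from scratch, I would invoke (or adapt) the representation theorem for entirely monotone right-continuous functions developed by \citet{fang2021multivariate}, augmented with condition (4) to force the measure off the $\{u_j=1\}$ faces, which is precisely what singles out $\tcds$ as opposed to a larger class with $\nu_S$ allowed on all of $[0,1]^{|S|}\setminus\{\zerovec\}$.
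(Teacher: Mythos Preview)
Your proposal is correct and follows essentially the same approach as the paper. The paper carries out exactly your Fubini identity in the forward direction with the same choice $g_S(u)=-\beta_S+\nu_S\bigl(\prod_{k\in S}[0,u_k]\cap([0,1)^{|S|}\setminus\{\zerovec\})\bigr)$, and in the reverse direction invokes a variant of \citet{aistleitner2015functions} (rather than \citet{fang2021multivariate}) for the Lebesgue--Stieltjes construction, using condition (4) precisely as you anticipate to force the resulting measure off the faces $\{u_j=1\}$.
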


Here, we also briefly introduce the concept of absolutely continuous interval functions and their derivatives we need for our proof of Proposition \ref{prop:tc-equiv-restricted-interaction}.
We refer readers to \citet{lojasiewicz1988introduction} for more details about them.

We call the product of $m$ closed intervals in $\R$, $\prod_{k = 1}^{m} [x_k, y_k]$ ($x_k < y_k$ for $k = 1, \dots, d$), an $m$-dimensional closed interval or a closed interval in $\R^m$. 
In particular, if $m$ closed intervals in $\R$ are of equal length, we refer to the product as an $m$-dimensional cube.
We say that two closed intervals in $\R^m$ do not overlap if they do not share interior points.
We also say that two closed intervals in $\R$ are adjoining if they have a common end point but do not overlap.
For instance, $[0, 1]$ and $[1, 2]$ are two adjoining closed intervals in $\R$.
In general, two closed intervals in $\R^m$ 
\begin{equation*}
    \prod_{k = 1}^{m} [x_k, y_k] \ \text{ and } \ \prod_{k = 1}^{m} [z_k, w_k]
\end{equation*}
are said to be adjoining if there exists some $k_0 \in \{1, \dots, m\}$ such that $[x_k, y_k]$ and $[z_k, w_k]$ are adjoining if $k = k_0$ and $[x_k, y_k] = [z_k, w_k]$ otherwise.
For example, $[0, 1] \times [0, 1]$ and $[0, 1] \times [1, 2]$ are adjoining closed intervals in $\R^2$. 
Note that the union of two adjoining closed intervals in $\R^m$ is also a closed interval in $\R^m$.
In addition, for an $m$-dimensional closed interval $\prod_{k = 1}^{m} [x_k, y_k]$, let $|\prod_{k = 1}^{m} [x_k, y_k]|$ denote the volume of the interval given by 
\begin{equation*}
    \bigg|\prod_{k = 1}^{m} [x_k, y_k]\bigg| = \prod_{k =  1}^{m} (y_k - x_k).
\end{equation*}

Let $I_0$ be a closed interval in $\R^m$, and let $F$ be a real-valued function defined for $m$-dimensional closed intervals contained in $I_0$.
For $\mathbf{x} \in I_0$, the upper and lower derivatives of $F$ at $\mathbf{x}$ are defined by
\begin{equation*}
    \overline{D}F(\mathbf{x}) = \limsup_{\substack{C: \text{cube, } C \ni \mathbf{x} \\ |C| \rightarrow 0}} \frac{F(C)}{|C|} \ \text{ and } \
    \underline{D}F(\mathbf{x}) = \liminf_{\substack{C: \text{cube, } C \ni \mathbf{x} \\ |C| \rightarrow 0}} \frac{F(C)}{|C|}.
\end{equation*}
Note that $\limsup$ and $\liminf$ are taken over $m$-dimensional cubes containing $\mathbf{x}$.
If these two derivatives are equal, the derivative of $F$ at $\mathbf{x}$ exists and is defined as their common value:
\begin{equation*}
    DF(\mathbf{x}) = \lim_{\substack{C: \text{cube, } C \ni \mathbf{x} \\ |C| \rightarrow 0}} \frac{F(C)}{|C|}.
\end{equation*}
The function $F$ is called an additive interval function on $I_0$ if 
\begin{equation*}
    F(I \cup I') = F(I) + F(I')
\end{equation*}
for every pair of $m$-dimensional adjoining closed intervals $I, I' \subseteq I_0$.
Furthermore, $F$ is said to be absolutely continuous on $I_0$ if it is additive and satisfies the following: 
For every $\epsilon > 0$, there exists $\delta > 0$ such that, for any finite collection of non-overlapping $m$-dimensional closed intervals $P_1, \dots, P_l \subseteq I_0$ with $\sum_{k = 1}^{l} |P_k| < \delta$, it holds that $\sum_{k = 1}^{l} |F(P_k)| < \epsilon$.

The following theorem is a version of the fundamental theorem of calculus for absolutely continuous interval functions.
This theorem will play a crucial role in our proof of Proposition \ref{prop:tc-equiv-restricted-interaction}.

\begin{theorem}[Theorem 7.3.3 of \citet{lojasiewicz1988introduction}]\label{thm:lojasiewicz}
    Suppose $F$ is an absolutely continuous interval function on a closed interval $I_0$ in $\R^m$. 
    In this case, $DF$ exists almost everywhere (with respect to the Lebesgue measure) on $I_0$, and 
    \begin{equation*}
        F(I) = \int_I DF(\mathbf{x}) \, d\mathbf{x}
    \end{equation*}
    for every $m$-dimensional closed interval $I \subseteq I_0$.
\end{theorem}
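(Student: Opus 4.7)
My plan is to realize $F$ as the distribution function of a finite signed Borel measure $\mu_F$ on $I_0$ that is absolutely continuous with respect to the Lebesgue measure $\lambda$, and then identify $DF$ with the Radon--Nikodym derivative via the Lebesgue differentiation theorem for cubes. This reduces the statement to two classical results once the measure-theoretic setup is in place.

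First, I would extend $F$ from closed sub-intervals of $I_0$ to the algebra $\mathcal{A}$ generated by such intervals. Every $A \in \mathcal{A}$ can be partitioned into finitely many pairwise non-overlapping closed sub-intervals, and repeated application of the additivity hypothesis on adjoining intervals (together with the fact that shared lower-dimensional faces have Lebesgue measure zero, which handles ambiguity on boundary pieces) makes the extension well-defined and finitely additive on $\mathcal{A}$. The $\varepsilon$-$\delta$ absolute continuity hypothesis then upgrades this to countable additivity: for any decreasing sequence $A_n \in \mathcal{A}$ with $\bigcap_n A_n = \emptyset$, one has $\lambda(A_n) \to 0$, and hence $|F(A_n)| \to 0$ by absolute continuity. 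Carath\'eodory's extension theorem then yields a unique finite signed Borel measure $\mu_F$ on $I_0$ satisfying $\mu_F(I) = F(I)$ for every closed sub-interval $I$, and the absolute continuity hypothesis transfers to $\mu_F \ll \lambda$.

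By the Radon--Nikodym theorem there exists $g \in L^1(I_0, \lambda)$ with $\mu_F(E) = \int_E g\, d\lambda$ for every Borel $E \subseteq I_0$; in particular, $F(I) = \int_I g(\mathbf{x})\, d\mathbf{x}$ for every closed interval $I \subseteq I_0$. The Lebesgue differentiation theorem, applied to the basis of cubes (which is a Vitali basis, hence differentiates every $L^1$ function), then gives, for $\lambda$-almost every $\mathbf{x} \in I_0$,
\[
  g(\mathbf{x}) = \lim_{\substack{C \ni \mathbf{x},\, C \text{ cube} \\ |C| \to 0}} \frac{1}{|C|} \int_C g\, d\lambda = \lim_{\substack{C \ni \mathbf{x},\, C \text{ cube} \\ |C| \to 0}} \frac{F(C)}{|C|}.
\]
Hence both the upper and lower derivatives of $F$ agree with $g(\mathbf{x})$ at almost every $\mathbf{x}$, so $DF$ exists a.e.\ and coincides with $g$, which immediately yields $F(I) = \int_I DF(\mathbf{x})\, d\mathbf{x}$ for every closed $I \subseteq I_0$.

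The main obstacle is the first step: establishing countable additivity of the extended $F$ on $\mathcal{A}$. Finite additivity requires an inclusion--exclusion argument across interval overlaps, which in dimension $m > 1$ involves keeping track of lower-dimensional faces of measure zero, and countable additivity must be extracted from the $\varepsilon$-$\delta$ absolute continuity rather than from monotone convergence, since $F$ need not be monotone or of one sign. Once $\mu_F$ has been constructed, however, both the Radon--Nikodym representation and the cube-based Lebesgue differentiation theorem are classical and yield the conclusion essentially immediately.
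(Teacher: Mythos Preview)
The paper does not prove this theorem: it is quoted verbatim from \citet{lojasiewicz1988introduction} and used as a black box in the proof of Proposition~\ref{prop:tc-equiv-restricted-interaction}. So there is no ``paper's own proof'' to compare against.

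Your outline is a standard and correct route to this classical result. The only point I would flag is a small imprecision in the first step: the algebra $\mathcal{A}$ you describe is generated by \emph{half-open} intervals (or equivalently, one works with the semiring of half-open boxes), not closed ones, since closed boxes do not form a semiring and their finite unions are awkward to decompose disjointly. The passage from the values of $F$ on closed intervals to a premeasure on half-open boxes is harmless here because absolute continuity forces $F$ to vanish on degenerate (lower-dimensional) intervals, so boundary faces contribute nothing; but this should be said explicitly rather than swept under ``lower-dimensional faces have Lebesgue measure zero.'' Once that is cleaned up, Carath\'eodory extension, Radon--Nikodym, and the Lebesgue differentiation theorem for cubes finish the argument exactly as you indicate.
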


In our proof of Proposition \ref{prop:tc-equiv-restricted-interaction}, we also employ the following notation.
For a real-valued function $f$ on $[0, 1]^d$ and for each nonempty $S \subseteq [d]$, let $\triangle_{S} f$ denote the function on intervals, defined by
\begin{equation}\label{eq:alt-sum}
    \triangle_{S} f\bigg(\prod_{k \in S} [x_k, y_k]\bigg) = \sum_{\delta \in \prod_{k \in S} \{0, 1\}} (-1)^{\sum_{k \in S} \delta_k} \cdot f\big(\widetilde{xy}_{\delta}\big)
\end{equation}
for $0 \le x_k < y_k \le 1$ for $k \in S$. 
Here, $\widetilde{xy}_{\delta}$ is the $d$-dimensional vector for which
\begin{align*}
    (\widetilde{xy}_{\delta})_k = 
    \begin{cases}
        \delta_k x_k + (1 - \delta_k) y_k & \text{if } k \in S \\
        0 & \text{otherwise.} 
    \end{cases}
\end{align*}
For example, if $d = 3$ and $S = \{1, 3\}$, \eqref{eq:alt-sum} simplifies to 
\begin{equation*}
    \triangle_{\{1, 3\}} f\big([u_1, u_2] \times [v_1, v_2]\big) = f(u_2, 0, v_2) - f(u_1, 0, v_2) - f(u_2, 0, v_1) + f(u_1, 0, v_1)
\end{equation*}
for $0 \le u_1 < u_2 \le 1$ and $0 \le v_1 < v_2 \le 1$.
Note that $\triangle_S f$ can be related to the divided differences of $f$ as:
\begin{equation*}
    \triangle_{S} f\bigg(\prod_{k \in S} [x_k, y_k]\bigg) = 
    \bigg[\prod_{k \in S} (y_k - x_k)\bigg] \cdot
    \begin{bmatrix}
        x_k, y_k, & k \in S    & \multirow{2}{*}{; \ $f$ } \\ 
        0,     & k \notin S &
    \end{bmatrix}
\end{equation*}
for $0 \le x_k < y_k \le 1$ for $k \in S$. 
For notational convenience, when $S = [d]$, we omit the subscript and write $\triangle := \triangle_{[d]}$.

\begin{proof}[Proof of Proposition \ref{prop:tc-equiv-restricted-interaction}]
\textbf{Step 1: $f \in \tcds \Rightarrow f \in \tcp$.}

Assume that $f \in \tcds$. 
We first show that $f \in \tcp$.
For this, we need to prove that the divided difference of $f$ of order $\mathbf{p}$ is nonpositive on $[0, 1]^d$ for every $\mathbf{p} = (p_1, \dots, p_d) \in \{0, 1, 2\}^d$ with $\max_k p_k = 2$.
The following lemma (proved in Appendix \ref{pf:tc-weak-cond}) shows that, in fact, it suffices to verify the nonpositivity of the divided differences of $f$ on some proper subsets of $[0, 1]^d$.

\begin{lemma}\label{lem:tc-weak-cond}
    Let $f$ be a real-valued function on $[0, 1]^d$. 
    Then, $f \in \tcp$ if, for each $\mathbf{p} = (p_1, \dots, p_d) \in \{0, 1, 2\}^d$ with $\max_k p_k = 2$,
    the divided difference of $f$ of order $\mathbf{p}$ is nonpositive on $N^{(\mathbf{p})} := N_1^{(\mathbf{p})} \times \cdots \times N_d^{(\mathbf{p})}$, where $N_k^{(\mathbf{p})} = [0, 1]$ if $p_k \neq 0$ and $N_k^{(\mathbf{p})} = \{0\}$ otherwise, i.e., 
    \begin{equation*}
    \begin{bmatrix}
        x_1^{(1)}, \dots, x_{p_1 + 1}^{(1)} & \\ 
        \vdots                              &; \ f \\
        x_1^{(d)}, \dots, x_{p_d + 1}^{(d)} &
    \end{bmatrix}
    \le 0
    \end{equation*}
    provided that $0 \le x_1^{(k)} < \cdots < x_{p_k + 1}^{(k)} \le 1$ if $p_k \neq 0$ and $x_1^{(k)} = 0$ otherwise.
\end{lemma}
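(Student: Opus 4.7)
My plan is to prove the lemma by induction on the integer
$$m(\mathbf{p}, \mathbf{x}) := \big|\{k \in [d] : p_k = 0 \text{ and } x_1^{(k)} > 0\}\big|,$$
which counts the number of ``zero-order'' coordinates of $\mathbf{p}$ whose prescribed evaluation point is strictly positive. The quantity I would show to be nonpositive, at induction level $m$, is the divided difference of $f$ of order $\mathbf{p}$ at $\mathbf{x}$, ranging over all admissible $\mathbf{p}$ with $\max_k p_k = 2$ and all admissible $\mathbf{x}$ satisfying $m(\mathbf{p}, \mathbf{x}) \le m$. The base case $m = 0$ is exactly the hypothesis of the lemma, since $m(\mathbf{p}, \mathbf{x}) = 0$ forces $\mathbf{x} \in N^{(\mathbf{p})}$.

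For the inductive step, assume the claim through level $m - 1$ and take any $(\mathbf{p}, \mathbf{x})$ with $m(\mathbf{p}, \mathbf{x}) = m \ge 1$. I would select some coordinate $k_0$ with $p_{k_0} = 0$ and $a := x_1^{(k_0)} > 0$, and form $\mathbf{p}'$ from $\mathbf{p}$ by replacing $p_{k_0} = 0$ with $p'_{k_0} = 1$. Since $\max_k p_k = 2$ is attained at some $j^* \neq k_0$ (as $p_{k_0} = 0$), we still have $\max_k p'_k = 2$, so $\mathbf{p}'$ is admissible. The standard one-variable identity $g(a) = g(0) + a \cdot [0, a;\, g]$, applied to the function $g$ obtained by fixing all coordinates of the divided difference of order $\mathbf{p}$ other than the $k_0$-th, yields
$$
\begin{bmatrix}
  x_1^{(k)}, \dots, x_{p_k+1}^{(k)},\; k \neq k_0 & \\
  a, & ; \, f
\end{bmatrix}_{\mathbf{p}}
=
\begin{bmatrix}
  x_1^{(k)}, \dots, x_{p_k+1}^{(k)},\; k \neq k_0 & \\
  0, & ; \, f
\end{bmatrix}_{\mathbf{p}}
+ a \cdot
\begin{bmatrix}
  x_1^{(k)}, \dots, x_{p_k+1}^{(k)},\; k \neq k_0 & \\
  0,\, a, & ; \, f
\end{bmatrix}_{\mathbf{p}'}.
$$
The first term on the right has one fewer zero-order coordinate at a strictly positive value than the left-hand side, so its count is $m - 1$ and the inductive hypothesis gives nonpositivity. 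For the second term, note that $\mathbf{p}'$ has one fewer zero-order coordinate than $\mathbf{p}$ (namely $k_0$ is no longer zero-order), and the remaining zero-order coordinates of $\mathbf{p}'$ coincide with those of $\mathbf{p}$ other than $k_0$ and retain the same prescribed values; therefore the count $m$ for the $\mathbf{p}'$-term is also $m - 1$, and the inductive hypothesis applies again. Since $a > 0$, summing the two nonpositive terms yields the desired inequality.

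The main obstacle I anticipate is the bookkeeping around the induction measure: one must verify that \emph{both} terms on the right-hand side strictly decrease $m$. This hinges on the simple but essential observation that when $p_{k_0}$ is promoted from $0$ to $1$, coordinate $k_0$ is removed from the set being counted by $m$, so the new value $a > 0$ assigned to $x_2^{(k_0)}$ does not contribute to $m$. The only other point requiring care is the recursion identity itself, which follows from the tensor-product structure of the multivariate divided difference used throughout Appendix \ref{proofs}.
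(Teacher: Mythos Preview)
Your proposal is correct and uses essentially the same argument as the paper: the key identity $g(a) = g(0) + a\cdot[0,a;\,g]$ applied in a zero-order coordinate, together with an induction that peels off one ``offending'' coordinate at a time. The only difference is organizational: the paper runs the induction on $|\mathbf{p}| := |\{k: p_k \neq 0\}|$ (downward from $d$, proving nonpositivity on all of $[0,1]^d$ for each $\mathbf{p}$ in turn), whereas you induct on the single quantity $m(\mathbf{p},\mathbf{x})$ over all $(\mathbf{p},\mathbf{x})$ simultaneously. Your packaging is arguably a bit cleaner since it collapses the paper's nested structure (outer induction on $|\mathbf{p}|$, inner sequential reduction of the $d-m$ zero-order coordinates) into one induction, but the mathematical content is identical.
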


Without loss of generality, we assume that $\mathbf{p} = (2, \dots, 2, 1, \dots, 1, 0, \dots, 0)$ where $|\{k: p_k \neq 0\}| = m_2$ and $|\{k: p_k = 2\}| = m_1$. 
Also, for notational convenience, we let $S_{\mathbf{p}} = [m_2]$.
By Lemma \ref{lem:alt-characterization}, there exist $\beta_0 \in \R$ and functions $g_S$, $1 \le |S| \le s$ satisfying the conditions in Lemma \ref{lem:alt-characterization} such that $f$ can be expressed as in \eqref{eq:alt-characterization}.
For every $0 \le x_1^{(k)} < \cdots < x_{p_k + 1}^{(k)} \le 1$, $k = 1, \dots, m_2$, we can thus represent the divided difference of $f$ on $N^{(\mathbf{p})}$ as follows:
\begingroup
\allowdisplaybreaks
\begin{align*}
    &\begin{bmatrix}
        x_1^{(1)}, \dots, x_{p_1 + 1}^{(1)}       & \multirow{6}{*}{; \ $f$ } \\ 
        \vdots                                    & \\
        x_1^{(m_2)}, \dots, x_{p_{m_2} + 1}^{(m_2)} & \\
        0                                         & \\
        \vdots                                    & \\
        0                                         &
    \end{bmatrix}
    \\
    &\quad= \sum_{i_1 = 1}^{3} \cdots \sum_{i_{m_1} = 1}^{3} \sum_{i_{m_1 + 1} = 1}^{2} \cdots \sum_{i_{m_2} = 1}^{2} \frac{1}{\prod_{j_1 \neq i_1} (x_{i_1}^{(1)} - x_{j_1}^{(1)})} \times \cdots \times \frac{1}{\prod_{j_{m_1} \neq i_{m_1}} (x_{i_{m_1}}^{(m_1)} - x_{j_{m_1}}^{(m_1)})} \\
    &\quad \qquad \qquad \qquad \qquad \qquad \qquad \qquad \cdot \frac{1}{\prod_{k = m_1 + 1}^{m_2}(x_2^{(k)} - x_1^{(k)})} \cdot (-1)^{\sum_{k = m_1 + 1}^{m_2} i_k} \cdot f\big(x_{i_1}^{(1)}, \dots, x_{i_{m_2}}^{(m_2)}, 0, \dots, 0\big)\\
    &\quad= \frac{1}{\prod_{k = 1}^{m_1}(x_3^{(k)} - x_1^{(k)})} \cdot \sum_{\delta \in \{0, 1\}^{m_1}} (-1)^{\sum_{k = 1}^{m_1} \delta_k} \cdot \frac{1}{\prod_{k = 1}^{m_1}(x_{3 - \delta_k}^{(k)} - x_{2 - \delta_k}^{(k)})} \cdot \frac{1}{\prod_{k = m_1 + 1}^{m_2}(x_2^{(k)} - x_1^{(k)})} \\
    &\quad \quad \cdot \sum_{i_1 = 2 - \delta_1}^{3 - \delta_1} \cdots \sum_{i_{m_1} = 2 - \delta_{m_1}}^{3 - \delta_{m_1}} \sum_{i_{m_1 + 1} = 1}^{2} \cdots \sum_{i_{m_2} = 1}^{2} (-1)^{\sum_{k = 1}^{m_1} (i_k + \delta_k - 1)} \cdot (-1)^{\sum_{k = m_1 + 1}^{m_2} i_k} f\big(x_{i_1}^{(1)}, \dots, x_{i_{m_2}}^{(m_2)}, 0, \dots, 0\big) \\
    &\quad= \frac{1}{\prod_{k = 1}^{m_1}(x_3^{(k)} - x_1^{(k)})} \cdot \frac{1}{\prod_{k = m_1 + 1}^{m_2}(x_2^{(k)} - x_1^{(k)})} \cdot \sum_{\delta \in \{0, 1\}^{m_1}} (-1)^{\sum_{k = 1}^{m_1} \delta_k} \cdot \frac{1}{\prod_{k = 1}^{m_1}(x_{3 - \delta_k}^{(k)} - x_{2 - \delta_k}^{(k)})} \\
    &\quad \qquad \qquad \qquad \qquad \qquad \qquad \qquad \qquad \qquad \qquad \quad \cdot \triangle_{S_{\mathbf{p}}}f \bigg(\prod_{k = 1}^{m_1} \big[x_{2 - \delta_k}^{(k)}, x_{3 - \delta_k}^{(k)}\big] \times \prod_{k = m_1 + 1}^{m_2} \big[x_1^{(k)}, x_2^{(k)} \big]\bigg) \\
    &\quad= - \frac{1}{\prod_{k = 1}^{m_1}(x_3^{(k)} - x_1^{(k)})} \cdot \frac{1}{\prod_{k = m_1 + 1}^{m_2}(x_2^{(k)} - x_1^{(k)})} \cdot \sum_{\delta \in \{0, 1\}^{m_1}} (-1)^{\sum_{k = 1}^{m_1} \delta_k} \cdot \frac{1}{\prod_{k = 1}^{m_1}(x_{3 - \delta_k}^{(k)} - x_{2 - \delta_k}^{(k)})} \\
    &\quad \qquad \qquad \qquad \qquad \qquad \qquad \qquad \cdot 
    \int_{x_{2 - \delta_1}^{(1)}}^{x_{3 - \delta_1}^{(1)}} \cdots \int_{x_{2 - \delta_{m_1}}^{(m_1)}}^{x_{3 - \delta_{m_1}}^{(m_1)}} \int_{x_1^{(m_1 + 1)}}^{x_2^{(m_1 + 1)}} \cdots \int_{x_1^{(m_2)}}^{x_2^{(m_2)}} g_{S_{\mathbf{p}}}(t_1, \dots, t_{m_2}) \, dt_{m_2} \cdots dt_1.
\end{align*}
\endgroup
Also, since $g_{S_{\mathbf{p}}}$ is entirely monotone, we have
\begingroup
\allowdisplaybreaks
\begin{align*}
    0 &\le \int_{x_2^{(1)}}^{x_3^{(1)}} \int_{x_1^{(1)}}^{x_2^{(1)}} \cdots \int_{x_2^{(m_1)}}^{x_3^{(m_1)}} \int_{x_1^{(m_1)}}^{x_2^{(m_1)}} 
    \int_{x_1^{(m_1 + 1)}}^{x_2^{(m_1 + 1)}} 
    \cdots
    \int_{x_1^{(m_2)}}^{x_2^{(m_2)}} 
    \\
    &\qquad \quad
    \prod_{k = 1}^{m_1} (v_k - u_k) \cdot
    \begin{bmatrix}
        u_1, v_1         & \multirow{6}{*}{; \ $g_{S_{\mathbf{p}}}$ } \\ 
        \vdots           & \\
        u_{m_1}, v_{m_1} & \\
        t_{m_1 + 1}      & \\
        \vdots           & \\
        t_{m_2}          &
    \end{bmatrix}
    \, dt_{m_2} \cdots dt_{m_1 + 1} du_{m_1} dv_{m_1} \cdots du_1 dv_1 \\
    &= \prod_{k = 1}^{m_1} \Big[\big(x_3^{(k)} - x_2^{(k)}\big) \big(x_2^{(k)} - x_1^{(k)}\big)\Big] \cdot \sum_{\delta \in \{0, 1\}^{m_1}} (-1)^{\sum_{k = 1}^{m_1} \delta_k} \cdot \frac{1}{\prod_{k = 1}^{m_1}(x_{3 - \delta_k}^{(k)} - x_{2 - \delta_k}^{(k)})} \\
    &\quad \qquad \qquad \qquad \qquad \qquad \cdot 
    \int_{x_{2 - \delta_1}^{(1)}}^{x_{3 - \delta_1}^{(1)}} \cdots \int_{x_{2 - \delta_{m_1}}^{(m_1)}}^{x_{3 - \delta_{m_1}}^{(m_1)}} \int_{x_1^{(m_1 + 1)}}^{x_2^{(m_1 + 1)}} \cdots \int_{x_1^{(m_2)}}^{x_2^{(m_2)}} g_{S_{\mathbf{p}}}(t_1, \dots, t_{m_2}) \, dt_{m_2} \cdots dt_1.
\end{align*}
\endgroup
Combining these results, we can show that the divided difference of order $\mathbf{p}$ is nonpositive on $N^{(\mathbf{p})} = [0, 1]^{m_2} \times \{0\}^{d - m_2}$ when $\mathbf{p} = (2, \dots, 2, 1, \dots, 1, 0, \dots, 0)$.
Through the same argument, we can prove that this is also true for every $\mathbf{p} \in \{0, 1, 2\}^d$ with $\max_k p_k = 2$.
Due to Lemma \ref{lem:tc-weak-cond}, we can therefore conclude that $f \in \tcp$.

\noindent\textbf{Step 2: $f \in \tcds \Rightarrow $ \eqref{eq:finite-derivative-at-zero} and \eqref{eq:finite-derivative-at-one} hold for all $S \neq \emptyset$ and \eqref{int-rest-cond} holds for all $S$ with $|S| > s$.}

We now show that $f$ satisfies \eqref{eq:finite-derivative-at-zero} and \eqref{eq:finite-derivative-at-one} (recall \eqref{eq:finite-derivative-at-zero-restated} and \eqref{eq:finite-derivative-at-one-restated}) for all nonempty subsets $S \subseteq [d]$ and \eqref{int-rest-cond} (recall \eqref{int-rest-cond-restated}) for all subsets $S$ with $|S| > s$.
It can be readily checked that \eqref{int-rest-cond} holds for all subsets $S$ with $|S| > s$. 
Also, for each subset $S \subseteq [d]$ with $1 \le |S| \le s$, since $g_S$ is entirely monotone, we have that 
\begin{equation*}
    \begin{bmatrix}
        0, t_k, & k \in S    & \multirow{2}{*}{; \ $f$ } \\ 
        0,      & k \notin S &
    \end{bmatrix}
    = \frac{1}{\prod_{k \in S} t_k} \int_{\prod_{k \in S}(0, t_k]} g_S((s_k, k \in S)) \, d(s_k, k \in S) \ge g_S((0, k \in S))
\end{equation*}
for every $t_k > 0$ for $k \in S$,
and that 
\begin{equation*}
    \begin{bmatrix}
        t_k, 1, & k \in S    & \multirow{2}{*}{; \ $f$ } \\ 
        0,     & k \notin S &
    \end{bmatrix}
    = \frac{1}{\prod_{k \in S} (1 - t_k)} \int_{\prod_{k \in S}(t_k, 1]} g_S((s_k, k \in S)) \, d(s_k, k \in S) \le g_S((1, k \in S))
\end{equation*}
for every $t_k < 1$ for $k \in S$.
The conditions \eqref{eq:finite-derivative-at-zero} and \eqref{eq:finite-derivative-at-one} follow directly from these results.

\noindent\textbf{Step 3: $f \in \tcp $ and \eqref{eq:finite-derivative-at-zero}, \eqref{eq:finite-derivative-at-one} hold for all $S \neq \emptyset$ $\Rightarrow f \in \tcd$.}

Next, we prove that, for every $f \in \tcp$ satisfying \eqref{eq:finite-derivative-at-zero} and \eqref{eq:finite-derivative-at-one} for all nonempty subsets $S \subseteq [d]$, it holds that $f \in \tcd$.
Observe that
\begin{equation*}
    f(x_1, \dots, x_d) = f(0, \dots, 0) + \sum_{\emptyset \neq S \subseteq [d]} \triangle_S f\bigg(\prod_{k \in S} [0, x_k]\bigg)
\end{equation*}
for every $(x_1, \dots, x_d) \in [0, 1]^d$.
Hence, by Lemma \ref{lem:alt-characterization}, it suffices to show that, for each nonempty subset $S$, there exists a function $g_S$ satisfying the conditions of Lemma \ref{lem:alt-characterization} such that 
\begin{equation}\label{eq:alt-sum-as-int}
    \triangle_S f\bigg(\prod_{k \in S} [0, x_k]\bigg) = - \int_{\prod_{k \in S} [0, x_k]} g_{S}((t_k, k \in S)) \, d(t_k, k \in S)
\end{equation}
for every $(x_k, k \in S) \in [0, 1]^{|S|}$.
Here, we prove this claim only for the case $S = [d]$, but our argument can be readily adapted to other nonempty subsets $S$ of $[d]$. 

\noindent\textbf{Step 3-1: $\triangle f$ is absolutely continuous.}

Recall the notation $\triangle = \triangle_{[d]}$.
We first show that $\triangle f$ is an absolutely continuous interval function on $[0, 1]^d$.
To show this, we use the following lemma, which we prove in Appendix \ref{pf:div-diff-mono}.

\begin{lemma}\label{lem:div-diff-mono}
    Suppose $f$ is a real-valued function on $[0, 1]^d$ whose divided difference of order $\mathbf{p} = (p_1, \dots, p_d)$ is nonpositive for $\mathbf{p} = (2, 1, \dots, 1), (1, 2, 1, \dots, 1), \dots, (1, \dots, 1, 2)$. 
    Then, we have 
    \begin{equation*}
        \begin{bmatrix}
            x_1^{(1)}, x_2^{(1)} & \\ 
            \vdots               &; \ f \\
            x_1^{(d)}, x_2^{(d)} &
        \end{bmatrix} \ge
        \begin{bmatrix}
            x_1^{(1)}, x_3^{(1)} & \\ 
            \vdots               &; \ f \\
            x_1^{(d)}, x_3^{(d)} &
        \end{bmatrix}
    \end{equation*}
    if $0 \le x_1^{(k)} < x_2^{(k)} \le x_3^{(k)} \le 1$ for $k = 1, \dots, d$, and 
    \begin{equation*}
        \begin{bmatrix}
            x_1^{(1)}, x_3^{(1)} & \\ 
            \vdots               &; \ f \\
            x_1^{(d)}, x_3^{(d)} &
        \end{bmatrix} \ge
        \begin{bmatrix}
            x_2^{(1)}, x_3^{(1)} & \\ 
            \vdots               &; \ f \\
            x_2^{(d)}, x_3^{(d)} &
        \end{bmatrix}
    \end{equation*}
    if $0 \le x_1^{(k)} \le x_2^{(k)} < x_3^{(k)} \le 1$ for $k = 1, \dots, d$.
    Also, it follows that 
    \begin{equation*}
        \begin{bmatrix}
            x_1^{(1)}, x_2^{(1)} & \\ 
            \vdots               &; \ f \\
            x_1^{(d)}, x_2^{(d)} &
        \end{bmatrix} \ge
        \begin{bmatrix}
            y_1^{(1)}, y_2^{(1)} & \\ 
            \vdots               &; \ f \\
            y_1^{(d)}, y_2^{(d)} &
        \end{bmatrix}
    \end{equation*}
    if $0 \le x_1^{(k)} \le y_1^{(k)} \le 1$ and $0 \le x_2^{(k)} \le y_2^{(k)} \le 1$ for $k = 1, \dots, d$.
\end{lemma}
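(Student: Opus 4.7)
The plan is to reduce everything to the univariate fact that if $h:[0,1] \to \mathbb{R}$ satisfies $[a, b, c; h] \le 0$ for all $0 \le a < b < c \le 1$, then the first divided difference $[a, b; h]$ is nonincreasing in each endpoint separately (for $a < b$ in $[0, 1]$). This follows from the identity $[a, c; h] = \tfrac{b - a}{c - a}[a, b; h] + \tfrac{c - b}{c - a}[b, c; h]$, which expresses $[a, c; h]$ as a convex combination of $[a, b; h]$ and $[b, c; h]$, together with $[a, b; h] \ge [b, c; h]$ (the exact content of the assumed sign of the second divided difference); hence $[a, b; h] \ge [a, c; h] \ge [b, c; h]$.

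To prove the first inequality, I would transition from $(x_1^{(k)}, x_2^{(k)})_{k=1}^d$ to $(x_1^{(k)}, x_3^{(k)})_{k=1}^d$ by updating the right endpoint in one coordinate at a time. By commutativity of tensor-product divided differences, at the stage where coordinate $k$ is about to be updated, the current value equals $[x_1^{(k)}, x_2^{(k)}; h_k]$, where $h_k:[0,1] \to \mathbb{R}$ is the univariate section obtained by taking tensor-product divided differences of $f$ in every coordinate except $k$, using $(x_1^{(j)}, x_3^{(j)})$ for $j < k$ and $(x_1^{(j)}, x_2^{(j)})$ for $j > k$. The hypothesis that the divided difference of $f$ of order $(1, \ldots, 1, 2, 1, \ldots, 1)$ with the $2$ in position $k$ is nonpositive at every valid point configuration in $[0, 1]^d$ translates exactly to $[a, b, c; h_k] \le 0$ for all $0 \le a < b < c \le 1$. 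The univariate fact then gives $[x_1^{(k)}, x_2^{(k)}; h_k] \ge [x_1^{(k)}, x_3^{(k)}; h_k]$, with trivial equality when $x_2^{(k)} = x_3^{(k)}$. Chaining these inequalities across $k = 1, \dots, d$ yields the first claim, and the second claim follows by the same argument using the other direction of the univariate monotonicity, namely $[x_1, x_3; h] \ge [x_2, x_3; h]$ for $x_1 \le x_2 < x_3$.

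For the third inequality, I would again proceed one coordinate at a time, but in each coordinate decompose the transition from $(x_1^{(k)}, x_2^{(k)})$ to $(y_1^{(k)}, y_2^{(k)})$ into two sub-steps that each change only one endpoint. If $y_1^{(k)} < x_2^{(k)}$, I first change the left endpoint to reach $(y_1^{(k)}, x_2^{(k)})$ (a valid ordered pair) using the second inequality, then change the right endpoint using the first. Otherwise $y_1^{(k)} \ge x_2^{(k)}$ (which forces $y_2^{(k)} > x_2^{(k)}$), and I first change the right endpoint to $(x_1^{(k)}, y_2^{(k)})$ using the first inequality, then the left endpoint using the second. Either way each intermediate pair is strictly ordered in $[0, 1]$, so each sub-step is covered by one of the first two inequalities applied to the appropriate univariate section.

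The main obstacle is purely bookkeeping: one must carefully track which coordinates have already been updated at each intermediate stage and verify that the univariate section $h_k$ satisfies the sign hypothesis on all triples in $[0, 1]$. This is automatic because the assumed nonpositivity of the $(1, \ldots, 2, \ldots, 1)$-order divided difference of $f$ holds uniformly over all valid point configurations in $[0, 1]^d$, so concavity of $h_k$ at each stage is inherited directly from the hypothesis and the inductive procedure closes.
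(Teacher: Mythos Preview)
Your proposal is correct and follows essentially the same approach as the paper: both arguments update one coordinate at a time, using the nonpositivity of the order-$(1,\dots,1,2,1,\dots,1)$ divided difference (with the $2$ in the coordinate being updated) to obtain the required monotonicity of the first-order divided difference in that coordinate, then chain across coordinates. The paper uses the equivalent identity $[x_1,x_2;h]=[x_1,x_3;h]-(x_3-x_2)[x_1,x_2,x_3;h]$ directly rather than your convex-combination phrasing, and for the third inequality it simply moves all right endpoints first and then all left endpoints, but these are cosmetic differences.
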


Along with the conditions \eqref{eq:finite-derivative-at-zero} and \eqref{eq:finite-derivative-at-one}, Lemma \ref{lem:div-diff-mono} implies that, for every $0 \le x_k^{(1)} < x_k^{(2)} \le 1$, $k = 1, \dots, d$, 
\begin{equation}\label{eq:bdd-of-derivative}
\begin{split}
    &\bigg|\frac{1}{\prod_{k = 1}^{d}(x_k^{(2)} - x_k^{(1)})} \cdot \triangle f\bigg(\prod_{k = 1}^{d} \big[x_k^{(1)}, x_k^{(2)}\big] \bigg)\bigg| 
    = \bigg|
    \begin{bmatrix}
        x_1^{(1)}, x_2^{(1)} & \\ 
        \vdots               &; \ f \\
        x_1^{(d)}, x_2^{(d)} &
    \end{bmatrix}
    \bigg| \\
    &\qquad \quad \le \max \Bigg( 
    \bigg| \sup \bigg\{
    \begin{bmatrix}
        t_1, 1 & \\ 
        \vdots &; \ f \\
        t_d, 1 &
    \end{bmatrix}
    : t_k < 1 \bigg\} \bigg|,
    \ \bigg| \inf \bigg\{
    \begin{bmatrix}
        0, t_1 & \\ 
        \vdots &; \ f \\
        0, t_d &
    \end{bmatrix}
    : t_k > 0 \bigg\} \bigg|
    \Bigg)
    < +\infty,
\end{split}
\end{equation}
from which the absolute continuity of $\triangle f$ follows directly.
Therefore, by Theorem \ref{thm:lojasiewicz}, $D\triangle f$ exists almost everywhere (w.r.t. the Lebesgue measure) on $[0, 1]^d$, and we have 
\begin{equation}\label{eq:lojasiewicz-on-alt-sum}
    \triangle f \bigg(\prod_{k = 1}^{d}[0, x_k]\bigg) 
    = \int_{\prod_{k = 1}^{d} [0, x_k]} D \triangle f(t_1, \dots, t_d) \, d(t_1, \dots, t_d)
\end{equation}
for every $(x_1, \dots, x_d) \in [0, 1]^d$.
Furthermore, using Lemma \ref{lem:div-diff-mono} again, we can show that 
\begin{equation}\label{eq:mono-derivative}
    D\triangle f(x_1, \dots, x_d) \ge D\triangle f(x_1', \dots, x_d')
\end{equation}
whenever $x_k \le x_k'$ for $k = 1, \dots, d$, and they exist.

\noindent\textbf{Step 3-2: Definition of $g$.}

Now, we define a function $g$ on $[0, 1]^d$ as follows. 
For $\mathbf{x} = (x_1, \dots, x_d) \in [0, 1)^d$, let
\begin{equation}\label{eq:def-of-g}
    g(x_1, \dots, x_d) = \inf \big\{-D\triangle f(s_1, \dots, s_d): s_k > x_k, k = 1, \dots, d \text{ and } D\triangle f(s_1, \dots, s_d) \text{ exists} \big\}.
\end{equation}
For $\mathbf{x} = (x_1, \dots, x_d) \in [0, 1]^d \setminus [0, 1)^d$, the value of $g$ at $\mathbf{x}$ is determined sequentially as follows:
\begingroup
\allowdisplaybreaks
\begin{align*}
     &g(1, x_2, \dots, x_d) = \lim\limits_{t_1 \uparrow 1} g(t_1, x_2, \dots, x_d) \ \text{ for } x_2, \dots, x_d < 1, \\
     &g(x_1, 1, x_3, \dots, x_d) = \lim\limits_{t_2 \uparrow 1} g(x_1, t_2, x_3, \dots, x_d) \ \text{ for } x_1, x_3, \dots, x_d < 1, \ \dots \ , \\
     &g(x_1, \dots, x_{d - 1}, 1) = \lim\limits_{t_d \uparrow 1} g(x_1, \dots, x_{d - 1}, t_d) \ \text{ for } x_1, \dots, x_{d - 1} < 1, \\
     &g(1, 1, x_3, \dots, x_d) = \lim\limits_{t_1 \uparrow 1} g(t_1, 1, x_3, \dots, x_d) = \lim\limits_{t_2 \uparrow 1} g(1, t_2, x_3, \dots, x_d) \ \text{ for } x_3, \dots, x_d < 1, \ \dots \ , \\
     &g(x_1, \dots, x_{d - 2}, 1, 1) = \lim\limits_{t_{d - 1} \uparrow 1} g(x_1, \dots, x_{d - 2}, t_{d - 1}, 1) = \lim\limits_{t_d \uparrow 1} g(x_1, \dots, x_{d - 2}, 1, t_d) \ \text{ for } x_1, \dots, x_{d - 2} < 1, \ \dots \ , \\
     &g(1, \dots, 1) = \lim\limits_{t_1 \uparrow 1} g(t_1, 1, \dots, 1) = \cdots = \lim\limits_{t_d \uparrow 1} g(1, \dots, 1, t_d).
\end{align*}
\endgroup
If we prove that (a) $g$ is well-defined, (b) $g$ satisfies the conditions of Lemma \ref{lem:alt-characterization}, and (c) $g = -D\triangle f$ almost everywhere (w.r.t. the Lebesgue measure), then by \eqref{eq:lojasiewicz-on-alt-sum}, we can guarantee that $g$ is the desired function for the claim \eqref{eq:alt-sum-as-int} when $S = [d]$.

\noindent\textbf{Step 3-3: Well-definedness of $g$.}

We prove that $g$ is well-defined at every point $\mathbf{x} = (x_1, \dots, x_d) \in [0, 1]^d$ by induction on $|\{k: x_k = 1\}|$. 
From \eqref{eq:bdd-of-derivative} and \eqref{eq:def-of-g}, it follows that
\begin{equation*}
    \bigg|\sup_{(x_1, \dots, x_d) \in [0, 1)^d} g(x_1, \dots, x_d)\bigg| < + \infty.
\end{equation*}
Moreover, by \eqref{eq:def-of-g}, $g(x_1, \dots, x_d) \le g(x_1', \dots, x_d')$ for every $x_k \le x_k' < 1$, $k = 1, \dots, d$.
These boundedness and monotonicity of $g$ ensure that, for fixed $x_2, \dots, x_d < 1$, the limit $\lim_{t_1 \uparrow 1} g(t_1, x_2, \dots, x_d)$ exists, which means that $g(1, x_2, \dots, x_d)$ is well-defined. 
Through the same argument, we can show that $g$ is well-defined at every $\mathbf{x} \in [0, 1]^d$ with $|\{k: x_k = 1\}| = 1$.

We now assume that $g$ is well-defined at every $\mathbf{x} \in [0, 1]^d$ with $|\{k: x_k = 1\}| \le m - 1$ and show that it is also well-defined at every $\mathbf{x}$ with $|\{k: x_k = 1\}| = m$.
Without loss of generality, assume $\mathbf{x} = (1, \dots, 1, x_{m + 1}, \dots, x_d)$, where $x_{m + 1}, \dots, x_d < 1$.
To show that $g$ is well-defined at $\mathbf{x}$, we need to verify that 
\begin{equation*}
    \lim_{t_{1} \uparrow 1} g(t_1, 1, \dots, 1, x_{m + 1}, \dots, x_d), \dots, \lim_{t_{m} \uparrow 1} g(1, \dots, 1, t_{m}, x_{m + 1}, \dots, x_d)
\end{equation*}
exist and coincide.
To establish this, it suffices to prove that $\lim_{t_{m} \uparrow 1} g(1, \dots, 1, t_{m}, x_{m + 1}, \dots, x_d)$ exists and satisfies
\begin{equation*}
    \lim_{t_{m} \uparrow 1} g(1, \dots, 1, t_{m}, x_{m + 1}, \dots, x_d) = \lim_{n \rightarrow \infty} g\Big(1 - \frac{1}{n}, \dots, 1 - \frac{1}{n}, x_{m + 1}, \dots, x_d\Big).
\end{equation*}
The existence of the limit follows from the boundedness of $g$ and the monotonicity:
\begin{align*}
    &g(1, \dots, 1, t_m, x_{m + 1}, \dots, x_d) 
    = \lim_{t_1 \uparrow 1} \cdots \lim_{t_{m - 1} \uparrow 1} g(t_1, \dots, t_{m-1}, t_m, x_{m + 1}, \dots, x_d) \\
    &\qquad \le \lim_{t_1 \uparrow 1} \cdots \lim_{t_{m - 1} \uparrow 1} g(t_1, \dots, t_{m-1}, t_m', x_{m + 1}, \dots, x_d) 
    = g(1, \dots, 1, t_m', x_{m + 1}, \dots, x_d)
\end{align*}
for every $t_m < t_m' < 1$.
Next, note that
\begin{align*}
    &\lim_{n \rightarrow \infty} g\Big(1 - \frac{1}{n}, \dots, 1 - \frac{1}{n}, x_{m + 1}, \dots, x_d\Big) 
    \le \lim_{n \rightarrow \infty} g\Big(1, 1 - \frac{1}{n}, \dots, 1 - \frac{1}{n}, x_{m + 1}, \dots, x_d\Big) \\
    &\qquad \le \cdots \le \lim_{n \rightarrow \infty} g\Big(1, \dots, 1, 1 - \frac{1}{n}, x_{m + 1}, \dots, x_d\Big) = \lim_{t_{m} \uparrow 1} g(1, \dots, 1, t_{m}, x_{m + 1}, \dots, x_d).
\end{align*}
Thus, it remains to prove that 
\begin{equation}\label{eq:compare-limits-for-g}
    \lim_{n \rightarrow \infty} g\Big(1 - \frac{1}{n}, \dots, 1 - \frac{1}{n}, x_{m + 1}, \dots, x_d\Big) \ge \lim_{t_{m} \uparrow 1} g(1, \dots, 1, t_{m}, x_{m + 1}, \dots, x_d).
\end{equation}
Fix $\epsilon > 0$. 
Then, there exists $s_m < 1$ such that 
\begin{equation*}
    g(1, \dots, 1, s_m, x_{m + 1}, \dots, x_d) > \lim_{t_{m} \uparrow 1} g(1, \dots, 1, t_{m}, x_{m + 1}, \dots, x_d) - \frac{\epsilon}{m}.
\end{equation*}
Similarly, there exists $s_{m - 1} < 1$ such that
\begin{align*}
    g(1, \dots, 1, s_{m - 1}, s_m, x_{m + 1}, \dots, x_d) &> g(1, \dots, 1, 1, s_m, x_{m + 1}, \dots, x_d) - \frac{\epsilon}{m} \\
    &> \lim_{t_{m} \uparrow 1} g(1, \dots, 1, 1, t_{m}, x_{m + 1}, \dots, x_d) - \frac{2\epsilon}{m}.
\end{align*}
Repeating this argument, we can find $s_1, \dots, s_m < 1$ for which
\begin{equation*}
    g(s_1, \dots, s_m, x_{m + 1}, \dots, x_d) 
    > \lim_{t_{m} \uparrow 1} g(1, \dots, 1, t_{m}, x_{m + 1}, \dots, x_d) - \epsilon.
\end{equation*}
This implies that
\begin{align*}
    \lim_{n \rightarrow \infty} g\Big(1 - \frac{1}{n}, \dots, 1 - \frac{1}{n}, x_{m + 1}, \dots, x_d\Big) &\ge g(s_1, \dots, s_m, x_{m + 1}, \dots, x_d) \\
    &> \lim_{t_{m} \uparrow 1} g(1, \dots, 1, t_{m}, x_{m + 1}, \dots, x_d) - \epsilon.
\end{align*}
Since $\epsilon > 0$ can be arbitrarily small, this proves \eqref{eq:compare-limits-for-g}, and therefore $g$ is well-defined at $\mathbf{x}$.
By induction, we can see that $g$ is well-defined at every point in $[0, 1]^d$.

\noindent\textbf{Step 3-4: Entire monotonicity of $g$.}

Next, we show that $g$ is entirely monotone. 
To prove this, we use the following lemma, which derives the nonpositivity of the divided difference of $D\triangle f$ of order $\mathbf{p} = (p_1, \dots, p_d)$ from the nonpositivity of the divided difference of $f$ of order $(p_1 + 1, \dots, p_d + 1)$ for $\mathbf{p} \in \{0, 1\}^d \setminus \{\zerovec\}$. 
The proof of Lemma \ref{lem:div-diff-first-order-ineq} is deferred to Appendix \ref{pf:div-diff-first-order-ineq}.

\begin{lemma}\label{lem:div-diff-first-order-ineq}
    Fix $\mathbf{p} = (p_1, \dots, p_d) \in \{0, 1\}^d \setminus \{\zerovec\}$. 
    Suppose $f$ is a real-valued function on $[0, 1]^d$ whose divided difference of order $(p_1 + 1, \dots, p_d + 1)$ is nonpositive.
    Then, for every $0 \le x_1^{(k)} < \cdots < x_{p_k + 1}^{(k)} \le 1$, $k = 1, \dots, d$, we have
    \begin{equation*}
    \begin{bmatrix}
        x_1^{(1)}, \dots, x_{p_1 + 1}^{(1)} & \\ 
        \vdots                              &; \ D\triangle f \\
        x_1^{(d)}, \dots, x_{p_d + 1}^{(d)} &
    \end{bmatrix}
    \le 0,
    \end{equation*}
    provided that $D\triangle f(x_{i_1}^{(1)}, \dots, x_{i_d}^{(d)})$ exists for every $i_k = 1, \dots, p_k + 1$, $k = 1, \dots, d$.
\end{lemma}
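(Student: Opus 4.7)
The plan is to prove Lemma~\ref{lem:div-diff-first-order-ineq} by approximating $D\triangle f$ at the specified grid points by the smoothed quantity
\[
  \psi_\epsilon(\mathbf{z}) \;:=\; \frac{\triangle f\big(\prod_{k=1}^d [z_k, z_k+\epsilon]\big)}{\epsilon^d} \;=\; \begin{bmatrix} z_1, z_1+\epsilon & \\ \vdots &;\ f \\ z_d, z_d+\epsilon & \end{bmatrix},
\]
and then showing that the divided difference of $\psi_\epsilon$ of order $\mathbf{p}$ decomposes into a sum of divided differences of $f$ of order $\mathbf{p}+\mathbf{1}$, each of which is nonpositive by hypothesis.

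Wherever $D\triangle f$ exists, $\psi_\epsilon(\mathbf{z}) \to D\triangle f(\mathbf{z})$ as $\epsilon \to 0^+$ by the definition of the cube-derivative. Since the divided difference in the conclusion of the lemma is a fixed finite linear combination of values of $D\triangle f$ at the $2^{|S|}$ grid points with $S := \{k : p_k = 1\}$ (the coordinates $k \notin S$ contribute pure evaluation at $x_1^{(k)}$), it equals the $\epsilon \to 0^+$ limit of the analogous divided difference of $\psi_\epsilon$. It therefore suffices to show nonpositivity of the latter for all sufficiently small $\epsilon > 0$, i.e.\ small enough that the shifted points $x_i^{(k)} + \eta\epsilon$ ($\eta \in \{0,1\}$) are all distinct and contained in $[0,1]$ in each coordinate.

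The core step is the decomposition
\[
  \begin{bmatrix} x_1^{(1)}, \dots, x_{p_1+1}^{(1)} & \\ \vdots &;\ \psi_\epsilon \\ x_1^{(d)}, \dots, x_{p_d+1}^{(d)} & \end{bmatrix}
  \;=\; \sum_{(A_k)_{k \in S}} \begin{bmatrix} A_1 & \\ \vdots &;\ f \\ A_d & \end{bmatrix},
\]
where $A_k$ for $k \in S$ ranges independently over the two 3-point sets $\{x_1^{(k)}, x_1^{(k)}+\epsilon, x_2^{(k)}\}$ and $\{x_1^{(k)}+\epsilon, x_2^{(k)}, x_2^{(k)}+\epsilon\}$, and $A_k = \{x_1^{(k)}, x_1^{(k)}+\epsilon\}$ for $k \notin S$. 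Every summand on the right is then a divided difference of $f$ of order exactly $\mathbf{p}+\mathbf{1}$, hence $\le 0$ by hypothesis, so the sum (and thus the divided difference of $\psi_\epsilon$) is $\le 0$. I would derive this decomposition by iterating, over the coordinates $k \in S$, the univariate identity
\[
  [x_1, x_2;\ F] \;=\; [x_1, x_1+\epsilon, x_2;\ g] + [x_1+\epsilon, x_2, x_2+\epsilon;\ g], \qquad \text{where } F(z) := [z, z+\epsilon;\ g],
\]
valid for univariate $g$ whenever $x_1+\epsilon < x_2$ and $x_2+\epsilon \le 1$, which is a direct consequence of the divided difference recursion. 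Commutativity of divided differences in different coordinates permits these $|S|$ univariate expansions to be performed sequentially; the coordinates $k \notin S$ remain untouched because no outer divided difference acts on them, so their inner factor $[x_1^{(k)}, x_1^{(k)}+\epsilon;\ \cdot]$ from $\psi_\epsilon$ persists and supplies $A_k = \{x_1^{(k)}, x_1^{(k)}+\epsilon\}$ in each term.

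The main obstacle is the combinatorial bookkeeping of the iteration: after expanding one coordinate $k \in S$, each of the two resulting partial terms must still carry an inner factor $[z_{k'}, z_{k'}+\epsilon;\ \cdot]$ in every remaining $k' \in S$ so that the univariate identity can be applied there as well. This is resolved by induction on $|S|$, leveraging that divided differences commute across coordinates and that each univariate expansion preserves the tensor-product structure in the untouched coordinates. While each individual step is elementary, tracking the order of composition and the propagation of inner factors through the sequential coordinate expansions is the most delicate part of the argument.
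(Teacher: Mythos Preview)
Your proof is correct and takes a genuinely different route from the paper's. Both approaches approximate $D\triangle f$ by $\epsilon$-cube quotients and reduce the order-$\mathbf{p}$ divided difference of this approximation to order-$(\mathbf{p}+\mathbf{1})$ divided differences of $f$. You obtain an exact \emph{identity}: the univariate telescoping $[x_1,x_2;F] = [x_1,x_1{+}\epsilon,x_2;g] + [x_1{+}\epsilon,x_2,x_2{+}\epsilon;g]$, tensorized over $k\in S$, decomposes the order-$\mathbf{p}$ divided difference of $\psi_\epsilon$ into a sum of $2^{|S|}$ order-$(\mathbf{p}+\mathbf{1})$ divided differences of $f$, each nonpositive by hypothesis. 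The paper instead starts from a single order-$(\mathbf{p}+\mathbf{1})$ divided difference of $f$ at the nodes $(x_1^{(k)}, x_1^{(k)}{+}\epsilon, x_2^{(k)})_{k\le m}\times(x_1^{(k)},x_1^{(k)}{+}\epsilon)_{k>m}$ and, coordinate by coordinate, bounds the approximating quantity by a chain of \emph{inequalities}, each step subtracting a nonnegative multiple of another nonpositive divided difference. Your decomposition is cleaner and more transparent. One incidental advantage of the paper's version is that for $k\in S$ it uses backward cubes $[x_2^{(k)}{-}\epsilon,\,x_2^{(k)}]$ at the upper point, so $x_2^{(k)}=1$ is allowed; your forward cubes $[x_i^{(k)},x_i^{(k)}{+}\epsilon]$ require every grid coordinate to be strictly below $1$. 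This is moot in the paper's application (the lemma is only invoked at interior points), and your argument adapts immediately by mixing forward and backward cubes.
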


Fix $\mathbf{p} = (p_1, \dots, p_d) \in \{0, 1\}^d \setminus \{\zerovec\}$. 
We prove that the divided difference of $g$ of order $\mathbf{p}$ with respect to the points $(x^{(k)}_i, 1 \le i \le p_k+1, 1 \le k \le d)$ is nonnegative for every $0 \le x_1^{(k)} < \dots < x_{p_k + 1}^{(k)} \le 1$, $k = 1, \dots, d$.
We first consider the case where $x_{p_k + 1}^{(k)} < 1$ for all $k = 1, \dots, d$.
Since $D\triangle f$ exists almost everywhere, there exists a sequence of positive real vectors $\{(\delta_{n}^{(1)}, \dots, \delta_{n}^{(d)}): n \in \mathbb{N}\}$ such that $\lim_{n \rightarrow \infty} \delta_n^{(k)} = 0$ for each $k = 1, \dots, d$, and $D \triangle f(x_{i_1}^{(1)} + \delta_n^{(1)}, \dots, x_{i_d}^{(d)} + \delta_n^{(d)})$ exists for every $i_k = 1, \dots, p_k + 1$, $k = 1, \dots, d$ and for every $n \in \mathbb{N}$.
From the monotonicity \eqref{eq:mono-derivative} of $D\triangle f$ and the definition \eqref{eq:def-of-g} of $g$, it follows that
\begin{equation*}
    g\big(x_{i_1}^{(1)}, \dots, x_{i_d}^{(d)}\big) = - \lim_{n \rightarrow \infty} D \triangle f \big(x_{i_1}^{(1)} + \delta_n^{(1)}, \dots, x_{i_d}^{(d)} + \delta_n^{(d)}\big)
\end{equation*}
for every $i_k = 1, \dots, p_k + 1$, $k = 1, \dots, d$.
This implies that
\begin{equation*}
    \begin{bmatrix}
        x_1^{(1)}, \dots, x_{p_1 + 1}^{(1)} & \\ 
        \vdots                              &; \ g \\
        x_1^{(d)}, \dots, x_{p_d + 1}^{(d)} &
    \end{bmatrix}
    = - \lim_{n \rightarrow \infty} 
    \begin{bmatrix}
        x_1^{(1)} + \delta_n^{(1)}, \dots, x_{p_1 + 1}^{(1)} + \delta_n^{(1)} & \\ 
        \vdots                              &; \ D\triangle f \\
        x_1^{(d)} + \delta_n^{(d)}, \dots, x_{p_d + 1}^{(d)} + \delta_n^{(d)} &
    \end{bmatrix}.
\end{equation*}
Since $\max_k (p_k + 1) = 2$, the divided difference of $f$ of order $(p_1 + 1, \dots, p_d + 1)$ is nonpositive.
By Lemma \ref{lem:div-diff-first-order-ineq}, we thus have 
\begin{equation*}
    \begin{bmatrix}
        x_1^{(1)} + \delta_n^{(1)}, \dots, x_{p_1 + 1}^{(1)} + \delta_n^{(1)} & \\ 
        \vdots                              &; \ D\triangle f \\
        x_1^{(d)} + \delta_n^{(d)}, \dots, x_{p_d + 1}^{(d)} + \delta_n^{(d)} &
    \end{bmatrix}
    \le 0
\end{equation*}
for every $n \in \mathbb{N}$. 
This proves that the divided difference of $g$ order $\mathbf{p}$ with respect to the points $(x^{(k)}_i, 1 \le i \le p_k+1, 1 \le k \le d)$ is nonnegative.

Now, we consider the case where $x_{p_k + 1}^{(k)} = 1$ for some $k \in \{1, \dots, d\}$.
For notational convenience, we assume that $x_{p_k + 1}^{(k)} = 1$ for $k \le m$ and $x_{p_k + 1}^{(k)} < 1$ for $k > m$.
Then, by the definition \eqref{eq:def-of-g} of $g$ and the result established in the previous paragraph, we have 
\begin{equation*}
    \begin{bmatrix}
        x_1^{(1)}, \dots, x_{p_1 + 1}^{(1)} & \\ 
        \vdots                              &; \ g \\
        x_1^{(d)}, \dots, x_{p_d + 1}^{(d)} &
    \end{bmatrix}
    = \lim_{t_1 \uparrow 1} \cdots \lim_{t_m \uparrow 1} 
    \begin{bmatrix}
        x_1^{(1)}, \dots, x_{p_1}^{(1)}, t_1 & \multirow{6}{*}{; \ $g$ } \\ 
        \vdots                               & \\
        x_1^{(m)}, \dots, x_{p_m}^{(m)}, t_m & \\ 
        x_1^{(m + 1)}, \dots, x_{p_{m + 1} + 1}^{(m + 1)}  & \\
        \vdots                               & \\
        x_1^{(d)}, \dots, x_{p_d + 1}^{(d)}  &
    \end{bmatrix}
    \ge 0,
\end{equation*}
which means that the divided difference of $g$ order $\mathbf{p}$ with respect to the points $(x^{(k)}_i, 1 \le i \le p_k+1, 1 \le k \le d)$ is nonnegative in this case as well.
Thus, we can conclude that $g$ is entirely monotone.

\noindent\textbf{Step 3-5: Coordinate-wise right and left continuity of $g$.}

It is clear from the definition that $g$ is coordinate-wise left-continuous at each point $(x_k, k \in S) \in [0, 1]^{|S|} \setminus [0, 1)^{|S|}$ with respect to all the $k^{\text{th}}$ coordinates where $x_k = 1$.
Hence, to show that $g$ satisfies the conditions of Lemma \ref{lem:alt-characterization}, it remains to verify the (coordinate-wise) right-continuity of $g$.
The right-continuity of $g$ on $[0, 1)^d$ is also clear from the definition of $g$. 
We prove that $g$ is also right-continuous at every point $\mathbf{x} = (x_1, \dots, x_d) \in [0, 1]^d \setminus [0, 1)^d$ by induction on $|\{k: x_k = 1\}|$.

We assume that $g$ is right-continuous at every $\mathbf{x} \in [0, 1]^d \setminus [0, 1)^d$ with $|\{k: x_k = 1\}| \le m - 1$ and show that it is also right-continuous at every $\mathbf{x} \in [0, 1]^d \setminus [0, 1)^d$ with $|\{k: x_k = 1\}| = m$.
Without loss of generality, let $\mathbf{x} = (1, \dots, 1, x_{m + 1}, \dots, x_d)$, where $x_{m + 1}, \dots, x_d < 1$.
Suppose, for contradiction, $g$ is not right-continuous at $\mathbf{x}$.
Then, without loss of generality, we can assume that
\begin{equation*}
    g(1, \dots, 1, x_{m + 1}, \dots, x_d) < \lim_{t_{m + 1} \downarrow x_{m + 1}} g(1, \dots, 1, t_{m + 1}, x_{m + 2}, \dots, x_d).
\end{equation*}
Let $\epsilon = \lim_{t_{m + 1} \downarrow x_{m + 1}} g(1, \dots, 1, t_{m + 1}, x_{m + 2}, \dots, x_d) - g(1, \dots, 1, x_{m + 1}, \dots, x_d) > 0$ and choose any $s_{m + 1} > x_{m + 1}$.
Since 
\begin{equation*}
    g(1, \dots, 1, 1, s_{m + 1}, x_{m + 2}, \dots, x_d) = \lim_{t_{m} \uparrow 1} g(1, \dots, 1, t_m, s_{m + 1}, x_{m + 2}, \dots, x_d),
\end{equation*}
there exists $s_m < 1$ such that
\begin{equation*}
    g(1, \dots, 1, s_m, s_{m + 1}, x_{m + 2}, \dots, x_d) > g(1, \dots, 1, 1, s_{m + 1}, x_{m + 2}, \dots, x_d) - \frac{\epsilon}{2}.
\end{equation*}
For every $t_{m + 1} \in (x_{m + 1}, s_{m + 1})$, the entire monotonicity of $g$ implies that 
\begingroup
\allowdisplaybreaks
\begin{align*}
    g(1, \dots, 1, s_m, t_{m + 1}, x_{m + 2}, \dots, x_d) &\ge g(1, \dots, 1, 1, t_{m + 1}, x_{m + 2}, \dots, x_d) \\
    + \big(g(1, \dots,  1, s_m, s_{m + 1}, &x_{m + 2}, \dots, x_d) - g(1, \dots, 1, 1, s_{m + 1}, x_{m + 2}, \dots, x_d)\big) \\
    &\ge g(1, \dots, 1, 1, t_{m + 1}, x_{m + 2}, \dots, x_d) - \frac{\epsilon}{2}.
\end{align*}
\endgroup
By the right-continuity of $g$ at $(1, \dots, 1, s_m, x_{m + 1}, \dots, x_d)$ with respect to the $(m + 1)^{\text{th}}$ coordinate, we have
\begingroup
\allowdisplaybreaks
\begin{align*}
    &g(1, \dots, 1, s_m, x_{m + 1}, \dots, x_d) = \lim_{t_{m + 1} \downarrow x_{m + 1}} g(1, \dots, 1, s_m, t_{m + 1}, x_{m + 2}, \dots, x_d) \\
    &\qquad \ge \lim_{t_{m + 1} \downarrow x_{m + 1}} g(1, \dots, 1, 1, t_{m + 1}, x_{m + 2}, \dots, x_d) - \frac{\epsilon}{2} 
    = g(1, \dots, 1, 1, x_{m + 1}, x_{m + 2}, \dots, x_d) + \frac{\epsilon}{2} \\
    &\qquad \ge g(1, \dots, 1, s_m, x_{m + 1}, \dots, x_d) + \frac{\epsilon}{2},
\end{align*}
\endgroup
which leads to contradiction.
Hence, $g$ is right-continuous at $\mathbf{x}$. 
By induction, we can conclude that $g$ is right-continuous on the entire domain $[0, 1]^d$.

\noindent\textbf{Step 3-6: $g = - D \triangle f$ almost everywhere.}

Lastly, we show that $g = -D \triangle f$ almost everywhere.
Consider the set
\begin{equation*}
    A = \big\{(x_1, \dots, x_d) \in [0, 1)^d: D \triangle f (x_1, \dots, x_d) \text{ exists and } g(x_1, \dots, x_d) \neq -D\triangle f (x_1, \dots, x_d) \big\}.
\end{equation*}
It suffices to show that $m(A) = 0$, where $m$ denotes the Lebesgue measure.
First, observe that if $x_k < x_k'$ for $k = 1, \dots, d$, and $D\triangle f(x_1, \dots, x_d)$, $D \triangle f(x_1', \dots, x_d')$ exist, then by \eqref{eq:mono-derivative} and \eqref{eq:def-of-g}, 
\begin{equation*}
    -D\triangle f(x_1, \dots, x_d) \le g(x_1, \dots, x_d) \le -D\triangle f(x_1', \dots, x_d') \le g(x_1', \dots, x_d').
\end{equation*}
This implies that, for each $\mathbf{c} = (c_1, \dots, c_d) \in \R^d$, there exists an injective map from the set 
\begin{equation*}
    A_{\mathbf{c}} := \big\{u \in \mathbb{R}: u \cdot (1, \dots, 1) + \mathbf{c} \in A\big\}
\end{equation*}
to $\mathbb{Q}$, the set of rational numbers, 
from which it follows that $A_{\mathbf{c}}$ is at most countable. 
Let $\mathbf{1} = (1, \dots, 1)$ and let $\mathbf{e}_k = (0, \dots, 0, 1, 0, \dots, 0)$, $k = 1, \dots, d$ be the standard basis vectors of $\R^d$.
Also, let $\mathit{P}$ be a $d \times d$ orthogonal matrix satisfying
\begin{equation*}
    \mathit{P} \Big(\frac{1}{\sqrt{d}} \cdot \mathbf{1}\Big)
    = \mathbf{e}_1
\end{equation*}
and let $\phi_{\mathit{P}}$ be the corresponding linear map defined by $\phi_{\mathit{P}}(\mathbf{x}) = \mathit{P} \mathbf{x}$ for $\mathbf{x} \in \R^d$.
Then, we can derive that
\begin{align*}
    m(A) &= m(\phi(A)) = \int \cdots \int 1\big\{(y_1, \dots, y_d) \in \phi_{\mathit{P}}(A)\big\} \, dy_1 \cdots dy_d \\
    &= \int \cdots \int 1\bigg\{\sum_{k = 1}^{d} y_k \mathbf{e}_k \in \phi_{\mathit{P}}(A)\bigg\} \, dy_1 \cdots dy_d 
    = \int \cdots \int 1\bigg\{\sum_{k = 1}^{d} y_k \phi_{\mathit{P}}^{-1} (\mathbf{e}_k) \in A\bigg\} \, dy_1 \cdots dy_d \\
    &= \int \cdots \int 1\bigg\{\frac{y_1}{\sqrt{d}} \cdot \mathbf{1} + \sum_{k = 2}^{d} y_k \phi_{\mathit{P}}^{-1} (\mathbf{e}_k) \in A\bigg\} \, dy_1 \cdots dy_d 
    = \int \cdots \int 0 \, dy_2 \cdots dy_d = 0,
\end{align*}
where the second-to-last inequality is from the fact that 
\begin{equation*}
    1\bigg\{y_1 \in \R: \frac{y_1}{\sqrt{d}} \cdot \mathbf{1} + \sum_{k = 2}^{d} y_k \phi_{\mathit{P}}^{-1} (\mathbf{e}_k) \in A\bigg\}
\end{equation*}
is at most countable for fixed $y_2, \dots, y_d \in \R$.

To sum up, we showed that $g$ satisfies the conditions of Lemma \ref{lem:alt-characterization} and $g = -D\triangle f$ almost everywhere. 
By \eqref{eq:lojasiewicz-on-alt-sum}, it follows that $g$ is the desired function for the claim \eqref{eq:alt-sum-as-int} when $S = [d]$. 
Using similar arguments, we can find a function $g_S$ for the claim \eqref{eq:alt-sum-as-int} for each nonempty subset $S$.
Consequently, Lemma \ref{lem:alt-characterization} proves that $f \in \tcd$.

\noindent\textbf{Step 4: $f \in \tcd$ and \eqref{int-rest-cond} holds for all $S$ with $|S| > s \Rightarrow f \in \tcds$.}

We wrap up the proof by showing that if we further assume that $f$ satisfies \eqref{int-rest-cond} for every subset $S$ with $|S| > s$, then $f \in \tcds$.
In this case, it follows directly that $\triangle_S f = 0$ for every subset $S$ with $|S| > s$. 
Thus, we can express $f$ as 
\begin{equation*}
    f(x_1, \dots, x_d) = f(0, \dots, 0) + \sum_{\emptyset \neq S \subseteq [d]} \triangle_S f\bigg(\prod_{k \in S} [0, x_k]\bigg) 
    = f(0, \dots, 0) + \sum_{S: 1 \le |S| \le s} \triangle_S f\bigg(\prod_{k \in S} [0, x_k]\bigg)
\end{equation*}
for $(x_1, \dots, x_d) \in [0, 1]^d$.
By using $g_S$ satisfying the claim \eqref{eq:alt-sum-as-int} and Lemma \ref{lem:alt-characterization} again, we can therefore conclude that $f \in \tcds$.
\end{proof}

\subsubsection{Proof of Proposition \ref{prop:alt-characterization-smooth}}
We use the following lemma in our proof of Proposition \ref{prop:alt-characterization-smooth}. 
This lemma can be proved via repeated applications of the fundamental theorem of calculus. 
For details of the proof, we refer readers to \citet[Section 11.4.3]{ki2024mars}.

\begin{lemma}[Lemma 11.25 of \citet{ki2024mars}]\label{lem:mult-fund-calc}
    Suppose $g:[0, 1]^m \rightarrow \R$ is smooth in the sense that, for every $\mathbf{p} \in \{0, 1\}^m$, it has a continuous derivative $g^{(\mathbf{p})}$ on $[0, 1]^m$. 
    For each $\mathbf{p} \in \{0, 1\}^m$, let $S_{\mathbf{p}} = \{k \in \{1, \dots, m\}: p_k = 1\}$ and ${\mathbf{t}}^{(\mathbf{p})} = (t_k, k \in S_p)$.
    Then, $g$ can be expressed as
    \begin{equation*}
        g(x_1, \dots, x_m) = g(0, \dots, 0) + \sum_{\mathbf{p} \in \{0, 1\}^m \setminus \{\zerovec\}}  \int_{\prod_{k \in S_{\mathbf{p}}}[0, x_k]} g^{(\mathbf{p})} (\widetilde{\mathbf{t}}^{(\mathbf{p})}) \, d{\mathbf{t}}^{(\mathbf{p})}
    \end{equation*}
    for $x = (x_1, \dots, x_m) \in [0, 1]^m$, 
    where $\widetilde{\mathbf{t}}^{(\mathbf{p})}$ is an $m$-dimensional extension of ${\mathbf{t}}^{(\mathbf{p})}$ where
    \begin{align*}
    \widetilde{t}^{(\mathbf{p})}_k = 
    \begin{cases}
        t_k &\mbox{if } k \in S_{\mathbf{p}}  \\
        0 &\mbox{otherwise}.
    \end{cases}
    \end{align*}
\end{lemma}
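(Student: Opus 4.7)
My plan is to reduce the claim to Proposition \ref{prop:tc-equiv-restricted-interaction}: I will verify that (i) $f \in \tcp$, (ii) the regularity conditions \eqref{eq:finite-derivative-at-zero} and \eqref{eq:finite-derivative-at-one} hold for every nonempty $S \subseteq [d]$, and (iii) under the additional hypothesis, the interaction restriction \eqref{int-rest-cond} holds for every $S$ with $|S| > s$.

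The core tool will be the tensor-product Hermite--Genocchi formula for the divided differences appearing in Definition~\ref{defn-tc-popoviciu}. For a univariate $C^n$ function $g$ on $[0,1]$ and distinct points $x_0 < \dots < x_n$, one has
\begin{equation*}
  [x_0,\dots,x_n;g]
  = \int_{\Delta^n} g^{(n)}\Big(\textstyle\sum_{i=0}^{n} \tau_i x_i\Big) \, d\tau,
\end{equation*}
where $\Delta^n$ is the standard $n$-simplex in barycentric coordinates. Applying this identity in each coordinate separately to the tensor-product divided difference in \eqref{divdiff}, I obtain, for $f$ with $f^{(\mathbf{p})}$ continuous, the representation
\begin{equation*}
\begin{bmatrix}
 x_1^{(1)},\dots,x_{p_1+1}^{(1)} &\\
 \vdots &; f \\
 x_1^{(d)},\dots,x_{p_d+1}^{(d)} &
\end{bmatrix}
= \int_{\Delta^{p_1}} \cdots \int_{\Delta^{p_d}}
f^{(\mathbf{p})}\bigl(u^{(1)}(\tau^{(1)}),\dots,u^{(d)}(\tau^{(d)})\bigr)\, d\tau^{(1)}\cdots d\tau^{(d)},
\end{equation*}
with $u^{(k)}(\tau^{(k)}) = \sum_{i=0}^{p_k}\tau^{(k)}_i x^{(k)}_{i+1}$. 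This is the multivariate analogue of Lemma~\ref{lem:mult-fund-calc} but for higher-order divided differences, and it is obtained by iterating the single-coordinate identity. For any $\mathbf{p}\in\{0,1,2\}^d$ with $\max_k p_k = 2$ the assumption $f^{(\mathbf{p})}\le 0$ immediately gives $[\,\cdot\,; f]\le 0$, so $f\in\tcp$.

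For (ii), the same integral representation with $p_k\in\{0,1\}$ expresses the quantities inside the sup/inf in \eqref{eq:finite-derivative-at-zero-restated}--\eqref{eq:finite-derivative-at-one-restated} as averages of $f^{(\mathbf{p})}(\cdot)$ over sub-simplices of $[0,1]^d$. Continuity of $f^{(\mathbf{p})}$ on the compact cube $[0,1]^d$ yields a uniform bound $\|f^{(\mathbf{p})}\|_\infty<\infty$, so both the supremum in \eqref{eq:finite-derivative-at-zero} and the infimum in \eqref{eq:finite-derivative-at-one} are finite for every nonempty $S\subseteq[d]$. Combined with (i), the first half of Proposition~\ref{prop:tc-equiv-restricted-interaction} then yields $f\in\tcd$.

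For (iii), assume additionally that $f^{(\mathbf{p})}\equiv 0$ for all $\mathbf{p}\in\{0,1\}^d$ with $\sum_j p_j>s$. Fix $S\subseteq[d]$ with $|S|>s$; the interaction restriction in its equivalent form \eqref{int-rest-cond-restated} is the divided difference of order $\mathbf{p}_S$ defined by $(p_S)_k=1$ for $k\in S$ and $(p_S)_k=0$ otherwise (evaluated with single point $0$ in the coordinates outside $S$). The Hermite--Genocchi representation above equates this divided difference with an integral of $f^{(\mathbf{p}_S)}$, and $\sum_k (p_S)_k = |S|>s$ forces $f^{(\mathbf{p}_S)}\equiv 0$; hence \eqref{int-rest-cond} holds for every such $S$. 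Invoking the full conclusion of Proposition~\ref{prop:tc-equiv-restricted-interaction} then gives $f\in\tcds$.

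The main obstacle is the multivariate Hermite--Genocchi identity itself: although it is classical in the univariate setting, here I need the tensor-product version where different coordinates use different orders $p_k\in\{0,1,2\}$. I would justify it by induction on $d$, applying the univariate formula to the $d$-th coordinate with the previous coordinates treated as parameters (using continuity of $f^{(\mathbf{p})}$ to interchange differentiation and integration). Once this representation is in hand, steps (i)--(iii) are immediate sign/vanishing checks, and invoking Proposition~\ref{prop:tc-equiv-restricted-interaction} closes the proof.
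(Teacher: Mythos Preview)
Your proposal addresses the wrong statement. The lemma to be proved is a pure calculus identity: for smooth $g:[0,1]^m\to\R$,
\[
g(x_1,\dots,x_m)=g(0,\dots,0)+\sum_{\mathbf p\in\{0,1\}^m\setminus\{\zerovec\}}\int_{\prod_{k\in S_{\mathbf p}}[0,x_k]} g^{(\mathbf p)}(\widetilde{\mathbf t}^{(\mathbf p)})\,d\mathbf t^{(\mathbf p)}.
\]
There is no $f$, no $\tcp$, no sign hypothesis, no Proposition~\ref{prop:tc-equiv-restricted-interaction} in sight. What you have written is instead a proof sketch for Proposition~\ref{prop:alt-characterization-smooth} (the smoothness characterization of $\tcd$ and $\tcds$); you even refer to ``the multivariate analogue of Lemma~\ref{lem:mult-fund-calc}'' as a tool, confirming that you are treating the present lemma as an auxiliary result rather than the target.

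For the actual statement, the paper simply notes that it follows by repeated applications of the univariate fundamental theorem of calculus and refers to \citet[Section~11.4.3]{ki2024mars}. Concretely, one argues by induction on $m$: write
\[
g(x_1,\dots,x_m)=g(x_1,\dots,x_{m-1},0)+\int_0^{x_m} g^{(0,\dots,0,1)}(x_1,\dots,x_{m-1},t_m)\,dt_m,
\]
apply the induction hypothesis to each of the two functions of $(x_1,\dots,x_{m-1})$ on the right, and collect terms indexed by $\mathbf p\in\{0,1\}^m$ according to whether $p_m=0$ or $p_m=1$. No Hermite--Genocchi formula, no divided differences, and no total concavity are needed.
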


\begin{proof}[Proof of Proposition \ref{prop:alt-characterization-smooth}]
Suppose $f^{(\mathbf{p})}$ exists and is continuous on $[0, 1]^d$ for every $\mathbf{p} \in \{0, 1, 2\}^d$.
Assume further that $f^{(\mathbf{p})} = 0$ for all $\mathbf{p} \in \{0, 1\}^d$ with $\sum_{k} p_k > s$.
For each nonempty subset $S \subseteq [d]$, let $\mathbf{p}_S$ denote the $d$-dimensional vector for which $(p_S)_k = 1$ if $k \in S$, and $(p_S)_k = 0$ otherwise. 
Also, for each $S$, let $f_S$ be the restriction of $f^{(\mathbf{p}_S)}$ to $\prod_{k \in S} [0, 1] \times \prod_{k \notin S} \{0\}$. 
Note that $f_S = 0$ for every $S$ with $|S| > s$.
Since $f$ is smooth in the sense of Lemma \ref{lem:mult-fund-calc}, we can write it as
\begin{align*}
    f(x_1, \dots, x_d) &= f(0, \dots, 0) + \sum_{\emptyset \neq S \subseteq [d]} \int_{\prod_{k \in S}[0, x_k]} f_S((t_k, k \in S)) \, d(t_k, k \in S) \\
    &= f(0, \dots, 0) + \sum_{S: 1 \le |S| \le s} \int_{\prod_{k \in S}[0, x_k]} f_S((t_k, k \in S)) \, d(t_k, k \in S)
\end{align*}
for $(x_1, \dots, x_d) \in [0, 1]^d$.
Also, since $f_S$ are continuous, if $-f_S$ are entirely monotone, then we can apply Lemma \ref{lem:alt-characterization} to conclude that $f \in \tcds$.

We now show that $-f_S$ is entirely monotone for every $S$ with $1 \le |S| \le s$.
Without loss of generality, we verify this only for $S = [m]$. 
Since $f_S$ is smooth in the sense of Lemma \ref{lem:mult-fund-calc}, it can be expressed as
\begin{align*}
    f_S(x_1, \dots, x_m) &= f_S(0, \dots, 0) + \sum_{\mathbf{p} \in \{0, 1\}^m \setminus \{\zerovec\}} \int_{\prod_{k \in S_{\mathbf{p}}}[0, x_k]} f_S^{(\mathbf{p})} (\widetilde{\mathbf{t}}^{(\mathbf{p})}) \, d{\mathbf{t}}^{(\mathbf{p})}
\end{align*}
for $(x_1, \dots, x_m) \in [0, 1]^m$.
For each $\mathbf{l} = (l_1, \dots, l_m) \in \{0, 1\}^m \setminus \{\zerovec\}$, the divided difference of $f_S$ of order $\mathbf{l}$ is thus given by
\begin{equation*}
    \begin{bmatrix}
        x_1^{(1)}, \dots, x_{l_1 + 1}^{(1)} & \\ 
        \vdots                              &; \ f_S    \\
        x_1^{(m)}, \dots, x_{l_m + 1}^{(m)} &   
    \end{bmatrix}
    = \sum_{\mathbf{p}: S_{\mathbf{l}} \subseteq S_{\mathbf{p}}} \int_{\prod_{k \in S_{\mathbf{l}}}(x_1^{(k)}, x_2^{(k)}] \times \prod_{k \in S_{\mathbf{p}} \setminus S_{\mathbf{l}}} [0, x_1^{(k)}]} f_S^{(\mathbf{p})} (\widetilde{\mathbf{t}}^{(\mathbf{p})}) \, d\mathbf{t}^{(p)}.
\end{equation*}
Note that $f_S^{(\mathbf{p})}$ is the restriction of $f^{(p_1 + 1, \dots, p_m + 1, 0, \dots, 0)}$ to $[0, 1]^m \times \{0\}^{d - m}$ and that $\max_k (p_k + 1) = 2$.
Hence, by the assumption that $f^{(\mathbf{p})} \le 0$ for every $\mathbf{p} \in \{0, 1, 2\}^d$ with $\max_k p_k = 2$, we have $f_S^{(\mathbf{p})} \le 0$ for every $\mathbf{p} \in \{0, 1\}^m \setminus \{\zerovec\}$.
This ensures that the divided difference of $f_S$ of order $\mathbf{l}$ is nonpositive for every $\mathbf{l} \in \{0, 1\}^m \setminus \{\zerovec\}$, which means that $- f_S$ is entirely monotone.
\end{proof}

\subsection{Proofs of Propositions in Section \ref{excomp}}\label{pf:excomp}
\subsubsection{Proof of Proposition \ref{prop:reduction-from-popoviciu}}

Note that the least squares criterion only depends on the values of functions at the design points $\mathbf{x}^{(1)}, \dots, \mathbf{x}^{(n)}$. 
Proposition \ref{prop:reduction-from-popoviciu} is thus a direct consequence of the following lemma, which states that, for every function $f \in \tcp$ satisfying \eqref{int-rest-cond} (recall \eqref{int-rest-cond-restated}) for every subset $S$ with $|S| > s$, we can always find a function in $\tcds$ that agrees with $f$ at all $\mathbf{x}^{(1)}, \dots, \mathbf{x}^{(n)}$. 
In other words, there always exists an element of $\tcds$ that is indistinguishable from $f$ at $\mathbf{x}^{(1)}, \dots, \mathbf{x}^{(n)}$. 

\begin{lemma}\label{lem:reduction-from-popoviciu}
    Suppose $f \in \tcp$ satisfies the interaction restriction condition \eqref{int-rest-cond} for every subset $S$ of $[d]$ with $|S| > s$. 
    Then, there exists $g \in \tcds$ such that $g(\mathbf{x}^{(i)}) = f(\mathbf{x}^{(i)})$ for all $i = 1, \dots, n$.
\end{lemma}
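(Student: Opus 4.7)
My plan is to construct $g$ as the tensor-product piecewise multilinear interpolant of $f$ on a lattice containing every design point, and then verify $g \in \tcds$ via Proposition \ref{prop:tc-equiv-restricted-interaction}. For each $k \in [d]$, let $L_k = \{0, 1\} \cup \{x_k^{(i)} : 1 \le i \le n\}$, written in increasing order as $\{u_0^{(k)} < \cdots < u_{n_k}^{(k)}\}$, and let $\phi_j^{(k)}:[0,1]\to[0,1]$ be the univariate hat function with $\phi_j^{(k)}(u_{j'}^{(k)}) = \delta_{jj'}$. Set
\[g(\mathbf{x}) := \sum_{\mathbf{j}} f\big(u_{j_1}^{(1)}, \dots, u_{j_d}^{(d)}\big)\prod_{k=1}^d \phi_{j_k}^{(k)}(x_k),\]
so that $g$ is continuous on $[0,1]^d$, multilinear on each sub-rectangle $\prod_k [u_{j_k-1}^{(k)}, u_{j_k}^{(k)}]$ of the lattice $\Lambda := \prod_k L_k$, and agrees with $f$ on $\Lambda$; in particular $g(\mathbf{x}^{(i)}) = f(\mathbf{x}^{(i)})$ for each $i$. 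It remains to prove $g \in \tcds$; by Proposition \ref{prop:tc-equiv-restricted-interaction}, this reduces to showing (a) $g \in \tcp$, (b) the regularity conditions \eqref{eq:finite-derivative-at-zero}--\eqref{eq:finite-derivative-at-one} for every nonempty $S \subseteq [d]$, and (c) the interaction restriction \eqref{int-rest-cond} for every $S$ with $|S| > s$.

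Item (b) is immediate: on the two corner sub-rectangles touching $(0, \dots, 0)$ and $(1, \dots, 1)$ respectively, $g$ is a multilinear polynomial, so the alternating sums appearing in \eqref{eq:finite-derivative-at-zero-restated}--\eqref{eq:finite-derivative-at-one-restated} are uniformly bounded in $(t_k, k \in S)$ by the coefficients of that polynomial. For (c), I would use the equivalent formulation (from the opening of Appendix \ref{proofs}) that \eqref{int-rest-cond} is the statement $\triangle_S g \equiv 0$ for $|S| > s$ on products of intervals (with non-$S$ coordinates set to $0$). Because the multilinear interpolation is a tensor product of one-dimensional piecewise linear interpolations, and $\triangle_S$ acts only in the $S$-coordinates, I would expand $\triangle_S g$ on $\prod_{k \in S} [x_k, y_k]$ as a linear combination (with nonnegative hat-function weights in the non-$S$ coordinates) of the lattice quantities $\triangle_S f$ on sub-intervals of $\Lambda$, each of which vanishes by hypothesis; hence $\triangle_S g \equiv 0$.

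For (a) I would apply the iterated-divided-difference identity to reduce a $d$-dimensional divided difference of $g$ of order $\mathbf{p}$ with $\max_k p_k = 2$ to $d$ nested univariate divided differences, one per coordinate. On any axis-parallel slice, $g$ is a continuous piecewise linear function whose knots lie in the corresponding $L_k$, so a standard 1D computation expresses its second-order divided difference at arbitrary points as a nonnegative-weighted combination of the discrete second-order divided differences of $g$ (equivalently, of $f$) at lattice knots lying in the enclosing interval. Iterating over the remaining coordinates and using the tensor-product structure $g = I_1 \circ \cdots \circ I_d f$, where $I_k$ denotes univariate piecewise linear interpolation in variable $k$, the divided difference of $g$ of order $\mathbf{p}$ becomes a nonnegative-weighted sum of divided differences of $f$ of the same order $\mathbf{p}$ at lattice points; each of the latter is nonpositive because $f \in \tcp$, giving nonpositivity for $g$. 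The main obstacle is exactly this bookkeeping: ensuring that the interpolation weights remain nonnegative when second-order directions mix with first- and zeroth-order directions (as in $\mathbf{p} = (2, 1, \dots, 1)$ or $(2, 2, \dots, 2)$), which in turn rests on $g$ being piecewise multilinear rather than merely continuous piecewise polynomial.
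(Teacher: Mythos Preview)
Your proposal is correct, and in fact the function $g$ you construct—the tensor-product multilinear interpolant of $f$ on the lattice—is exactly the function the paper builds; the paper simply arrives at it from a different direction. Rather than defining $g$ by the interpolation formula and then verifying $g\in\tcds$ via Proposition~\ref{prop:tc-equiv-restricted-interaction}, the paper writes down explicit discrete measures $\nu_S$ (supported on the lattice $\widebar L_S$) and coefficients $\beta_S$ in terms of divided differences of $f$, checks that $f\in\tcp$ forces each $\nu_S\ge 0$, and then proves via an inductive identity (Lemma~\ref{lem:red-from-pop-first}) that the resulting $f_{\beta,\nu}\in\tcds$ matches $f$ at every lattice point. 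The two routes certify the same object by opposite means: yours checks the characterizing Popoviciu-plus-regularity conditions ``from the outside'' and invokes the heavy equivalence Proposition~\ref{prop:tc-equiv-restricted-interaction}; the paper exhibits the measure representation ``from the inside'' and avoids that proposition entirely.

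The trade-off is that the paper's explicit formulas for $\beta_S$ and $\nu_S$ are reused verbatim in the proofs of Lemma~\ref{lem:reduction-to-discrete} (discretization of general $\nu_S$) and Lemma~\ref{lem:complexity-upper-bound} (the bound on $V_{\text{design}}(f)$), so its construction does double duty downstream. Your route is cleaner conceptually but would require reproving those later facts separately. Your handling of part~(a) via the Peano-kernel/B-spline representation of second divided differences of piecewise linear functions is correct, and the key observation you flag—that the nonnegative weights coming from coordinate $k$ depend only on the $k$th evaluation points and the $k$th knot set, hence are constant across the remaining iterations—is exactly what makes the tensor-product reduction go through for mixed orders like $\mathbf p=(2,1,\dots,1)$.
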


\begin{proof}[Proof of Lemma \ref{lem:reduction-from-popoviciu}]
For each $k = 1, \dots, d$, we write 
\begin{equation*}
    \big\{0, x_k^{(1)}, \dots, x_k^{(n)}, 1\big\} = \big\{u_0^{(k)}, \dots, u_{n_k}^{(k)}\big\}
\end{equation*}
where $0 = u_0^{(k)} < \cdots < u_{n_k}^{(k)} = 1$.
As there can be duplications among $x_k^{(i)}$, $n_k$ may be smaller than $n + 1$.
Also, for each $S \subseteq [d]$ with $1 \le |S| \le s$, we let 
\begin{equation*}
  \widebar{L}_S := \bigg(\prod_{k \in S} \big\{u_0^{(k)}, \dots, u_{n_k - 1}^{(k)}\big\}\bigg) \setminus \{\zerovec\}.
\end{equation*}
Note that $\widebar{L}_S \subseteq L_S$ for each $S$.

We construct $f_{\beta, \nu} \in \tcds$ that agrees with $f$ at the design points ${\mathbf{x}}^{(1)}, \dots, {\mathbf{x}}^{(n)}$ explicitly as follows.
First, for each $S \subseteq [d]$ with $1 \le |S| \le s$, $\mathbf{l} = (l_k, k \in S) \in \prod_{k \in S} \{0, 1, \dots, n_k - 1\}$, and $k = 1, \dots, d$, we let
\begin{equation*}
    p_k(S, \mathbf{l}) = 
    \begin{cases}
        0 & \text{if } k \notin S \text{ or } l_k = 0 \\
        l_k - 1 & \text{otherwise}
    \end{cases}
    \quad \text{ and } \quad 
    q_k(S, \mathbf{l}) = 
    \begin{cases}
        0 & \text{if } k \notin S \\
        l_k + 1 & \text{otherwise.}
    \end{cases}
\end{equation*}
Observe that $0 \le q_k(S, \mathbf{l}) - p_k(S, \mathbf{l}) \le 2$ and that $q_k(S, \mathbf{l}) - p_k(S, \mathbf{l}) = 2$ if $k \in S$ and $l_k \neq 0$.
It thus follows that $\max_k q_k(S, \mathbf{l}) - p_k(S, \mathbf{l}) = 2$, unless $\mathbf{l} = (0, k \in S)$.
Next, we let $\beta_0 = f(0, \dots, 0)$ and, for each $S$, let 
\begin{equation}\label{popoviciu-discrete-beta}
    \beta_S =
    \begin{bmatrix}
        u_{p_1(S, \mathbf{l})}^{(1)}, \dots, u_{q_1(S, \mathbf{l})}^{(1)} & \\ 
        \vdots                                          &; \ f \\
        u_{p_d(S, \mathbf{l})}^{(d)}, \dots, u_{q_d(S, \mathbf{l})}^{(d)} &
    \end{bmatrix}.
\end{equation}
Also, for each $S$, we let $\nu_S$ be the discrete measure supported on $\widebar{L}_S$ ($\subseteq L_S$) for which
\begin{equation}\label{popoviciu-discrete-nu}
    \nu_S\big(\big\{\big(u_{l_k}^{(k)}, k \in S\big)\big\}\big) = - \bigg(\prod_{\substack{k \in S \\ l_k \neq 0}} \big(u_{l_k + 1}^{(k)} - u_{l_k - 1}^{(k)}\big)\bigg)\cdot 
    \begin{bmatrix}
        u_{p_1(S, \mathbf{l})}^{(1)}, \dots, u_{q_1(S, \mathbf{l})}^{(1)} & \\ 
        \vdots                                          &; \ f \\
        u_{p_d(S, \mathbf{l})}^{(d)}, \dots, u_{q_d(S, \mathbf{l})}^{(d)} &
    \end{bmatrix}
\end{equation}
for $\mathbf{l} = (l_k, k \in S) \in \prod_{k \in S} \{0, 1, \dots, n_k - 1\} \setminus \{\zerovec\}$.
Since $f \in \tcp$, we have
\begin{equation*}
    \begin{bmatrix}
        u_{p_1(S, \mathbf{l})}^{(1)}, \dots, u_{q_1(S, \mathbf{l})}^{(1)} & \\ 
        \vdots                                          &; \ f \\
        u_{p_d(S, \mathbf{l})}^{(d)}, \dots, u_{q_d(S, \mathbf{l})}^{(d)} &
    \end{bmatrix}
    \le 0
\end{equation*}
for every $\mathbf{l} \neq (0, k \in S)$, and this ensures that $\nu_S$ is a measure for each $S$.
Now, we prove that $f_{\beta, \nu} \in \tcds$, constructed from these $\beta_0$, $\beta_S$, and $\nu_S$, agrees with $f$ at the design point ${\mathbf{x}}^{(1)}, \dots, {\mathbf{x}}^{(n)}$.
In fact, we can prove a strong statement. 
We can show that $f_{\beta, \nu}(u_{i_1}^{(1)}, \dots, u_{i_d}^{(d)}) = f(u_{i_1}^{(1)}, \dots, u_{i_d}^{(d)})$ for all $i_k \in \{0, \dots, n_k\}$, $k = 1, \dots, d$.
For this, we use the following lemma, which will be proved right after this proof.  
Here, for each $S$ and $v_k \in [0, 1]$ for $k \notin S$, we denote by $f_S(\cdot; (v_k, k \notin S))$ the restriction of $f$ to the subset $x_k = v_k$, $k \notin S$ of $[0, 1]^d$. 
Also, recall the definition of $\triangle$ from \eqref{eq:alt-sum}.

\begin{lemma}\label{lem:red-from-pop-first}
Suppose we are given $S \subseteq [d]$ and $v_k \in [0, 1]$ for each $k \notin S$.
Then, we have that 
\begin{align}\label{eq:red-from-pop-first}
    &\triangle f_S\big(\cdot; (v_k, k \notin S)\big) \bigg(\prod_{k \in S} \big[0, u_{i_k}^{(k)}\big]\bigg) \nonumber \\
    &\quad = \sum_{\mathbf{l} \in \prod_{k \in S}\{0, \dots, n_k - 1\}} \prod_{k \in S} \big(u_{i_k}^{(k)} - u_{l_k}^{(k)}\big)_+ \cdot \prod_{\substack{k \in S \\ l_k \neq 0}} \big(u_{l_k + 1}^{(k)} - u_{l_k - 1}^{(k)} \big) \cdot 
    \begin{bmatrix}
        u_{p_k(S, \mathbf{l})}^{(k)}, \dots, u_{q_k(S, \mathbf{l})}^{(k)}, & k \in S    & \multirow{2}{*}{; \ $f$ } \\ 
        v_k,                                             & k \notin S &
    \end{bmatrix}
\end{align}
for every $i_k \in \{0, 1, \dots, n_k\}$, $k = 1, \dots, d$.
\end{lemma}

It follows from Lemma \ref{lem:red-from-pop-first} that, for each $S$,
\begingroup
\allowdisplaybreaks
\begin{align*}
    &\beta_S \cdot \prod_{k \in S} u_{i_k}^{(k)} - \sum_{\mathbf{l} \in \prod_{k \in S}\{0, \dots, n_k - 1\} \setminus \{\zerovec\}} \bigg(\prod_{k \in S} \big(u_{i_k}^{(k)} - u_{l_k}^{(k)}\big)_+\bigg) \cdot \nu_S\big(\big\{\big(u_{l_k}^{(k)}, k \in S\big)\big\}\big) \\
    &\quad= \sum_{\mathbf{l} \in \prod_{k \in S}\{0, \dots, n_k - 1\}} \bigg(\prod_{k \in S} \big(u_{i_k}^{(k)} - u_{l_k}^{(k)}\big)_+\bigg) \cdot \bigg(\prod_{\substack{k \in S \\ l_k \neq 0}} \big(u_{l_k + 1}^{(k)} - u_{l_k - 1}^{(k)} \big)\bigg) \cdot 
    \begin{bmatrix}
        u_{p_k(S, \mathbf{l})}^{(k)}, \dots, u_{q_k(S, \mathbf{l})}^{(k)}, & k \in S    & \multirow{2}{*}{; \ $f$ } \\ 
        0,                                               & k \notin S &
    \end{bmatrix} 
    \\
    &\quad= \triangle f_S\big(\cdot; (0, k \notin S)\big) \bigg(\prod_{k \in S} \big[0, u_{i_k}^{(k)}\big]\bigg) 
    = \triangle_S f\bigg(\prod_{k \in S} \big[0, u_{i_k}^{(k)}\big]\bigg)
\end{align*}
\endgroup
for every $i_k \in \{0, 1, \dots, n_k\}$, $k = 1, \dots, d$.
Together with the interaction restriction condition \eqref{int-rest-cond} for $S \subseteq [d]$ with $|S| > s$, this implies that
\begingroup 
\allowdisplaybreaks
\begin{align*}
    f_{\beta, \nu}\big(u_{i_1}^{(1)}, \dots, u_{i_d}^{(d)}\big) &= f(0, \dots, 0) + \sum_{S : 1 \le |S| \le s} \beta_S \prod_{k \in S} u_{i_k}^{(k)} \\
    &\qquad- \sum_{S: 1 \le |S| \le s} \sum_{\mathbf{l} \in \prod_{k \in S}\{0, \dots, n_k - 1\} \setminus \{\zerovec\}} \bigg(\prod_{k \in S} \big(u_{i_k}^{(k)} - u_{l_k}^{(k)}\big)_+\bigg) \cdot \nu_S\big(\big\{\big(u_{l_k}^{(k)}, k \in S\big)\big\}\big) \\
    &= f(0, \dots, 0) + \sum_{S: 1 \le |S| \le s} \triangle_S f\bigg(\prod_{k \in S} \big[0, u_{i_k}^{(k)}\big]\bigg) \\
    &= f(0, \dots, 0) + \sum_{\emptyset \neq S \subseteq [d]} \triangle_S f\bigg(\prod_{k \in S} \big[0, u_{i_k}^{(k)}\big]\bigg) 
    = f\big(u_{i_1}^{(1)}, \dots, u_{i_d}^{(d)}\big)
\end{align*}
\endgroup
for every $i_k \in \{0, 1, \dots, n_k\}$, $k = 1, \dots, d$.
\end{proof}

\begin{proof}[Proof of Lemma \ref{lem:red-from-pop-first}]
We prove the lemma by induction on $|S|$.
We first consider the case $|S| = 1$.
In this case, the claim of the lemma can be readily checked as follows.
Without loss of generality, here, we assume that $S = \{1\}$.
\begingroup
\allowdisplaybreaks
\begin{align*}
    \eqref{eq:red-from-pop-first} &= u_{i_1}^{(1)} \cdot 
    \begin{bmatrix}
        u_0^{(1)}, u_1^{(1)} & \multirow{4}{*}{; \ $f$ } \\ 
        v_2    & \\
        \vdots & \\
        v_d    &
    \end{bmatrix} 
    + \sum_{l_1 = 1}^{i_1 - 1} \big(u_{i_1}^{(1)} - u_{l_1}^{(1)}\big) \cdot \big(u_{l_1 + 1}^{(1)} - u_{l_1 - 1}^{(1)} \big)
    \cdot 
    \begin{bmatrix}
        u_{l_1 - 1}^{(1)}, u_{l_1}^{(1)}, u_{l_1 + 1}^{(1)} & \multirow{4}{*}{; \ $f$ } \\ 
        v_2    & \\
        \vdots & \\
        v_d    &
    \end{bmatrix}
    \\
    &= u_{i_1}^{(1)} \cdot 
    \begin{bmatrix}
        u_0^{(1)}, u_1^{(1)} & \multirow{4}{*}{; \ $f$ } \\ 
        v_2    & \\
        \vdots & \\
        v_d    &
    \end{bmatrix} 
    + \sum_{l_1 = 1}^{i_1 - 1} \big(u_{i_1}^{(1)} - u_{l_1}^{(1)}\big) \cdot \Bigg[
    \cdot 
    \begin{bmatrix}
        u_{l_1}^{(1)}, u_{l_1 + 1}^{(1)} & \multirow{4}{*}{; \ $f$ } \\ 
        v_2    & \\
        \vdots & \\
        v_d    &
    \end{bmatrix}
    - 
    \begin{bmatrix}
        u_{l_1 - 1}^{(1)}, u_{l_1}^{(1)} & \multirow{4}{*}{; \ $f$ } \\ 
        v_2    & \\
        \vdots & \\
        v_d    &
    \end{bmatrix}
    \Bigg] \\
    &= \sum_{l_1 = 1}^{i_1} \big(u_{l_1}^{(1)} - u_{l_1 - 1}^{(1)}\big) 
    \begin{bmatrix}
        u_{l_1 - 1}^{(1)}, u_{l_1}^{(1)} & \multirow{4}{*}{; \ $f$ } \\ 
        v_2    & \\
        \vdots & \\
        v_d    &
    \end{bmatrix}
    \\
    &= f\big(u_{i_1}^{(1)}, v_2, \dots, v_d \big) - f(0, v_2, \dots, v_d) = \triangle f_{\{1\}}\big(\cdot; (v_2, \dots, v_d)\big) \big(\big[0, u_{i_1}^{(1)}\big]\big).
\end{align*}
\endgroup

Next, we suppose that the the claim holds for all $S \subseteq [d]$ with $|S| = m - 1$ and prove the claim for subsets $S$ with $|S| = m$.
For notational convenience, here, we only consider the case $S = [m]$, but the same argument applies to every subset $S$ with $|S| = m$. 
Through the same computation as in the case $S = \{1\}$, we can show that 
\begingroup
\allowdisplaybreaks
\begin{align*}
    \eqref{eq:red-from-pop-first} &= \sum_{l_1 = 0}^{n_1 - 1} \cdots \sum_{l_{m - 1} = 0}^{n_{m - 1} - 1} \prod_{k = 1}^{m - 1} \big(u_{i_k}^{(k)} - u_{l_k}^{(k)}\big)_+ \cdot \prod_{\substack{k \in \{1, \dots, m - 1\} \\ l_k \neq 0}} \big(u_{l_k + 1}^{(k)} - u_{l_k - 1}^{(k)} \big) \\
    &\qquad \qquad \cdot \Bigg[u_{i_m}^{(m)} \cdot 
    \begin{bmatrix}
        u_{p_1(S, \mathbf{l})}^{(1)}, \dots, u_{q_1(S, \mathbf{l})}^{(1)} & \\ 
        \vdots                                          & \\
        u_{p_{m - 1}(S, \mathbf{l})}^{(m - 1)}, \dots, u_{q_{m - 1}(S, \mathbf{l})}^{(m - 1)} & \\ 
        u_{0}^{(m)}, u_{1}^{(m)} &; \ f \\ 
        v_{m + 1} & \\ 
        \vdots & \\ 
        v_{d} &
    \end{bmatrix} 
    \\
    &\qquad \qquad \qquad \quad + \sum_{l_m = 1}^{i_m - 1} \big(u_{i_m}^{(m)} - u_{l_m}^{(m)}\big) \cdot \big(u_{l_m + 1}^{(m)} - u_{l_m - 1}^{(m)} \big) \cdot 
    \begin{bmatrix}
        u_{p_1(S, \mathbf{l})}^{(1)}, \dots, u_{q_1(S, \mathbf{l})}^{(1)} & \\ 
        \vdots                                          & \\
        u_{p_{m - 1}(S, \mathbf{l})}^{(m - 1)}, \dots, u_{q_{m - 1}(S, \mathbf{l})}^{(m - 1)} & \\ 
        u_{l_m - 1}^{(m)}, u_{l_m}^{(m)}, u_{l_m + 1}^{(m)} &; \ f \\ 
        v_{m + 1} & \\ 
        \vdots & \\ 
        v_{d} &
    \end{bmatrix}
    \Bigg] \\
    &= \sum_{l_1 = 0}^{n_1 - 1} \cdots \sum_{l_{m - 1} = 0}^{n_{m - 1} - 1} \prod_{k = 1}^{m - 1} \big(u_{i_k}^{(k)} - u_{l_k}^{(k)}\big)_+ \cdot \prod_{\substack{k \in \{1, \dots, m - 1\} \\ l_k \neq 0}} \big(u_{l_k + 1}^{(k)} - u_{l_k - 1}^{(k)} \big) \\
    &\qquad \qquad \cdot \Bigg[
    \begin{bmatrix}
        u_{p_1(S, \mathbf{l})}^{(1)}, \dots, u_{q_1(S, \mathbf{l})}^{(1)} & \\ 
        \vdots                                          & \\
        u_{p_{m - 1}(S, \mathbf{l})}^{(m - 1)}, \dots, u_{q_{m - 1}(S, \mathbf{l})}^{(m - 1)} & \\ 
        u_{i_m}^{(m)} &; \ f \\ 
        v_{m + 1} & \\ 
        \vdots & \\ 
        v_{d} &
    \end{bmatrix}
    -
    \begin{bmatrix}
        u_{p_1(S, \mathbf{l})}^{(1)}, \dots, u_{q_1(S, \mathbf{l})}^{(1)} & \\ 
        \vdots                                          & \\
        u_{p_{m - 1}(S, \mathbf{l})}^{(m - 1)}, \dots, u_{q_{m - 1}(S, \mathbf{l})}^{(m - 1)} & \\ 
        0 &; \ f \\ 
        v_{m + 1} & \\ 
        \vdots & \\ 
        v_{d} &
    \end{bmatrix}
    \Bigg] \\
    &= \triangle f_{[m - 1]}\big(\cdot; \big(u_{i_m}^{(m)}, v_{m + 1}, \dots, v_d\big)\big)\bigg(\prod_{k = 1}^{m - 1} \big[0, u_{i_k}^{(k)}\big]\bigg) - \triangle f_{[m - 1]}(\cdot; (0, v_{m + 1}, \dots, v_d))\bigg(\prod_{k = 1}^{m - 1} \big[0, u_{i_k}^{(k)}\big]\bigg) \\
    &= \triangle f_{[m]}\big(\cdot; (v_{m + 1}, \dots, v_d)\big)\bigg(\prod_{k = 1}^{m} \big[0, u_{i_k}^{(k)}\big]\bigg),
\end{align*}
\endgroup
where the second-to-last equality is due to the induction hypothesis.
This proves the claim of the lemma for $S = [m]$ and concludes the proof.
\end{proof}

\subsubsection{Proof of Proposition \ref{prop:existence-computation}}

Proposition \ref{prop:existence-computation} directly follows from the lemma below, which states that, for every $f_{\beta, \nu} \in \tcds$, there always exists $f_{\gamma, \mu} \in \tcds$ whose measures $\mu_S$ are discrete and supported on $L_S$, and which coincides with $f_{\beta, \nu}$ at all design points ${\mathbf{x}}^{(1)}, \dots, {\mathbf{x}}^{(n)}$.

\begin{lemma}\label{lem:reduction-to-discrete}
    Suppose we are given real numbers $\beta_0$, $\beta_S$, $1 \le |S| \le s$, and finite measures $\nu_{S}$, $1 \le |S| \le s$. 
    Then, there exist real numbers $\gamma_0$, $\gamma_S$, $1 \le |S| \le s$, and discrete measures $\mu_{S}$ supported on $L_S$, $1 \le |S| \le s$ such that 
    $f_{\gamma, \mu}(\mathbf{x}^{(i)}) = f_{\beta, \nu}(\mathbf{x}^{(i)})$ for all $i = 1, \dots, n$.
\end{lemma}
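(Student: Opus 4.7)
My plan is to derive Lemma \ref{lem:reduction-to-discrete} as an almost immediate corollary of the already-established Lemma \ref{lem:reduction-from-popoviciu}, rather than attempting a direct construction from scratch. The first step I would take is to invoke (the forward direction of) Proposition \ref{prop:tc-equiv-restricted-interaction}: since the given $f_{\beta, \nu}$ lies in $\tcds$, it automatically belongs to $\tcp$ and satisfies the interaction restriction condition \eqref{int-rest-cond} for every $S \subseteq [d]$ with $|S| > s$. These are precisely the hypotheses needed to apply Lemma \ref{lem:reduction-from-popoviciu} with $f = f_{\beta, \nu}$, which then produces some $g \in \tcds$ satisfying $g(\mathbf{x}^{(i)}) = f_{\beta, \nu}(\mathbf{x}^{(i)})$ for every $i = 1, \dots, n$.

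Next, I would observe that the function $g$ supplied by the \emph{explicit} construction in the proof of Lemma \ref{lem:reduction-from-popoviciu} is already of the form $f_{\gamma, \mu}$ demanded by the statement. That proof sets $\beta_0 = f(0, \dots, 0)$, defines the interaction coefficients $\beta_S$ by the divided-difference formula \eqref{popoviciu-discrete-beta}, and, crucially, defines each $\nu_S$ via \eqref{popoviciu-discrete-nu} as a discrete measure supported on $\widebar{L}_S$, which is a subset of the lattice $L_S$ from \eqref{lattices-gen-by-data}. Relabeling this output as $(\gamma_0, \gamma_S, \mu_S)$ yields exactly what the lemma asks for, with no additional computation required.

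The only potential obstacle to this plan is ensuring that the forward half of Proposition \ref{prop:tc-equiv-restricted-interaction} really gives us what we need, but this is handled by Steps 1 and 2 of its proof, which show $\tcds \subseteq \tcp$ and that members of $\tcds$ satisfy the interaction restriction for $|S| > s$. In principle one could attempt a first-principles argument that replaces each $\nu_S$ by a discrete measure on $L_S$ while preserving the integrals $\int \prod_{j \in S}(x_j^{(i)} - t_j)_+ \, d\nu_S$ simultaneously for all $i$ and all $S$; this would require carefully matching piecewise multi-affine integrands across the cells of the lattice, and I expect that route to be significantly more tedious than going through the Popoviciu divided-difference representation as above.
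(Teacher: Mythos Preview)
Your proposal is correct and matches the paper's own proof essentially line for line: the paper likewise invokes Proposition \ref{prop:tc-equiv-restricted-interaction} to place $f_{\beta,\nu}$ in $\tcp$ with the interaction restriction, and then appeals to the explicit construction \eqref{popoviciu-discrete-beta}--\eqref{popoviciu-discrete-nu} from the proof of Lemma \ref{lem:reduction-from-popoviciu}, noting that the resulting measures are supported on $\widebar{L}_S \subseteq L_S$.
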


\begin{proof}[Proof of Lemma \ref{lem:reduction-to-discrete}]
Since $f_{\beta, \nu} \in \tcds$, by Proposition \ref{prop:tc-equiv-restricted-interaction}, we have $f_{\beta, \nu} \in \tcp$, and it satisfies the interaction restriction condition \eqref{int-rest-cond} for every subset $S$ of $[d]$ with $|S| > s$.
For such a function, in our proof of Lemma \ref{lem:reduction-from-popoviciu}, we constructed $f_{\gamma, \mu}$ that agrees with it at the design points ${\mathbf{x}}^{(1)}, \dots, {\mathbf{x}}^{(n)}$, with explicit choices of real numbers $\gamma_0$, $\gamma_S$, and discrete measures $\nu_S$ supported on $L_S$. 
Hence, Lemma \ref{lem:reduction-to-discrete} is a direct consequence of Proposition \ref{prop:tc-equiv-restricted-interaction} and our proof of Lemma \ref{lem:reduction-from-popoviciu}.
\end{proof}

\subsection{Proofs of Theorems in Section \ref{rates}}\label{pf:rates}
In the proofs presented in this subsection, $C$ will  represent universal constants. 
When a constant depends on specific variables, this dependence will be indicated with subscripts.
The value of a constant may vary from one line to another, even if the same notation is used.
Also, for simplicity, the exact values of constants are often left unspecified.

\subsubsection{Simple Bound of $V_{\text{design}}(\cdot)$}
Here, in the following lemma, we present a simple upper bound of $V_{\text{design}}(f)$ for $f \in \tcp$.

\begin{lemma}\label{lem:complexity-upper-bound}
  If $f \in \tcp$, then
  \begin{equation*}
    V_{\text{design}}(f) \le \sum_{\emptyset \neq S \subseteq [d]} \bigg[
    \begin{bmatrix}
        0, 1/n_k, & k \in S & \multirow{2}{*}{; \ $f$ } \\ 
        0,        & k \notin S &
    \end{bmatrix}
    -
    \begin{bmatrix}
        (n_k - 1)/n_k, 1, & k \in S & \multirow{2}{*}{; \ $f$ } \\ 
        0,                & k \notin S &
    \end{bmatrix}
    \bigg].
  \end{equation*}
\end{lemma}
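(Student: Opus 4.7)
The plan is to exhibit an explicit $g \in \tcd$ agreeing with $f$ on the design points and to compute its complexity $V(g)$. Since $f \in \tcp$ and the interaction restriction condition \eqref{int-rest-cond} is vacuous when $s = d$, Lemma \ref{lem:reduction-from-popoviciu} applies (with $s = d$) and yields $g = f_{\beta, \nu} \in \tcd$ with $g(\mathbf{x}^{(i)}) = f(\mathbf{x}^{(i)})$ for all $i$, where each $\nu_S$ is the explicit discrete measure defined in \eqref{popoviciu-discrete-nu}. Since $V_{\text{design}}(f) \le V(g) = \sum_{\emptyset \neq S \subseteq [d]} \nu_S([0, 1)^{|S|} \setminus \{\zerovec\})$, the task reduces to verifying, for each nonempty $S \subseteq [d]$, that $\nu_S([0, 1)^{|S|} \setminus \{\zerovec\})$ equals the corresponding bracketed difference on the right-hand side of the lemma.

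Fix a nonempty $S$. For the equally-spaced design one has $u_j^{(k)} = j/n_k$, and therefore $u_{l+1}^{(k)} - u_{l-1}^{(k)} = 2/n_k$ whenever $1 \le l \le n_k - 1$. The plan is to partition the sum $\sum_{\mathbf{l}} \nu_S(\{(l_k/n_k, k \in S)\})$ according to $T := \{k \in S : l_k \neq 0\}$, which ranges over nonempty subsets of $S$, since the shape of \eqref{popoviciu-discrete-nu} differs depending on which coordinates $l_k$ vanish. For a fixed $T$, in each coordinate $k \in T$ the inner sum over $l_k \in \{1, \dots, n_k - 1\}$ telescopes via the identity $(2/n_k) \cdot [(l-1)/n_k, l/n_k, (l+1)/n_k; \,\cdot\,] = [l/n_k, (l+1)/n_k; \,\cdot\,] - [(l-1)/n_k, l/n_k; \,\cdot\,]$, which turns the three-point divided difference into a difference of two-point divided differences and collapses the sum to $[(n_k - 1)/n_k, 1; \,\cdot\,] - [0, 1/n_k; \,\cdot\,]$.

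Executing the telescoping in all $|T|$ active coordinates leaves an alternating sum indexed by $T' \subseteq T$, where $T'$ records those coordinates at which the ``upper'' boundary pair $((n_k - 1)/n_k, 1)$ is selected over the ``lower'' pair $(0, 1/n_k)$; the inactive coordinates $k \in S \setminus T$ retain the pair $(0, 1/n_k)$ throughout, and coordinates $k \notin S$ remain fixed at $0$. After regrouping by $T'$ and evaluating the combinatorial coefficient
\begin{equation*}
  \sum_{T \,:\, T' \subseteq T \subseteq S,\, T \neq \emptyset} (-1)^{|T| - |T'|}
\end{equation*}
by a standard inclusion-exclusion, every intermediate $T'$ contributes zero and only the boundary values $T' = \emptyset$ and $T' = S$ survive, with opposite signs. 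Combined with the overall minus sign in \eqref{popoviciu-discrete-nu}, this produces exactly the two-term difference on the right-hand side of the lemma, and summing over $S$ completes the argument.

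The main technical hurdle I anticipate is the multi-coordinate bookkeeping: correctly tracking the three-point divided differences in coordinates $k \in T$, the two-point factors in coordinates $k \in S \setminus T$, and the fixed value $0$ in coordinates $k \notin S$, while being careful to exclude the index $\mathbf{l} = \zerovec$ from the sum. Once one recognizes that each coordinate $k \in T$ contributes the same boundary-difference operator independently, the inclusion-exclusion collapse becomes routine.
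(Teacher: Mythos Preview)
Your proposal is correct and follows the same overall strategy as the paper: reuse the explicit construction $f_{\beta,\nu}$ from the proof of Lemma~\ref{lem:reduction-from-popoviciu} (with $s=d$), bound $V_{\text{design}}(f)\le V(f_{\beta,\nu})$, and then evaluate each $\nu_S([0,1)^{|S|}\setminus\{\zerovec\})$ by telescoping the three-point divided differences via $(2/n_k)\,[\,(l-1)/n_k,\,l/n_k,\,(l+1)/n_k\,;\cdot\,]=[\,l/n_k,\,(l+1)/n_k\,;\cdot\,]-[\,(l-1)/n_k,\,l/n_k\,;\cdot\,]$.

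The only organizational difference is in how the exclusion of $\mathbf{l}=\zerovec$ is handled. You partition the sum by $T=\{k\in S:l_k\neq 0\}$, telescope in all coordinates of $T$ simultaneously, and then collapse the resulting double sum over $(T,T')$ via inclusion-exclusion. The paper instead \emph{adds back} the $\mathbf{l}=\zerovec$ term at the outset (observing that it equals exactly the ``lower'' divided difference, so that $\nu_S([0,1)^{|S|}\setminus\{\zerovec\})-[\text{lower}]$ becomes a sum over the full product $\prod_{k\in S}\{0,\dots,n_k-1\}$), and then telescopes the coordinates \emph{sequentially}, one at a time, arriving directly at $-[\text{upper}]$ without any inclusion-exclusion step. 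Both routes yield \eqref{eq:discrete-measure-size}; the paper's sequential telescoping is a bit more economical, while your $T$-partition makes the combinatorics more explicit at the cost of an extra regrouping step.
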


\begin{proof}[Proof of Lemma \ref{lem:complexity-upper-bound}]
In this proof, we reuse $f_{\beta, \nu}$ that we constructed in our proof of Lemma \ref{lem:reduction-from-popoviciu}.
Recall our specific choices of $\beta_0$, $\beta_S$, and $\nu_S$ (with $s = d$) from \eqref{popoviciu-discrete-beta} and \eqref{popoviciu-discrete-nu}. 
Note that, as
\begin{equation*}
    \{{\mathbf{x}}^{(1)}, \dots, {\mathbf{x}}^{(n)}\} = \prod_{k = 1}^{d} \Big\{0, \frac{1}{n_k}, \dots, \frac{n_k - 1}{n_k} \Big\},
\end{equation*}
here, we have
\begin{equation*}
    \big\{u_0^{(k)}, \dots, u_{n_k}^{(k)}\big\} = \Big\{0, \frac{1}{n_k}, \dots, \frac{n_k - 1}{n_k}, 1\Big\}
\end{equation*}
for each $k = 1, \dots, d$.
Since $f_{\beta, \nu}$ and $f$ have the same values at all design points ${\mathbf{x}}^{(1)}, \dots, {\mathbf{x}}^{(n)}$, the definition of $V_{\text{design}}(\cdot)$ implies that
\begin{equation*}
    V_{\text{design}}(f) \le V(f_{\beta, \nu}) = \sum_{\emptyset \neq S \subseteq [d]} \nu_S\big([0, 1)^{|S|} \setminus \{\zerovec\}\big).
\end{equation*}
Hence, it suffices to show that
\begin{equation}\label{eq:discrete-measure-size}
    \nu_S\big([0, 1)^{|S|} \setminus \{\zerovec\}\big) = 
    \begin{bmatrix}
        0, 1/n_k, & k \in S & \multirow{2}{*}{; \ $f$ } \\ 
        0,        & k \notin S &
    \end{bmatrix}
    -
    \begin{bmatrix}
        (n_k - 1)/n_k, 1, & k \in S & \multirow{2}{*}{; \ $f$ } \\ 
        0,                & k \notin S &
    \end{bmatrix}
\end{equation}
for each nonempty subset $S \subseteq [d]$.
Here, we prove this equation only for $S = [m]$ for notational convenience, but the other cases can also be proved exactly the same.
When $S = [m]$, our choice \eqref{popoviciu-discrete-nu} of $\nu_S$ leads to
\begingroup
\allowdisplaybreaks
\begin{align*}
    &\nu_S\big([0, 1)^{|S|} \setminus \{\zerovec\}\big) -  
    \begin{bmatrix}
        0, 1/n_k, & k \in S & \multirow{2}{*}{; \ $f$ } \\ 
        0,        & k \notin S &
    \end{bmatrix}
    \\
    &\ = - \sum_{l_1 = 0}^{n_1 - 1} \cdots \sum_{l_m = 0}^{n_m - 1} \bigg(\prod_{\substack{k \in \{1, \dots, m\} \\ l_k \neq 0}} \frac{2}{n_k}\bigg)\cdot 
    \begin{bmatrix}
        p_1(S, \mathbf{l})/n_1, \dots, q_1(S, \mathbf{l})/n_1 & \multirow{6}{*}{; \ $f$ } \\ 
        \vdots                              & \\
        p_m(S, \mathbf{l})/n_m, \dots, q_m(S, \mathbf{l})/n_m & \\ 
        0                                   & \\
        \vdots                              & \\ 
        0                                   &
    \end{bmatrix}
    \\
    &\ = - \sum_{l_1 = 0}^{n_1 - 1} \cdots \sum_{l_{m - 1} = 0}^{n_{m - 1} - 1}  \bigg(\prod_{\substack{k \in \{1, \dots, m - 1\} \\ l_k \neq 0}} \frac{2}{n_k}\bigg) \cdot 
    \Bigg[
    \begin{bmatrix}
        p_1(S, \mathbf{l})/n_1, \dots, q_1(S, \mathbf{l})/n_1                         & \\ 
        \vdots                                                      & \\
        p_{m - 1}(S, \mathbf{l})/n_{m - 1}, \dots, q_{m - 1}(S, \mathbf{l})/n_{m - 1} & \\ 
        0, 1/n_m                                                    &; \ f \\ 
        0                                                           & \\
        \vdots                                                      & \\ 
        0                                                           &
    \end{bmatrix}
    \\
    &\ \qquad \qquad \qquad \qquad \qquad \qquad \qquad \qquad \qquad \quad + \sum_{l_m = 1}^{n_m - 1} \frac{2}{n_m} \cdot
    \begin{bmatrix}
        p_1(S, \mathbf{l})/n_1, \dots, q_1(S, \mathbf{l})/n_1                         & \\ 
        \vdots                                                      & \\
        p_{m - 1}(S, \mathbf{l})/n_{m - 1}, \dots, q_{m - 1}(S, \mathbf{l})/n_{m - 1} & \\ 
        (l_m - 1)/n_m, l_m/n_m, (l_m + 1)/n_m                       &; \ f \\ 
        0                                                           & \\
        \vdots                                                      & \\ 
        0                                                           &
    \end{bmatrix}
    \Bigg]
    \\
    &\ = - \sum_{l_1 = 0}^{n_1 - 1} \cdots \sum_{l_{m - 1} = 0}^{n_{m - 1} - 1}  \bigg(\prod_{\substack{k \in \{1, \dots, m - 1\} \\ l_k \neq 0}} \frac{2}{n_k}\bigg) \cdot 
    \Bigg[
    \begin{bmatrix}
        p_1(S, \mathbf{l})/n_1, \dots, q_1(S, \mathbf{l})/n_1                   & \\ 
        \vdots                                                & \\
        p_{m - 1}(S, \mathbf{l})/n_{m - 1}, \dots, q_{m - 1}(S, \mathbf{l})/n_{m - 1} & \\ 
        0, 1/n_m                                              &; \ f \\ 
        0                                                     & \\
        \vdots                                                & \\ 
        0                                                     &
    \end{bmatrix}
    \\
    & + \sum_{l_m = 1}^{n_m - 1} 
    \begin{bmatrix}
        p_1(S, \mathbf{l})/n_1, \dots, q_1(S, \mathbf{l})/n_1                   & \\ 
        \vdots                                                & \\
        p_{m - 1}(S, \mathbf{l})/n_{m - 1}, \dots, q_{m - 1}(S, \mathbf{l})/n_{m - 1} & \\ 
        l_m/n_m, (l_m + 1)/n_m                                &; \ f \\ 
        0                                                     & \\
        \vdots                                                & \\ 
        0                                                     &
    \end{bmatrix}
    -
    \begin{bmatrix}
        p_1(S, \mathbf{l})/n_1, \dots, q_1(S, \mathbf{l})/n_1                   & \\ 
        \vdots                                                & \\
        p_{m - 1}(S, \mathbf{l})/n_{m - 1}, \dots, q_{m - 1}(S, \mathbf{l})/n_{m - 1} & \\ 
        (l_m - 1)/n_m, l_m/n_m                                &; \ f \\ 
        0                                                     & \\
        \vdots                                                & \\ 
        0                                                     &
    \end{bmatrix}
    \Bigg] \\
    &\ = - \sum_{l_1 = 0}^{n_1 - 1} \cdots \sum_{l_{m - 1} = 0}^{n_{m - 1} - 1}  \bigg(\prod_{\substack{k \in \{1, \dots, m - 1\} \\ l_k \neq 0}} \frac{2}{n_k}\bigg) \cdot 
    \begin{bmatrix}
        p_1(S, \mathbf{l})/n_1, \dots, q_1(S, \mathbf{l})/n_1                   & \\ 
        \vdots                                                & \\
        p_{m - 1}(S, \mathbf{l})/n_{m - 1}, \dots, q_{m - 1}(S, \mathbf{l})/n_{m - 1} & \\ 
        (n_m - 1)/n_m, 1                                      &; \ f \\ 
        0                                                     & \\
        \vdots                                                & \\ 
        0                                                     &
    \end{bmatrix}.
\end{align*}
\endgroup
Repeating this argument with the $(m - 1)^{\text{th}}, \dots, 1^{\text{st}}$ coordinates in turn, we can obtain
\begingroup
\allowdisplaybreaks
\begin{align*}
    &\nu_S\big([0, 1)^{|S|} \setminus \{\zerovec\}\big) -  
    \begin{bmatrix}
        0, 1/n_k, & k \in S & \multirow{2}{*}{; \ $f$ } \\ 
        0,        & k \notin S &
    \end{bmatrix}
    \\
    &\qquad= -
    \begin{bmatrix}
        (n_1 - 1)/n_1, 1                                      & \multirow{6}{*}{; \ $f$ } \\ 
        \vdots                                                & \\
        (n_m - 1)/n_m, 1                                      & \\ 
        0                                                     & \\
        \vdots                                                & \\ 
        0                                                     &
    \end{bmatrix} 
    = -
    \begin{bmatrix}
        (n_k - 1)/n_k, 1, & k \in S & \multirow{2}{*}{; \ $f$ } \\ 
        0,        & k \notin S &
    \end{bmatrix},
\end{align*}
\endgroup
which proves \eqref{eq:discrete-measure-size} for the case $S = [m]$.
\end{proof}

\subsubsection{Proof of Theorem \ref{thm:risk-bound-fixed}}
The following theorem (Theorem \ref{thm:chatterjee-risk-bound}), established in \citet{chatterjee2014new}, plays a central role in our proof of Theorem \ref{thm:risk-bound-fixed}. 
Before presenting the theorem, we briefly outline the problem setting considered in \cite{chatterjee2014new}.

Consider the model
\begin{equation}\label{gaussian-random-vector-model}
    y_i = \theta_i^* + \xi_i \quad \qt{for $i = 1, \dots, n$},
\end{equation}
where $\mathbf{y} = (y_1, \dots, y_n)$ is the observation vector, $\theta^* = (\theta^*_1, \dots, \theta^*_n)$ is the unknown vector to be estimated, and $\xi = (\xi_1, \dots, \xi_n)$ is a vector of Gaussian errors with $\xi_i \overset{\text{i.i.d.}}{\sim} N(0, \sigma^2)$.
Also, let $K \subseteq \R^n$ be a closed convex set.
The least squares estimator of $\theta^*$ over $K$ based on the observations $y_1, \dots, y_n$ is then defined as 
\begin{equation}\label{discrete-convex-lse}
    \hat{\theta}_K = \argmin_{\theta \in K} \|\mathbf{y} - \theta\|_2,
\end{equation}
where $\|\cdot\|_2$ denotes the Euclidean norm.

\citet{chatterjee2014new} studied the risk of this estimator $\hat{\theta}_K$, measured by the expected squared error $\E[\|\hat{\theta}_K - \theta^*\|_2^2]$, where the expectation is taken over the Gaussian errors $\xi_i$.
A key tool in their analysis is the function $h$ defined as
\begin{equation}\label{eq:chatterjee-function}
    h(t) = \E\Big[\sup_{\theta \in K: \|\theta - \theta^*\|_2 \le t} \langle \xi, \theta - \theta^* \rangle \Big] - \frac{t^2}{2}
\end{equation}
for $t \ge 0$.
If no $\theta \in K$ satisfies $\|\theta - \theta^*\|_2 \le t$, $h(t)$ is set to $-\infty$.
\citet[Theorem 1]{chatterjee2014new} proved that this function $h$ has a unique maximizer, denoted by $t^*$.
Furthermore, \citet[Corollary 1.2]{chatterjee2014new}, restated as Theorem \ref{thm:chatterjee-risk-bound} below, provides an upper bound on the expected squared error $\E[\|\hat{\theta}_K - \theta^*\|_2^2]$ in terms of $t^*$.
    
\begin{theorem}[Corollary 1.2 of \citet{chatterjee2014new}]\label{thm:chatterjee-risk-bound}
    The expected squared error can be bounded as
    \begin{equation*}
        \E\big[\|\hat{\theta}_K - \theta^*\|_2^2\big] \le C(\sigma^2 + {t^*}^2)
    \end{equation*}
    for some universal constant $C$.
\end{theorem}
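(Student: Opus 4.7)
The plan is to cast the problem as a convex LSE in $\R^n$ and apply Theorem~\ref{thm:chatterjee-risk-bound}. First, since $f^* \in \tcp$ satisfies the interaction restriction condition, Propositions~\ref{prop:tc-equiv-restricted-interaction} and~\ref{prop:reduction-from-popoviciu} together produce, for every $\epsilon > 0$, a $g \in \tcds$ agreeing with $f^*$ at all design points $\mathbf{x}^{(i)}$ with $V(g) \le V(1 + \epsilon)$. Since $R_F$ and the LSE depend on $f^*$ only through its values at these points, I may take $\epsilon \downarrow 0$ and WLOG assume $f^* \in \tcds$ with $V(f^*) \le V$. Let $K := \{(\tilde f(\mathbf{x}^{(i)}))_{i=1}^n : \tilde f \in \tcds\} \subseteq \R^n$; by Proposition~\ref{prop:existence-computation} this is the image of a closed convex cone under a linear map, hence closed and convex. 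With $\theta^* := (f^*(\mathbf{x}^{(i)}))_i \in K$ and $\hat\theta := (\hat f_{\text{TC}}^{d,s}(\mathbf{x}^{(i)}))_i$, the latter is the Euclidean projection of $\mathbf{y}$ onto $K$. Theorem~\ref{thm:chatterjee-risk-bound} then yields $\E\|\hat\theta - \theta^*\|_2^2 \le C(\sigma^2 + (t^*)^2)$, whence $R_F \le C\sigma^2/n + C(t^*)^2/n$, reducing the proof to bounding the maximizer $t^*$ of $h(t) = W(t) - t^2/2$, with $W(t) := \E[\sup_{\theta \in K : \|\theta - \theta^*\|_2 \le t} \langle \xi, \theta - \theta^* \rangle]$.

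\textbf{Metric entropy and local Gaussian complexity.} The key technical step is a metric entropy estimate of the form
\begin{equation*}
\log N\bigl(\epsilon, \{\tilde f \in \tcds : V(\tilde f) \le V_0,\ \|\tilde f - f^*\|_\infty \le R\}, \|\cdot\|_n\bigr) \le C_d\, V_0^{1/2}\, \epsilon^{-1/2}\, [\log(V_0 R/\epsilon)]^{3(2s-1)/4},
\end{equation*}
which I would derive from the measure-based representation~\eqref{intermodds}: elements of the class are nonnegative combinations, of total mass at most $V_0$, of axially concave hinge products $\prod_{j \in S}(x_j - t_j)_+$ with $1 \le |S| \le s$; tensorizing Bronshtein-type univariate concave entropy bounds across the $|S| \le s$ active coordinates and summing over the $\sum_{k=1}^s \binom{d}{k} = O(d^s)$ subsets produces the $\epsilon^{-1/2}$ main rate together with a log factor of power $3(2s-1)/4$. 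Converting to Euclidean entropy via $\|\theta_f - \theta_g\|_2 = \sqrt{n}\|f - g\|_n$ and invoking Dudley's integral then yields, on the ball constrained by $V(\tilde f) \le V_0$ and $\|\tilde f - f^*\|_\infty \le R$,
\begin{equation*}
\E\Bigl[\sup\nolimits_{\tilde f: \|\theta_{\tilde f} - \theta^*\|_2 \le t}\langle \xi, \theta_{\tilde f} - \theta^* \rangle\Bigr] \le C_d\, \sigma\, V_0^{1/4}\, n^{1/8}\, t^{3/4}\, [\log(nV_0/\sigma)]^{3(2s-1)/8}.
\end{equation*}

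\textbf{Peeling, the fixed point, and the residual term.} Since $V(\hat f)$ is not a priori bounded (in contrast to MARS-LASSO, where $V$ is imposed explicitly), I would control $W(t)$ through a dyadic peeling argument over $V_0 \in \{2^k V : k \ge 0\}$ and $R \in \{2^l R_0 : l \ge 0\}$, absorbing the peeling tails into the same $\log(nV/\sigma)$ factor up to constants. Solving the fixed point $W(t^*) = (t^*)^2/2$ with the above estimate yields $(t^*)^2/n \asymp (\sigma^2 V^{1/2}/n)^{4/5} [\log((1+V/\sigma)n)]^{3(2s-1)/5}$, matching the main term in the theorem. The residual $\sigma^2(\log n)^{5d/4}[\log((1+V/\sigma)n)]^{3(2s-1)/4}/n$ term arises from the coarsest peeling scale, where the $V^{1/2}$-entropy bound no longer dominates and one instead uses a parametric $(\log n)^{5d/4}$ bound reflecting the $O(n^s)$-dimensional discrete parameterization from Proposition~\ref{prop:existence-computation}. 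The $s=1$ case is proved identically but with univariate concave entropy (no tensorization log factor), yielding the cleaner stated rate. The main obstacle will be proving the entropy bound at the correct $3(2s-1)/4$-log power: this requires carefully stratifying the $(2^d-1)$ interaction families $S \subseteq [d]$ and tracking how the total mass $V$ distributes across scales, since the nonnegativity constraint on $\nu_S$ (unlike the signed measures of MARS-LASSO) prevents direct symmetrization tricks.
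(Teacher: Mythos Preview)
You are not proving the stated theorem. Theorem~\ref{thm:chatterjee-risk-bound} is Corollary~1.2 of \citet{chatterjee2014new}: a \emph{general} fact about Euclidean projection onto an arbitrary closed convex set $K$, asserting $\E\|\hat\theta_K-\theta^*\|_2^2\le C(\sigma^2+(t^*)^2)$ where $t^*$ maximizes $h$. The paper quotes this result without proof and uses it as a black box. Your proposal instead \emph{applies} Theorem~\ref{thm:chatterjee-risk-bound} as a tool (you write ``apply Theorem~\ref{thm:chatterjee-risk-bound}'') and then proceeds to bound $t^*$ for the specific convex set arising from $\tcds$; that is a proof sketch of Theorem~\ref{thm:risk-bound-fixed}, not of the theorem you were asked about. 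A proof of Theorem~\ref{thm:chatterjee-risk-bound} would have to establish the concentration of $\|\hat\theta_K-\theta^*\|_2$ around $t^*$ for generic $K$, which is the content of \citet{chatterjee2014new} and involves none of the entropy or peeling machinery you describe.

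As a side remark on your sketch of Theorem~\ref{thm:risk-bound-fixed}: the paper's route is quite different from what you outline. Rather than peeling over the unbounded quantity $V(\tilde f)$, the paper decomposes the lattice design spatially into $2^d$ orthants and then into dyadic rectangles $L_{\mathbf r}$; the crucial step (Lemma~\ref{lem:bounds-finite-derivatives}) shows that total concavity together with $\|\theta-\theta^*\|_2\le t$ forces an \emph{a priori} bound $V(\theta^{(\mathbf r)})\le C_d[M+t(2^{r_+}/n)^{1/2}]$ on each piece, after which a known empirical-process bound (Theorem~\ref{thm:emp-bound-ki}) is applied piecewise and summed. Your proposed peeling over $V_0$ would need a substitute for this lemma to control how much measure mass the LSE can place near the boundary, and it is not clear your entropy estimate alone supplies that control.
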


However, determining the maximizer $t^*$ of the function $h$ is often quite challenging. 
Fortunately, we can utilize the following result in \cite{chatterjee2014new} to obtain an upper bound for $t^*$, 
which can then be used instead to bound the expected squared error. 

\begin{proposition}[Proposition 1.3 of \citet{chatterjee2014new}]\label{prop:chatterjee-maximizer}
    If $h(t_1) \ge h(t_2)$ for some $0 \le t_1 < t_2$, then  $t^* \le t_2$.
\end{proposition}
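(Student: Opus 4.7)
The plan is to derive the proposition from strict concavity of $h$, treating the existence/uniqueness of $t^*$ (stated just before) as already established.

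First, I would show that the map
\[ g(t) := \E\Big[\sup_{\theta \in K : \|\theta - \theta^*\|_2 \le t} \langle \xi, \theta - \theta^*\rangle \Big] \]
is concave on the set where it is finite. Fix a realization $\xi$ and, given $t_1, t_2 \ge 0$ with $K \cap B(\theta^*, t_i)$ nonempty, pick $\varepsilon$-optimal witnesses $\theta_i \in K$ with $\|\theta_i - \theta^*\|_2 \le t_i$ for $i=1,2$. For any $\lambda \in [0,1]$, the point $\lambda \theta_1 + (1-\lambda)\theta_2$ lies in $K$ by convexity, and the triangle inequality applied to $\theta_i - \theta^*$ gives $\|\lambda\theta_1 + (1-\lambda)\theta_2 - \theta^*\|_2 \le \lambda t_1 + (1-\lambda) t_2$. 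Hence this convex combination is feasible for the supremum defining $g(\lambda t_1 + (1-\lambda) t_2)$, and linearity of $\langle \xi, \cdot\rangle$ yields the pointwise concavity inequality up to $\varepsilon$. Taking expectation over $\xi$ preserves concavity, and since $-t^2/2$ is strictly concave, $h = g - t^2/2$ is strictly concave wherever it is finite.

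Next, I would derive the bound on $t^*$ by contradiction. Suppose $h(t_1) \ge h(t_2)$ with $0 \le t_1 < t_2$ but $t^* > t_2$. Finiteness of $h(t_1)$ and $h(t_2)$ is given by the hypothesis, and $h(t^*) \ge h(t_2)$ is finite by the maximizer property. Writing $t_2 = \lambda t_1 + (1-\lambda) t^*$ for the unique $\lambda \in (0,1)$, strict concavity gives
\[ h(t_2) > \lambda\, h(t_1) + (1-\lambda)\, h(t^*) \ge \lambda\, h(t_1) + (1-\lambda)\, h(t_1) = h(t_1), \]
where I used $h(t^*) \ge h(t_1)$. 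This contradicts $h(t_1) \ge h(t_2)$, so $t^* \le t_2$.

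The main obstacle I expect is not mathematical depth but careful bookkeeping in the concavity step: the supremum in $g$ need not be attained (so $\varepsilon$-optimal competitors are required), and $h$ can equal $-\infty$ on part of its domain. Both are easily dealt with—the assumed inequality $h(t_1) \ge h(t_2)$ forces finiteness at the two endpoints, and strict concavity transports finiteness to all intermediate $t$, which is the only range used in the contradiction argument.
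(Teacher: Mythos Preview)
Your argument is correct and is essentially the standard proof of this fact (and is how Chatterjee establishes it). Note, however, that the present paper does not actually prove this proposition; it merely quotes it as Proposition~1.3 of \citet{chatterjee2014new} and uses it as a black box. So there is no ``paper's own proof'' to compare against here---your write-up simply supplies the argument the paper omits.

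One small quibble: your claim that ``the assumed inequality $h(t_1) \ge h(t_2)$ forces finiteness at the two endpoints'' is not literally true, since $-\infty \ge -\infty$. Under the paper's convention $h(t) = -\infty$ whenever $K \cap \{\theta : \|\theta - \theta^*\|_2 \le t\} = \emptyset$, so if both $t_1 < t_2$ lie below the distance from $\theta^*$ to $K$, the hypothesis holds vacuously while $t^* > t_2$. This edge case never arises in the paper's applications (they always take $t_1 = 0$ with $\theta^* \in K$, so $h(0) = 0$ is finite), and Chatterjee's statement is tacitly restricted to the finite regime; but strictly speaking you should add the assumption $h(t_2) > -\infty$ or note that the conclusion is intended only in that case.
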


\begin{remark}[Well-specified case]
    If $\theta^* \in K$, it holds that $h(0) = 0$. 
    Thus, in this case, for every $t_2 > 0$ with $h(t_2) \le 0$, we have $t^* \le t_2$.
\end{remark}

We now establish a connection between $\hat{f}_{\text{TC}}^{d, s}$ and $\hat{\theta}_K$ with an appropriate closed convex set $K$.
Consider the index set
\begin{equation*}
    I_0 = \prod_{k = 1}^{d} \{0, 1, \dots, n_k - 1\}.
\end{equation*}
We first re-index ${\mathbf{x}}^{(1)}, \dots, {\mathbf{x}}^{(n)}$ using $I_0$, so that ${\mathbf{x}}^{(\mathbf{i})} = (i_1/n_1, \dots, i_d/n_d)$ for $\mathbf{i} = (i_1, \dots, i_d) \in I_0$. 
Similarly, the observations $y_1, \dots, y_n$ are re-indexed as $(y_{\mathbf{i}}, \mathbf{i} \in I_0)$.
Next, define
\begin{equation}\label{discrete-tc-set}
    T = \big\{(f({\mathbf{x}}^{(\mathbf{i})}), \mathbf{i} \in I_0): f \in \tcds\big\}.
\end{equation}
Note that $T$ is a closed convex subset of $\R^{|I_0|}$.
Also, let $\hat{\theta}_T \in \R^{|I_0|}$ denote the least squares estimator of $\theta^*$ over $T$:
\begin{equation*}
    \hat{\theta}_T = \argmin_{\theta \in T} \|\mathbf{y} - \theta\|_2,
\end{equation*}
where $\mathbf{y} = (y_{\mathbf{i}}, \mathbf{i} \in I_0)$.
Then, by the definition \eqref{discrete-tc-set} of $T$, it is clear that the values of $\hat{f}_{\text{TC}}^{d, s}$ at the design points are equal to the components of $\hat{\theta}_T$.
That is,
\begin{equation*}
    \hat{f}_{\text{TC}}^{d, s}\Big(\frac{i_1}{n_1}, \dots, \frac{i_d}{n_d}\Big) = (\hat{\theta}_T)_{\mathbf{i}}
\end{equation*}
for all $\mathbf{i} = (i_1, \dots, i_d) \in I_0$.
Consequently, the risk $R_F(\hat{f}_{\text{TC}}^{d, s}, f^*)$ of $\hat{f}_{\text{TC}}^{d, s}$ can be expressed in terms of the expected squared error of $\hat{\theta}_T$:
\begin{equation*}
    R_F(\hat{f}_{\text{TC}}^{d, s}, f^*) = \frac{1}{n} \cdot \E\big[\|\hat{\theta}_T - \theta^*\|_2^2 \big],
\end{equation*}
where $\theta^* = (f^*({\mathbf{x}}^{(\mathbf{i})}), \mathbf{i} \in I_0)$.
This relationship provides a way to analyze the risk $R_F(\hat{f}_{\text{TC}}^{d, s}, f^*)$ of $\hat{f}_{\text{TC}}^{d, s}$ by studying the expected squared error $\E[\|\hat{\theta}_T - \theta^*\|_2^2]$ of $\hat{\theta}_T$.

We also employ the following concepts and notations in our proof of Theorem \ref{thm:risk-bound-fixed}.
For $\theta \in \R^{|I_0|}$ and a vector of nonnegative integers $\mathbf{p} = (p_1, \dots, p_d)$, the discrete difference of $\theta$ of order $\mathbf{p}$ at $(i_1, \dots, i_d)$, where $(i_1, \dots, i_d) \in I_0$ and $i_k \ge p_k$ for $k = 1, \dots, d$, is defined as 
\begin{equation}\label{discrete-differences}
    (D^{(\mathbf{p})} \theta)_{i_1, \dots, i_d} =
    \frac{\prod_{k = 1}^{d} p_k !}{\prod_{k = 1}^{d} n_k^{p_k}} \cdot 
    \begin{bmatrix}
        (i_1 - p_1)/n_1, \dots, i_1/n_1 & \\ 
        \vdots                          &; \ f \\
        (i_d - p_d)/n_d, \dots, i_d/n_d &
    \end{bmatrix},
\end{equation} 
where $f: [0, 1]^d \rightarrow \R$ is any function satisfying $f(j_1/n_1, \dots, j_d/n_d) = \theta_{\mathbf{j}}$ for all $\mathbf{j} = (j_1, \dots, j_d) \in I_0$.
Since \eqref{discrete-differences} only depends on the values of $f$ at $(j_1/n_1, \dots, j_d/n_d)$ for $\mathbf{j} = (j_1, \dots, j_d) \in I_0$, the discrete difference $(D^{(\mathbf{p})} \theta)_{i_1, \dots, i_d}$ is well-defined; it is independent of the choice of $f$.
These discrete differences $(D^{(\mathbf{p})} \theta)_{i_1, \dots, i_d}$ are consistent with the definitions provided in \citet[Section 9]{ki2024mars}.

\begin{example}[d = 2]
Let $\theta = (\theta_{\mathbf{i}}, \mathbf{i} \in I_0) \in \R^{|I_0|}$.
For $\mathbf{p} = (1, 0)$: 
\begin{equation*}
    (D^{(1, 0)} \theta)_{i_1, i_2} = \frac{1}{n_1} 
    \begin{bmatrix}
        (i_1 - 1)/n_1, i_1/n_1 & \multirow{2}{*}{; \ $f$ } \\ 
        i_2/n_2                & 
    \end{bmatrix}
    = f\Big(\frac{i_1}{n_1}, \frac{i_2}{n_2}\Big) - f\Big(\frac{i_1 - 1}{n_1}, \frac{i_2}{n_2}\Big) 
    = \theta_{i_1, i_2} - \theta_{i_1 - 1, i_2}
\end{equation*}
provided that $i_1 \ge 1$.
For $\mathbf{p} = (1, 1)$:
\begin{align*}
    (D^{(1, 1)} \theta)_{i_1, i_2} &= \frac{1}{n_1 n_2} 
    \begin{bmatrix}
        (i_1 - 1)/n_1, i_1/n_1 & \multirow{2}{*}{; \ $f$ } \\ 
        (i_2 - 1)/n_2, i_2/n_2 & 
    \end{bmatrix}
    \\
    &= f\Big(\frac{i_1}{n_1}, \frac{i_2}{n_2}\Big) - f\Big(\frac{i_1 - 1}{n_1}, \frac{i_2}{n_2}\Big) 
    - f\Big(\frac{i_1}{n_1}, \frac{i_2 - 1}{n_2}\Big) + f\Big(\frac{i_1 - 1}{n_1}, \frac{i_2 - 1}{n_2}\Big) \\
    &= \theta_{i_1, i_2} - \theta_{i_1 - 1, i_2} - \theta_{i_1, i_2 - 1} + \theta_{i_1 - 1, i_2 - 1}
\end{align*}
provided that $i_1, i_2 \ge 1$. 
For $\mathbf{p} = (2, 0)$:
\begingroup
\allowdisplaybreaks
\begin{align*}
    (D^{(2, 0)} \theta)_{i_1, i_2} &= \frac{2}{n_1^2} 
    \begin{bmatrix}
        (i_1 - 2)/n_1, (i_1 - 1)/n_1, i_1/n_1 & \multirow{2}{*}{; \ $f$ } \\ 
        i_2/n_2                & 
    \end{bmatrix}
    \\
    &= f\Big(\frac{i_1}{n_1}, \frac{i_2}{n_2}\Big) - 2 f\Big(\frac{i_1 - 1}{n_1}, \frac{i_2}{n_2}\Big) + f\Big(\frac{i_1 - 2}{n_1}, \frac{i_2}{n_2}\Big)
    = \theta_{i_1, i_2} - 2 \theta_{i_1 - 1, i_2} + \theta_{i_1 - 2, i_2}
\end{align*}
\endgroup
provided that $i_1 \ge 2$.
\end{example}

If $\theta \in T$, then we can select $f$ satisfying the required condition from $\tcds$. 
Let $f_0$ denote such a function. 
Since the divided difference of $f_0$ of order $\mathbf{p}$ is nonpositive for every $\mathbf{p} \in \{0, 1, 2\}^d$ with $\max_k p_k = 2$, it then follows that $(D^{(\mathbf{p})} \theta)_{\mathbf{i}} \le 0$ for all $\mathbf{p} \in \{0, 1, 2\}^d$ with $\max_k p_k = 2$ and $\mathbf{i} = (i_1, \dots, i_d) \in I_0$ with $i_k \ge p_k$ for $k = 1, \dots, d$.
Furthermore, using the interaction restriction condition \eqref{int-rest-cond} that $f_0$ satisfies for every subset $S \subseteq [d]$ with $|S| > s$, we can show that $(D^{(\mathbf{p})} \theta)_{\mathbf{i}} = 0$ for every $\mathbf{p} \in \{0, 1\}^d$ with $\sum_{k = 1}^d p_k > s$ and $\mathbf{i} = (i_1, \dots, i_d) \in I_0$ with $i_k \ge p_k$ for $k = 1, \dots, d$.

Additionally, for a vector of positive integers $\mathbf{m} = (m_1, \dots, m_d)$, define 
\begin{equation}\label{general-index-set}
    I(\mathbf{m}) = \prod_{k = 1}^{d} \{0, 1, \dots, m_k - 1\}.
\end{equation}
Note that $I_0 = I(n_1, \dots, n_d)$.
For $\theta \in \R^{|I(\mathbf{m})|}$ and a vector of nonnegative integers $\mathbf{p} = (p_1, \dots, p_d)$, the discrete difference of $\theta$ of order $\mathbf{p}$ at $(i_1, \dots, i_d)$, where $(i_1, \dots, i_d) \in I(\mathbf{m})$ and $i_k \ge p_k$ for $k = 1, \dots, d$, is defined analogously to \eqref{discrete-differences}.

\begin{proof}[Proof of Theorem \ref{thm:risk-bound-fixed}]
Let $h$ be the function defined in \eqref{eq:chatterjee-function} with $K = T$.
We aim to find $t_2 > 0$ such that $h(t_2) \le 0$. 
After that, we will apply Theorem \ref{thm:chatterjee-risk-bound} and Proposition \ref{prop:chatterjee-maximizer} (with $K = T$) to derive an upper bound for the expected squared error $\E[\|\hat{\theta}_T - \theta^*\|_2^2]$. 

To focus on the first term of $h(\cdot)$, we define
\begin{equation*}
    H(t) = h(t) + \frac{t^2}{2} = \E\Big[\sup_{\theta \in T: \|\theta - \theta^*\|_2 \le t} \langle \xi, \theta - \theta^* \rangle \Big]
\end{equation*}
for $t \ge 0$.
For technical reasons, we decompose the index set $I_0$ into $2^d$ subsets with approximately equal size.
For each $k = 1, \dots, d$, define 
\begin{equation*}
    J_0^{(k)} = \Big\{i_k \in \mathbb{Z}: 0 \le i_k \le \frac{n_k}{2} - 1 \Big\}
    \ \text{ and } \
    J_1^{(k)} = \Big\{i_k \in \mathbb{Z}: \frac{n_k - 1}{2} \le i_k \le n_k - 1 \Big\}.
\end{equation*}
Also, for each $\delta \in \{0, 1\}^d$, let
\begin{equation*}
    J_{\delta} = \prod_{k = 1}^{d} J_{\delta_k}^{(k)}.
\end{equation*}
It is clear that $I_0 = \biguplus_{\delta \in \{0, 1\}^d} J_{\delta}$, where $\biguplus$ denotes the disjoint union. 
It thus follows that 
\begin{equation*}
    H(t) = \E\bigg[\sup_{\theta \in T: \|\theta - \theta^*\|_2 \le t} \sum_{\delta \in \{0, 1\}^d} \sum_{\mathbf{i} \in J_{\delta}} \xi_{\mathbf{i}} (\theta_{\mathbf{i}} - \theta_{\mathbf{i}}^*) \bigg] \le \sum_{\delta \in \{0, 1\}^d} \E\bigg[\sup_{\theta \in T: \|\theta - \theta^*\|_2 \le t} \sum_{\mathbf{i} \in J_{\delta}} \xi_{\mathbf{i}} (\theta_{\mathbf{i}} - \theta_{\mathbf{i}}^*) \bigg] 
    = \sum_{\delta \in \{0, 1\}^d} H_{\delta}(t),
\end{equation*}
where
\begin{equation*}
    H_{\delta}(t) = \E\bigg[\sup_{\theta \in T: \|\theta - \theta^*\|_2 \le t} \sum_{\mathbf{i} \in J_{\delta}} \xi_{\mathbf{i}} (\theta_{\mathbf{i}} - \theta_{\mathbf{i}}^*) \bigg]
\end{equation*}
for each $\delta \in \{0, 1\}^d$.

Here, we focus on bounding $H_{\zerovec}(\cdot)$, as the bounds for the other $H_{\delta}(\cdot)$ terms can be obtained analogously.
We further split the index set $J_{\zerovec}$ into dyadic pieces.
For each $k = 1, \dots, d$, let $R_k$ denote the largest integer $r_k$ for which the set 
\begin{equation*}
    L_{r_k}^{(k)} := \Big\{i_k \in \mathbb{Z}:  \frac{n_k}{2^{r_k + 1}} - 1 < i_k \le \frac{n_k}{2^{r_k}} - 1 \Big\} := \big\{a_{r_k}^{(k)}, a_{r_k}^{(k)} + 1, \dots, b_{r_k}^{(k)} \big\}
\end{equation*}
is nonempty.
Observe that $L_{R_k}^{(k)} = \{0\}$ and $2^{R_k} \le n_k < 2^{R_k + 1}$ for all $k = 1, \dots, d$.
Define $\mathcal{R} = \prod_{k = 1}^{d} \{1, \dots, R_k\}$.
Then, $J_{\zerovec}$ can be decomposed as follows:
\begin{equation*}
    J_{\zerovec} = \biguplus_{\mathbf{r} \in \mathcal{R}} L_{\mathbf{r}}, \quad \text{ where } \quad L_{\mathbf{r}} := \prod_{k = 1}^{d} L_{r_k}^{(k)}.
\end{equation*}
For $\theta = (\theta_{\mathbf{i}}, \mathbf{i} \in I_0) \in \R^{|I_0|}$ and $\mathbf{r} = (r_1, \dots, r_d) \in \prod_{k = 1}^{d}\{0, 1, \dots, R_k\}$, let $\theta^{(\mathbf{r})}$ denote the subvector of $\theta$ consisting of the components whose indices belong to $L_{\mathbf{r}}$.

Now, define
\begin{equation*}
    W = \bigg\{(w_{\mathbf{r}}, \mathbf{r} \in \mathcal{R}): w_{\mathbf{r}} \in \{1, \dots, |\mathcal{R}|\} \text{ for each } \mathbf{r} \in \mathcal{R} \text{ and } \sum_{\mathbf{r} \in \mathcal{R}} w_{\mathbf{r}} \le 2 |\mathcal{R}|\bigg\}.
\end{equation*}
Also, for each $\mathbf{w} = (w_{\mathbf{r}}, \mathbf{r} \in \mathcal{R}) \in W$, define
\begin{equation*}
    \Lambda_{\mathbf{w}}(t) = \Big\{\theta \in T: \|\theta - \theta^*\|_2 \le t \text{ and } \|\theta^{(\mathbf{r})} - {\theta^*}^{(\mathbf{r})}\|_2^2 \le \frac{w_{\mathbf{r}} t^2}{|\mathcal{R}|} \text{ for each } \mathbf{r} \in \mathcal{R} \Big\}.
\end{equation*}
We then claim that
\begin{equation}\label{eq:set-T-covering}
    \{\theta \in T: \|\theta - \theta^*\|_2 \le t\} 
    \subseteq \bigcup_{\mathbf{w} \in W} \Lambda_{\mathbf{w}}(t). 
\end{equation}
This can be easily verified as follows. 
Fix $\theta \in T$ and assume that $\|\theta -\theta^*\|_2 \le t$.
For each $\mathbf{r} \in \mathcal{R}$, note that
\begin{equation*}
    \|\theta^{(\mathbf{r})} - {\theta^*}^{(\mathbf{r})}\|_2 \le \|\theta - \theta^*\|_2 \le t. 
\end{equation*}
Thus, there exists $w_{\mathbf{r}} \in \{1, \dots, |\mathcal{R}|\}$ such that
\begin{equation*}
    \frac{(w_{\mathbf{r}} - 1) t^2}{|\mathcal{R}|} \le \|\theta^{(\mathbf{r})} - {\theta^*}^{(\mathbf{r})}\|_2^2 \le \frac{w_{\mathbf{r}} t^2}{|\mathcal{R}|}.
\end{equation*}
Furthermore, since 
\begin{equation*}
    t^2 \ge \|\theta - \theta^*\|_2^2 \ge \sum_{\mathbf{r} \in \mathcal{R}} \|\theta^{(\mathbf{r})} - {\theta^*}^{(\mathbf{r})}\|_2^2 \ge \frac{t^2}{|\mathcal{R}|} \sum_{\mathbf{r} \in \mathcal{R}}(w_{\mathbf{r}} - 1) = t^2\bigg(\frac{1}{|\mathcal{R}|} \sum_{\mathbf{r} \in \mathcal{R}} w_{\mathbf{r}} - 1 \bigg),
\end{equation*}
it follows that $\sum_{\mathbf{r} \in \mathcal{R}} w_{\mathbf{r}} \le 2 |\mathcal{R}|$. 
Consequently, $\theta \in \Lambda_{\mathbf{w}}(t)$, completing the verification.

By \eqref{eq:set-T-covering}, we have
\begin{equation*}
    H_{\zerovec}(t) \le \E\bigg[\max_{\mathbf{w} \in W} \sup_{\theta \in \Lambda_{\mathbf{w}}(t)} \sum_{\mathbf{i} \in J_{\zerovec}} \xi_{\mathbf{i}} (\theta_{\mathbf{i}} - \theta_{\mathbf{i}}^*) \bigg].
\end{equation*}
Using the following lemma, which is based on the concentration inequality for Gaussian random variables, we can switch the order of the expectation and maximum (at the cost of additional terms):
\begin{equation*}
    H_{\zerovec}(t) \le \max_{\mathbf{w} \in W} \E\bigg[\sup_{\theta \in \Lambda_{\mathbf{w}}(t)} \sum_{\mathbf{i} \in J_{\zerovec}} \xi_{\mathbf{i}} (\theta_{\mathbf{i}} - \theta_{\mathbf{i}}^*) \bigg] + \sigma t \sqrt{2 \log|W|} + \sigma t \sqrt{\frac{\pi}{2}}.
\end{equation*}

\begin{lemma}[Lemma D.1 of \citet{guntuboyina2020adaptive}]
    Let $\Theta_1, \dots, \Theta_m$ be subsets of $\R^n$ containing the origin. 
    Then, we have
    \begin{equation*}
        \E\Big[\max_{i = 1, \dots, m} \sup_{\theta \in \Theta_i} \langle \xi, \theta \rangle \Big]
        \le \max_{i = 1, \dots, m} \E \Big[\sup_{\theta \in \Theta_i} \langle \xi, \theta \rangle \Big] 
        + \sigma \Big(\sqrt{2 \log m} + \sqrt{\frac{\pi}{2}}\Big) \cdot \Big(\max_{i = 1, \dots, m} \sup_{\theta \in \Theta_i} \|\theta\|_2\Big),
    \end{equation*}
    where $\xi = (\xi_1, \dots, \xi_n)$ is a vector of Gaussian errors with $\xi_i \overset{\text{i.i.d.}}{\sim} N(0, \sigma^2)$.
\end{lemma}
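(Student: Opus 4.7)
The plan is to apply the Gaussian concentration inequality (Borell--TIS) to each $Z_i := \sup_{\theta \in \Theta_i} \langle \xi, \theta \rangle$ individually, then glue the resulting sub-Gaussian tails together via a union bound and layer-cake integration. Setting $Y_i := Z_i - \E[Z_i]$, the elementary inequality $\max_i(\E[Z_i] + Y_i) \le \max_i \E[Z_i] + \max_i Y_i$, after taking expectations, reduces the claim to showing
\begin{equation*}
    \E\!\left[\max_{1 \le i \le m} Y_i\right] \le \sigma L \left(\sqrt{2\log m} + \sqrt{\pi/2}\right), \qquad L := \max_{1 \le i \le m} \sup_{\theta \in \Theta_i} \|\theta\|_2.
\end{equation*}

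Next, I would observe that $\xi \mapsto Z_i(\xi)$ is Lipschitz with constant $L_i := \sup_{\theta \in \Theta_i}\|\theta\|_2 \le L$, since
\begin{equation*}
    Z_i(\xi) - Z_i(\xi') \le \sup_{\theta \in \Theta_i} \langle \xi - \xi', \theta\rangle \le L_i \|\xi - \xi'\|_2.
\end{equation*}
Writing $\xi = \sigma g$ with $g \sim N(0, I_n)$, so that $Z_i$ becomes a $(\sigma L_i)$-Lipschitz function of a standard Gaussian vector, Borell--TIS gives $\P(Y_i > t) \le \exp(-t^2/(2\sigma^2 L_i^2)) \le \exp(-t^2/(2\sigma^2 L^2))$ for each $i$ and every $t \ge 0$. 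A union bound then yields
\begin{equation*}
    \P\!\left(\max_i Y_i > t\right) \le m \exp\!\left(-\frac{t^2}{2\sigma^2 L^2}\right).
\end{equation*}

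Finally, I would apply the layer-cake formula to the positive part of $\max_i Y_i$ and split the integral at the threshold $u := \sigma L \sqrt{2\log m}$, chosen precisely so that $m \exp(-u^2/(2\sigma^2 L^2)) = 1$:
\begin{equation*}
    \E\!\left[\max_i Y_i\right] \le \int_0^\infty \P\!\left(\max_i Y_i > t\right) dt \le u + \int_u^\infty m \exp\!\left(-\frac{t^2}{2\sigma^2 L^2}\right) dt.
\end{equation*}
The substitution $t = u + s$, combined with the identity $m\exp(-u^2/(2\sigma^2 L^2)) = 1$ and the trivial bound $\exp(-us/(\sigma^2 L^2)) \le 1$ for $s \ge 0$, collapses the tail integral into $\int_0^\infty \exp(-s^2/(2\sigma^2 L^2)) ds = \sigma L \sqrt{\pi/2}$. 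Summing the two pieces gives the target inequality, and combining with the first reduction step proves the lemma.

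There is no substantive obstacle here; the proof is a routine combination of Borell--TIS concentration, a union bound, and Gaussian tail integration. The only mild care is in choosing the splitting threshold $u$ to balance the two contributions, which has the convenient side effect of handling the degenerate $m = 1$ case (where $\log m = 0$ and the $\sigma L \sqrt{\pi/2}$ term alone controls $\E[(Y_1)_+]$).
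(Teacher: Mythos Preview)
Your proof is correct and follows the standard route: Gaussian concentration (Borell--TIS) applied to each Lipschitz functional $Z_i$, a union bound, and layer-cake integration split at the threshold $\sigma L\sqrt{2\log m}$. The paper does not actually prove this lemma; it is quoted verbatim from \citet{guntuboyina2020adaptive} and used as a black box in the proof of Theorem~\ref{thm:risk-bound-fixed}. Your argument is precisely the one underlying that cited result, so there is nothing to compare.
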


Since
\begin{equation*}
    |W| \le \sum_{l = |\mathcal{R}|}^{2|\mathcal{R}|} \binom{l - 1}{l - |\mathcal{R}|} \le \sum_{l = |\mathcal{R}|}^{2|\mathcal{R}|} \binom{2|\mathcal{R}| - 1}{l - |\mathcal{R}|} \le 2^{2|\mathcal{R}| - 1},
\end{equation*}
it thus follows that
\begin{equation}\label{eq:bdd-for-H-zerovec}
    H_{\zerovec}(t) \le \max_{\mathbf{w} \in W} \E\bigg[\sup_{\theta \in \Lambda_{\mathbf{w}}(t)} \sum_{\mathbf{i} \in J_{\zerovec}} \xi_{\mathbf{i}} (\theta_{\mathbf{i}} - \theta_{\mathbf{i}}^*) \bigg] + 2 \sigma t \sqrt{|\mathcal{R}|} + \sigma t \sqrt{\frac{\pi}{2}}.
\end{equation}

Fix $\mathbf{w} = (w_{\mathbf{r}}, \mathbf{r} \in \mathcal{R}) \in W$.
Since $J_{\zerovec} = \biguplus_{\mathbf{r} \in \mathcal{R}} L_{\mathbf{r}}$, we have
\begin{align*}
    &\E\bigg[\sup_{\theta \in \Lambda_{\mathbf{w}}(t)} \sum_{\mathbf{i} \in J_{\zerovec}} \xi_{\mathbf{i}} (\theta_{\mathbf{i}} - \theta_{\mathbf{i}}^*) \bigg] 
    = \E\bigg[\sup_{\theta \in \Lambda_{\mathbf{w}}(t)} \sum_{\mathbf{r} \in \mathcal{R}} \langle \xi^{(\mathbf{r})}, \theta^{(\mathbf{r})} - {\theta^*}^{(\mathbf{r})} \rangle \bigg] \\
    &\qquad \le \sum_{\mathbf{r} \in \mathcal{R}} \E\bigg[\sup_{\theta \in \Lambda_{\mathbf{w}}(t)} \langle \xi^{(\mathbf{r})}, \theta^{(\mathbf{r})} - {\theta^*}^{(\mathbf{r})} \rangle \bigg] 
    \le \sum_{\mathbf{r} \in \mathcal{R}} \E\bigg[\sup_{\substack{\theta \in T: \|\theta - \theta^*\|_2 \le t \\ \|\theta^{(\mathbf{r})} - {\theta^*}^{(\mathbf{r})}\|_2 \le t(w_{\mathbf{r}}/|\mathcal{R}|)^{1/2}}} \langle \xi^{(\mathbf{r})}, \theta^{(\mathbf{r})} - {\theta^*}^{(\mathbf{r})} \rangle \bigg].
\end{align*}
For each $\mathbf{r} \in \mathcal{R}$, we now bound 
\begin{equation*}
    \E\bigg[\sup_{\substack{\theta \in T: \|\theta - \theta^*\|_2 \le t \\ \|\theta^{(\mathbf{r})} - {\theta^*}^{(\mathbf{r})}\|_2 \le t(w_{\mathbf{r}}/|\mathcal{R}|)^{1/2}}} \langle \xi^{(\mathbf{r})}, \theta^{(\mathbf{r})} - {\theta^*}^{(\mathbf{r})} \rangle \bigg]
\end{equation*}
using the following result (Theorem \ref{thm:emp-bound-ki}) established in the proof of Theorem 5.1 of \citet{ki2021mars} (an earlier version of \cite{ki2024mars}).

For a vector of positive integers $\mathbf{m} = (m_1, \dots, m_d)$ and $\theta = (\theta_{\mathbf{i}}, \mathbf{i} \in I(\mathbf{m}))$, define 
\begin{equation*}
    V(\theta) = \sum_{\substack{\mathbf{p} \in \{0, 1, 2\}^d \\ \max_j p_j = 2}} \bigg[\bigg(\prod_{k = 1}^{d} m_k^{p_k - \ind\{p_k = 2\}}\bigg) \cdot \sum_{\mathbf{i} \in I^{(\mathbf{p})}(\mathbf{m})} |(D^{(\mathbf{p})} \theta)_{\mathbf{i}}|\bigg],
\end{equation*}
where $\ind\{\cdot\}$ denotes the indicator function.
Here, for each $\mathbf{p} \in \{0, 1, 2\}^d$ with $\max_j p_j = 2$, the index set $I^{(\mathbf{p})}(\mathbf{m})$ is given by $I^{(\mathbf{p})}(\mathbf{m}) = I^{(\mathbf{p})}_1 \times \cdots \times I^{(\mathbf{p})}_d$, where 
\begin{equation*}
    I^{(\mathbf{p})}_k = 
    \begin{cases}
        \{p_k\} & \text{if } p_k \neq 2, \\
        \{2, 3, \dots, m_k - 1\} & \text{if } p_k = 2
    \end{cases}
    \qt{for } k = 1, \dots, d.
\end{equation*}
This $V(\cdot)$ can be viewed as a discrete analogue of $V(\cdot)$ defined in \eqref{complexity-measure} 
(see Section 9 of \citet{ki2024mars}).
Also, it is straightforward to verify that if $(D^{(\mathbf{p})} \theta)_{\mathbf{i}} \le 0$ for all $\mathbf{i} \in I^{(\mathbf{p})}(\mathbf{m})$ and $\mathbf{p} \in \{0, 1, 2\}^d$ with $\max_j p_j = 2$ (which holds when $\theta \in T$ and $\mathbf{m} = (n_1, \dots, n_d)$), then $V(\theta)$ can be expressed as
\begin{equation}\label{discrete-complexity-measure-tc}
    V(\theta) = \sum_{\mathbf{q} \in \{0, 1\}^d \setminus \{\zerovec\}} \bigg[\bigg(\prod_{k \in S_{\mathbf{q}}} m_k \bigg) \cdot \Big( (D^{(\mathbf{q})}\theta)_{(1, k \in S_{\mathbf{q}}) \times (0, k \notin S_{\mathbf{q}})} - (D^{(\mathbf{q})}\theta)_{(m_k - 1, k \in S_{\mathbf{q}}) \times (0, k \notin S_{\mathbf{q}})} \Big)\bigg],
\end{equation}
where $S_{\mathbf{q}} := \{k \in \{1, \dots, d\}: q_k = 1\}$.
Here, for each $\mathbf{q}$, $(1, k \in S_{\mathbf{q}}) \times (0, k \notin S_{\mathbf{q}})$ represents the $d$-dimensional vector whose $k^{\text{th}}$ component is $1$ if $k \in S_{\mathbf{q}}$, 0 otherwise; $(m_k - 1, k \in S_{\mathbf{q}}) \times (0, k \notin S_{\mathbf{q}})$ is similarly defined.
Moreover, for each $V > 0$, define
\begin{align*}
    C_{\mathbf{m}}^{d, s}(V) &= \bigg\{\theta \in \R^{|I(\mathbf{m})|}: V(\theta) \le V \text{ and } (D^{(\mathbf{p})}\theta)_{\mathbf{i}} = 0 \text{ for every } \mathbf{p} \in \{0, 1\}^d \text{ with } \sum_{k = 1}^{d} p_k > s \\
    &\qquad \qquad \qquad \qquad \qquad \qquad \qquad \qquad \qquad \text{ and } \mathbf{i} \in I(\mathbf{m}) \text{ with } i_k \ge p_k \text{ for } k = 1, \dots, d \bigg\}. 
\end{align*}

\begin{theorem}\label{thm:emp-bound-ki}
    There exists some constant $C_d$ depending only on $d$ such that, for every $t > 0$,
    \begingroup
    \allowdisplaybreaks
    \begin{align*}
        \E\Big[\sup_{\theta \in C_{\mathbf{m}}^{d, s}(V): \|\theta - \theta^*\|_2 \le t}\langle \xi, \theta - \theta^* \rangle \Big] 
        &\le C_d \sigma t \log (m_1 \cdots m_d) 
        + C_d \sigma t \log\Big(1 + \frac{V}{t}\Big) \\
        &\quad + C_d \sigma t \bigg[\log\Big(2 + \frac{V(m_1 \cdots m_d)^{1/2}}{t}\Big)\bigg]^{3(2s - 1)/8} \\
        &\quad + C_d \sigma V^{1/4} t^{3/4} (m_1 \cdots m_d)^{1/8} \bigg[\log\Big(2 + \frac{V(m_1 \cdots m_d)^{1/2}}{t}\Big)\bigg]^{3(2s - 1)/8}
    \end{align*}
    \endgroup
    when $s \ge 2$, and 
    \begin{equation*}
        \E\Big[\sup_{\theta \in C_{\mathbf{m}}^{d, 1}(V): \|\theta - \theta^*\|_2 \le t}\langle \xi, \theta - \theta^* \rangle \Big] 
        \le C_d \sigma t \log (m_1 \cdots m_d) 
        + C_d \sigma t \log\Big(1 + \frac{V}{t}\Big) + C_d \sigma V^{1/4} t^{3/4} (m_1 \cdots m_d)^{1/8}
    \end{equation*}
    when $s = 1$.
\end{theorem}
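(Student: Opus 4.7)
The target is a bound on the Gaussian width of $\mathcal{T}_t := \{\theta \in C_{\mathbf{m}}^{d, s}(V) : \|\theta - \theta^*\|_2 \le t\}$, and the natural route is Dudley's entropy integral, so the core task is to estimate the Euclidean metric entropy of $\mathcal{T}_t$. The starting point is the discrete analogue of Proposition \ref{prop:existence-computation}: every $\theta \in C_{\mathbf{m}}^{d, s}(V)$ decomposes as $\theta = \theta_P + \theta_N$, where $\theta_P$ lies in the linear span of the monomials $\prod_{j \in S} x_j$ for $|S| \le s$ evaluated on the grid (a subspace of dimension $\le 2^d$), and $\theta_N$ lies in $V \cdot \mathrm{conv}(A_{\mathbf{m}})$, with $A_{\mathbf{m}}$ the finite set of atoms $-\prod_{j \in S}(x_j - t_j)_+$ evaluated on the grid, indexed by $S$ with $1 \le |S| \le s$ and $\mathbf{t} \in L_S$. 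Crucially $|A_{\mathbf{m}}| \le C_d (m_1 \cdots m_d)^s$, and each atom has bounded Euclidean norm.

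Because the Gaussian supremum is subadditive under Minkowski sum, the polynomial part $\theta_P$ can be handled separately: it lies in a fixed-dimensional subspace, so its contribution is only $C_d \sigma t$ and is absorbed into the stated constants. The main work is to bound $\mathbb{E}[\sup_{\theta_N} \langle \xi, \theta_N - \theta_N^* \rangle]$ over $V \cdot \mathrm{conv}(A_{\mathbf{m}}) \cap B(\theta_N^*, t)$. I would combine two entropy tools: (i) a coarse Maurey-style bound $\log N(\epsilon; V \cdot \mathrm{conv}(A_{\mathbf{m}}), \|\cdot\|_2) \le C_d (V M / \epsilon)^2 \log |A_{\mathbf{m}}|$ where $M = \max_{a \in A_{\mathbf{m}}} \|a\|_2$, which handles the large-scale part of the chain and produces the coarse terms $C_d \sigma t \log(\prod_k m_k)$ (from the outermost union bound over atoms) and $C_d \sigma t \log(1 + V/t)$; and (ii) a refined small-scale bound reflecting both the smoothness of total concavity and the interaction cap $s$, of the form $\log N(\epsilon) \le C_d V^{1/2} (\prod_k m_k)^{1/4} \epsilon^{-1/2} [\log(2 + V (\prod_k m_k)^{1/2}/\epsilon)]^{3(2s - 1)/4}$, adapted from the MARS-type entropy analysis in \citet{ki2024mars}.

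Plugging the small-scale bound into Dudley's integral $\int_0^{2t} \sqrt{\log N(\epsilon)}\, d\epsilon$ yields the dominant non-parametric term $C_d \sigma V^{1/4} t^{3/4} (\prod_k m_k)^{1/8} [\log]^{3(2s-1)/8}$, while the additional term $C_d \sigma t [\log]^{3(2s-1)/8}$ arises when re-centering the chain between the Maurey scale and the refined scale. For $s = 1$, the interaction cap reduces $C_{\mathbf{m}}^{d, 1}(V)$ essentially to an additive concave class, whose entropy admits a coordinate-by-coordinate bound (via the classical univariate concave-function entropy) without the $(2s-1)$ logarithmic factor, recovering the cleaner $s = 1$ estimate.

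The technically hardest piece is (ii): obtaining the sharp small-scale entropy with the correct exponent $3(2s-1)/4$ on the logarithm and the relatively mild $(m_1 \cdots m_d)^{1/4}$ grid dependence. This will require constructing nets that group atoms by supporting subset $S$, handle coordinates in $S$ via univariate concave coverings, and then combine across subsets by exploiting nonnegativity of the NNLS weights together with the characterization that all mixed partials of maximum order two are nonpositive. Once this entropy estimate is in hand, the rest is a standard two-scale chaining calculation.
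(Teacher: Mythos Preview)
The paper does not supply an in-house proof of this theorem: it is imported directly from \citet{ki2021mars} (an earlier version of \citet{ki2024mars}), where it is established within the proof of their Theorem~5.1. Your proposal therefore goes further than the paper itself by attempting to reconstruct the underlying argument.

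Your outline---Dudley's integral driven by a two-scale entropy estimate---is the right architecture and almost certainly how the original proof is organized. Two points merit tightening. First, the handling of the polynomial part $\theta_P$ is too quick: the representation $\theta = \theta_P + \theta_N$ does not by itself confine $\theta_P - \theta_P^*$ to a ball of radius $O(t)$, since the atoms in $A_{\mathbf{m}}$ are not orthogonal to the monomial subspace and $\theta_N$ need not be small. The terms $C_d \sigma t \log(m_1 \cdots m_d)$ and $C_d \sigma t \log(1 + V/t)$ more plausibly arise from covering the finite-dimensional coefficient vector $(\beta_S)_{|S| \le s}$ after bounding its range via the joint constraints $\|\theta - \theta^*\|_2 \le t$ and $V(\theta) \le V$, rather than from a Maurey step on the atoms. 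Second, and more importantly, your small-scale entropy bound (ii)---with the $\epsilon^{-1/2}$ exponent and the $3(2s-1)/4$ logarithmic power---is the technical heart of the result, and you source it to ``the MARS-type entropy analysis in \citet{ki2024mars},'' which is precisely where this theorem lives. If the goal is an independent proof, you would need to actually execute that entropy calculation (tensor-product bracketing of univariate concave pieces, combined across interaction subsets $S$ of size up to $s$); your closing paragraph correctly identifies what is required but does not carry it out, so as written the argument is essentially a restatement of the dependence on the cited reference rather than a self-contained proof.
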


We will apply this theorem to the subvectors $\theta^{(\mathbf{r})}$. 
For each $\mathbf{r} \in \mathcal{R}$, define
\begin{equation*}
    m_{r_k}^{(k)} := |L_{r_k}^{(k)}| = b_{r_k}^{(k)} - a_{r_k}^{(k)} + 1 \qt{ for } k = 1, \dots, d,
\end{equation*}
and let $\mathbf{m}_{\mathbf{r}} = (m_{r_1}^{(1)}, \dots, m_{r_d}^{(d)})$.
We re-index each $\theta^{(\mathbf{\mathbf{r}})}$ with $I(\mathbf{m}_{\mathbf{r}})$ by shifting indices, ensuring that the definition of $V(\cdot)$ is applicable to $\theta^{(\mathbf{r})}$.
We will show that, for each $\mathbf{r} \in \mathcal{R}$, there exists some $V_{\mathbf{r}} > 0$ such that $\theta^{(\mathbf{r})} \in C_{\mathbf{m}_{\mathbf{r}}}^{d, s}(V_{\mathbf{r}})$ for every $\theta \in T$ satisfying $\|\theta - \theta^*\|_2 \le t$.

Fix $\theta \in T$ and assume that $\|\theta - \theta^*\|_2 \le t$.
Since $\theta \in T$, it follows that $(D^{(\mathbf{p})} \theta)_{\mathbf{i}} = 0$ for every $\mathbf{p} \in \{0, 1\}^d$ with $\sum_{k = 1}^d p_k > s$ and $\mathbf{i} = (i_1, \dots, i_d) \in I_0$ with $i_k \ge p_k$ for $k = 1, \dots, d$.
Thus, for every $\mathbf{p} \in \{0, 1\}^d$ with $\sum_{k = 1}^d p_k > s$ and $\mathbf{i} = (i_1, \dots, i_d) \in I(\mathbf{m}_{\mathbf{r}})$ with $i_k \ge p_k$ for $k = 1, \dots, d$, we have 
\begin{equation*}
    (D^{(\mathbf{p})}\theta^{(\mathbf{r})})_{\mathbf{i}} = (D^{(\mathbf{p})} \theta)_{i_1 + a_{r_1}^{(1)}, \dots, i_d + a_{r_d}^{(d)}} = 0.
\end{equation*}

Next, we bound $V(\theta^{(\mathbf{r})})$ for each $\mathbf{r} \in \mathcal{R}$.
To this end, we use the following lemma, which will be proved in Appendix \ref{pf:bounds-finite-derivatives}.
Choose any function $g^* \in \tcds$ that agrees with $f^*$ at the design points $\mathbf{x}^{(\mathbf{i})}$, 
and define
\begin{equation}\label{sup-of-first-derivatives}
\begin{split}
    M:= M(g^*) := \max_{S: 1 \le |S| \le s} \max \bigg(
    &\sup \bigg\{
    \begin{bmatrix}
        0, t_k, & k \in S    & \multirow{2}{*}{; \ $g^*$ } \\ 
        0,      & k \notin S &
    \end{bmatrix}
    : t_k > 0 \ \text{ for } k \in S \bigg\}, \\
    &\quad -\inf \bigg\{
    \begin{bmatrix}
        t_k, 1, & k \in S    & \multirow{2}{*}{; \ $g^*$ } \\ 
        0,      & k \notin S &
    \end{bmatrix}
    : t_k < 1 \ \text{ for } k \in S \bigg\}
    \bigg).
\end{split}
\end{equation}
Since $g^* \in \tcds$, it holds that $M(g^*) < +\infty$.
Also, note that $\theta^* = (f^*({\mathbf{x}}^{(\mathbf{i})}), \mathbf{i} \in I_0) = (g^*({\mathbf{x}}^{(\mathbf{i})}), \mathbf{i} \in I_0)$.

\begin{lemma}\label{lem:bounds-finite-derivatives}
    Assume that $\theta \in T$ and $\|\theta - \theta^*\|_2 \le t$.
    Then, for every $\mathbf{p} = (p_1, \dots, p_d) \in \{0, 1\}^d \setminus \{\zerovec\}$ and $\mathbf{i} = (i_1, \dots, i_d) \in L_{r}$ with $i_k \ge p_k$ for $k = 1, \dots, d$, we have 
    \begin{equation}\label{eq:finite-derivative-lower-bound}
        (D^{(\mathbf{p})}\theta)_{\mathbf{i}} \ge \frac{C_d}{\prod_{k = 1}^{d} n_k^{p_k}} \cdot \Big[- M - t \Big(\frac{2^{r_+}}{n}\Big)^{1/2} \cdot 2^{\sum_{k = 1}^{d} p_k r_k}\Big]
    \end{equation}
    if $\mathbf{r} = (r_1, \dots, r_d) \in \prod_{k = 1}^{d} \{1, \dots, R_k\} = \mathcal{R}$, and
    \begin{equation}\label{eq:finite-derivative-upper-bound}
        (D^{(\mathbf{p})}\theta)_{\mathbf{i}} \le \frac{C_d}{\prod_{k = 1}^{d} n_k^{p_k}} \cdot \Big[M + t \Big(\frac{2^{r_+}}{n}\Big)^{1/2} \cdot 2^{\sum_{k = 1}^{d} p_k r_k}\Big]
    \end{equation}
    if $\mathbf{r} = (r_1, \dots, r_d) \in \prod_{k = 1}^{d} \{0, 1, \dots, R_k\}$, where $r_+ = \sum_{k = 1}^{d} r_k$.
\end{lemma}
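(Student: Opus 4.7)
The plan is to decompose $(D^{(\mathbf{p})}\theta)_{\mathbf{i}} = (D^{(\mathbf{p})}\theta^*)_{\mathbf{i}} + (D^{(\mathbf{p})}\eta)_{\mathbf{i}}$ with $\eta := \theta - \theta^*$, and to bound the two pieces separately so as to produce the $M$-term and the $t(2^{r_+}/n)^{1/2} 2^{\sum_k p_k r_k}$-term, respectively.

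For the $\theta^*$-piece, I would write $(D^{(\mathbf{p})}\theta^*)_{\mathbf{i}}$ as $\prod_k n_k^{-p_k}$ times the divided difference of $g^*$ of order $\mathbf{p}$ at the adjacent grid points $(i_k-1)/n_k,\, i_k/n_k$ for $k \in S_{\mathbf{p}}$ and at the single point $i_k/n_k$ for $k \notin S_{\mathbf{p}}$. Since $g^* \in \tcds \subseteq \tcp$, its restriction obtained by fixing the $\bar{S}_{\mathbf{p}}$-coordinates remains totally concave on the $|S_{\mathbf{p}}|$-dimensional cube, and applying Lemma \ref{lem:div-diff-mono} to this restriction squeezes the above divided difference between its values at the extreme endpoints $0$ and $1$ in each $k \in S_{\mathbf{p}}$ direction. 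A parallel monotonicity argument in the $\bar{S}_{\mathbf{p}}$-coordinates, carried out via the integral representation of Lemma \ref{lem:alt-characterization} (or equivalently via the interaction restriction condition of Proposition \ref{prop:tc-equiv-restricted-interaction} when it applies), further reduces the bound to the boundary divided differences appearing in the definition \eqref{sup-of-first-derivatives} of $M$. This yields $|(D^{(\mathbf{p})}\theta^*)_{\mathbf{i}}| \le C_d M / \prod_k n_k^{p_k}$.

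For the $\eta$-piece, the key structural fact is that, for each $\vartheta \in \{\theta,\theta^*\}$, $\mathbf{j} \mapsto (D^{(\mathbf{p})}\vartheta)_{\mathbf{j}}$ is nonincreasing in $j_k$ for every $k \in S_{\mathbf{p}}$, because $(D^{(\mathbf{p}+\mathbf{e}_k)}\vartheta)_{\mathbf{j}} \le 0$ is a max-order-$2$ constraint, and is concave in $j_k$ for every $k \notin S_{\mathbf{p}}$, because $(D^{(\mathbf{p}+2\mathbf{e}_k)}\vartheta)_{\mathbf{j}} \le 0$. I would use the monotonicity to sandwich both $(D^{(\mathbf{p})}\theta)_{\mathbf{i}}$ and $(D^{(\mathbf{p})}\theta^*)_{\mathbf{i}}$ by one-sided averages of $(D^{(\mathbf{p})}\theta)$ (resp.\ $(D^{(\mathbf{p})}\theta^*)$) in the $S_{\mathbf{p}}$-directions over the block $L_{\mathbf{r}}$ with $\mathbf{i}_{\bar{S}_{\mathbf{p}}}$ fixed; summation by parts in each $k \in S_{\mathbf{p}}$ direction collapses each such average to a signed combination of $\vartheta$ at the $2^{|S_{\mathbf{p}}|}$ corners of the block, scaled by $1/(\prod_{k \in S_{\mathbf{p}}} m_{r_k}^{(k)} n_k)$. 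Writing $\vartheta = \theta^* + \eta$ and absorbing the $\theta^*$-contribution into the $M$-term via the first paragraph applied at the macroscopic scale, the $\eta$-contribution -- after a further summation over the $\bar{S}_{\mathbf{p}}$-directions of $L_{\mathbf{r}}$ (whose validity is underwritten by the concavity just noted) -- becomes a linear combination of $\eta_{\mathbf{j}}$ over a region of size $\asymp |L_{\mathbf{r}}| \asymp n/2^{r_+}$. Cauchy--Schwarz combined with $\|\eta\|_2 \le t$ then produces the factor $t\cdot|L_{\mathbf{r}}|^{-1/2} \asymp t(2^{r_+}/n)^{1/2}$, and the remaining factor $2^{\sum_k p_k r_k}$ comes from the block-scale $\prod_{k \in S_{\mathbf{p}}} n_k/m_{r_k}^{(k)}$.

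The main obstacle is the handling of the $\bar{S}_{\mathbf{p}}$-directions, in which only concavity (not monotonicity) of $(D^{(\mathbf{p})}\vartheta)$ is available, so the one-sided averaging step of the previous paragraph does not extend verbatim. One has to bound the deviation of the pointwise value at $j_k = i_k$ from the corresponding block-average by a controlled second-order discrete difference, to which another summation by parts and a further application of Cauchy--Schwarz can be applied. Aligning all these contributions so that the constants and scales collapse to the claimed bound is where the bookkeeping is most delicate. The mild asymmetry in the lemma statement -- the lower bound is only claimed for $\mathbf{r} \in \mathcal{R}$, whereas the upper bound is valid on $\prod_k\{0,\dots,R_k\}$ -- reflects a corresponding asymmetry in the averaging regions: ``right-averaging'' over $\{j_k \ge i_k\}$ degenerates when $r_k = 0$, while ``left-averaging'' over $\{j_k \le i_k\}$ remains effective throughout.
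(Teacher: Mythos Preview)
Your handling of the $\theta^*$-piece is correct and matches the paper's first step. The $\eta$-piece, however, has a genuine gap, visible already in the base case $\mathbf{p}=(1,\dots,1)$ where $\bar S_{\mathbf p}=\emptyset$ and your $\bar S_{\mathbf p}$-averaging plays no role. Telescoping the one-sided block average of $(D^{(\mathbf p)}\theta)$ over a box of side-lengths $N_k$ collapses it to an alternating sum of $\theta$ at the $2^d$ \emph{corners} of the box; Cauchy--Schwarz on these $2^d$ values of $\eta$ gives only $C_d\, t/\prod_k N_k$. The ``linear combination of $\eta_{\mathbf j}$ over a region of size $\asymp|L_{\mathbf r}|$'' you invoke is simply not there---the $\prod_k N_k$ interior terms have been telescoped away---so your bound is off by the crucial factor $(n/2^{r_+})^{1/2}$, and tracing this through the proof of Theorem~\ref{thm:risk-bound-fixed} would degrade the rate from $n^{-4/5}$ to $n^{-3/4}$. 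For $|\mathbf p|<d$ the proposed fix does not help either: averaging the telescoped inequality over $i_{\bar S_{\mathbf p}}$ bounds the \emph{average} of $(D^{(\mathbf p)}\theta)$ over that slice, not the pointwise value at the given $\mathbf i$, and concavity alone cannot convert one into the other (a concave function can lie strictly below its block average at a boundary point).

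The paper's route is structurally different: a proof by contradiction combined with \emph{downward} induction on $|\mathbf p|$. If the lower bound fails at $\mathbf i$, monotonicity of $(D^{(\mathbf p)}\theta)$ together with the uniform bound on $(D^{(\mathbf p)}\theta^*)$ forces $(D^{(\mathbf p)}\eta)_{\mathbf j}$ to keep a \emph{fixed sign} for all $\mathbf j$ with $j_k\ge i_k$ in the $S_{\mathbf p}$-directions; propagation in the $\bar S_{\mathbf p}$-directions is handled not by concavity but by the already-established quantitative bound on $(D^{(\mathbf p+\mathbf e_k)}\theta)$ for $k\in\bar S_{\mathbf p}$ (this is the inductive step). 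One then lower-bounds $\|\eta\|_2^2$ by summing over many pairs $(\mathbf j,\mathbf j')$ in this large region, using $\sum_\delta a_\delta^2\ge 2^{-m}\bigl(\sum_\delta(-1)^{\sum_k\delta_k}a_\delta\bigr)^2$ to telescope each pair into a sum of the consistently-signed $(D^{(\mathbf p)}\eta)_{\mathbf l}$; the sign consistency over $\asymp\prod_k(n_k-i_k)$ many terms is exactly what generates the extra $(2^{r_+}/n)^{1/2}$, and it cannot be obtained from any direct bound on $\eta$, since $\eta$---being a difference of two totally concave vectors---carries no shape constraint of its own.
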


Recall that $S_{\mathbf{p}} = \{k \in \{1, \dots, d\}: p_k = 1\}$ for each $\mathbf{p} \in \{0, 1\}^d$.
By Lemma \ref{lem:bounds-finite-derivatives} and \eqref{discrete-complexity-measure-tc}, for each $\mathbf{r} = (r_1, \dots, r_d) \in \mathcal{R}$, we have
\begingroup
\allowdisplaybreaks
\begin{align*}
    V(\theta^{(\mathbf{r})}) &= \sum_{\mathbf{p} \in \{0, 1\}^d \setminus \{\zerovec\}} \prod_{k \in S_{\mathbf{p}}} m_{r_k}^{(k)} \cdot \Big[(D^{(\mathbf{p})} \theta^{(\mathbf{r})})_{(1, k \in S_{\mathbf{p}}) \times (0, k \notin S_{\mathbf{p}})} - (D^{(\mathbf{p})} \theta^{(\mathbf{r})})_{(m^{(k)}_{r_k} - 1, k \in S_{\mathbf{p}}) \times (0, k \notin S_{\mathbf{p}})}\Big] \\
    &= \sum_{\mathbf{p} \in \{0, 1\}^d \setminus \{\zerovec\}} \prod_{k \in S_{\mathbf{p}}} m_{r_k}^{(k)} \cdot \Big[(D^{(\mathbf{p})} \theta)_{(a_{r_k}^{(k)} + 1, k \in S_{\mathbf{p}}) \times (a_{r_k}^{(k)}, k \notin S_{\mathbf{p}})} - (D^{(\mathbf{p})} \theta)_{(b_{r_k}^{(k)}, k \in S_{\mathbf{p}}) \times (a_{r_k}^{(k)}, k \notin S_{\mathbf{p}})}\Big] \\
    &\le \sum_{\mathbf{p} \in \{0, 1\}^d \setminus \{\zerovec\}} \prod_{k = 1}^{d} \Big(\frac{n_k}{2^{r_k}}\Big)^{p_k} \cdot \frac{C_d}{\prod_{k = 1}^{d} n_k^{p_k}} \cdot \Big[M + t \Big(\frac{2^{r_+}}{n}\Big)^{1/2} \cdot 2^{\sum_{k = 1}^{d} p_k r_k}\Big] \\
    &\le \sum_{\mathbf{p} \in \{0, 1\}^d \setminus \{\zerovec\}} C_d \cdot \Big[\frac{M}{2^{\sum_{k = 1}^{d} p_k r_k}} + t \Big(\frac{2^{r_+}}{n}\Big)^{1/2}\Big] 
    \le C_d \cdot \Big[M + t \Big(\frac{2^{r_+}}{n}\Big)^{1/2}\Big] := V_{\mathbf{r}}.
\end{align*}
\endgroup
Here, for the first inequality, we use 
\begin{equation*}
    m_{r_k}^{(k)} = b_{r_k}^{(k)} - a_{r_k}^{(k)} + 1 \le \frac{n_k}{2^{r_k}} - 1 + 1 = \frac{n_k}{2^{r_k}}
\end{equation*}
for $k = 1, \dots, d$.
Thus, we have $\theta^{(\mathbf{r})} \in C_{\mathbf{m}_{\mathbf{r}}}^{d, s}(V_{\mathbf{r}})$ for each $\mathbf{r} \in \mathcal{R}$.

From now on, we handle the two cases $s \ge 2$ and $s = 1$ separately.
Let us first consider the case $s \ge 2$.
By Theorem \ref{thm:emp-bound-ki}, 
for each $\mathbf{r} = (r_1, \dots, r_d) \in \mathcal{R}$, we have
\begin{align}\label{eq:bdd-of-emp-process-subvector}
    &\E\bigg[\sup_{\substack{\theta \in T: \|\theta - \theta^*\|_2 \le t \\ \|\theta^{(\mathbf{r})} - {\theta^*}^{(\mathbf{r})}\|_2 \le t(w_{\mathbf{r}}/|\mathcal{R}|)^{1/2}}} \langle \xi^{(\mathbf{r})}, \theta^{(\mathbf{r})} - {\theta^*}^{(\mathbf{r})} \rangle \bigg] 
    \le \E\bigg[\sup_{\substack{\theta^{(\mathbf{r})} \in C_{\mathbf{m}}^{d, s}(V_{\mathbf{r}}) \\ \|\theta^{(\mathbf{r})} - {\theta^*}^{(\mathbf{r})}\|_2 \le t_{\mathbf{r}}}} \langle \xi^{(\mathbf{r})}, \theta^{(\mathbf{r})} - {\theta^*}^{(\mathbf{r})} \rangle \bigg] \nonumber \\
    \begin{split}
        &\qquad \le C_d \sigma t_{\mathbf{r}} \log |L_{\mathbf{r}}|
        + C_d \sigma t_{\mathbf{r}} \log\Big(1 + \frac{V_{\mathbf{r}}}{t_{\mathbf{r}}}\Big) + C_d \sigma t_{\mathbf{r}} \bigg[\log\Big(2 + \frac{V_{\mathbf{r}} |L_{\mathbf{r}}|^{1/2}}{t_{\mathbf{r}}}\Big)\bigg]^{3(2s - 1)/8} \\
        &\qquad \qquad + C_d \sigma V_{\mathbf{r}}^{1/4} t_{\mathbf{r}}^{3/4} |L_{\mathbf{r}}|^{1/8} \bigg[\log\Big(2 + \frac{V_{\mathbf{r}} |L_{\mathbf{r}}|^{1/2}}{t_{\mathbf{r}}}\Big)\bigg]^{3(2s - 1)/8},
    \end{split}
\end{align}
where $t_{\mathbf{r}} = t (w_{\mathbf{r}}/|\mathcal{R}|)^{1/2}$.
Recall that
\begin{equation*}
    \E\bigg[\sup_{\theta \in \Lambda_{\mathbf{w}}(t)} \sum_{\mathbf{i} \in J_{\zerovec}} \xi_{\mathbf{i}} (\theta_{\mathbf{i}} - \theta_{\mathbf{i}}^*) \bigg] 
    \le \sum_{\mathbf{r} \in \mathcal{R}} \E\bigg[\sup_{\substack{\theta \in T: \|\theta - \theta^*\|_2 \le t \\ \|\theta^{(\mathbf{r})} - {\theta^*}^{(\mathbf{r})}\|_2 \le t(w_{\mathbf{r}}/|\mathcal{R}|)^{1/2}}} \langle \xi^{(\mathbf{r})}, \theta^{(\mathbf{r})} - {\theta^*}^{(\mathbf{r})} \rangle \bigg].
\end{equation*}
Hence, we need to bound the summation of each term in $\eqref{eq:bdd-of-emp-process-subvector}$ over $\mathcal{R}$.
The summation of the first term over $\mathbf{r} \in \mathcal{R}$ can be bounded as
\begin{align*}
    \sum_{\mathbf{r} \in \mathcal{R}} t_{\mathbf{r}} \log |L_{\mathbf{r}}| 
    &\le \sum_{\mathbf{r} \in \mathcal{R}} t \cdot \Big(\frac{w_{\mathbf{r}}}{|\mathcal{R}|}\Big)^{1/2} \log\Big(\prod_{k = 1}^{d} \frac{n_k}{2^{r_k}}\Big) 
    \le \frac{t \log n}{|\mathcal{R}|^{1/2}} \cdot \sum_{\mathbf{r} \in \mathcal{R}} w_{\mathbf{r}}^{1/2} 
    \le t \log n \cdot \bigg(\sum_{\mathbf{r} \in \mathcal{R}} w_{\mathbf{r}}\bigg)^{1/2} \\
    &\le t \log n \cdot (2 |\mathcal{R}|)^{1/2}
    \le C_d t (\log n)^{(d + 2)/2}.
\end{align*}
Here, the first inequality follows from the fact that
\begin{equation*}
    |L_{\mathbf{r}}| = \prod_{k = 1}^{d} |L_{r_k}^{(k)}| = \prod_{k = 1}^{d} m_{r_k}^{(k)} \le \prod_{k = 1}^{d} \frac{n_k}{2^{r_k}}.
\end{equation*}
Also, the second-to-last inequality is due to the condition $\sum_{\mathbf{r} \in \mathcal{R}} w_{\mathbf{r}} \le 2 |\mathcal{R}|$, and the last one follows from
\begin{equation*}
    |\mathcal{R}| = \prod_{k = 1}^{d} R_k \le \prod_{k = 1}^{d} (1 + C \log n_k) 
    \le \bigg[\frac{1}{d} \sum_{k = 1}^{d} (1 + C \log n_k)\bigg]^d 
    = \Big(1 + \frac{C}{d} \cdot \log n\Big)^d 
    \le C_d (\log n)^d.
\end{equation*}
We can bound the the summation of the second term over $\mathbf{r} \in \mathcal{R}$ as follows:
\begingroup
\allowdisplaybreaks
\begin{align*}
    &\sum_{\mathbf{r} \in \mathcal{R}} t_{\mathbf{r}} \log\Big(1 + \frac{V_{\mathbf{r}}}{t_{\mathbf{r}}}\Big) 
    = \sum_{\mathbf{r} \in \mathcal{R}} t \Big(\frac{w_{\mathbf{r}}}{|\mathcal{R}|}\Big)^{1/2} \cdot \log\Big(1 + C_d \cdot \frac{M + t (2^{r_+}/n)^{1/2}}{t \cdot (w_{\mathbf{r}}/|\mathcal{R}|)^{1/2}} \Big) \\
    &\qquad \le \sum_{\mathbf{r} \in \mathcal{R}} t \Big(\frac{w_{\mathbf{r}}}{|\mathcal{R}|}\Big)^{1/2} \cdot \log\Big(1 + C_d \cdot \frac{(M + t) \cdot |\mathcal{R}|^{1/2}}{t} \Big) 
    \le C_d t (\log n)^{d/2} \cdot  \log\Big(1 + C_d \cdot \frac{(M + t) \cdot (\log n)^{d/2}}{t} \Big) \\
    &\qquad \le C_d t (\log n)^{d/2} \cdot \Big[C_d + \log\Big(1 + \frac{M}{t}\Big) + \frac{d}{2} \log\log n \Big] \\
    &\qquad \le C_d t (\log n)^{d/2} + C_d t (\log n)^{d/2} \log\Big(1 + \frac{M}{t}\Big) + C_d t (\log n)^{d/2} \cdot \log \log n,
\end{align*}
\endgroup
where the first inequality follows from that $2^{r_+} \le n$ and $w_{\mathbf{r}} \ge 1$.
Similarly, the summation of the third term over $\mathbf{r} \in \mathcal{R}$ can be bounded as  
\begingroup
\allowdisplaybreaks
\begin{align*}
    \sum_{\mathbf{r} \in \mathcal{R}} t_{\mathbf{r}} \bigg[\log\Big(2 + \frac{V_{\mathbf{r}} |L_{\mathbf{r}}|^{1/2}}{t_{\mathbf{r}}}\Big)\bigg]^{3(2s - 1)/8} &\le \sum_{\mathbf{r} \in \mathcal{R}} t \cdot \Big(\frac{w_{\mathbf{r}}}{|\mathcal{R}|}\Big)^{1/2} \bigg[\log\Big(2 + C_d \cdot \frac{M + t (2^{r_+}/n)^{1/2}}{t \cdot (w_{\mathbf{r}}/|\mathcal{R}|)^{1/2}} \cdot n^{1/2}\Big)\bigg]^{3(2s - 1)/8} \\
    &\le C_d t (\log n)^{d/2} \cdot \bigg[\log\Big(2 + C_d \cdot \frac{(M + t) \cdot n^{1/2}(\log n)^{d/2}}{t} \Big)\bigg]^{3(2s - 1)/8} \\
    &\le C_d t (\log n)^{d/2} \cdot \Big[C_d + \log \Big(1 + \frac{M}{t}\Big) + \frac{1}{2} \log n + \frac{d}{2} \log \log n \Big]^{3(2s - 1)/8} \\
    &\le C_d t (\log n)^{d/2} \Big[\log\Big(1 + \frac{M}{t}\Big)\Big]^{3(2s - 1)/8} + C_d t (\log n)^{(4d + 6s - 3)/8},
\end{align*}
\endgroup
where the first inequality uses the fact that $|L_{\mathbf{r}}| \le n$.
Lastly, the summation of the fourth term over $\mathbf{r} \in \mathcal{R}$ has the following bound:
\begingroup
\allowdisplaybreaks
\begin{align*}
    &\sum_{\mathbf{r} \in \mathcal{R}} V_{\mathbf{r}}^{1/4} t_{\mathbf{r}}^{3/4} |L_{\mathbf{r}}|^{1/8} \bigg[\log\Big(2 + \frac{V_{\mathbf{r}} |L_{\mathbf{r}}|^{1/2}}{t_{\mathbf{r}}}\Big)\bigg]^{3(2s - 1)/8} \\
    &\qquad \le \sum_{\mathbf{r} \in \mathcal{R}} C_d \Big[M + t \Big(\frac{2^{r_+}}{n}\Big)^{1/2}\Big]^{1/4} \cdot \Big(t \Big(\frac{w_{\mathbf{r}}}{|\mathcal{R}|}\Big)^{1/2}\Big)^{3/4} \cdot \bigg(\prod_{k = 1}^{d} \frac{n_k}{2^{r_k - 1}}\bigg)^{1/8} \Big[\log\Big(1 + \frac{M}{t}\Big) + \log n \Big]^{3(2s - 1)/8} \\
    &\qquad \le C_d \sum_{\mathbf{r} \in \mathcal{R}} \Big[M^{1/4} + t^{1/4} \Big(\frac{2^{r_+}}{n}\Big)^{1/8}\Big] \cdot t^{3/4} \Big(\frac{w_{\mathbf{r}}}{|\mathcal{R}|}\Big)^{3/8} \cdot \Big(\frac{n^{1/8}}{2^{r_+/8}}\Big) \cdot \Big[\log\Big(1 + \frac{M}{t}\Big) + \log n \Big]^{3(2s - 1)/8} \\
    &\qquad \le C_d \bigg[M^{1/4} t^{3/4} n^{1/8} \sum_{\mathbf{r} \in \mathcal{R}} \frac{1}{2^{r_+/8}} + t \sum_{\mathbf{r} \in \mathcal{R}} \Big(\frac{w_{\mathbf{r}}}{|\mathcal{R}|}\Big)^{3/8} \bigg] \cdot \Big[\log\Big(1 + \frac{M}{t}\Big) + \log n \Big]^{3(2s - 1)/8}.
\end{align*}
\endgroup
Here, the second inequality is based on that $(x + y)^{1/4} \le x^{1/4} + y^{1/4}$ for $x, y \ge 0$.
Observe that
\begin{equation*}
    \sum_{\mathbf{r} \in \mathcal{R}} \frac{1}{2^{r_+/8}} = \prod_{k = 1}^{d} \sum_{r_k = 1}^{R_k} \frac{1}{2^{r_k/8}} \le C.
\end{equation*}
Also, by H\"older inequality, we have
\begingroup
\allowdisplaybreaks
\begin{equation*}
    \sum_{\mathbf{r} \in \mathcal{R}} \Big(\frac{w_{\mathbf{r}}}{|\mathcal{R}|}\Big)^{3/8} 
    \le \frac{1}{|\mathcal{R}|^{3/8}} \cdot \bigg(\sum_{\mathbf{r} \in \mathcal{R}} w_{\mathbf{r}}\bigg)^{3/8} \cdot \bigg( \sum_{\mathbf{r} \in \mathcal{R}} 1 \bigg)^{5/8} 
    \le C |\mathcal{R}|^{5/8} 
    \le C_d (\log n)^{5d/8}.
\end{equation*}
\endgroup
It thus follows that
\begingroup
\allowdisplaybreaks
\begin{align*}
    &\sum_{\mathbf{r} \in \mathcal{R}} V_{\mathbf{r}}^{1/4} t_{\mathbf{r}}^{3/4} |L_{\mathbf{r}}|^{1/8} \bigg[\log\Big(2 + \frac{V_{\mathbf{r}} |L_{\mathbf{r}}|^{1/2}}{t_{\mathbf{r}}}\Big)\bigg]^{3(2s - 1)/8} 
    \le C_d M^{1/4} t^{3/4} n^{1/8} \Big[\log\Big(1 + \frac{M}{t}\Big)\Big]^{3(2s - 1)/8} \\
    &\qquad \qquad + C_d M^{1/4} t^{3/4} n^{1/8} (\log n)^{3(2s - 1)/8} 
    + C_d t (\log n)^{5d/8} \Big[\log\Big(1 + \frac{M}{t}\Big)\Big]^{3(2s - 1)/8} + C_d t (\log n)^{(5d + 6s - 3)/8}.
\end{align*}
\endgroup
Combining these results, we can derive that
\begingroup
\allowdisplaybreaks
\begin{align*}
    &\E\bigg[\sup_{\theta \in \Lambda_{\mathbf{w}}(t)} \sum_{\mathbf{i} \in J_{\zerovec}} \xi_{\mathbf{i}} (\theta_{\mathbf{i}} - \theta_{\mathbf{i}}^*) \bigg] \le C_d \sigma t (\log n)^{\max((d + 2)/2, (5d + 6s - 3)/8)}
    + C_d \sigma t (\log n)^{d/2} \log\Big(1 + \frac{M}{t}\Big) \\
    &\qquad \qquad \qquad \quad + C_d \sigma t (\log n)^{5d/8} \Big[\log\Big(1 + \frac{M}{t}\Big) \Big]^{3(2s - 1)/8} 
    + C_d \sigma M^{1/4} t^{3/4} n^{1/8} \Big[\log\Big(1 + \frac{M}{t}\Big) \Big]^{3(2s - 1)/8} \\
    &\qquad \qquad \qquad \quad + C_d \sigma M^{1/4} t^{3/4} n^{1/8} (\log n)^{3(2s - 1)/8}.
\end{align*}
\endgroup

Using \eqref{eq:bdd-for-H-zerovec}, we can show that $H_{\zerovec}(t)$ has the same upper bound (with a slightly larger $C_d$). 
Similarly, by analogous arguments, we can derive the same upper bound for $H_{\delta}(t)$ for $\delta \in \{0, 1\}^d \setminus \{\zerovec\}$.
Therefore, we can see that if we define
\begingroup
\allowdisplaybreaks
\begin{align*}
    t_2 &= \max \bigg(\sigma, C_d \sigma (\log n)^{\max((d + 2)/2, (5d + 6s - 3)/8)}, 
    C_d \sigma (\log n)^{d/2} \cdot  \log\Big(1 + \frac{M}{\sigma}\Big), \\
    &\qquad \qquad \quad C_d \sigma (\log n)^{5d/8} \Big[\log\Big(1 + \frac{M}{\sigma}\Big) \Big]^{3(2s - 1)/8}, 
    C_d \sigma^{4/5} M^{1/5} n^{1/10} \Big[\log\Big(1 + \frac{M}{\sigma}\Big) \Big]^{3(2s - 1)/10}, \\
    &\qquad \qquad \quad C_d \sigma^{4/5} M^{1/5} n^{1/10} (\log n)^{3(2s - 1)/10} \bigg),
\end{align*}
\endgroup
then $H(t_2) \le t_2^2/2$, which implies $h(t_2) \le 0$.
As a result, by Theorem \ref{thm:chatterjee-risk-bound} and Proposition \ref{prop:chatterjee-maximizer}, we have
\begingroup
\allowdisplaybreaks
\begin{align*}
    R_F(\hat{f}_{\text{TC}}^{d, s}, f^*) &= \frac{1}{n} \cdot \E\big[\|\hat{\theta}_T - \theta^*\|_2^2 \big] \le \frac{C}{n} \cdot (\sigma^2 + t_2^2) \\
    &\le C_d \Big( \frac{\sigma^2 M^{1/2}}{n} \Big)^{4/5} (\log n)^{3(2s - 1)/5} 
    + C_d \Big( \frac{\sigma^2 M^{1/2}}{n} \Big)^{4/5} \Big[\log\Big(1 + \frac{M}{\sigma}\Big) \Big]^{3(2s - 1)/5} \\
    &\qquad + C_d \cdot \frac{\sigma^2}{n} (\log n)^{5d/4} \Big[\log\Big(1 + \frac{M}{\sigma}\Big) \Big]^{3(2s - 1)/4} 
    + C_d \cdot \frac{\sigma^2}{n} (\log n)^{d} \Big[\log\Big(1 + \frac{M}{\sigma}\Big) \Big]^{2} \\
    &\qquad + C_d \cdot \frac{\sigma^2}{n} (\log n)^{\max(d + 2, (5d + 6s - 3)/4)}.  
\end{align*}
\endgroup
This can be simplified as 
\begin{equation}\label{eq:risk-with-M-s-ge-2}
    R_F(\hat{f}_{\text{TC}}^{d, s}, f^*) \le C_d \Big( \frac{\sigma^2 M^{1/2}}{n} \Big)^{4/5} \bigg[ \log\Big(\Big(1 + \frac{M}{\sigma}\Big) n \Big)\bigg]^{3(2s - 1)/5} 
    + C_d \cdot \frac{\sigma^2}{n} (\log n)^{5d/4} \bigg[ \log\Big(\Big(1 + \frac{M}{\sigma}\Big) n \Big)\bigg]^{3(2s - 1)/4},
\end{equation}
as desired, since $s \ge 2$.

We now consider the case $s = 1$.
In this case, again, by Theorem \ref{thm:emp-bound-ki}, we have
\begingroup
\allowdisplaybreaks
\begin{align*}
    &\E\bigg[\sup_{\substack{\theta \in T: \|\theta - \theta^*\|_2 \le t \\ \|\theta^{(\mathbf{r})} - {\theta^*}^{(\mathbf{r})}\|_2 \le t(w_{\mathbf{r}}/|\mathcal{R}|)^{1/2}}} \langle \xi^{(\mathbf{r})}, \theta^{(\mathbf{r})} - {\theta^*}^{(\mathbf{r})} \rangle \bigg] 
    \le \E\bigg[\sup_{\substack{\theta^{(\mathbf{r})} \in C_{\mathbf{m}}^{d, s}(V_{\mathbf{r}}) \\ \|\theta^{(\mathbf{r})} - {\theta^*}^{(\mathbf{r})}\|_2 \le t_{\mathbf{r}}}} \langle \xi^{(\mathbf{r})}, \theta^{(\mathbf{r})} - {\theta^*}^{(\mathbf{r})} \rangle \bigg] \\
    &\qquad \qquad \le C_d \sigma t_{\mathbf{r}} \log |L_{\mathbf{r}}|
    + C_d \sigma t_{\mathbf{r}} \log\Big(1 + \frac{V_{\mathbf{r}}}{t_{\mathbf{r}}}\Big) + C_d \sigma V_{\mathbf{r}}^{1/4} t_{\mathbf{r}}^{3/4} |L_{\mathbf{r}}|^{1/8}.
\end{align*}
\endgroup
Through similar computations as above, we can show that 
\begin{align*}
    &H_{\zerovec}(t) \le C_d \sigma t (\log n)^{(d + 2)/2}
    + C_d \sigma t (\log n)^{d/2} \log\Big(1 + \frac{M}{t}\Big) + C_d \sigma t (\log n)^{5d/8} + C_d \sigma M^{1/4} t^{3/4} n^{1/8}.
\end{align*}
Also, by analogous arguments, the same upper bound holds for $H_{\delta}(t)$ for $\delta \in \{0, 1\}^d \setminus \{\zerovec\}$.
Thus, in this case, the inequality $h(t_2) \le 0$ holds for
\begin{equation*}
    t_2 = \max\Big( 
    \sigma, C_d \sigma (\log n)^{(d + 2)/2}, 
    C_d \sigma (\log n)^{d/2} \cdot \log\Big(1 + \frac{M}{\sigma}\Big), 
    C_d \sigma (\log n)^{5d/8},
    C_d \sigma^{4/5} M^{1/5} n^{1/10}
    \Big).
\end{equation*}
Consequently, using Theorem \ref{thm:chatterjee-risk-bound} and Proposition \ref{prop:chatterjee-maximizer}, we can derive that
\begin{equation}\label{eq:risk-with-M-s-eq-1}
\begin{split}
    R_F(\hat{f}_{\text{TC}}^{d, 1}, f^*) &= \frac{1}{n} \cdot \E\big[\|\hat{\theta}_T - \theta^*\|_2^2 \big] \le \frac{C}{n} \cdot (\sigma^2 + t_2^2) \\
    &\le C_d \Big( \frac{\sigma^2 M^{1/2}}{n} \Big)^{4/5} 
    + C_d \cdot \frac{\sigma^2}{n} (\log n)^{d} \bigg[ \log\Big(\Big(1 + \frac{M}{\sigma}\Big) n \Big)\bigg]^{2} 
    + C_d \cdot \frac{\sigma^2}{n} (\log n)^{5d/4}.  
\end{split}
\end{equation}

We now show that $M = M(g^*)$ in the bounds \eqref{eq:risk-with-M-s-ge-2} and \eqref{eq:risk-with-M-s-eq-1} can be replaced with $V_{\text{design}}(f^*)$.
Fix $c_S \in \R$ for each subset $S \subseteq [d]$ with $1 \le |S|\le s$, 
and let $A: [0, 1]^d \rightarrow \R$ denote the multi-affine function defined as
\begin{equation*}
    A(x_1, \dots, x_d) = g^*(x_1, \dots, x_d) - \sum_{S: 1 \le |S| \le s} c_S \prod_{k \in S} x_k \qt{for $(x_1, \dots, x_d) \in [0, 1]^d$.}
\end{equation*}
Next, define $\widebar{g} = g^* - A \in \tcds$,
and let $\widebar{y}_{\mathbf{i}} = y_{\mathbf{i}} - A(\mathbf{x}^{(\mathbf{i})})$ for $\mathbf{i} \in I_0$.
It then follows that
\begin{equation}\label{eq:shifted-model}
    \widebar{y}_{\mathbf{i}} = y_{\mathbf{i}} - A(\mathbf{x}^{(\mathbf{i})}) = f^*(\mathbf{x}^{(\mathbf{i})}) - A(\mathbf{x}^{(\mathbf{i})}) + \xi_{\mathbf{i}} = g^*(\mathbf{x}^{(\mathbf{i})}) - A(\mathbf{x}^{(\mathbf{i})}) + \xi_{\mathbf{i}}  = \widebar{g}(\mathbf{x}^{(\mathbf{i})}) + \xi_{\mathbf{i}}
\end{equation}
for $\mathbf{i} \in I_0$.
We can thus view $(\mathbf{x}^{(\mathbf{i})}, \widebar{y}_{\mathbf{i}})$ for $\mathbf{i} \in I_0$ as observations from the model \eqref{eq:shifted-model}, where the true regression function is $\widebar{g}$.
Moreover, define $\widebar{f}_{\text{TC}}^{d, s} = \hat{f}_{\text{TC}}^{d, s} - A$.
Since $\tcds$ is invariant under subtraction by $A$, i.e., 
\begin{equation*}
    \tcds = \big\{f - A: f \in \tcds\big\},
\end{equation*}
it is clear that $\widebar{f}_{\text{TC}}^{d, s}$ is a least squares estimator over $\tcds$, based on observations $(\mathbf{x}^{(\mathbf{i})}, \widebar{y}_{\mathbf{i}})$ for $\mathbf{i} \in I_0$.
That is,
\begin{equation*}
  \widebar{f}_{\text{TC}}^{d, s} \in \argmin_{f \in \tcds} \sum_{\mathbf{i} \in I_0}
  \big(\widebar{y}_{\mathbf{i}} - f(\mathbf{x}^{(\mathbf{i})}) \big)^2. 
\end{equation*}
Hence, the bounds \eqref{eq:risk-with-M-s-ge-2} and \eqref{eq:risk-with-M-s-eq-1} hold for $R_F(\widebar{f}_{\text{TC}}^{d, s}, \widebar{g})$ with $M = M(\widebar{g})$.
Since
\begin{equation*}
    R_F(\hat{f}_{\text{TC}}^{d, s}, f^*) = R_F(\hat{f}_{\text{TC}}^{d, s}, g^*) = R_F(\widebar{f}_{\text{TC}}^{d, s}, \widebar{g}),
\end{equation*}
this implies that the bounds \eqref{eq:risk-with-M-s-ge-2} and \eqref{eq:risk-with-M-s-eq-1} also hold for $R_F(\hat{f}_{\text{TC}}^{d, s}, f^*)$ with $M = M(\widebar{g})$.

Observe that, for each subset $S \subseteq [d]$ with $1 \le |S| \le s$,
\begin{equation*}
    \begin{bmatrix}
        0, t_k, & k \in S    & \multirow{2}{*}{; \ $\widebar{g}$ } \\ 
        0,      & k \notin S &
    \end{bmatrix} 
    = 
    \begin{bmatrix}
        0, t_k, & k \in S    & \multirow{2}{*}{; \ $g^*$ } \\ 
        0,      & k \notin S &
    \end{bmatrix}
    - c_S
    \qt{ for every } t_k > 0 \text{ for } k \in S 
\end{equation*}
and
\begin{equation*}
    \begin{bmatrix}
        t_k, 1, & k \in S    & \multirow{2}{*}{; \ $\widebar{g}$ } \\ 
        0,      & k \notin S &
    \end{bmatrix}
    = 
    \begin{bmatrix}
        t_k, 1, & k \in S    & \multirow{2}{*}{; \ $g^*$ } \\ 
        0,      & k \notin S &
    \end{bmatrix} 
    - c_S
    \qt{ for every } t_k < 1 \text{ for } k \in S.
\end{equation*}
Hence, if we choose $c_S$ as
\begin{equation*}
    c_S = \inf \bigg\{
    \begin{bmatrix}
        t_k, 1, & k \in S    & \multirow{2}{*}{; \ $g^*$ } \\ 
        0,      & k \notin S &
    \end{bmatrix}
    : t_k < 1 \ \text{ for } k \in S \bigg\}
\end{equation*}
for each subset $S$, then we have
\begin{align*}
    M(\widebar{g}) = \max_{S: 1 \le |S| \le s} \bigg(
    &\sup \bigg\{
    \begin{bmatrix}
        0, t_k, & k \in S    & \multirow{2}{*}{; \ $g^*$ } \\ 
        0,      & k \notin S &
    \end{bmatrix}
    : t_k > 0 \ \text{ for } k \in S \bigg\} \\
    &\qquad - \inf \bigg\{
    \begin{bmatrix}
        t_k, 1, & k \in S    & \multirow{2}{*}{; \ $g^*$ } \\ 
        0,      & k \notin S &
    \end{bmatrix}
    : t_k < 1 \ \text{ for } k \in S \bigg\}
    \bigg)
\end{align*}
(recall the definition of $M(\cdot)$ from \eqref{sup-of-first-derivatives}).

Since $g^* \in \tcds$, there exist real numbers $\beta_0$ and $\beta_S$ (for $1 \le |S| \le s$) and finite measures $\nu_S$ (for $1 \le |S| \le s$) on $[0, 1)^{|S|} \setminus \{\zerovec\}$ such that
\begin{equation*}
  g^*(x_1, \dots, x_d) = \beta_0 + \sum_{S : 1 \le |S| \le s} \beta_S \bigg[\prod_{k \in S} x_k \bigg] - \sum_{S : 1 \le |S| \le s} \int_{[0, 1)^{|S|} \setminus \{\zerovec\}} 
  \bigg[\prod_{k \in S} (x_k - t_k)_+ \bigg] \, d\nu_S(t_k, k \in S)
\end{equation*}
for $(x_1, \dots, x_d) \in [0, 1]^d$.
Observe that, for each $S$, we have
\begingroup
\allowdisplaybreaks
\begin{align*}
    \int_{[0, 1)^{|S|} \setminus \{\zerovec\}} \prod_{k \in S} (x_k - t_k)_+  \, d\nu_S(t_k, k \in S) &= \int_{\prod_{k \in S}[0, x_k) \setminus \{\zerovec\}} \int_{\prod_{k \in S}[t_k, x_k)} 1 \, d(s_k, k \in S) \, d\nu_S(t_k, k \in S) \\
    &= \int_{\prod_{k \in S}[0, x_k)} \int_{\prod_{k \in S}[0, s_k] \setminus \{\zerovec\}} 1 \, d\nu_S(t_k, k \in S) \, d(s_k, k \in S) \\
    &= \int_{\prod_{k \in S}[0, x_k)} \nu_S\bigg(\prod_{k \in S}[0, s_k] \setminus \{\zerovec\}\bigg) \, d(s_k, k \in S).
\end{align*}
\endgroup
Thus, it can be readily verified that, for each $S$,
\begin{equation*}
    \begin{bmatrix}
        0, t_k, & k \in S    & \multirow{2}{*}{; \ $g^*$ } \\ 
        0,      & k \notin S &
    \end{bmatrix} 
    = \beta_S - \frac{1}{\prod_{k \in S} t_k} \int_{\prod_{k \in S}[0, t_k)} \nu_S\bigg(\prod_{k \in S}[0, s_k] \setminus \{\zerovec\}\bigg) \, d(s_k, k \in S)
\end{equation*}
for every $t_k > 0$ for $k \in S$, and
\begin{equation*}
    \begin{bmatrix}
        t_k, 1, & k \in S    & \multirow{2}{*}{; \ $g^*$ } \\ 
        0,      & k \notin S &
    \end{bmatrix}
    = \beta_S - \frac{1}{\prod_{k \in S} (1 - t_k)} \int_{\prod_{k \in S}[t_k, 1)} \nu_S\bigg(\prod_{k \in S}[0, s_k] \setminus \{\zerovec\}\bigg) \, d(s_k, k \in S)
\end{equation*}
for every $t_k < 1$ for $k \in S$.
Since $\nu_S$ is nonnegative, this implies that
\begin{equation*}
    \sup \bigg\{
    \begin{bmatrix}
        0, t_k, & k \in S    & \multirow{2}{*}{; \ $g^*$ } \\ 
        0,      & k \notin S &
    \end{bmatrix}
    : t_k > 0 \ \text{ for } k \in S \bigg\}
    \le \beta_S
\end{equation*}
and 
\begin{equation*}
    \inf \bigg\{
    \begin{bmatrix}
        t_k, 1, & k \in S    & \multirow{2}{*}{; \ $g^*$ } \\ 
        0,      & k \notin S &
    \end{bmatrix}
    : t_k < 1 \ \text{ for } k \in S \bigg\} 
    \ge \beta_S - \nu_S\big([0, 1)^{|S|} \setminus \{\zerovec\}\big).
\end{equation*}
As a result, we have
\begin{equation*}
    M(\widebar{g}) \le \max_{S: 1 \le |S| \le s} \nu_{S}\big([0, 1)^{|S|} \setminus \{\zerovec\}\big) 
    \le \sum_{S: 1 \le |S| \le s} \nu_{S}\big([0, 1)^{|S|} \setminus \{\zerovec\}\big) 
    = V(g^*).
\end{equation*}

Recall that the bounds \eqref{eq:risk-with-M-s-ge-2} and \eqref{eq:risk-with-M-s-eq-1} hold for $R_F(\hat{f}_{\text{TC}}^{d, s}, f^*)$ with $M = M(\widebar{g})$. 
Also, recall that $g^*$ is any function in $\tcds$ that agrees with $f^*$ at the design points $\mathbf{x}^{(\mathbf{i})}$. 
Hence, the bounds \eqref{eq:risk-with-M-s-ge-2} and \eqref{eq:risk-with-M-s-eq-1} still remain valid when $M$ is replaced with $V_{\text{design}}(f^*)$.
\end{proof}

\subsubsection{Proof of Theorem \ref{thm:sharp-oracle}}
The following theorem, from \citet{bellec2018sharp}, is a key tool for our proof of Theorem \ref{thm:sharp-oracle}.
Recall the model \eqref{gaussian-random-vector-model} and the definition \eqref{discrete-convex-lse} of the least squares estimator $\hat{\theta}_K$ over a closed convex subset $K$ of $\R^n$.
Theorem \ref{thm:bellec} provides an alternative bound for the expected squared error $\E[\|\hat{\theta}_K - \theta^*\|_2^2]$ of $\hat{\theta}_K$.
The bound in Theorem \ref{thm:bellec} is expressed in terms of the \textit{statistical dimension} of the \textit{tangent cone} of $K$, which are defined as follows.

For a closed convex subset $K$ of $\R^n$, the tangent cone of $K$ at $\theta \in K$ is defined as 
\begin{equation*}
    \T_{K}(\theta) = \overline{\{t(\eta - \theta): t \ge 0 \text{ and } \eta \in K\}},
\end{equation*}
where $\overline{\{\cdot\}}$ denotes the closure.
It is clear that $\T_{K}(\theta)$ is closed and convex. 
Also, it can be readily checked that $\T_{K}(\theta)$ is a cone, meaning that if $\zeta \in \T_{K}(\theta)$ and $t \ge 0$, then $t \zeta \in \T_{K}(\theta)$.
For a closed convex cone $\T \subseteq \R^n$, the statistical dimension of $\T$ is defined by
\begin{equation*}
    \delta(\T) = \E \|\Pi_{\T} (\mathbf{Z})\|_2^2,
\end{equation*}
where $\mathbf{Z} = (Z_1, \dots, Z_n)$ is a Gaussian random vector with $Z_i
\overset{\text{i.i.d.}}{\sim} N(0, 1^2)$, $\Pi_{\T} (\mathbf{Z})$ denotes the projection of $\mathbf{Z}$ onto $\T$, and $\|\cdot\|_2$ is the Euclidean norm.
It is well-known that the statistical dimension $\delta(\cdot)$ is
monotone: for closed convex cones $\T \subseteq \T'$, we have
$\delta(\T) \le \delta(\T')$ (see, e.g., \citet[Proposition
3.1]{amelunxen2014living}).

\begin{theorem}[Corollary 2.2 of \citet{bellec2018sharp}]\label{thm:bellec}
Under the model \eqref{gaussian-random-vector-model}, $\hat{\theta}_K$ (defined in \eqref{discrete-convex-lse}) satisfies that
    \begin{equation*}
        \E\big[\|\hat{\theta}_K - \theta^*\|_2^2\big] \le \inf_{\theta \in K} \big\{ \|\theta - \theta^*\|_2^2 + \sigma^2 \cdot  \delta(\T_K(\theta))\big\}.
    \end{equation*}
\end{theorem}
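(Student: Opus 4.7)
The plan is to combine the obtuse-angle characterization of Euclidean projection onto the convex set $K$ with a localization argument on the tangent cone $\T_K(\theta)$. The argument is deterministic up to the very last step, and the sharp leading constant $1$ in front of $\|\theta - \theta^*\|_2^2$ will come from a strongly-convex sharpening of the usual basic inequality.

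First I would use that $\hat{\theta}_K$ is the Euclidean projection of $\mathbf{y}$ onto $K$, so the variational inequality $\langle \mathbf{y} - \hat{\theta}_K,\, \eta - \hat{\theta}_K\rangle \le 0$ holds for every $\eta \in K$. Setting $\eta = \theta \in K$ and expanding $\|\mathbf{y} - \theta\|_2^2 = \|(\mathbf{y} - \hat{\theta}_K) + (\hat{\theta}_K - \theta)\|_2^2$ together with $\mathbf{y} = \theta^* + \xi$ yields the sharpened basic inequality
\begin{equation*}
    \|\hat{\theta}_K - \theta^*\|_2^2 \;\le\; \|\theta - \theta^*\|_2^2 + 2\langle \xi,\, \hat{\theta}_K - \theta\rangle - \|\hat{\theta}_K - \theta\|_2^2.
\end{equation*}
The extra quadratic penalty on the right is precisely what will produce the sharp constant $1$. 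Because $\hat{\theta}_K \in K$, the displacement $\hat{\theta}_K - \theta$ lies in $\T_K(\theta)$, so
\begin{equation*}
    2\langle \xi,\, \hat{\theta}_K - \theta\rangle - \|\hat{\theta}_K - \theta\|_2^2 \;\le\; \sup_{u \in \T_K(\theta)} \big\{2\langle \xi, u\rangle - \|u\|_2^2\big\}.
\end{equation*}

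Next I would evaluate this supremum explicitly. For any closed convex cone $\T \subseteq \R^n$, completing the square $2\langle \xi, u\rangle - \|u\|_2^2 = \|\xi\|_2^2 - \|\xi - u\|_2^2$ shows the supremum is attained at $u = \Pi_{\T}(\xi)$, the Euclidean projection of $\xi$ onto $\T$. Because $\T$ is a cone, testing the projection variational inequality at $0$ and at $2\Pi_{\T}(\xi)$ (both in $\T$) gives the orthogonality $\langle \xi - \Pi_{\T}(\xi),\, \Pi_{\T}(\xi)\rangle = 0$, and Pythagoras then yields $\sup_{u \in \T}\{2\langle \xi, u\rangle - \|u\|_2^2\} = \|\Pi_{\T}(\xi)\|_2^2$. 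Applying this with $\T = \T_K(\theta)$ and taking expectations over $\xi \sim N(0, \sigma^2 I_n)$, I would exploit that the projection onto a cone is positively $1$-homogeneous to write $\Pi_{\T_K(\theta)}(\xi) = \sigma\, \Pi_{\T_K(\theta)}(\xi/\sigma)$ with $\xi/\sigma \sim N(0, I_n)$, so that $\E\|\Pi_{\T_K(\theta)}(\xi)\|_2^2 = \sigma^2\, \delta(\T_K(\theta))$ by the very definition of statistical dimension. Taking the infimum over $\theta \in K$ finishes the proof.

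The main obstacle is securing the \emph{strong} basic inequality with the extra $-\|\hat{\theta}_K - \theta\|_2^2$ term rather than the weaker form $\|\hat{\theta}_K - \theta^*\|_2^2 \le \|\theta - \theta^*\|_2^2 + 2\langle \xi, \hat{\theta}_K - \theta\rangle$; without this quadratic localizer one can only bound $2\langle \xi, \hat{\theta}_K - \theta\rangle$ by a Gaussian width via Cauchy--Schwarz on a localized set, producing a bound of order $(1+\alpha)\|\theta - \theta^*\|_2^2 + C_\alpha \sigma^2 \delta(\T_K(\theta))$ with a leading constant strictly greater than $1$. Obtaining the sharp constant $1$ is what distinguishes this sharp oracle inequality from its weaker cousins, and the projection variational inequality for convex sets is exactly what makes the localizer appear.
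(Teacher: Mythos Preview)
Your proof is correct. Note, however, that the paper does not supply its own proof of this statement: it is quoted verbatim as Corollary~2.2 of \citet{bellec2018sharp} and used as a black box in the proof of Theorem~\ref{thm:sharp-oracle}. What you have written is essentially Bellec's own argument --- the sharpened basic inequality obtained from the projection variational characterization, followed by the identity $\sup_{u \in \T}\{2\langle \xi, u\rangle - \|u\|_2^2\} = \|\Pi_{\T}(\xi)\|_2^2$ for closed convex cones --- so there is nothing to compare against within the present paper.
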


We also use the following notation, which extends $T$ defined in \eqref{discrete-tc-set}, in our proof of Theorem \ref{thm:sharp-oracle}.
For positive integers $m_1, \dots, m_d$, we define
\begin{equation*}
    T(m_1, \dots, m_d) = \bigg\{\Big(f\Big(\frac{i_1}{m_1}, \dots, \frac{i_d}{m_d}\Big), (i_1, \dots, i_d) \in I(m_1, \dots, m_d)\Big): f \in \tcds \bigg\},
\end{equation*}
where $I(m_1, \dots, m_d)$ is the index set defined as in \eqref{general-index-set}.
Note that $T = T(n_1, \dots, n_d)$.

Furthermore, as in the proof of Theorem \ref{thm:risk-bound-fixed}, we re-index $(\mathbf{x}^{(1)}, y_1), \dots, (\mathbf{x}^{(n)}, y_n)$ as $(\mathbf{x}^{(\mathbf{i})}, y_{\mathbf{i}})$ for $\mathbf{i} \in I_0$ in our proof of Theorem \ref{thm:sharp-oracle}.
For each real-valued function $f$ on $[0, 1]^d$, we also denote by $\theta_f$ the vector of evaluations of $f$ at the design points: 
\begin{equation*}
    \theta_f = (f(\mathbf{x}^{(\mathbf{i})}), \mathbf{i} \in I_0).
\end{equation*}

\begin{proof}[Proof of Theorem \ref{thm:sharp-oracle}]
Recall that $\theta^* = (f^*(\mathbf{x}^{(\mathbf{i})}), \mathbf{i} \in I_0) = \theta_{f^*}$, and $\hat{\theta}_T$ is defined as the least squares estimator over $T$:
\begin{equation*}
    \hat{\theta}_T = \argmin_{\theta \in T} \|\mathbf{y} - \theta\|_2,
\end{equation*}
where $\mathbf{y} = (y_{\mathbf{i}}, \mathbf{i} \in I_0)$.
Also, recall that we have
\begin{equation*}
    R_F(\hat{f}_{\text{TC}}^{d, s}, f^*) = \frac{1}{n} \cdot \E\big[\|\hat{\theta}_T - \theta^*\|_2^2 \big].
\end{equation*}

Since $T = \{\theta_f: f \in \tcds\}$, applying Theorem \ref{thm:bellec} with $K = T$, we obtain
\begin{align*}
    R_F(\hat{f}_{\text{TC}}^{d, s}, f^*) &= \frac{1}{n} \cdot \E\big[\|\hat{\theta}_T - \theta^*\|_2^2 \big] 
    \le \inf_{\theta \in T} \Big\{ \frac{1}{n} \cdot \|\theta - \theta^*\|_2^2 + \frac{\sigma^2}{n} \cdot  \delta(\T_T(\theta))\Big\} \\
    &= \inf_{f \in \tcds} \Big\{\|f - f^*\|_n^2 + \frac{\sigma^2}{n} \cdot  \delta(\T_T(\theta_f))\Big\} \le \inf_{f \in \tcds \cap \mathcal{R}^{d, s}} \Big\{\|f - f^*\|_n^2 + \frac{\sigma^2}{n} \cdot  \delta(\T_T(\theta_f))\Big\}.
\end{align*}
Hence, to prove Theorem \ref{thm:sharp-oracle}, it suffices to show that 
\begin{equation*}
    \delta(\T_T(\theta_{f})) \le 
    \begin{cases}
        C_d \cdot m(f) \cdot (\log n)^{(5d + 6s - 3)/4} & \text{if } s \ge 2, \\
        C_d \cdot m(f) \cdot (\log n)^{\max(d + 2, 5d/4)} & \text{if } s = 1
    \end{cases}
\end{equation*}
for every $f \in \tcds \cap \mathcal{R}^{d, s}$.

Fix $f \in \tcds \cap \mathcal{R}^{d, s}$, and let 
\begin{equation*}
    P = \bigg\{\prod_{k = 1}^{d} \big[u^{(k)}_{j_k - 1}, u^{(k)}_{j_k}\big]: j_k = 1, \dots, m_k\bigg\}
\end{equation*}
be an axis-aligned split of $[0, 1]^d$ with $|P| = m(f)$ such that $0 = u_0^{(k)} < \dots < u_{m_k}^{(k)} = 1$ for $k = 1, \dots, d$, and $f$ is a multi-affine function of interaction order $s$ on each rectangle of $P$.
For each $\theta \in \R^{|I_0|}$ and $\mathbf{j} = (j_1, \dots, j_d) \in \prod_{k = 1}^{d} [m_k]$, let $\theta^{(\mathbf{j})}$ be the subvector of $\theta$ defined as
\begin{equation*}
    \theta^{(\mathbf{j})} = \Big(\theta_{i_1, \dots, i_d}, \Big(\frac{i_1}{n_1}, \dots, \frac{i_d}{n_d}\Big) \in \prod_{k = 1}^{d} \big[u^{(k)}_{j_k - 1}, u^{(k)}_{j_k}\big) \Big).
\end{equation*}
Also, for each $\mathbf{j}$, by shifting indices, we re-index $\theta^{(\mathbf{j})}$ with $I(n_1^{(\mathbf{j})}, \dots, n_d^{(\mathbf{j})})$, where $n_k^{(\mathbf{j})}$ is the number of $i_k$'s for which $i_k/n_k \in [u^{(k)}_{j_k - 1}, u^{(k)}_{j_k})$.

Now, we claim that 
\begin{equation*}
    \T_{T}(\theta_{f}) \subseteq \bigg\{\theta \in \R^{|I_0|}: \theta^{(\mathbf{j})} \in T\big(n_1^{(\mathbf{j})}, \dots, n_d^{(\mathbf{j})}\big) \text{ for every } \mathbf{j} = (j_1, \dots, j_d) \in \prod_{k = 1}^{d} [m_k] \bigg\} := \T_P.
\end{equation*}
It is clear that $\T_P$ is closed, and if $\theta \in \T_P$ and $t \ge 0$, then $t \theta \in \T_P$.
For the claim, it is thus enough to show that, for every $\theta \in T$, we have $\theta - \theta_{f} \in \T_P$.
Fix $\theta \in T$, and choose any function $g \in \tcds$ with $\theta_g = \theta$.
Also, fix $\mathbf{j} = (j_1, \dots, j_d) \in \prod_{k = 1}^{d} [m_k]$, and, for each $k = 1, \dots, d$, let $l_k^{(\mathbf{j})}$ be the smallest integer for which $l_k^{(\mathbf{j})}/n_k \in [u^{(k)}_{j_k - 1}, u^{(k)}_{j_k})$.
For these fixed $\theta$ and $\mathbf{j}$, we show that $(\theta - \theta_{f})^{(\mathbf{j})} = \theta_{g - f}^{(\mathbf{j})} \in T(n_1^{(\mathbf{j})}, \dots, n_d^{(\mathbf{j})})$, from which the claim follows directly.

Suppose $f$ is of the form 
\begin{equation}\label{eq:multi-affine-function}
    f(x_1, \dots, x_d) = \beta_0^{(\mathbf{j})} + \sum_{S: 1 \le |S| \le s} \beta_S^{(\mathbf{j})} \bigg[\prod_{k \in S} x_k\bigg]
\end{equation}
on $\prod_{k = 1}^{d} [u^{(k)}_{j_k - 1}, u^{(k)}_{j_k}]$, for some constants $\beta_0^{(\mathbf{j})}$ and $\beta_S^{(\mathbf{j})}$, $1 \le |S| \le s$.
First, let $f_0: [0, 1]^d \rightarrow \R$ be the function that is of the form \eqref{eq:multi-affine-function} over the entire domain $[0, 1]^d$.
Note that while $f$ may not be a multi-affine function on $\prod_{k = 1}^{d} [l_k^{(\mathbf{j})}/n_k, (l_k^{(\mathbf{j})} + n_k^{(\mathbf{j})})/n_k]$, the function $f_0$ is multi-affine on $\prod_{k = 1}^{d} [l_k^{(\mathbf{j})}/n_k, (l_k^{(\mathbf{j})} + n_k^{(\mathbf{j})})/n_k]$ by definition.
Next, define $h$ as
\begin{equation*}
    h(x_1, \dots, x_d) = (g - f_0)\Big(\frac{l_1^{(\mathbf{j})} + n_1^{(\mathbf{j})} \cdot x_1}{n_1}, \dots, \frac{l_d^{(\mathbf{j})} + n_d^{(\mathbf{j})} \cdot x_d}{n_d}\Big)
\end{equation*}
for $(x_1, \dots, x_d) \in [0, 1]^d$.
Observe that 
\begin{equation*}
    \Big(\frac{l_1^{(\mathbf{j})} + n_1^{(\mathbf{j})} \cdot x_1}{n_1}, \dots, \frac{l_d^{(\mathbf{j})} + n_d^{(\mathbf{j})} \cdot x_d}{n_d}\Big) \in \prod_{k = 1}^{d} \big[u^{(k)}_{j_k - 1}, u^{(k)}_{j_k}\big)
\end{equation*}
provided that $x_k \le (n_k^{(\mathbf{j})} - 1)/n_k^{(\mathbf{j})}$ for $k = 1, \dots, d$. 
It thus follows that
\begin{equation*}
    \theta_{g - f}^{(\mathbf{j})} = \theta_{g - f_0}^{(\mathbf{j})} 
    = \bigg(h\Big(\frac{i_1}{n_1^{(\mathbf{j})}}, \dots, \frac{i_d}{n_d^{(\mathbf{j})}}\Big), (i_1, \dots, i_d) \in I\big(n_1^{(\mathbf{j})}, \dots, n_d^{(\mathbf{j})}\big)\bigg).
\end{equation*}

We now verify that $h \in \tcp$ and satisfies the interaction restriction condition \eqref{int-rest-cond} for all subsets $S \subseteq [d]$ with $|S| > s$.
By the definition of $T(n_1^{(\mathbf{j})}, \dots, n_d^{(\mathbf{j})})$ and (the proof of) Proposition \ref{lem:reduction-from-popoviciu}, this implies that $(\theta - \theta_{f})^{(\mathbf{j})} = \theta_{g - f}^{(\mathbf{j})} \in T(n_1^{(\mathbf{j})}, \dots, n_d^{(\mathbf{j})})$, which proves the claim.
To see this, first, note that the divided differences of $f_0$ of order $\mathbf{p} = (p_1, \dots, p_d)$ with $\max_j p_j = 2$ are all zero because $f_0$ is a multi-affine function.
Combining this with the fact that $g \in \tcds \subseteq \tcp$, it follows that $h \in \tcp$.
Also, it is straightforward to see that the interaction restriction condition \eqref{int-rest-cond} for $h$ can be derived from those for $g$ and $f_0$, which are from the fact that $g, f_0 \in \tcds$.

Due to the monotonicity of the statistical dimension $\delta(\cdot)$, we have
\begin{equation*}
    \delta(\T_{T}(\theta_{f})) \le \delta(\T_P) 
    = \E \|\Pi_{\T_P} (\mathbf{Z})\|_2^2 = \sum_{\mathbf{j} \in \prod_{k = 1}^{d} [m_k]} \E \|\Pi_{T(n_1^{(\mathbf{j})}, \dots, n_d^{(\mathbf{j})})} (\mathbf{Z}^{(\mathbf{j})})\|_2^2.
\end{equation*}
Note that $\E \|\Pi_{T(n_1^{(\mathbf{j})}, \dots, n_d^{(\mathbf{j})})} (\mathbf{Z}^{(\mathbf{j})})\|_2^2 / (n_1^{(\mathbf{j})} \cdots n_d^{(\mathbf{j})})$ is equal to the risk $R_F(\hat{f}_{\text{TC}}^{d, s}, f^*)$ of the estimator $\hat{f}_{\text{TC}}^{d, s}$, 
when $f^* = 0$, $\sigma = 1$, and the design points are $(i_1/n_1^{(\mathbf{j})}, \dots, i_d/n_d^{(\mathbf{j})})$ for $(i_1, \dots, i_d) \in I(n_1^{(\mathbf{j})}, \dots, n_d^{(\mathbf{j})})$.
Hence, we can reuse the results established in Theorem \ref{thm:risk-bound-fixed} to derive that 
\begin{equation*}
    \delta(\T_{T}(\theta_{f})) \le \sum_{\mathbf{j} \in \prod_{k = 1}^{d} [m_k]} C_d \cdot \Big(\log \big(n_1^{(\mathbf{j})} \cdots n_d^{(\mathbf{j})}\big) \Big)^{(5d + 6s - 3)/4} 
    \le C_d \cdot m(f) \cdot (\log n)^{(5d + 6s - 3)/4} 
\end{equation*}
when $s \ge 2$, 
\begin{equation*}
    \delta(\T_{T}(\theta_{f})) \le \sum_{\mathbf{j} \in \prod_{k = 1}^{d} [m_k]} C_d \cdot \Big(\log \big(n_1^{(\mathbf{j})} \cdots n_d^{(\mathbf{j})}\big) \Big)^{\max(d + 2, 5d/4)} 
    \le C_d \cdot m(f) \cdot (\log n)^{\max(d + 2, 5d/4)}
\end{equation*}
when $s = 1$.
This concludes the proof.
\end{proof}

\subsubsection{Proof of Theorem \ref{thm:risk-bound-random}}
Our proof of Theorem \ref{thm:risk-bound-random} almost follows the proof of Theorem 3.5 of \citet{ki2024mars}.
As in that proof, the following theorem, which is a version of \citet[Proposition 2]{han2019convergence}, plays a central role in our proof of Theorem \ref{thm:risk-bound-random}. 

\begin{theorem}[Theorem 11.12 of \citet{ki2024mars}]\label{thm:ki-random-bound}
    Suppose we are given $(\mathbf{x}^{(1)}, y_1), \dots, (\mathbf{x}^{(n)}, y_n)$ generated according to the model 
    \begin{equation*}
        y_i = f^{*}(\mathbf{x}^{(i)}) + \xi_i, 
    \end{equation*}
    where $\mathbf{x}^{(i)}$ are i.i.d. random variables with law $P$ on $[0, 1]^d$, and $\xi_i$ are i.i.d. error terms independent of $\mathbf{x}^{(1)}, \dots, \mathbf{x}^{(n)}$, with mean zero and finite $L^q$ norm for some $q \ge 3$. 
    The function $f^{*}: [0, 1]^d \rightarrow \R$ is an unknown regression function to be estimated.
    Let $\F$ denote a collection of continuous real-valued functions on $[0, 1]^d$, and assume $f^* \in \F$.
    Also, assume that $\|f - f^*\|_{\infty} \le M$ for all $f \in \F$. 
    Let $\hat{f}$ be an estimator of $f^{*}$ satisfying
    \begin{equation*}
        \hat{f} \in \argmin_{f \in \F} \bigg\{\sum_{i = 1}^{n} \big(y_i - f(\mathbf{x}^{(i)})\big)^2 \bigg\} 
    \end{equation*}
    with probability at least $1 - \epsilon$ for some $\epsilon \ge 0$.
    
    Suppose there exists $t_n > 0$ such that the following inequalities hold for every $r \ge 1$:
    \begin{equation*}
        \E\bigg[\sup_{\substack{f \in \F - \{f^*\}\\ \|f\|_{P, 2} \le rt_n} } \Big| \frac{1}{\sqrt{n}} \sum_{i=1}^{n} \xi_i f(\mathbf{x}^{(i)}) \Big| \bigg] \le r \sqrt{n} t_n^2 \ \text{ and } \
        \E\bigg[\sup_{\substack{f \in \F - \{f^*\}\\ \|f \|_{P, 2} \le rt_n} } \Big| \frac{1}{\sqrt{n}} \sum_{i=1}^{n} \epsilon_i f(\mathbf{x}^{(i)}) \Big| \bigg] \le r \sqrt{n} t_n^2. 
    \end{equation*}
    Here, $\|\cdot\|_{P, 2}$ is the norm defined by 
    \begin{equation*}
        \| f \|_{P, 2} = \big(\E_{\mathbf{X} \sim P}[f(\mathbf{X})^2 ]\big)^{1/2},
    \end{equation*}
    and $\epsilon_i$ are i.i.d. Rademacher variables independent of $\mathbf{x}^{(1)}, \dots, \mathbf{x}^{(n)}$.
    Under these assumptions, there exists a universal positive constant $C$ such that, for every $t \ge t_n$,
    \begin{equation*}
        \P (\|\hat{f} - f^* \|_{P, 2} > t) \le \epsilon + C \cdot \frac{1 + M^3}{t^3} \cdot t_n^3 + C \cdot \frac{\|\xi_1\|_2^3 + M^3}{n^{3/2} t^3} + C \cdot \frac{M^3(\|\xi_1\|_q^3 n^{3/q} + M^3)}{n^{3} t^6}.
    \end{equation*}
\end{theorem}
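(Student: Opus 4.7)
\textbf{Proof proposal for Theorem \ref{thm:risk-bound-random}.}
My plan is to invoke Theorem \ref{thm:ki-random-bound} directly with $\F$ taken to be (a truncated version of) the constraint class $\{f\in\tcds:V(f)\le V\}$, $P=p_0$, and $\hat f=\hat f_{\text{TC},V}^{d,s}$, where the latter is an exact minimizer by definition so that $\epsilon=0$. The target tail bound $t=t_n=C\cdot n^{-2/5}(\log n)^{4(s-1)/5}$ then yields $\|\hat f_{\text{TC},V}^{d,s}-f^*\|_{p_0,2}^2=O_p(t_n^2)=O_p(n^{-4/5}(\log n)^{8(s-1)/5})$, matching the stated rate. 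The remaining work is therefore twofold: first, to show that $\F$ can be taken to be uniformly $L^\infty$-bounded with bound $M$ depending only on $d,V,B$ and the moments of $\xi_i$; and second, to verify the two localized expected-supremum inequalities required by Theorem \ref{thm:ki-random-bound} for this choice of $t_n$.

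For the boundedness step, I would decompose each $f\in\tcds$ in the form \eqref{intermodds} into its multi-affine part $A_f(x)=\beta_0+\sum_{1\le|S|\le s}\beta_S\prod_{j\in S}x_j$ and its purely concave remainder $R_f(x)=-\sum_S\int\prod_{j\in S}(x_j-t_j)_+\,d\nu_S$. Since each kernel $\prod_{j\in S}(x_j-t_j)_+$ is bounded by $1$ on $[0,1]^d$ and $V(f)\le V$, we have $\|R_f\|_\infty\le V$. The multi-affine coefficients $\beta_0,\beta_S$ of $\hat f_{\text{TC},V}^{d,s}$ are, up to an op-negligible event, bounded in terms of $\|f^*\|_\infty$, $V$, and the sample variance of the $y_i$; standard truncation arguments (as in \citet{han2016multivariate, ki2024mars}) reduce the analysis to the event where $\|\hat f_{\text{TC},V}^{d,s}\|_\infty\le M$ for some $M=M(d,V,B,\text{moments of }\xi_i)$, so that $\F$ may be taken to be $\{f\in\tcds:V(f)\le V,\ \|f\|_\infty\le M\}$. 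Feasibility of $f^*$ holds by assumption.

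For the empirical and Rademacher process bounds, I would use existing metric-entropy estimates for the shape-constrained MARS classes developed in \citet{ki2024mars}. Their bounds (derived for signed-measure versions) yield, for the class $\F-\{f^*\}\cap\{\|f\|_{p_0,2}\le r t_n\}$, a covering-number estimate whose bracketing entropy grows at the rate dictated by totally concave functions on $[0,1]^d$, with logarithmic dependence on $d$ of order $(\log n)^{3(2s-1)}$; since the nonnegativity constraint on the $\nu_S$ only shrinks the class, the same entropy bound applies. Combining these entropy bounds with Dudley's integral (applied on the $L^2(P_n)$-ball of radius $r t_n$ and converted to an $L^2(p_0)$-ball via standard $L^2$--$L^2$ equivalence arguments, using the upper bound $p_0\le B$) yields, for a universal constant and the choice $t_n=C\cdot n^{-2/5}(\log n)^{4(s-1)/5}$, both expected suprema in Theorem \ref{thm:ki-random-bound} bounded by $r\sqrt n\,t_n^2$.

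The main obstacle is this last calibration: verifying that the entropy integral, combined with the localization at radius $r t_n$, gives the correct matching rate $r\sqrt n\, t_n^2$. Because we only have the upper bound $p_0\le B$ (with $b=0$) and must handle the Rademacher process via the $L^2(p_0)$ envelope rather than the $L^\infty$ envelope, some care is needed: specifically, the $L^\infty$ bound $M$ enters only to truncate the entropy integral and the subexponential tail contribution from $\xi_i$, while the localization radius $r t_n$ controls the Dudley integral tail. I expect this bookkeeping, rather than the application of Theorem \ref{thm:ki-random-bound} itself, to be the technical heart of the proof, essentially mirroring the analogous step in the proof of Theorem~3.5 of \citet{ki2024mars}.
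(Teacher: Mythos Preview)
Your proposal (which you label as a proof of Theorem~\ref{thm:risk-bound-random}, the result that \emph{uses} Theorem~\ref{thm:ki-random-bound}) follows exactly the paper's route: establish $\|\hat f_{\text{TC},V}^{d,s}-f^*\|_\infty=O_p(1)$, restrict to the truncated class $\F_{\text{TC}}^M(V)=\{f\in\tcds:V(f)\le V,\ \|f-f^*\|_\infty\le M\}$, embed it in the signed-measure MARS class $\F_{\infty\text{-mars}}^M(V)$ of \citet{ki2024mars} so that their expected-supremum calculations carry over verbatim, calibrate $t_n$ as in the proof of their Theorem~3.5, and apply Theorem~\ref{thm:ki-random-bound} with $q=5$. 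The one slip is your claim that $\epsilon=0$: $\hat f_{\text{TC},V}^{d,s}$ is the exact minimizer over the \emph{untruncated} class by definition, and only becomes a minimizer over $\F_{\text{TC}}^M(V)$ on the high-probability event $\{\|\hat f_{\text{TC},V}^{d,s}-f^*\|_\infty\le M\}$, so $\epsilon>0$ is precisely where that truncation probability enters (you effectively acknowledge this a few lines later).
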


\begin{proof}[Proof of Theorem \ref{thm:risk-bound-random}]
Let $\infmars$ denote the collection of all functions of the form
\eqref{intermodds} where $\nu_S$ is a finite signed (Borel) measure on
$[0, 1)^{|S|} \setminus \{\zerovec\}$ for each subset $S \subseteq
[d]$ with $1 \le |S| \le s$.   
As discussed in Section \ref{comparison-ki}, this is the function class studied in \citet{ki2024mars}. 
For this proof, we use the results established for $\infmars$ in \cite{ki2024mars}.

First, using the same argument as in the proof of Lemma 11.15 of \citet{ki2024mars}, we can prove the following:
\begin{lemma}\label{lem:mod-est-bdd-in-prob}
    The regularized variant $\hat{f}_{\text{TC}, V}^{d, s}$ satisfies that 
    \begin{equation*}
        \|\hat{f}_{\text{TC}, V}^{d, s} - f^*\|_{\infty} = O_p(1).
    \end{equation*}
\end{lemma}

Fix $\epsilon > 0$. 
This lemma guarantees that, there exists $M > 0$ such that
\begin{equation}\label{eq:mod-est-prob-bdd}
    \P\big(\|\hat{f}_{\text{TC}, V}^{d, s} - f^*\|_{\infty} \le M\big) \ge 1 - \epsilon
\end{equation}
for sufficiently large $n$.
Define
\begin{equation*}
    \F_{\text{TC}}^{M}(V) = \big\{f \in \tcds: V(f) \le V \text{ and } \|f - f^*\|_{\infty} \le M\big\},
\end{equation*}
and
\begin{equation*}
    \F_{\infty-\text{mars}}^{M}(V) = \big\{f \in \infmars: V(f) \le V \text{ and } \|f - f^*\|_{\infty} \le M\big\}.
\end{equation*}
It is clear that $\F_{\text{TC}}^{M}(V)$ is a collection of continuous functions on $[0, 1]^d$ and contains $f^*$.
Also, from \eqref{eq:mod-est-prob-bdd} and the definition of $\hat{f}_{\text{TC}, V}^{d, s}$, it follows that, for sufficiently large $n$, 
\begin{equation*}
    \hat{f}_{\text{TC}, V}^{d, s} \in \argmin_f \bigg\{\sum_{i = 1}^{n} \big(y_i - f(\mathbf{x}^{(i)})\big)^2: f \in \F_{\text{TC}}^{M}(V)\bigg\}
\end{equation*}
with probability at least $1 - \epsilon$.

Since $\tcds \subseteq \infmars$, we have $\F_{\text{TC}}^{M}(V) \subseteq \F_{\infty-\text{mars}}^{M}(V)$.
It follows that, for every $t > 0$, 
\begin{equation*}
    \E\bigg[\sup_{\substack{f \in \F_{\text{TC}}^{M}(V) - \{f^*\}\\ \|f\|_{p_0, 2} \le t}} \Big|\frac{1}{\sqrt{n}} \sum_{i=1}^{n} \xi_i f(\mathbf{x}^{(i)}) \Big| \bigg] \le \E\bigg[\sup_{\substack{f \in \F_{\infty-\text{mars}}^{M}(V) - \{f^*\}\\ \|f\|_{p_0, 2} \le t}} \Big|\frac{1}{\sqrt{n}} \sum_{i=1}^{n} \xi_i f(\mathbf{x}^{(i)}) \Big| \bigg]
\end{equation*}
and
\begin{equation*}
    \E\bigg[\sup_{\substack{f \in \F_{\text{TC}}^{M}(V) - \{f^*\}\\ \|f\|_{p_0, 2} \le t}} \Big|\frac{1}{\sqrt{n}} \sum_{i=1}^{n} \epsilon_i f(\mathbf{x}^{(i)}) \Big| \bigg] \le \E\bigg[\sup_{\substack{f \in \F_{\infty-\text{mars}}^{M}(V) - \{f^*\}\\ \|f\|_{p_0, 2} \le t}} \Big|\frac{1}{\sqrt{n}} \sum_{i=1}^{n} \epsilon_i f(\mathbf{x}^{(i)}) \Big| \bigg],
\end{equation*}
where $\epsilon_i$ are i.i.d. Rademacher variables independent of $\mathbf{x}^{(i)}$.
Thus, by repeating the arguments in the proof of Theorem 3.5 of \citet{ki2024mars}, we can obtain
\begin{equation*}
    \E\bigg[\sup_{\substack{f \in \F_{\text{TC}}^{M}(V) - \{f^*\}\\ \|f\|_{p_0, 2} \le rt_n}} \Big|\frac{1}{\sqrt{n}} \sum_{i=1}^{n} \xi_i f(\mathbf{x}^{(i)}) \Big| \bigg] \le r\sqrt{n} t_n^2 
    \ \ \text{ and } \ \
    \E\bigg[\sup_{\substack{f \in \F_{\text{TC}}^{M}(V) - \{f^*\}\\ \|f\|_{p_0, 2} \le rt_n}} \Big|\frac{1}{\sqrt{n}} \sum_{i=1}^{n} \epsilon_i f(\mathbf{x}^{(i)}) \Big| \bigg] \le r\sqrt{n} t_n^2
\end{equation*}
for every $r \ge 1$, provided that
\begingroup
\allowdisplaybreaks
\begin{align*}
    &t_n = (1 + 4\|\xi_1\|_{5, 1}) \bigg[C_{B, d} (1 + M^{1/2})^{4/5} \big[ (1 + M^{1/2})^{1/5} + (M + V)^{1/5} \big] + C_{B, d} M^{4/9} V^{1/9} \bigg[ 1 + \Big(\frac{M + V}{1 + M^{1/2}}\Big)^{4/45}\bigg] \\
    &\qquad \qquad \qquad \qquad \quad + C_{B, d} (1 + M^{1/2})^{4/5} V^{1/5} \bigg[\log\Big( 2 + \frac{V n^{1/2}}{1 + M^{1/2}}\Big)\bigg]^{4(s - 1)/5} \bigg] \cdot n^{-2/5}.
\end{align*}
\endgroup
By using Theorem \ref{thm:ki-random-bound} with the function class $\F_{\text{TC}}^{M}(V)$, we can show that 
\begin{equation*}
    \P \big(\|\hat{f}_{\text{TC}, V}^{d, s} - f^* \|_{p_0, 2} > t\big) \le \epsilon + C \cdot \frac{1 + M^3}{t^3} \cdot t_n^3 + C \cdot \frac{\|\xi_1\|_2^3 + M^3}{n^{3/2} t^3} + C \cdot \frac{M^3(\|\xi_1\|_5^3 n^{3/5} + M^3)}{n^{3} t^6}
\end{equation*}
for sufficiently large $n$ and $t \ge t_n$.
As a result, again, by repeating the arguments in the proof of Theorem 3.5 of \citet{ki2024mars}, we can find $K > 0$ for which
\begin{equation*}
    \P \big(\|\hat{f}_{\text{TC}, V}^{d, s} - f^* \|_{p_0, 2} > K n^{-2/5} (\log n)^{4(s - 1)/5}\big) < 2 \epsilon
\end{equation*}
for sufficiently large $n$. 
This proves that
\begin{equation*}
    \|\hat{f}_{\text{TC}, V}^{d, s} - f^*\|_{p_0, 2}^2 = O_p \big(n^{-4/5} (\log n)^{8(s - 1)/5}\big).
\end{equation*}
\end{proof}

\subsubsection{Proof of Theorem \ref{thm:adaptive-random}}
We use the following variant of Theorem \ref{thm:ki-random-bound} 
(\cite[Theorem 11.12]{ki2024mars}) to prove Theorem
\ref{thm:adaptive-random}. The only differences are that the errors are 
now assumed to be sub-exponential, and the term $\|\xi_1\|_q^3 n^{3/q}$ in the 
probability bound is replaced by $K^3 [ \log( 2 + n\sigma^2 / K^2)]^3$ as a 
result of that stronger assumption. Below, we describe the modifications 
needed in the proof of Theorem \ref{thm:ki-random-bound} to establish Theorem 
\ref{thm:ki-random-bound-sub-exp}.

\begin{theorem}\label{thm:ki-random-bound-sub-exp}
    Suppose we are given 
    $(\mathbf{x}^{(1)}, y_1), \dots, (\mathbf{x}^{(n)}, y_n)$ generated 
    according to the model 
    \begin{equation*}
        y_i = f^{*}(\mathbf{x}^{(i)}) + \xi_i, 
    \end{equation*}
    where $\mathbf{x}^{(i)}$ are i.i.d. random variables with law $P$ on 
    $[0, 1]^d$, and $\xi_i$ are i.i.d. mean-zero sub-exponential errors,
    independent of $\mathbf{x}^{(1)}, \dots, \mathbf{x}^{(n)}$ and
    satisfying \eqref{sub-exponential} for some $K, \sigma > 0$. The function 
    $f^{*}: [0, 1]^d \rightarrow \R$ is an unknown regression function to be 
    estimated. Let $\F$ denote a collection of continuous real-valued functions 
    on $[0, 1]^d$, and assume $f^* \in \F$. Also, assume that 
    $\|f - f^*\|_{\infty} \le M$ for all $f \in \F$. Let $\hat{f}$ be an 
    estimator of $f^{*}$ satisfying
    \begin{equation*}
        \hat{f} \in \argmin_{f \in \F} \bigg\{\sum_{i = 1}^{n} 
        \big(y_i - f(\mathbf{x}^{(i)})\big)^2 \bigg\} 
    \end{equation*}
    with probability at least $1 - \epsilon$ for some $\epsilon \ge 0$.
    
    Suppose there exists $t_n > 0$ such that the following inequalities hold 
    for every $r \ge 1$:
    \begin{equation*}
        \E\bigg[\sup_{\substack{f \in \F - \{f^*\}\\ \|f\|_{P, 2} \le rt_n} } 
        \Big| \frac{1}{\sqrt{n}} \sum_{i=1}^{n} \xi_i f(\mathbf{x}^{(i)}) 
        \Big| \bigg] 
        \le r \sqrt{n} t_n^2 \ \text{ and } \ 
        \E\bigg[\sup_{\substack{f \in \F - \{f^*\}\\ \|f \|_{P, 2} \le rt_n} } 
        \Big| \frac{1}{\sqrt{n}} \sum_{i=1}^{n} \epsilon_i f(\mathbf{x}^{(i)}) 
        \Big| \bigg] 
        \le r \sqrt{n} t_n^2. 
    \end{equation*}
    Here, $\|\cdot\|_{P, 2}$ is the norm defined by 
    \begin{equation*}
        \| f \|_{P, 2} 
        = \big(\E_{\mathbf{X} \sim P}[f(\mathbf{X})^2 ]\big)^{1/2},
    \end{equation*}
    and $\epsilon_i$ are i.i.d. Rademacher variables independent of 
    $\mathbf{x}^{(1)}, \dots, \mathbf{x}^{(n)}$.
    Under these assumptions, there exists a universal positive constant $C$ 
    such that, for every $t \ge t_n$,
    \begin{equation*}
        \P (\|\hat{f} - f^* \|_{P, 2} > t) 
        \le \epsilon + C \cdot \frac{1 + M^3}{t^3} \cdot t_n^3 
        + C \cdot \frac{\|\xi_1\|_2^3 + M^3}{n^{3/2} t^3} 
        + C \cdot \frac{M^3\{K^3 (\log( 2 + n\sigma^2 / K^2))^3 
        + M^3\}}{n^{3} t^6}.
    \end{equation*}
\end{theorem}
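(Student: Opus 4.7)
My plan is to follow the proof of Theorem~\ref{thm:ki-random-bound} (Theorem~11.12 of \citet{ki2024mars}) line for line, substituting a sub-exponential tail bound at the one place where the finite $L^q$ moment assumption enters. Every other ingredient of that proof---the peeling and localization argument, the symmetrization via Rademacher variables, and the use of the assumed empirical-process bounds involving $t_n$---relies only on the uniform envelope $\|f - f^*\|_\infty \le M$ and on $\|\xi_1\|_2$, and so carries over without modification. In particular, the first three terms in the asserted probability bound (namely $\epsilon$, $C(1+M^3) t_n^3/t^3$, and $C(\|\xi_1\|_2^3 + M^3)/(n^{3/2} t^3)$) appear in exactly the same form as in Theorem~\ref{thm:ki-random-bound}, with the same constants.

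The only step that must be revised is the one controlling $\max_{i \le n} |\xi_i|$ on a high-probability event. In the finite-$L^q$ proof of \citet{ki2024mars}, this is handled by Markov's inequality, producing a contribution of order $M^3 \|\xi_1\|_q^3 n^{3/q}/(n^3 t^6)$. Under the sub-exponential assumption~\eqref{sub-exponential}, Bernstein's inequality gives
\begin{equation*}
  \P(|\xi_i| \ge u) \le 2 \exp\Big(-\tfrac{1}{2}\min\big(u^2/\sigma^2,\, u/K\big)\Big) \qquad \text{for all } u > 0,
\end{equation*}
and a union bound over $i = 1, \dots, n$ combined with tail-integration yields the maximal inequality
\begin{equation*}
  \E\Big[\max_{i \le n} |\xi_i|^3\Big] \le C\, K^3\,\Big[\log\Big(2 + \frac{n\sigma^2}{K^2}\Big)\Big]^3,
\end{equation*}
where the sub-Gaussian regime ($u^2/\sigma^2 \le u/K$) contributes at most the same order, so the bound holds unconditionally. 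Inserting this in place of the Markov-based estimate replaces $\|\xi_1\|_q^3 n^{3/q}$ by $K^3[\log(2 + n\sigma^2/K^2)]^3$ and yields the asserted final term.

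The main obstacle is essentially bookkeeping: one must verify that the moment assumption of \citet{ki2024mars} enters only through this $\max_i |\xi_i|$ control, and not implicitly through, say, a Hoffmann--J\o{}rgensen-type symmetrization step or a separate truncation/concentration argument, so that a single substitution suffices. Because the sub-exponential tail is sharper than any polynomial Markov tail at the relevant scales, no other constant in the probability bound can worsen under the substitution, and so the argument goes through with exactly this one modification.
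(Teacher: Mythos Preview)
Your proposal is correct and matches the paper's own argument almost exactly: the paper likewise states that the only change from Theorem~\ref{thm:ki-random-bound} is to replace the $L^q$-based bound $\E[\max_i |\xi_i|^3] \lesssim \|\xi_1\|_q^3 n^{3/q}$ by the sub-exponential maximal bound $\E[\max_i |\xi_i|^3] \le C K^3 [\log(2 + n\sigma^2/K^2)]^3$, with all other steps carried over verbatim. The only cosmetic difference is that the paper derives this maximal inequality via a Jensen/Orlicz argument with $\psi(x) = e^{x^{1/3}} - 1 - x^{1/3} - \tfrac{1}{2}x^{2/3}$ rather than Bernstein plus tail integration, but both routes are standard and yield the same conclusion.
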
 

\begin{remark}
    In the proof of Theorem \ref{thm:ki-random-bound}, the term 
    $\|\xi_1\|_q^3 n^{3/q}$ in the probability bound arises as an upper bound 
    (up to a constant) on $\E[\max_i |\xi_i|^3]$ under the moment assumption 
    $\|\xi_i\|_q < \infty$ for some $q \ge 3$. Under the stronger assumption 
    that $\xi_i$ are sub-exponential random variables satisfying \eqref{sub-exponential} 
    for some $K, \sigma > 0$, we can obtain a sharper bound for 
    $\E[\max_i |\xi_i|^3]$ as follows.

    Define $\psi: \R_{\ge 0} \to \R_{\ge 0}$ by 
    \begin{equation*}
        \psi(x) = \exp(x^{1/3}) - 1 - x^{1/3} - \frac{1}{2} \cdot x^{2/3}.
    \end{equation*}
    It is straightforward to check that $\psi$ is strictly increasing, convex, and 
    invertible. Hence,
    \begin{align*}
        \E\big[\max_i |\xi_i|^3\big] 
        &= K^3 \E\Big[\max_i \Big|\frac{\xi_i}{K}\Big|^3\Big]  
        = K^3 \E\Big[\psi^{-1}\Big(\max_i \psi \Big( 
        \Big|\frac{\xi_i}{K}\Big|^3\Big)\Big)\Big]
        \le K^3 \psi^{-1}\bigg(\E\Big[\max_i \psi \Big( 
        \Big|\frac{\xi_i}{K}\Big|^3\Big)\Big]\bigg) \\
        &\le K^3 \psi^{-1}\bigg(\sum_{i = 1}^{n} \E\Big[ \psi \Big( 
        \Big|\frac{\xi_i}{K}\Big|^3\Big)\Big]\bigg)
        = K^3 \psi^{-1}\bigg(n \E\Big[ \psi \Big( 
        \Big|\frac{\xi_1}{K}\Big|^3\Big)\Big]\bigg) 
        \le K^3 \psi^{-1}\Big(\frac{n\sigma^2}{2K^2}\Big),
    \end{align*}
    where the first inequality uses the concavity of $\psi^{-1}$, and the 
    last inequality follows from  
    \begin{equation*}
        \E\Big[ \psi \Big(\Big|\frac{\xi_1}{K}\Big|^3\Big)\Big]
        \le \E\bigg[ \exp\Big(\Big|\frac{\xi_1}{K}\Big|\Big) - 1 - 
        \Big|\frac{\xi_1}{K} \Big| \bigg] 
        \le \frac{\sigma^2}{2K^2}.
    \end{equation*} 
    Let $C > 0$ be a constant such that
    \begin{equation*}
        \psi(x) \ge \frac{1}{2} \cdot \exp(x^{1/3}) \ \text{ for } x \ge C.
    \end{equation*}
    Then, for $x \ge \psi(C)$, we have
    \begin{equation*}
        \psi^{-1}(x) \le [\log(2x)]^3.
    \end{equation*}    
    Consequently,
    \begin{equation*}
        \psi^{-1}\Big(\frac{n\sigma^2}{2K^2}\Big)
        \le C + \Big[\log\Big(1 + \frac{n\sigma^2}{K^2}\Big)\Big]^3 
        \le C\Big[\log\Big(2 + \frac{n\sigma^2}{K^2}\Big)\Big]^3,
    \end{equation*}
    from which it follows that
    \begin{equation*}
        \E\big[\max_i |\xi_i|^3\big] 
        \le CK^3 \bigg[ \log\Big( 2 + \frac{n\sigma^2}{K^2}
        \Big)\bigg]^3.
    \end{equation*}
    Recall that in this section, the value of $C$ may vary from line 
    to line.

    By using this sharper bound for $\E\big[\max_i |\xi_i|^3\big]$, we 
    can replace the term $\|\xi_1\|_q^3 n^{3/q}$ in the probability bound of 
    Theorem \ref{thm:ki-random-bound} with $K^3 [ \log( 2 + n\sigma^2 / K^2)]^3$ 
    and establish Theorem \ref{thm:ki-random-bound-sub-exp}.
\end{remark}

\begin{proof}[Proof of Theorem \ref{thm:adaptive-random}]
Fix $\epsilon_0 > 0$. By the same argument as in the proof of Theorem 
\ref{thm:risk-bound-random}, there exists $M > 0$ such that
\begin{equation*}
    \P\big(\|\hat{f}^{d, s}_{\text{TC}, V} - f^*\|_{\infty} \le M\big) 
    \ge 1 - \epsilon_0
\end{equation*}
for sufficiently large $n$.
Define
\begin{equation*}
    \F_{\text{TC}}^{M}(V) = \big\{f \in \F_{\text{TC}}^{d, s}: V(f) \le V 
    \text{ and } \|f - f^*\|_{\infty} \le M\big\}
\end{equation*}
and
\begin{equation*}
    B_{M, p_0}(t) = \big\{f \in \F_{\text{TC}}^{d, s}: \|f - f^*\|_{\infty} 
    \le M \text{ and } \|f - f^*\|_{p_0, 2} \le t\big\}.
\end{equation*}
Then, $f^* \in \F_{\text{TC}}^{M}(V)$, and for sufficiently large $n$, we have
\begin{equation*}
    \hat{f}_{\text{TC}, V}^{d, s} 
    \in \argmin_f \bigg\{\sum_{i = 1}^{n} 
    \big(y_i - f(\mathbf{x}^{(i)})\big)^2: f \in \F_{\text{TC}}^{M}(V)\bigg\}
\end{equation*}
with probability at least $1 - \epsilon_0$.

We first consider the special case $f^* = 0$.
In this case,
\begin{equation*}
    \F_{\text{TC}}^{M}(V) = \big\{f \in \F_{\text{TC}}^{d, s}: V(f) \le V 
    \text{ and } \|f\|_{\infty} \le M\big\}
\end{equation*}
and
\begin{equation*}
    B_{M, p_0}(t) = \big\{f \in \F_{\text{TC}}^{d, s}: \|f\|_{\infty} \le M 
    \text{ and } \|f\|_{p_0, 2} \le t\big\}.
\end{equation*}
Our goal is to apply Theorem \ref{thm:ki-random-bound-sub-exp}. 
To this end, we need to bound
\begin{equation}\label{eq:exp-sup-FMTCV}
    \E\bigg[\sup_{\substack{f \in \F_{\text{TC}}^{M}(V)\\ \|f\|_{p_0, 2} \le t}}
    \Big|\frac{1}{\sqrt{n}} \sum_{i=1}^{n} \xi_i f(\mathbf{x}^{(i)}) \Big| 
    \bigg].
\end{equation}
Since the Rademacher variables $\epsilon_i$ are also sub-exponential, the same 
bound applies to
\begin{equation*}
    \E\bigg[\sup_{\substack{f \in \F_{\text{TC}}^{M}(V)\\ \|f\|_{p_0, 2} \le t}}
    \Big|\frac{1}{\sqrt{n}} \sum_{i=1}^{n} \epsilon_i f(\mathbf{x}^{(i)}) \Big| 
    \bigg].
\end{equation*}

We first enlarge the function class over which the supremum is taken:
\begin{equation}\label{eq:bound-by-BMt}
    \E\bigg[\sup_{\substack{f \in \F_{\text{TC}}^{M}(V)\\ \|f\|_{p_0, 2} \le t}}
    \Big|\frac{1}{\sqrt{n}} \sum_{i=1}^{n} \xi_i f(\mathbf{x}^{(i)}) \Big| 
    \bigg] 
    \le \E\bigg[\sup_{f \in B_{M, p_0}(t)}
    \Big|\frac{1}{\sqrt{n}} \sum_{i=1}^{n} \xi_i f(\mathbf{x}^{(i)}) \Big| 
    \bigg].
\end{equation}
By the same argument as in the proof of Lemma 11.17 of \cite{ki2024mars}, 
the expectation on the right-hand side of \eqref{eq:bound-by-BMt} can be 
bounded in terms of the bracketing entropy integral:
\begin{equation}\label{eq:bound-by-bracket-int}
    \E\bigg[\sup_{f \in B_{M, p_0}(t)}
    \Big|\frac{1}{\sqrt{n}} \sum_{i=1}^{n} \xi_i f(\mathbf{x}^{(i)}) \Big| 
    \bigg] 
    \le C J_{[ \ ]}(t, B_{M, p_0}(t), \|\cdot\|_{p_0, 2}) 
    \cdot \Big(\sigma + KM \cdot 
    \frac{J_{[ \ ]}(t, B_{M, p_0}(t), \|\cdot\|_{p_0, 2})}{t^2 \sqrt{n}}\Big),
\end{equation}
where
\begin{equation*}
    J_{[ \ ]}(t, B_{M, p_0}(t), \|\cdot\|_{p_0, 2}) 
    = \int_{0}^{t} \sqrt{1 + \log N_{[ \ ]}(\epsilon, B_{M, p_0}(t), \|\cdot\|_{p_0, 2})} \, d\epsilon,
\end{equation*}
and $N_{[ \ ]}(\epsilon, \mathcal{G}, \| \cdot \|)$ denotes the 
$\epsilon$-bracketing number of $\mathcal{G}$ with respect to $\| \cdot \|$.
Since $b \le p_0(x) \le B$ for all $x \in [0, 1]^d$, we also have
\begin{equation}\label{eq:bound-norm-change}
    N_{[ \ ]}(\epsilon, B_{M, p_0}(t), \|\cdot\|_{p_0, 2})
    \le N_{[ \ ]}(\epsilon/B^{1/2}, B_M(t/b^{1/2}), \|\cdot\|_{2}),
\end{equation}
where 
\begin{equation*}
    B_M(t) = \big\{f \in \F_{\text{TC}}^{d, s}: \|f\|_{\infty} \le M 
    \text{ and } \|f\|_{2} \le t\big\}.
\end{equation*}
For $\delta \in \{0, 1\}^d$, define 
\begin{equation*}
    P^{(\delta)} = \prod_{k = 1}^{d} P_{\delta_k},
\end{equation*}
where $P_0 = [0, 1/2]$ and $P_1 = [1/2, 1]$.
Also, let $B_M^{(\delta)}(t)$ denote the collection of functions 
$g: P^{(\delta)} \to \R$ such that $g = f|_{P^{(\delta)}}$ for some
$f \in B_M(t)$.
It then follows that
\begin{equation}\label{eq:bound-by-Bdelta}
    \log N_{[ \ ]}(\epsilon, B_M(t), \|\cdot\|_{2}) 
    \le \sum_{\delta \in \{0, 1\}^d} 
    \log N_{[ \ ]}\Big(\frac{\epsilon}{2^{d/2}}, 
    B_M^{(\delta)}(t), \|\cdot\|_{2}\Big).
\end{equation}

From this point forward, we focus on bounding
$\log N_{[ \ ]}(\epsilon, B_M^{(\zerovec)}(t), \|\cdot\|_{2})$.
An entirely analogous argument applies to
$\log N_{[ \ ]}(\epsilon, B_M^{(\delta)}(t), \|\cdot\|_{2})$ for 
$\delta \in \{0, 1\}^d \setminus \{\zerovec\}$.
Fix $\epsilon > 0$, and define 
\begin{equation*}
    R = \max\bigg(\bigg\lceil \log\Big(\frac{dM^2}{\epsilon^2}\Big) 
    / \log2 \bigg\rceil - (d - 3), 1\bigg).
\end{equation*}
This choice of $R$ ensures that
\begin{equation*}
    \frac{M^2}{2^{d - 2}} \cdot \frac{d}{2^R} \le \frac{\epsilon^2}{2}.
\end{equation*}
Now, consider the set
\begin{equation*}
    P^{(\zerovec)}_{\text{int}} = \Big[\frac{1}{2^{R + 1}}, \frac{1}{2}\Big]^d,
\end{equation*}
and let $B_{\text{int}}^{(\zerovec)}(t)$ denote the collection of functions 
$g: P_{\text{int}}^{(\zerovec)} \to \R$ such that 
$g = f|_{P_{\text{int}}^{(\zerovec)}}$ for some $f \in \F_{\text{TC}}^{d, s}$ 
with $\|f\|_2 \le t$.
Intuitively, $P_{\text{int}}^{(\zerovec)}$ can be thought of as an interior of 
$P^{(\zerovec)} = [0, 1/2]^d$ obtained by peeling off its boundary.
We claim that
\begin{equation}\label{eq:bound-by-Bint}
    \log N_{[ \ ]}\big(\epsilon, B_M^{(\zerovec)}(t), \|\cdot\|_2\big) 
    \le \log N_{[ \ ]}\Big(\frac{\epsilon}{\sqrt{2}}, 
    B_{\text{int}}^{(\zerovec)}(t), \|\cdot\|_2\Big).
\end{equation}
To prove this, let $g \in B_M^{(\zerovec)}(t)$. By definition, 
$g = f|_{P^{(\zerovec)}}$ for some $f \in B_M(t)$, and hence,
$g|_{P_{\text{int}}^{(\zerovec)}} \in B_{\text{int}}^{(\zerovec)}(t)$.
Let $[h_1, h_2]$ be a bracket containing $g|_{P_{\text{int}}^{(\zerovec)}}$ with 
$\|h_1 - h_2\|_2 \le \epsilon / \sqrt{2}$.
Consider extensions $h_1^{\text{ext}}$ and $h_2^{\text{ext}}$ of $h_1$ and $h_2$ 
to $P^{(\zerovec)}$ for which $h_1^{\text{ext}}(x) = - M$ for 
$x \notin P_{\text{int}}^{(\zerovec)}$ and $h_2^{\text{ext}}(x) = M$ for 
$x \notin P_{\text{int}}^{(\zerovec)}$.
Then, $g \in [h_1^{\text{ext}}, h_2^{\text{ext}}]$ and 
\begin{align*}
    \|h_1^{\text{ext}} - h_2^{\text{ext}}\|_2^2 
    &\le \|h_1 - h_2\|_2^2 
    + 4M^2 \cdot \bigg(\frac{1}{2^d} 
    - \Big(\frac{1}{2} - \frac{1}{2^{R + 1}}\Big)^d\bigg) 
    \le \|h_1 - h_2\|_2^2 
    + \frac{M^2}{2^{d - 2}} \cdot \Big(1
    - \Big(1 - \frac{1}{2^{R}}\Big)^d\Big) \\
    &\le \|h_1 - h_2\|_2^2 
    + \frac{M^2}{2^{d - 2}} \cdot \frac{d}{2^{R}} 
    \le \|h_1 - h_2\|_2^2 + \frac{\epsilon^2}{2} 
    \le \epsilon^2.
\end{align*}
This proves the claim \eqref{eq:bound-by-Bint}.

Next, we split $P_{\text{int}}^{(\zerovec)}$ into dyadic pieces. 
Let $\mathcal{R} = \{1, \dots, R\}^d$, and for each 
$\mathbf{r} = (r_1, \dots, r_d) \in \mathcal{R}$, define
\begin{equation*}
    Q^{(\mathbf{r})} = \prod_{k = 1}^{d} Q_{r_k},
\end{equation*}
where $Q_{l} = [1/2^{l + 1}, 1/2^l]$ for $l = 1, \dots, R$.
Also, for each $\mathbf{r} \in \mathcal{R}$, let 
$B_{\mathbf{r}}^{(\zerovec)}(t)$ denote the collection of functions 
$g: Q^{(\mathbf{r})} \to \R$ such that $g = f|_{Q^{(\mathbf{r})}}$ for some 
$f \in \F_{\text{TC}}^{d, s}$ with $\|f\|_2 \le t$.
Since
\begin{equation*}
    P_{\text{int}}^{(\zerovec)} 
    = \bigcup_{\mathbf{r} \in \mathcal{R}} Q^{(\mathbf{r})},
\end{equation*}
we have
\begin{equation}\label{eq:bound-by-Br}
    \log N_{[ \ ]}\Big(\frac{\epsilon}{\sqrt{2}}, 
    B_{\text{int}}^{(\zerovec)}(t), \|\cdot\|_2\Big)
    \le \sum_{\mathbf{r} \in \mathcal{R}} 
    \log N_{[ \ ]}\Big(\frac{\epsilon}{\sqrt{2|\mathcal{R}|}}, 
    B_{\mathbf{r}}^{(\zerovec)}(t), \|\cdot\|_2\Big).
\end{equation}
We now apply the following lemma, proved in Appendix 
\ref{pf:bounds-finite-derivatives-cont}, to bound each term on the right-hand
side of \eqref{eq:bound-by-Br}.

\begin{lemma}\label{lem:bounds-finite-derivatives-cont}
    Assume that $f \in \F_{\text{TC}}^{d, s}$ and $\|f\|_2 \le t$. 
    Then, for every $\emptyset \neq S \subseteq [d]$,
    $1/2^{r_k + 1} \le s_k < t_k \le 1/2^{r_k}$ for $k \in S$, and 
    $t_k \in [1/2^{r_k + 1}, 1/2^{r_k}]$ for $k \notin S$, we have
    \begin{equation}\label{eq:bounds-finite-derivatives-cont-lower}
        \begin{bmatrix}
            s_k, t_k, & k \in S    & \multirow{2}{*}{; \ $f$ } \\ 
            t_k,      & k \notin S &
        \end{bmatrix}
        \ge - C_{d} t \cdot 2^{r_+/2} \cdot 2^{\sum_{k \in S} r_k}
    \end{equation}
    if $\mathbf{r} = (r_1, \dots, r_d) \in \mathbb{N}^d$, and
    \begin{equation}\label{eq:bounds-finite-derivatives-cont-upper}
        \begin{bmatrix}
            s_k, t_k, & k \in S    & \multirow{2}{*}{; \ $f$ } \\ 
            t_k,      & k \notin S &
        \end{bmatrix}
        \le C_{d} t \cdot 2^{r_+/2} \cdot 2^{\sum_{k \in S} r_k}
    \end{equation}
    if $\mathbf{r} = (r_1, \dots, r_d) \in \mathbb{Z}_{\ge 0}^d$, where 
    $r_+ = \sum_{k = 1}^d r_k$.
\end{lemma}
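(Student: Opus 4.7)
The plan is to mirror the strategy behind the discrete Lemma \ref{lem:bounds-finite-derivatives}, adapting each step to the continuous setting. The divided difference under consideration is of order $\mathbf{1}_S$, i.e., order one in each $k \in S$ and zero in each $k \notin S$. Since $f \in \F_{\text{TC}}^{d, s}$, every divided difference of $f$ of order $\mathbf{p}$ with $\max_k p_k = 2$ is nonpositive; in particular, the hypotheses of Lemma \ref{lem:div-diff-mono} are satisfied. First, I would fix $(t_k, k \notin S)$ and observe that the section $(x_k, k \in S) \mapsto f(x_S, t_{\bar S})$ remains totally concave on $[0, 1]^{|S|}$ (its divided differences inherit nonpositivity from those of $f$). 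Applying Lemma \ref{lem:div-diff-mono} to this section, the first-order divided difference $D_S(\mathbf{s}, \mathbf{t}; t_{\bar S})$ is monotone nonincreasing in each of its $2|S|$ endpoint coordinates. This one-sided monotonicity can then be used to sandwich $D_S(\mathbf{s}, \mathbf{t}; t_{\bar S})$ between divided differences evaluated on reference argument boxes that are shifted toward $\mathbf{0}$ (for the upper bound \eqref{eq:bounds-finite-derivatives-cont-upper}) and toward $\mathbf{1}$ (for the lower bound \eqref{eq:bounds-finite-derivatives-cont-lower}); the latter shift requires $r_k \ge 1$ so that the shifted arguments remain in $[0, 1]$, which is precisely the asymmetry between $\mathbf{r} \in \mathbb{N}^d$ and $\mathbf{r} \in \mathbb{Z}_{\ge 0}^d$ in the two bounds.

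To convert the resulting sandwich into a quantitative estimate in terms of $\|f\|_2$, I would integrate the reference divided difference over a suitably chosen range of its endpoint values and also over the free coordinates $(t_k, k \notin S)$, so that the numerator $\sum_{R \subseteq S} (-1)^{|S|-|R|} f(z_R)$ reassembles, after multiplying by the denominator and re-integrating, into an integral of $f$ over a product region $\Omega \subseteq [0, 1]^d$ of volume $|\Omega| \asymp 2^{-r_+}$. Monotonicity guarantees that this average controls the original divided difference in the desired direction. Applying Cauchy–Schwarz gives
\begin{equation*}
    \bigg|\int_{\Omega} f(x) \, dx \bigg|
    \le \|f\|_2 \cdot |\Omega|^{1/2}
    \le C_d \cdot t \cdot 2^{-r_+/2}.
\end{equation*}
Dividing through by the denominator $\prod_{k \in S}(t_k - s_k) \asymp 2^{-\sum_{k \in S}(r_k + 1)}$ of the divided difference and absorbing the $(d - |S|)$-dimensional normalization from the averaging over $(t_k, k \notin S)$ produces the claimed bound $C_d \cdot t \cdot 2^{r_+/2} \cdot 2^{\sum_{k \in S} r_k}$.

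The main obstacle is that Lemma \ref{lem:div-diff-mono} only supplies one-sided monotonicity in each coordinate, whereas the goal is a two-sided bound. The delicate step is therefore choreographing the averaging so that, in combination with monotonicity, the alternating sum of $f$-values at the $2^{|S|}$ corners of the argument box reassembles (rather than cancels) into an integral of $f$ over a product region with volume large enough for Cauchy–Schwarz to deliver the factor $2^{r_+/2}$. A related subtlety is that the coordinates $k \notin S$ carry no monotonicity for $D_S$, so they must be averaged without the aid of Lemma \ref{lem:div-diff-mono}; this is done by purely direct integration using the $L^2$ control. Handling boundary cases where some $r_k = 0$ (permitted for the upper bound but not the lower) is the final piece, and is what dictates the asymmetry between \eqref{eq:bounds-finite-derivatives-cont-upper} and \eqref{eq:bounds-finite-derivatives-cont-lower}.
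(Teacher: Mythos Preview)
Your overall strategy---monotonicity of the order-$\mathbf{1}_S$ divided difference via Lemma~\ref{lem:div-diff-mono} followed by an $L^2$ estimate---is the right instinct, but there are two concrete gaps.

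First, the claim that the alternating sum $\sum_{R \subseteq S} (-1)^{|S|-|R|} f(z_R)$ ``reassembles into an integral of $f$ over a product region $\Omega$'' is not correct: integrating the increment $\triangle_S f$ over a parameter range of boxes yields a \emph{signed} combination of integrals of $f$ (e.g., in one dimension $\int\!\!\int [f(v)-f(u)]\,du\,dv$ is a difference of two integrals, not a single $\int_\Omega f$). The paper's mechanism is different: assuming the bound fails, monotonicity propagates the failure to all shifted boxes; then Cauchy's inequality on the $2^{|S|}$ corner values converts the alternating sum into a lower bound on $\sum_\delta f(\text{corner}_\delta)^2$; integrating this sum of squares over a parametrization in which each point of a region $\Omega$ (of volume $\asymp 2^{-r_+}$) appears as exactly one corner yields $\int_\Omega f^2 \le \|f\|_2^2 \le t^2$, contradicting the assumed failure. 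The Cauchy step is applied to the corner vector in $\mathbb{R}^{2^{|S|}}$, not as Cauchy--Schwarz on $\int_\Omega f$.

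Second, and more seriously, the coordinates $k \notin S$ cannot be handled by ``direct integration using the $L^2$ control'': the lemma demands a pointwise bound at the specific $(t_k)_{k \notin S}$, whereas averaging over those coordinates can only control an $L^2$ average of $D_S$ in $t_{\bar S}$. The paper resolves this by induction on $|S|$: moving a free coordinate $t_k \mapsto u_k$ (for $k \notin S$) changes the divided difference by $(u_k - t_k)$ times a divided difference of order $\mathbf{1}_{S \cup \{k\}}$, which is bounded by the induction hypothesis (the already-established upper bound for subsets of size $|S|+1$). This lets the contradiction region extend to positive measure in the free coordinates as well. Without this inductive control, your argument produces no pointwise bound when $S \subsetneq [d]$.
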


Fix $\mathbf{r} = (r_1, \dots, r_d) \in \mathcal{R}$, and
suppose $g \in B_{\mathbf{r}}^{(\zerovec)}(t)$. Define 
$h: [0, 1]^d \to \R$ by 
\begin{equation*}
    h(x_1, \dots, x_d) = g\Big(\frac{x_1 + 1}{2^{r_1 + 1}}, 
    \dots, \frac{x_d + 1}{2^{r_d + 1}}\Big),
\end{equation*}
so that $h$ is a domain-scaled version of $g$.
By the definition of $B_{\mathbf{r}}^{(\zerovec)}(t)$, there exists 
$f \in \F_{\text{TC}}^{d, s}$ with $\|f\|_2 \le t$ such that 
$g = f|_{Q^{(\mathbf{r})}}$. Since $f \in \F_{\text{TC}}^{d, s}$, it follows 
that $h \in \tcp$, and $h$ satisfies the interaction restriction condition 
\eqref{int-rest-cond} (see also \eqref{int-rest-cond-restated} and 
\eqref{int-rest-cond-restated-2}) for every subset $S$ of $[d]$ with $|S| > s$.
By Lemma \ref{lem:bounds-finite-derivatives-cont}, for every nonempty subset 
$S$ of $[d]$, 
\begin{align*}
    &\sup \bigg\{
    \begin{bmatrix}
        0, t_k, & k \in S    & \multirow{2}{*}{; \ $h$ } \\ 
        0,      & k \notin S & \\
    \end{bmatrix}
    : t_k > 0 \ \text{ for } k \in S \bigg\} \\
    &\qquad = \prod_{k \in S} 
    \frac{1}{2^{r_k + 1}} \cdot \sup \bigg\{
    \begin{bmatrix}
        1/2^{r_k + 1}, t_k, & k \in S    & \multirow{2}{*}{; \ $f$ } \\ 
        1/2^{r_k + 1},      & k \notin S & \\
    \end{bmatrix}
    : \frac{1}{2^{r_k + 1}} < t_k < \frac{1}{2^{r_k}} \ \text{ for } 
    k \in S \bigg\} 
    \le \frac{1}{2^{|S|}} \cdot C_d t \cdot 2^{r_+/2}
\end{align*}
and
\begin{align*}
    &\inf \bigg\{
    \begin{bmatrix}
        t_k, 1, & k \in S    & \multirow{2}{*}{; \ $h$ } \\ 
        0,      & k \notin S &
    \end{bmatrix}
    : t_k < 1 \ \text{ for } k \in S \bigg\} \\
    &\qquad= \prod_{k \in S} 
    \frac{1}{2^{r_k + 1}} \cdot \inf \bigg\{
    \begin{bmatrix}
        t_k, 1/2^{r_k}, & k \in S    & \multirow{2}{*}{; \ $f$ } \\ 
        1/2^{r_k + 1},      & k \notin S &
    \end{bmatrix}
    : \frac{1}{2^{r_k + 1}} < t_k < \frac{1}{2^{r_k}} \ \text{ for }
    k \in S \bigg\} 
    \ge -\frac{1}{2^{|S|}} \cdot C_d t \cdot 2^{r_+/2}.
\end{align*}
Hence, Proposition \ref{prop:tc-equiv-restricted-interaction} ensures 
$h \in \tcds$.
Also, $V(h)$ of $h$ can be bounded as
\begingroup
\allowdisplaybreaks
\begin{align*}
    V(h) &= \sum_{\emptyset \neq S \subseteq \{1, \dots, d\}} 
    \bigg( \sup \bigg\{
    \begin{bmatrix}
        0, t_k, & k \in S    & \multirow{2}{*}{; \ $h$ } \\ 
        0,      & k \notin S & \\
    \end{bmatrix}
    : t_k > 0 \ \text{ for } k \in S \bigg\}, \\
    &\qquad \qquad \qquad \qquad -\inf \bigg\{
    \begin{bmatrix}
        t_k, 1, & k \in S    & \multirow{2}{*}{; \ $h$ } \\ 
        0,      & k \notin S &
    \end{bmatrix}
    : t_k < 1 \ \text{ for } k \in S \bigg\} \bigg) \\
    &\le \sum_{\emptyset \neq S \subseteq \{1, \dots, d\}} \frac{1}{2^{|S|}} 
    \cdot C_d t \cdot 2^{r_+/2} 
    \le C_d t \cdot 2^{r_+/2}.
\end{align*}
\endgroup
Moreover, 
\begin{equation*}
    \|g\|_2^2 = \int_{Q^{(\mathbf{r})}} \big(g(x)\big)^2 \, dx
    = \prod_{k = 1}^{d} \frac{1}{2^{r_k + 1}} \cdot \int_{[0, 1]^d} 
    \big(h(x)\big)^2 \, dx
    = \frac{1}{2^{r_+ + d}} \cdot \|h\|_2^2.
\end{equation*}
Consequently, it follows that
\begin{equation*}
    \log N_{[ \ ]}\Big(\frac{\epsilon}{\sqrt{2 |\mathcal{R}|}},
    B_{\mathbf{r}}^{(\zerovec)}(t), \|\cdot\|_2\Big) 
    \le \log N_{[ \ ]} 
    \Big(\frac{2^{(r_+ + d - 1)/2} \epsilon}{|\mathcal{R}|^{1/2}},
    D(C_d t \cdot 2^{r_+/2}, t \cdot 2^{(r_+ + d)/2}), 
    \|\cdot\|_2 \Big),
\end{equation*}
where
\begin{equation*}
    D(V, t) := \big\{f \in \F_{\text{TC}}^{d, s}: V(f) \le V 
    \text{ and } \|f\|_2 \le t \big\}.
\end{equation*}

The following lemma, proved in Appendix \ref{pf:fvm-bracketing-entropy}, 
provides an upper bound on the bracketing number of $D(V, t)$ for $V, t > 0$.
\begin{lemma}\label{lem:fvm-bracketing-entropy}
    There exists a positive constant $C_{d}$ depending on $d$ such that
    \begin{align*}
        &\log N_{[ \ ]} (\epsilon, D(V, t), \|\cdot\|_2) 
        \le C_{d} \log \Big( 2 + \frac{t + V}{\epsilon} \Big) 
        + C_{d} \Big(2 + \frac{V}{\epsilon} \Big)^{1/2}
        \bigg[\log \Big(2 + \frac{V}{\epsilon}\Big)\bigg]^{2(s - 1)}
    \end{align*}
    for every $V, t > 0$ and $\epsilon > 0$.
\end{lemma}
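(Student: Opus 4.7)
The plan is to decompose every $f \in D(V, t)$ according to the measure-based representation \eqref{intermodds}, namely
\begin{equation*}
f = \beta_0 + \sum_{S : 1 \le |S| \le s} \beta_S \bigg[\prod_{j \in S} x_j\bigg] - \sum_{S : 1 \le |S| \le s} f_S,
\qquad f_S(x) := \int_{[0,1)^{|S|} \setminus \{\zerovec\}} \prod_{j \in S} (x_j - t_j)_+ \, d\nu_S(t_j, j \in S),
\end{equation*}
and to bracket the parametric piece and each integral piece separately, concluding by the standard fact that $\epsilon$-brackets on the summands combine to give an $\epsilon \cdot (\text{number of summands})$-bracket on the sum. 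Since the number of summands depends only on $d$, this only inflates $\epsilon$ by a constant $C_d$.

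The first step is to bound the finitely many coefficients $\beta_0, \beta_S$ in terms of $t$ and $V$. Taking $x = \zerovec$ shows $|\beta_0| \le \|f\|_\infty \le C_d (|\beta_0| + \sum_S |\beta_S| + V)$; combined with $\|f\|_2 \le t$ and an averaging argument over suitable rectangles (using that the integral pieces are uniformly bounded by $V$), one obtains $|\beta_0|, |\beta_S| \le C_d(t + V)$. The parametric part lies in a $C_d$-dimensional box of side $C_d(t+V)$, so discretizing each coordinate on a grid of spacing $\epsilon$ yields a bracketing number at most $(C_d(t+V)/\epsilon)^{C_d}$, contributing the $C_d \log(2 + (t+V)/\epsilon)$ term.

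The second step bounds the bracketing entropy of each class
\begin{equation*}
\mathcal{G}_S(V) := \bigg\{ f_S : \nu_S \text{ finite nonneg. measure on } [0,1)^{|S|} \setminus \{\zerovec\},\ \nu_S([0,1)^{|S|} \setminus \{\zerovec\}) \le V \bigg\}.
\end{equation*}
Because $(x_j - t_j)_+$ is coordinate-wise nondecreasing in $x_j$ and nonincreasing in $t_j$, each $f_S$ is (up to a nonnegative combination) built from monotone ramp atoms, and in fact $-f_S$ is entirely monotone in the sense of Definition \ref{defn-entire-mono}. I would bracket $\mathcal{G}_S(V)$ by first discretizing $\nu_S$ on a dyadic grid in $[0,1)^{|S|}$ with mesh $\delta \asymp \epsilon / V$ (using that shifting $t_j$ by $\delta$ changes $(x_j - t_j)_+$ by at most $\delta$ in sup-norm), then enumerating the resulting nonnegative discrete-measure vectors of total mass $\le V$ on this grid at resolution $\epsilon$. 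The key quantitative input is the bracketing entropy bound for $|S|$-dimensional entirely monotone (or, equivalently, generalized-distribution-function) classes, which gives
\begin{equation*}
\log N_{[\,]}(\epsilon, \mathcal{G}_S(V), \|\cdot\|_2) \le C_d \Big(2 + \frac{V}{\epsilon}\Big)^{1/2} \bigg[\log \Big(2 + \frac{V}{\epsilon}\Big)\bigg]^{2(|S| - 1)}.
\end{equation*}
This is a direct analogue of bracketing entropy bounds already used in \citet{ki2024mars} and \citet{fang2021multivariate} for infinite-$d$-variation and entirely monotone classes; in fact, for $S = [d]$ it is essentially contained in the proof of Theorem \ref{thm:risk-bound-random} above. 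Summing over the $O(d^s)$ choices of $S$ with $|S| \le s$ and taking the worst exponent $2(s-1)$ gives the claimed second term.

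The main obstacle is the second step: obtaining the precise $(V/\epsilon)^{1/2}$ rate with the sharp log exponent $2(|S|-1)$. The exponent $1/2$ comes from identifying $\mathcal{G}_S(V)$ with scaled $L^1$-balls in entirely monotone functions, where a Birman--Solomjak-type approximation (piecewise constant on a dyadic partition of $[0,1]^{|S|}$) combined with covering on each cell yields the $\epsilon^{-1/2}$ behavior; the logarithmic factor $[\log(V/\epsilon)]^{2(|S|-1)}$ then arises from counting the dyadic partitions of $[0,1]^{|S|}$, exactly as in the entropy computation for $|S|$-dimensional distribution functions. If a cleanly packaged bracketing-entropy statement for the monotone class underlying $\mathcal{G}_S(V)$ is unavailable, I would instead reduce to the corresponding fact for the max-Hardy-Krause variation class proved in \citet{ki2024mars} (which is the signed-measure analogue and therefore strictly larger), thereby inheriting the bound essentially for free.
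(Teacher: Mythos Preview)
Your proposal takes essentially the same route as the paper: decompose $f$ into its multi-affine part and the integral pieces $f_S$, bound the coefficients $|\beta_S| \le C_d(V+t)$ so that the affine part contributes $C_d\log(2+(t+V)/\epsilon)$, and then quote the bracketing entropy bound for the nonparametric pieces from \cite{ki2024mars}. The paper explicitly says that once the $\beta_S$ bound is in hand the rest is identical to Lemma~11.18 there, so your reduction to the Hardy--Krause/signed-measure class is exactly what happens.

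The one loose step in your sketch is the coefficient bound. The inequality $|\beta_0| \le \|f\|_\infty$ is vacuous here because membership in $D(V,t)$ gives only $\|f\|_2 \le t$, not sup-norm control, and the ensuing chain $\|f\|_\infty \le C_d(|\beta_0|+\sum_S|\beta_S|+V)$ is circular. The paper's argument is a purely $L^2$-based induction on $|T|$: from $\|f\|_2 \le t$ and $V(f)\le V$ one gets $\int_{[0,1]^d}\big(\sum_S \beta_S \prod_{j\in S}x_j\big)^2\,dx \le (V+t)^2$; restricting this integral to $[0,1]^m\times[0,\delta]^{d-m}$ and applying Cauchy--Schwarz isolates $\beta_{[m]}^2$ up to lower-order terms of size $C_d\delta^2\cdot\sum_S\beta_S^2$, and choosing $\delta$ small closes the system. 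Your ``averaging argument over suitable rectangles'' is exactly this computation, so the gap is presentational rather than mathematical.
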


Applying Lemma \ref{lem:fvm-bracketing-entropy}, we obtain
\begin{align*}
    &\log N_{[ \ ]}\Big(\frac{\epsilon}{\sqrt{2 |\mathcal{R}|}},
    B_{\mathbf{r}}^{(\zerovec)}(t), \|\cdot\|_2\Big) \\
    &\qquad \le C_d \log \Big( 2 
    + \frac{C_d t |\mathcal{R}|^{1/2}}{\epsilon}\Big)
    + C_d \Big(2 + \frac{C_d t |\mathcal{R}|^{1/2}}{\epsilon} \Big)^{1/2} 
    \bigg[\log \Big(2 + 
    \frac{C_d t |\mathcal{R}|^{1/2}}{\epsilon}\Big)\bigg]^{2(s - 1)}.
\end{align*}
Combining this with \eqref{eq:bound-by-Bint} and \eqref{eq:bound-by-Br}, we 
deduce
\begin{align*}
    &\log N_{[ \ ]}\big(\epsilon, B_M^{(\zerovec)}(t), \|\cdot\|_2\big)
    \le \log N_{[ \ ]}\Big(\frac{\epsilon}{\sqrt{2}}, 
    B_{\text{int}}^{(\zerovec)}(t), \|\cdot\|_2\Big)
    \le \sum_{\mathbf{r} \in \mathcal{R}} 
    \log N_{[ \ ]}\Big(\frac{\epsilon}{\sqrt{2|\mathcal{R}|}}, 
    B_{\mathbf{r}}^{(\zerovec)}(t), \|\cdot\|_2\Big) \\
    &\qquad \le C_d |\mathcal{R}| \cdot \log \Big( 2 
    + \frac{C_d t |\mathcal{R}|^{1/2}}{\epsilon}\Big)
    + C_d |\mathcal{R}| \cdot \Big(2 
    + \frac{C_d t |\mathcal{R}|^{1/2}}{\epsilon} \Big)^{1/2} 
    \bigg[\log \Big(2 + 
    \frac{C_d t |\mathcal{R}|^{1/2}}{\epsilon}\Big)\bigg]^{2(s - 1)}.
\end{align*}
Next, by \eqref{eq:bound-by-Bdelta},
\begingroup
\allowdisplaybreaks
\begin{align*}
    &\log N_{[ \ ]}(\epsilon, B_M(t), \|\cdot\|_2) 
    \le \sum_{\delta \in \{0, 1\}^d} \log N_{[ \ ]}\Big(\frac{\epsilon}{2^{d/2}}, 
    B_M^{(\delta)}(t), \|\cdot\|_2\Big) \\
    &\qquad \le C_{d} |\mathcal{R}| \cdot \log \Big( 2 
    + \frac{C_d t |\mathcal{R}|^{1/2}}{\epsilon}\Big)
    + C_d |\mathcal{R}| \cdot \Big(2 + \frac{C_d 
    t |\mathcal{R}|^{1/2}}{\epsilon} \Big)^{1/2} \bigg[\log \Big(2 + 
    \frac{C_d t |\mathcal{R}|^{1/2}}{\epsilon}\Big)\bigg]^{2(s - 1)} \\
    &\qquad \le C_{d, M} \log \Big(2 + \frac{C_{d, M} t}{\epsilon} 
    \cdot \Big(\log \Big( 2 + \frac{1}{\epsilon}\Big)\Big)^{d/2} \Big) \cdot
    \bigg(\log \Big( 2 + \frac{1}{\epsilon}\Big)\bigg)^d \\
    &\qquad \qquad + C_{d, M} \Big(2 + \frac{C_{d, M} t}{\epsilon} \cdot 
    \Big(\log \Big( 2 + \frac{1}{\epsilon}\Big)\Big)^{d/2}\Big)^{1/2} \\
    &\qquad \qquad \qquad \qquad \quad \cdot \bigg(\log \Big( 2 
    + \frac{1}{\epsilon}\Big)\bigg)^d \bigg[\log \Big(2 
    + \frac{C_{d, M} t}{\epsilon} \cdot \Big(\log \Big( 2 
    + \frac{1}{\epsilon}\Big)\Big)^{d/2}\Big)\bigg]^{2(s - 1)} \\
    &\qquad \le C_{d, M} \log \Big(3 + \frac{t}{\epsilon}\Big) \cdot
    \bigg(\log \Big( 3 + \frac{1}{\epsilon}\Big)\bigg)^d  
    + C_{d, M} \log \log \Big(3 + \frac{1}{\epsilon}\Big) \cdot
    \bigg(\log \Big( 3 + \frac{1}{\epsilon}\Big)\bigg)^d \\
    &\qquad \qquad + C_{d, M} \Big(3 + \frac{t}{\epsilon}\Big)^{1/2} 
    \bigg(\log \Big( 3 + \frac{1}{\epsilon}\Big)\bigg)^{5d/4} 
    \bigg(\log \Big(3 + \frac{t}{\epsilon}\Big)\bigg)^{2(s - 1)} \\
    &\qquad \qquad + C_{d, M} \Big(3 + \frac{t}{\epsilon}\Big)^{1/2}
    \bigg(\log \Big( 3 + \frac{1}{\epsilon}\Big)\bigg)^{5d/4} 
    \bigg(\log \log \Big(3 + \frac{1}{\epsilon}\Big)\bigg)^{2(s - 1)},
\end{align*}
\endgroup
where the third inequality uses
\begin{equation*}
    |\mathcal{R}|  = R^d 
    \le C_{d, M} \bigg(\log \Big( 2 + \frac{1}{\epsilon}\Big)\bigg)^d,
\end{equation*}
which follows from
\begin{equation*}
    R \le \max\Big(\log \Big(\frac{dM^2}{\epsilon^2}\Big) 
    / \log 2 - (d - 2), 1\Big)
    \le C_{d, M} \log \Big( 2 + \frac{1}{\epsilon}\Big).
\end{equation*}
If $t \ge 1$, then
\begin{equation*}
    \log \Big( 3 + \frac{1}{\epsilon}\Big) \le \log \Big( 3 
    + \frac{t}{\epsilon}\Big).
\end{equation*}
If instead $t < 1$, then
\begin{equation*}
    \log \Big( 3 + \frac{1}{\epsilon}\Big) = \log \Big( 3t 
    + \frac{t}{\epsilon}\Big) - \log t 
    \le \log \Big( 3 + \frac{t}{\epsilon}\Big) + \log \frac{1}{t} 
    \le \log \Big( 3 + \frac{t}{\epsilon}\Big) + \log \Big(3 + \frac{1}{t}\Big).
\end{equation*}
Thus, in all cases, 
\begin{equation*}
    \log \Big( 3 + \frac{1}{\epsilon}\Big) 
    \le \log \Big( 3 + \frac{t}{\epsilon}\Big)
    + \log \Big(3 + \frac{1}{t}\Big).
\end{equation*}
As a result,
\begingroup
\allowdisplaybreaks
\begin{align}\label{bound-on-BMt}
    \log N_{[ \ ]}(\epsilon, B_M(t), \|\cdot\|_2) 
    &\le C_{d, M} \log \Big(3 + \frac{t}{\epsilon}\Big) \cdot
    \bigg(\log \Big( 3 + \frac{t}{\epsilon}\Big)
    + \log \Big(3 + \frac{1}{t}\Big)\bigg)^{d + 1/e} \nonumber \\
    &\qquad + C_{d, M} \Big(3 + \frac{t}{\epsilon}\Big)^{1/2} 
    \bigg(\log \Big( 3 + \frac{t}{\epsilon}\Big)
    + \log \Big(3 + \frac{1}{t}\Big)\bigg)^{5d/4}
    \bigg(\log \Big(3 + \frac{t}{\epsilon}\Big)\bigg)^{2(s - 1)} \nonumber \\
    &\qquad + C_{d, M} \Big(3 + \frac{t}{\epsilon}\Big)^{1/2} 
    \bigg(\log \Big( 3 + \frac{t}{\epsilon}\Big)
    + \log \Big(3 + \frac{1}{t}\Big)\bigg)^{5d/4 + 2(s - 1)/e} \nonumber \\ 
    &\le C_{d, M} \log \Big(3 + \frac{t}{\epsilon}\Big) \cdot
    \bigg(\log \Big( 3 + \frac{t}{\epsilon}\Big)\bigg)^{d + 1/e} 
    + C_{d, M} \log \Big(3 + \frac{t}{\epsilon}\Big) \cdot
    \bigg(\log \Big( 3 + \frac{1}{t}\Big)\bigg)^{d + 1/e} \nonumber \\
    &\qquad + C_{d, M}  \Big(3 + \frac{t}{\epsilon}\Big)^{1/2}
    \bigg(\log \Big(3 + \frac{t}{\epsilon}\Big)\bigg)^{5d/4 + 2(s - 1)} \\
    &\qquad + C_{d, M} \Big(3 + \frac{t}{\epsilon}\Big)^{1/2} 
    \bigg(\log \Big(3 + \frac{t}{\epsilon}\Big)\bigg)^{2(s - 1)} 
    \bigg(\log \Big(3 + \frac{1}{t}\Big)\bigg)^{5d/4 + 2(s - 1)/e} 
    \nonumber.
\end{align}
\endgroup
For the first inequality, we also used $\log \log x \le (\log x)^{1/e}$ 
for all $x > 1$.
By \eqref{eq:bound-norm-change}, it follows that
\begin{align*}
    &\log N_{[ \ ]}(\epsilon, B_{M, p_0}(t), \|\cdot\|_{p_0, 2}) 
    \le C_{b, B, d, M} \log \Big(3 + \frac{t}{\epsilon}\Big) \cdot
    \bigg(\log \Big( 3 + \frac{t}{\epsilon}\Big)\bigg)^{d + 1/e} \\
    &\qquad \qquad + C_{b, B, d, M} \log \Big(3 + \frac{t}{\epsilon}\Big) \cdot
    \bigg(\log \Big( 3 + \frac{1}{t}\Big)\bigg)^{d + 1/e} 
    + C_{b, B, d, M}  \Big(3 + \frac{t}{\epsilon}\Big)^{1/2}
    \bigg(\log \Big(3 + \frac{t}{\epsilon}\Big)\bigg)^{5d/4 + 2(s - 1)} \\
    &\qquad \qquad + C_{b, B, d, M} \Big(3 + \frac{t}{\epsilon}\Big)^{1/2} 
    \bigg(\log \Big(3 + \frac{t}{\epsilon}\Big)\bigg)^{2(s - 1)} 
    \bigg(\log \Big(3 + \frac{1}{t}\Big)\bigg)^{5d/4 + 2(s - 1)/e}.
\end{align*}

Next, we bound the integrals appearing in the bracketing entropy integral. 
Observe that
\begin{equation*}
    \int_{0}^{t} \bigg(\log \Big(3 + \frac{t}{\epsilon}\Big)\bigg)^{1/2} 
    \bigg(\log \Big( 3 + \frac{t}{\epsilon}\Big)\bigg)^{d/2 + 1/2e} \, d\epsilon
    \le Ct,
\end{equation*}
\begin{equation*}
    \int_{0}^{t} \bigg(\log \Big(3 + \frac{t}{\epsilon}\Big)\bigg)^{1/2}
    \bigg(\log \Big( 3 + \frac{1}{t}\Big)\bigg)^{d/2 + 1/2e} \, d\epsilon
    \le Ct \bigg(\log \Big( 3 + \frac{1}{t}\Big)\bigg)^{d/2 + 1/2e},
\end{equation*}
\begin{equation*}
    \int_{0}^{t} \Big(3 + \frac{t}{\epsilon}\Big)^{1/4} \bigg(\log \Big(3 
    + \frac{t}{\epsilon}\Big)\bigg)^{5d/8 + s - 1} \, d\epsilon
    \le Ct,
\end{equation*}
and
\begin{equation*}
    \int_{0}^{t} \Big(3 + \frac{t}{\epsilon}\Big)^{1/4} 
    \bigg(\log \Big(3 + \frac{t}{\epsilon}\Big)\bigg)^{s - 1}
    \bigg(\log \Big(3 + \frac{1}{t}\Big)\bigg)^{5d/8 + (s- 1)/e} \, d\epsilon
    \le Ct \bigg(\log \Big(3 + \frac{1}{t}\Big)\bigg)^{5d/8 + (s - 1)/e}.
\end{equation*}
Hence, the bracketing entropy integral satisfies
\begin{equation*}
    J_{[ \ ]}(t, B_{M, p_0}(t), \|\cdot\|_{p_0, 2}) 
    = \int_{0}^{t} \sqrt{1 + \log
    N_{[ \ ]}(\epsilon, B_{M, p_0}(t), \|\cdot\|_{p_0, 2})} \, d\epsilon 
    \le C_{b, B, d, M} t \bigg(\log \Big(3 + \frac{1}{t}\Big)
    \bigg)^{a(d, s)},
\end{equation*}
where
\begin{equation*}
    a(d, s) := \max\Big(\frac{5d}{8} + \frac{s - 1}{e},
    \frac{d}{2} + \frac{1}{2e}\Big).
\end{equation*}
Combining this with \eqref{eq:bound-by-BMt} and \eqref{eq:bound-by-bracket-int} 
gives
\begin{align*}
    &\E\bigg[\sup_{\substack{f \in \F_{\text{TC}}^{M}(V)\\ \|f\|_{p_0, 2} \le t}}
    \Big|\frac{1}{\sqrt{n}} \sum_{i=1}^{n} \xi_i f(\mathbf{x}^{(i)}) \Big| 
    \bigg] 
    \le \E\bigg[\sup_{f \in B_M(t)} \Big|\frac{1}{\sqrt{n}} 
    \sum_{i=1}^{n} \xi_i f(\mathbf{x}^{(i)}) \Big| \bigg] \\
    &\qquad \quad \le C J_{[ \ ]}(t, B_M(t), \|\cdot\|_{p_0, 2}) 
    \cdot \Big(\sigma + KM \cdot 
    \frac{J_{[ \ ]}(t, B_M(t), \|\cdot\|_{p_0, 2})}{t^2 \sqrt{n}}\Big) \\
    &\qquad \quad \le C_{b, B, d, M} \sigma t \bigg(\log \Big(3 
    + \frac{1}{t}\Big) \bigg)^{a(d, s)} 
    + C_{b, B, d, M} K n^{-1/2} \bigg(\log \Big(3 + \frac{1}{t}\Big)
    \bigg)^{2a(d, s)}
    := \Psi(t).
\end{align*}
Let
\begin{equation*}
    t_n = C_{b, B, d, M} (1 + \sigma + K^{1/2}) \cdot n^{-1/2} \big(\log (3 + n^{1/2})
    \big)^{a(d, s)}.
\end{equation*}
Then, we have
\begin{equation*}
    \Psi(t_n) \le \sqrt{n} t_n^2.
\end{equation*}
Since $t \mapsto \Psi(t)/t$ is decreasing, for every $r \ge 1$,
\begin{equation*}
    \E\bigg[\sup_{\substack{f \in \F_{\text{TC}}^{M}(V)\\ \|f\|_{p_0, 2} \le 
    rt_n}} \Big|\frac{1}{\sqrt{n}} \sum_{i=1}^{n} \xi_i 
    f(\mathbf{x}^{(i)}) \Big| \bigg] 
    \le \Psi(rt_n) \le r \Psi(t_n)
    \le r \sqrt{n} t_n^2.
\end{equation*}
Repeating all of the preceding arguments with the Rademacher variables 
$\epsilon_i$ in place of $\xi_i$ also yields
\begin{equation*}
    \E\bigg[\sup_{\substack{f \in \F_{\text{TC}}^{M}(V)\\ \|f\|_{p_0, 2} \le 
    rt_n}} \Big|\frac{1}{\sqrt{n}} \sum_{i=1}^{n} \epsilon_i 
    f(\mathbf{x}^{(i)}) \Big| \bigg] 
    \le r\sqrt{n} t_n^2
\end{equation*}
for every $r \ge 1$.

Applying Theorem \ref{thm:ki-random-bound-sub-exp} therefore gives
\begin{equation*}
    \P \big(\|\hat{f}^{d, s}_{n, V} - f^* \|_{p_0, 2} > t\big) 
    \le \epsilon_0 + C \cdot \frac{1 + M^3}{t^3} \cdot t_n^3 
    + C \cdot \frac{\|\xi_1\|_2^3 + M^3}{n^{3/2} t^3} 
    + C \cdot \frac{M^3K^3 [ \log( 2 + n\sigma^2 / K^2)]^3 
    + M^6}{n^{3} t^6}.
\end{equation*}
It follows that for each $L > 0$,
\begingroup
\allowdisplaybreaks
\begin{align*}
    &\limsup_{n \rightarrow \infty} \P \big(\|\hat{f}^{d, s}_{n, V} 
    - f^* \|_{p_0, 2} > L n^{-1/2} (\log n)^{a(d, s)}\big) \\
    &\qquad \le \epsilon_0 + \limsup_{n \rightarrow \infty} 
    \bigg[C \cdot \frac{(1 + M^3) t_n^3}{L^3 n^{-3/2} 
    (\log n)^{3a(d, s)}} 
    + C \cdot \frac{\|\xi_1\|_2^3 + M^3}{L^3 (\log n)^{3a(d, s)}} 
    + C \cdot \frac{M^3K^3 [ \log( 2 + n\sigma^2 / K^2)]^3 
    + M^6}{L^6 (\log n)^{6a(d, s)}} \bigg] \\
    &\qquad \le \epsilon_0 + C_{b, B, d, M} \cdot 
    \frac{(1 + \sigma + K^{1/2})^3}{L^3},
\end{align*}
\endgroup
where the second inequality uses that $6a(d, s) > 3d \ge 3$.
Hence, for sufficiently large $L$, 
\begin{equation*}
    \limsup_{n \rightarrow \infty} \P \big(\|\hat{f}^{d, s}_{n, V} 
    - f^* \|_{p_0, 2} > L n^{-1/2} (\log n)^{a(d, s)}\big) < 2\epsilon_0,
\end{equation*}
which proves that
\begin{equation*}
    \|\hat{f}^{d, s}_{n, V} - f^* \|_{p_0, 2} 
    = O_p\big(n^{-1/2} (\log n)^{a(d, s)}\big).
\end{equation*}
Note that the constants underlying $O_p$ depend on $b, B, d, V, K$, and 
$\sigma$, with the dependence on $V$ entering through $M$.

We now prove the theorem for the general case.
Suppose that $f^*$ is multi-affine of the form \eqref{piecewise-multi-affine} on 
each rectangle
\begin{equation*}
    \mathcal{U}^{(\mathbf{j})} 
    := \prod_{k = 1}^d \big[u^{(k)}_{j_k - 1}, u^{(k)}_{j_k}\big], \quad
    \mathbf{j} = (j_1, \dots, j_d) \in \prod_{k = 1}^d \{1, \dots, n_k\}, 
\end{equation*}
where $0 = u^{(k)}_0 < \dots < u^{(k)}_{n_k} = 1$ for $k = 1, \dots, d$.
Also, assume that $\prod_{k = 1}^d n_k = m(f^*) =: m$.
As in the special case $f^* = 0$, our goal is to apply Theorem \ref{thm:ki-random-bound-sub-exp}. 
To this end, we aim to bound
\begin{equation*}
    \E\bigg[\sup_{\substack{f \in \F_{\text{TC}}^{M}(V) - \{f^*\} \\ 
    \|f\|_{p_0, 2} \le t}}
    \Big|\frac{1}{\sqrt{n}} \sum_{i=1}^{n} \xi_i f(\mathbf{x}^{(i)}) \Big| 
    \bigg]. 
\end{equation*}

As before, we first enlarge the function class:
\begin{equation*}
    \E\bigg[\sup_{\substack{f \in \F_{\text{TC}}^{M}(V) - \{f^*\} \\ 
    \|f\|_{p_0, 2} \le t}}
    \Big|\frac{1}{\sqrt{n}} \sum_{i=1}^{n} \xi_i f(\mathbf{x}^{(i)}) \Big| 
    \bigg] 
    \le \E\bigg[\sup_{f \in B_{M, p_0}(t) - \{f^*\}}
    \Big|\frac{1}{\sqrt{n}} \sum_{i=1}^{n} \xi_i f(\mathbf{x}^{(i)}) \Big| 
    \bigg]. 
\end{equation*}
Analogous to \eqref{eq:bound-by-bracket-int}, this expectation can bounded 
using the bracketing entropy integral as follows:
\begin{align*}
    \E\bigg[\sup_{f \in B_{M, p_0}(t) - \{f^*\}}
    \Big|\frac{1}{\sqrt{n}} \sum_{i=1}^{n} \xi_i f(\mathbf{x}^{(i)}) \Big| 
    \bigg] 
    &\le C J_{[ \ ]}(t, B_{M, p_0}(t) - \{f^*\}, \|\cdot\|_{p_0, 2}) \\
    &\qquad \cdot 
    \Big(\sigma + KM \cdot \frac{J_{[ \ ]}(t, B_{M, p_0}(t) - \{f^*\}, 
    \|\cdot\|_{p_0, 2})}{t^2 \sqrt{n}}\Big).
\end{align*}
Since $b \le p_0(x) \le B$ for all $x \in [0, 1]^d$, we also have
\begin{equation}\label{bound-norm-change-general}
    N_{[ \ ]}(\epsilon, B_{M, p_0}(t) - \{f^*\}, \|\cdot\|_{p_0, 2})
    \le N_{[ \ ]}(\epsilon/B^{1/2}, \widebar{B}_M(t/b^{1/2}) - \{f^*\}, 
    \|\cdot\|_{2}),
\end{equation}
where 
\begin{equation*}
    \widebar{B}_M(t) = \big\{f \in \F_{\text{TC}}^{d, s}: \|f - f^*\|_{\infty} 
    \le M \text{ and } \|f - f^*\|_2 \le t\big\}.
\end{equation*}

Next, we decompose $[0, 1]^d$ into $m$ rectangles $\mathcal{U}^{(\mathbf{j})}$, 
on which $f^*$ is multi-affine.
For each $\mathbf{j} = (j_1, \dots, j_d) \in \prod_{k = 1}^d \{1, \dots, n_k\}$,
let $A_{\mathbf{j}}$ denote the area of $\mathcal{U}^{(\mathbf{j})}$:
\begin{equation*}
    A_{\mathbf{j}} = \prod_{k = 1}^{d} \big(u^{(k)}_{j_k} - u^{(k)}_{j_k - 1}\big),
\end{equation*}
and let $\widebar{B}^{(\mathbf{j})}_M(t)$ be the collection of functions 
$g: \mathcal{U}^{(\mathbf{j})} \to \R$ such that 
$g = f|_{\mathcal{U}^{(\mathbf{j})}}$ for some $f \in \widebar{B}_M(t)$.
It then follows that
\begin{equation}\label{bound-by-BjMt}
    \log N_{[ \ ]}(\epsilon, \widebar{B}_M(t) - \{f^*\}, \|\cdot\|_2) 
    \le \sum_{\mathbf{j} \in \prod_{k = 1}^d \{1, \dots, n_k\}} 
    \log N_{[ \ ]}\big(\epsilon / \sqrt{m}, 
    \widebar{B}^{(\mathbf{j})}_M(t) - \{f^*\}, \|\cdot\|_2\big).
\end{equation}
Recall that $m = m(f^*) = \prod_{k = 1}^d n_k$.

Suppose $g \in \widebar{B}^{(\mathbf{j})}_M(t)$.
By the definition of $\widebar{B}^{(\mathbf{j})}_M(t)$, there exists 
$f \in \widebar{B}_M(t)$ with $\|f - f^*\|_{\infty} \le M$ and 
$\|f - f^*\|_2 \le t$ such that $g = f|_{\mathcal{U}^{(\mathbf{j})}}$.
Define $h: [0, 1]^d \to \R$ by 
\begin{equation*}
    h(x_1, \dots, x_d) = (g - f^*)\Big(u^{(1)}_{j_1 - 1} + 
    \big(u^{(1)}_{j_1} - u^{(1)}_{j_1 - 1}\big) x_1, \dots, 
    u^{(d)}_{j_d - 1} + \big(u^{(d)}_{j_d} - u^{(d)}_{j_d - 1}\big) x_d\Big).
\end{equation*}
Since $f, f^* \in \tcds$ and $f^*$ is multi-affine on 
$\mathcal{U}^{(\mathbf{j})}$, it follows that $h \in \tcp$, and $h$ satisfies 
the interaction restriction condition \eqref{int-rest-cond} (see also 
\eqref{int-rest-cond-restated} and \eqref{int-rest-cond-restated-2}) for every 
subset $S$ of $[d]$ with $|S| > s$. By induction, it can also be readily 
verified that for every nonempty subset 
$S \subset [d]$,
\begin{align*}
    &\sup \bigg\{
    \begin{bmatrix}
        0, t_k, & k \in S    & \multirow{2}{*}{; \ $h$ } \\ 
        0,      & k \notin S & \\
    \end{bmatrix}
    : t_k > 0 \ \text{ for } k \in S \bigg\} \\
    &\quad= \prod_{k \in S} 
    \big(u^{(k)}_{j_k} - u^{(k)}_{j_k - 1}\big) \cdot \sup \bigg\{
    \begin{bmatrix}
        u^{(k)}_{j_k - 1}, t_k, & k \in S    
        & \multirow{2}{*}{; \ $f - f^*$ } \\ 
        u^{(k)}_{j_k - 1},      & k \notin S & \\
    \end{bmatrix}
    : u^{(k)}_{j_k - 1} < t_k < u^{(k)}_{j_k} \ \text{ for } 
    k \in S \bigg\} < \infty
\end{align*}
and
\begin{align*}
    &\inf \bigg\{
    \begin{bmatrix}
        t_k, 1, & k \in S    & \multirow{2}{*}{; \ $h$ } \\ 
        0,      & k \notin S &
    \end{bmatrix}
    : t_k < 1 \ \text{ for } k \in S \bigg\} \\
    &\quad = \prod_{k \in S} 
    \big(u^{(k)}_{j_k} - u^{(k)}_{j_k - 1}\big) \cdot \inf \bigg\{
    \begin{bmatrix}
        t_k, u^{(k)}_{j_k}, & k \in S    
        & \multirow{2}{*}{; \ $f - f^*$ } \\ 
        u^{(k)}_{j_k - 1},      & k \notin S &
    \end{bmatrix}   
    : u^{(k)}_{j_k - 1} < t_k < u^{(k)}_{j_k} \ \text{ for }
    k \in S \bigg\} > -\infty.
\end{align*}
By Proposition \ref{prop:tc-equiv-restricted-interaction}, these facts ensure 
that $h \in \tcds$. Furthermore, since
\begin{equation*}
    \|g - f^*\|_2^2 = A_{\mathbf{j}} \cdot \|h\|_2^2,
\end{equation*}
we obtain
\begin{equation*}
    \log N_{[ \ ]}\big(\epsilon / \sqrt{m}, 
    \widebar{B}^{(\mathbf{j})}_M(t) - \{f^*\}, \|\cdot\|_2\big) 
    \le \log N_{[ \ ]} \bigg(\frac{\epsilon}{m^{1/2} A_{\mathbf{j}}^{1/2}}, 
    B_M\Big(\frac{t}{A_{\mathbf{j}}^{1/2}}\Big), 
    \|\cdot\|_2\bigg),
\end{equation*}
where
\begin{equation*}
    B_M(t) = \{f \in \F_{\text{TC}}^{d, s}: \|f\|_{\infty} \le M 
    \text{ and } \|f\|_2 \le t \}.
\end{equation*}

Using the bound \eqref{bound-on-BMt} established in the special case $f^* = 0$, 
we obtain
\begingroup
\allowdisplaybreaks
\begin{align*}
    &\log N_{[ \ ]}\big(\epsilon / \sqrt{m}, 
    \widebar{B}^{(\mathbf{j})}_M(t) - \{f^*\}, \|\cdot\|_2\big) 
    \le C_{d, M} \log \Big(3 
    + \frac{m^{1/2} t}{\epsilon}\Big) \cdot
    \bigg(\log \Big( 3 + \frac{m^{1/2} t}{\epsilon}\Big)\bigg)^{d + 1/e} \\
    &\qquad \qquad + C_{d, M} \log \Big(3 + \frac{m^{1/2} t}{\epsilon}\Big) 
    \cdot \bigg(\log \Big( 3 
    + \frac{A_{\mathbf{j}}^{1/2}}{t}\Big)\bigg)^{d + 1/e}
    + C_{d, M} \bigg(\log \Big(3 
    + \frac{m^{1/2} t}{\epsilon}\Big)\bigg)^{5d/4 + 2(s - 1)} \\
    &\qquad \qquad + C_{d, M} \Big(3 + \frac{m^{1/2} t}{\epsilon}\Big)^{1/2}
    \bigg(\log \Big(3 + \frac{m^{1/2} t}{\epsilon}\Big)\bigg)^{2(s - 1)} 
    \bigg(\log \Big(3 
    + \frac{A_{\mathbf{j}}^{1/2}}{t}\Big)\bigg)^{5d/4 + 2(s - 1)/e} \\
    &\qquad \le C_{d, M, m} \log \Big(3 
    + \frac{t}{\epsilon}\Big) \cdot
    \bigg(\log \Big( 3 + \frac{t}{\epsilon}\Big)\bigg)^{d + 1/e} 
    + C_{d, M, m} \log \Big(3  
    + \frac{t}{\epsilon}\Big) \cdot \bigg(\log \Big(3 
    + \frac{1}{t}\Big)\bigg)^{d + 1/e}  \\
    &\qquad \qquad + C_{d, M, m} \bigg(\log \Big(3 
    + \frac{t}{\epsilon}\Big)\bigg)^{5d/4 + 2(s - 1)} \\
    &\qquad \qquad + C_{d, M, m} \Big(3 + \frac{t}{\epsilon}\Big)^{1/2} 
    \bigg(\log \Big(3 + \frac{t}{\epsilon}\Big)\bigg)^{2(s - 1)} 
    \bigg(\log \Big(3 + \frac{1}{t}\Big)\bigg)^{5d/4 + 2(s - 1)/e},
\end{align*}
\endgroup
where the second inequality also uses the fact that $A_{\mathbf{j}} \le 1$. 
Combining this bound with \eqref{bound-norm-change-general} and 
\eqref{bound-by-BjMt} yields
\begingroup
\allowdisplaybreaks
\begin{align*}
    &\log N_{[ \ ]}(\epsilon, B_{M, p_0}(t) - \{f^*\}, \|\cdot\|_{p_0, 2}) 
    \le C_{b, B, d, M, m} \log \Big(3 
    + \frac{t}{\epsilon}\Big) \cdot
    \bigg(\log \Big( 3 + \frac{t}{\epsilon}\Big)\bigg)^{d + 1/e} \\
    &\qquad \qquad \qquad + C_{b, B, d, M, m} \log \Big(3  
    + \frac{t}{\epsilon}\Big) \cdot \bigg(\log \Big(3 
    + \frac{1}{t}\Big)\bigg)^{d + 1/e} 
    + C_{b, B, d, M, m} \bigg(\log \Big(3 
    + \frac{t}{\epsilon}\Big)\bigg)^{5d/4 + 2(s - 1)} \\
    &\qquad \qquad \qquad + C_{b, B, d, M, m} \Big(3 + \frac{t}{\epsilon}\Big)^{1/2} 
    \bigg(\log \Big(3 + \frac{t}{\epsilon}\Big)\bigg)^{2(s - 1)} 
    \bigg(\log \Big(3 + \frac{1}{t}\Big)\bigg)^{5d/4 + 2(s - 1)/e}.
\end{align*}
\endgroup

From here on, we can repeat the same argument and computations as in the case 
$f^* = 0$. The only difference is that the constants now also depend on 
$m = m(f^*)$.
As a result, we obtain
\begin{equation*}
    \|\hat{f}^{d, s}_{n, V} - f^* \|_{p_0, 2} 
    = O_p\big(n^{-1/2} (\log n)^{a(d, s)}\big),
\end{equation*}
where the constants underlying $O_p$ depend on $b, B, d, V, K, \sigma$, and 
$m(f^*)$.
\end{proof}

\subsection{Proofs of Lemmas in Appendix \ref{pf:totconcdef}}
\subsubsection{Proof of Lemma \ref{lem:alt-characterization}}\label{pf:alt-characterization}
Our proof of Lemma \ref{lem:alt-characterization} is based on the following lemma. 
This lemma is a variant of \citet[Theorem 3]{aistleitner2015functions}, which we restate here as Theorem \ref{thm:aist-dick}.
\citet[Theorem 3]{aistleitner2015functions} offers a connection between functions on $[0, 1]^m$ that are entirely monotone and coordinate-wise right-continuous, and the cumulative distribution functions of finite measures on $[0, 1]^m \setminus \{\zerovec\}$.
The proof of Lemma \ref{lem:version-aist-dick} will be provided right after the proof of Lemma \ref{lem:alt-characterization}.

\begin{lemma}\label{lem:version-aist-dick}
    Suppose a real-valued function $g$ on $[0, 1]^m$ is entirely monotone and coordinate-wise right-continuous. 
    Also, assume that $g$ is coordinate-wise left-continuous at each point $\mathbf{x} \in [0, 1]^m \setminus [0, 1)^m$ with respect to all the $j^{\text{th}}$ coordinates where $x_j = 1$.
    Then, there exists a finite (Borel) measure $\nu$ on $[0, 1)^m \setminus \{\zerovec\}$ such that
    \begin{equation}\label{eq:version-aist-dict}
        g(x_1, \dots, x_m) - g(0, \dots, 0) = \nu\bigg(\prod_{k = 1}^{m}[0, x_k] \cap \big([0, 1)^m \setminus \{\zerovec\}\big)\bigg)
    \end{equation}
    for every $\mathbf{x} = (x_1, \dots, x_m) \in [0, 1]^m$.
\end{lemma}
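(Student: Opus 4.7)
The strategy is to bootstrap from the classical Aistleitner--Dick result (Theorem~\ref{thm:aist-dick}), which produces a representing measure on the closed cube, and then to use the additional left-continuity hypothesis to show that this measure places no mass on the ``upper faces'' $H_j := \{\mathbf{y} \in [0,1]^m : y_j = 1\}$ for $j = 1, \dots, m$. Restricting the measure to $[0,1)^m \setminus \{\zerovec\}$ will then yield the required $\nu$.

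For the first step, entire monotonicity of $g$ supplies the nonnegativity of alternating-sum increments of $g$ over half-open boxes, while coordinate-wise right-continuity supplies the consistency needed to apply Theorem~\ref{thm:aist-dick} (up to routine reformatting, discussed below). This produces a finite Borel measure $\tilde{\nu}$ on $[0,1]^m \setminus \{\zerovec\}$ satisfying
\begin{equation*}
  g(\mathbf{x}) - g(\zerovec) = \tilde{\nu}\bigg(\prod_{k = 1}^{m} [0, x_k] \setminus \{\zerovec\}\bigg) \qt{for every $\mathbf{x} \in [0,1]^m$.}
\end{equation*}
For the second step, fix $j$. Applying this identity at the vectors $(1, \dots, 1)$ and $(1, \dots, 1, 1 - \epsilon, 1, \dots, 1)$ (with $1 - \epsilon$ in position $j$) and subtracting gives
\begin{equation*}
  \tilde{\nu}\bigl([0, 1]^{j-1} \times (1 - \epsilon, 1] \times [0, 1]^{m - j}\bigr) = g(1, \dots, 1) - g(1, \dots, 1, 1 - \epsilon, 1, \dots, 1).
\end{equation*}
The right-hand side tends to $0$ as $\epsilon \to 0+$ by the hypothesized left-continuity of $g$ at $(1, \dots, 1) \in [0,1]^m \setminus [0,1)^m$ in its $j^{\text{th}}$ coordinate. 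Continuity from above of the finite measure $\tilde{\nu}$ applied to the nested intersection $\bigcap_{\epsilon > 0}[0,1]^{j-1} \times (1-\epsilon, 1] \times [0, 1]^{m-j} = H_j$ then forces $\tilde{\nu}(H_j) = 0$.

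Finally, set $\nu := \tilde{\nu}|_{[0, 1)^m \setminus \{\zerovec\}}$. For every $\mathbf{x} \in [0,1]^m$, the symmetric difference between $\prod_{k = 1}^{m} [0, x_k] \setminus \{\zerovec\}$ and $\prod_{k = 1}^{m} [0, x_k] \cap ([0, 1)^m \setminus \{\zerovec\})$ is contained in $\bigcup_{j=1}^{m} H_j$, which has $\tilde{\nu}$-measure zero by the second step; combined with the representation from the first step this gives \eqref{eq:version-aist-dict}. The main obstacle is matching the precise formulation of Theorem~\ref{thm:aist-dick} to the identity invoked in the first step, since the cited theorem may package the measure on a slightly different underlying space (half-open vs.\ closed cube, or on a reflected cube where left- and right-continuity roles are swapped). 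Any such discrepancy can be absorbed by the coordinate reflection $x_k \mapsto 1 - x_k$ applied before invoking Aistleitner--Dick, or, alternatively, by redoing the Carath\'eodory construction directly on half-open boxes $\prod_k [a_k, b_k) \subseteq [0, 1)^m \setminus \{\zerovec\}$ using the alternating sums of $g$ as a premeasure — a routine but tedious verification in which entire monotonicity gives nonnegativity and right-continuity gives countable additivity.
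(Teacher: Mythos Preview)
Your proposal is correct and follows essentially the same route as the paper: apply Theorem~\ref{thm:aist-dick} to obtain a measure $\tilde\nu$ on $[0,1]^m\setminus\{\zerovec\}$, restrict it to $[0,1)^m\setminus\{\zerovec\}$, and use the left-continuity hypothesis to show this restriction still represents $g$. The only cosmetic difference is in how the boundary case is dispatched: the paper verifies \eqref{eq:version-aist-dict} directly at each $\mathbf{x}\in[0,1]^m\setminus[0,1)^m$ via iterated limits $\lim_{y_1\uparrow 1}\cdots\lim_{y_l\uparrow 1}$ and continuity from below of $\tilde\nu$, whereas you first isolate the fact $\tilde\nu(H_j)=0$ (using left-continuity only at $(1,\dots,1)$) and then conclude by a set-difference argument. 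Your concern about mismatched formulations of Theorem~\ref{thm:aist-dick} is unnecessary here, since the paper states it in exactly the form you need.
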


\begin{theorem}[Theorem 3 of \citet{aistleitner2015functions}]\label{thm:aist-dick}
    Suppose a real-valued function $g$ on $[0, 1]^m$ is entirely monotone and coordinate-wise right-continuous. 
    Then, there exists a unique finite (Borel) measure $\nu$ on $[0, 1]^m \setminus \{\zerovec\}$ such that 
    \begin{equation*}
        g(x_1, \dots, x_m) - g(0, \dots, 0) = \nu\bigg(\prod_{k = 1}^{m}[0, x_k] \setminus \{\zerovec\}\bigg)
    \end{equation*}
    for every $\mathbf{x} = (x_1, \dots, x_m) \in [0, 1]^m$.
\end{theorem}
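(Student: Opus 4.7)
The plan is to derive Lemma \ref{lem:version-aist-dick} from Theorem \ref{thm:aist-dick} by showing that the additional left-continuity hypothesis forces the Aistleitner--Dick representing measure to vanish on the ``top faces'' $\{x_j = 1\}$, so that it can be restricted to $[0,1)^m \setminus \{\zerovec\}$ without changing the representation.

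First, since $g$ is entirely monotone and coordinate-wise right-continuous, Theorem \ref{thm:aist-dick} provides a unique finite Borel measure $\mu$ on $[0,1]^m \setminus \{\zerovec\}$ such that
\begin{equation*}
    g(x_1, \dots, x_m) - g(0, \dots, 0)
    = \mu\bigg(\prod_{k=1}^m [0, x_k] \setminus \{\zerovec\}\bigg)
    \qt{for every $(x_1,\dots,x_m) \in [0,1]^m$.}
\end{equation*}
The goal is to show $\mu\big(\{x \in [0,1]^m : x_j = 1\}\big) = 0$ for each $j = 1, \dots, m$; then $\mu$ is concentrated on $[0,1)^m \setminus \{\zerovec\}$, and $\nu := \mu|_{[0,1)^m \setminus \{\zerovec\}}$ is the desired measure, giving \eqref{eq:version-aist-dict} immediately since the set $\prod_{k=1}^m [0,x_k] \setminus \{\zerovec\}$ and its intersection with $[0,1)^m \setminus \{\zerovec\}$ differ by a $\mu$-null set.

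The key step is the vanishing on top faces. Fix $j \in \{1,\dots,m\}$ and arbitrary $x_k \in [0,1]$ for $k \neq j$. Setting $x_j = 1$, the point $\mathbf{x}$ lies in $[0,1]^m \setminus [0,1)^m$, so by the left-continuity hypothesis,
\begin{equation*}
    g(\mathbf{x})
    = \lim_{t \uparrow 1} g(x_1,\dots,x_{j-1},t,x_{j+1},\dots,x_m).
\end{equation*}
Translating both sides via the representation and using continuity of $\mu$ from below along the increasing sets $\prod_{k \neq j}[0,x_k] \times [0,t] \setminus \{\zerovec\}$ as $t \uparrow 1$, we get
\begin{equation*}
    \mu\bigg(\prod_{k \neq j}[0,x_k] \times [0,1] \setminus \{\zerovec\}\bigg)
    = \mu\bigg(\prod_{k \neq j}[0,x_k] \times [0,1) \setminus \{\zerovec\}\bigg),
\end{equation*}
hence $\mu\big(\prod_{k \neq j}[0,x_k] \times \{1\}\big) = 0$. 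Specializing to $x_k = 1$ for every $k \neq j$ yields $\mu\big(\{x : x_j = 1\}\big) = 0$, as required.

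The only subtlety is bookkeeping of the singleton $\{\zerovec\}$ and making sure the sets differ by a $\mu$-null set when passing from $[0,1]^m \setminus \{\zerovec\}$ to $[0,1)^m \setminus \{\zerovec\}$; this is immediate from $\mu\big(\bigcup_j \{x_j = 1\}\big) \le \sum_j \mu(\{x_j = 1\}) = 0$. Beyond this, the argument is a direct reduction to Theorem \ref{thm:aist-dick}, and no deeper difficulty is anticipated.
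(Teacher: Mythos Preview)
Neither you nor the paper proves Theorem \ref{thm:aist-dick}: it is quoted from Aistleitner--Dick as a black box, and what you have actually written is a proof of Lemma \ref{lem:version-aist-dick}. As a proof of that lemma, your argument is correct and essentially identical to the paper's---both restrict the Aistleitner--Dick measure to $[0,1)^m \setminus \{\zerovec\}$ and invoke the left-continuity hypothesis together with continuity of measures from below; you package this as ``$\mu$ vanishes on each top face $\{x_j=1\}$'' and conclude globally, while the paper verifies \eqref{eq:version-aist-dict} directly at each boundary point $\mathbf{x}$ via the iterated limit $\lim_{y_1\uparrow 1}\cdots\lim_{y_l\uparrow 1}$.
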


\begin{proof}[Proof of Lemma \ref{lem:alt-characterization}]
First, we assume that $f \in \tcds$ is of the form \eqref{intermodds} and show that such $f$ can be represented as \eqref{eq:alt-characterization}.
For each $S \subseteq [d]$ with $1 \le |S| \le s$, let $g_S$ be the function on $[0, 1]^{|S|}$ defined by
\begin{equation*}
    g_S((x_k, k \in S)) = -\beta_S + \nu_S \bigg(\prod_{k \in S}[0, x_k] \cap \big([0, 1)^m \setminus \{\zerovec\}\big)\bigg)
\end{equation*}
for $(x_k, k \in S) \in [0, 1]^{|S|}$.
It can be readily checked that $g_S$ is coordinate-wise right-continuous on $[0, 1]^{|S|}$ and coordinate-wise left-continuous at each point $(x_k, k \in S) \in [0, 1]^{|S|} \setminus [0, 1)^{|S|}$ with respect to all the $k^{\text{th}}$ coordinates where $x_k = 1$. 
Moreover, for each $S$, we have 
\begingroup
\allowdisplaybreaks
\begin{align*}
    \int_{[0, 1)^{|S|} \setminus \{\zerovec\}} \prod_{k \in S} (x_k - t_k)_+  \, d\nu_S(t_k, k \in S) 
    &= \int_{\prod_{k \in S}[0, x_k) \setminus \{\zerovec\}} \int_{\prod_{k \in S}[t_k, x_k)} 1 \, d(s_k, k \in S) \, d\nu_S(t_k, k \in S) \\
    &= \int_{\prod_{k \in S}[0, x_k)} \int_{\prod_{k \in S}[0, s_k] \setminus \{\zerovec\}} 1 \, d\nu_S(t_k, k \in S) \, d(s_k, k \in S) \\
    &= \beta_S \cdot \prod_{k \in S} x_k + \int_{\prod_{k \in S}[0, x_k]} g_S((s_k, k \in S)) \, d(s_k, k \in S),
\end{align*}
\endgroup
from which it follows that
\begin{align*}
    f(x_1, \dots, x_d) &= \beta_0 + \sum_{S : 1 \le |S| \le s} \beta_S \cdot \prod_{k \in S} x_k - \sum_{S : 1 \le |S| \le s} \int_{[0, 1)^{|S|} \setminus \{\zerovec\}} \prod_{k \in S} (x_k - t_k)_+ \, d\nu_S(t_k, k \in S) \\
    &= \beta_0 - \sum_{S : 1 \le |S| \le s} \int_{\prod_{k \in S}[0, x_k]} g_S((s_k, k \in S)) \, d(s_k, k \in S).
\end{align*}
Hence, it remains to verify that $g_S$ is entirely monotone for each $S$.
Without loss of generality, we assume that $S = [m]$.
Recall that, when $S = [m]$, $g_S$ is entirely monotone if 
\begin{equation*}
    \begin{bmatrix}
        x_1^{(1)}, \dots, x_{p_1 + 1}^{(1)} & \\ 
        \vdots                              &; \ g_S \\
        x_1^{(m)}, \dots, x_{p_m + 1}^{(m)} &
    \end{bmatrix}
    \ge 0
\end{equation*}
for every $(p_1, \dots, p_m) \in \{0, 1\}^m \setminus \{\zerovec\}$.
For each $\mathbf{p} = (p_1, \dots, p_m)$, this inequality follows from the nonnegativity of $\nu_S$ and the following equation:
\begin{equation*}
    \begin{bmatrix}
        x_1^{(1)}, \dots, x_{p_1 + 1}^{(1)} & \\ 
        \vdots                              &; \ g_S \\
        x_1^{(m)}, \dots, x_{p_m + 1}^{(m)} &
    \end{bmatrix}
    = \frac{1}{\prod_{k \in S_{\mathbf{p}}} (x_2^{(k)} - x_1^{(k)})} \cdot \nu_S\bigg(\bigg(\prod_{k \in S_{\mathbf{p}}} \big(x_1^{(k)}, x_2^{(k)}\big] \times \prod_{k \notin S_{\mathbf{p}}} \big[0, x_1^{(k)}\big]\bigg) \cap [0, 1)^m \bigg),
\end{equation*}
where $S_{\mathbf{p}} = \{k \in \{1, \dots, m\}: p_k = 1\}$.

Next, we assume that $f$ is of the form \eqref{eq:alt-characterization} and show that $f \in \tcds$.
For each $S \subseteq [d]$ with $1 \le |S| \le s$, since $g_S$ appearing in \eqref{eq:alt-characterization} satisfies the conditions of Lemma \ref{lem:version-aist-dick}, there exists a finite measure $\nu_S$ on $[0, 1)^{|S|} \setminus \{\zerovec\}$ such that
\begin{equation*}
    g_S((x_k, k \in S)) - g_S((0, k \in S)) = \nu_S\bigg(\prod_{k \in S}[0, x_k] \cap \big([0, 1)^{|S|} \setminus \{\zerovec\}\big)\bigg)
\end{equation*}
for every $(x_k, k \in S) \in [0, 1]^{|S|}$.
If we let $\beta_S = - g_S((0, k \in S))$, then we can express $f$ as 
\begingroup
\allowdisplaybreaks
\begin{align*}
    f(x_1, \dots, x_d) &= \beta_0 + \sum_{S : 1 \le |S| \le s} \beta_S \prod_{k \in S} x_k - \sum_{S : 1 \le |S| \le s} \int_{\prod_{k \in S} [0, x_k)} \nu_S\bigg(\prod_{k \in S}[0, t_k] \setminus \{\zerovec\}\bigg) \, d(t_k, k \in S) \\
    &= \beta_0 + \sum_{S : 1 \le |S| \le s} \beta_S \prod_{k \in S} x_k - \sum_{S : 1 \le |S| \le s} \int_{\prod_{k \in S}[0, x_k)} \int_{\prod_{k \in S}[0, t_k] \setminus \{\zerovec\}} 1 \, d\nu_S(s_k, k \in S) \, d(t_k, k \in S) \\
    &= \beta_0 + \sum_{S : 1 \le |S| \le s} \beta_S \prod_{k \in S} x_k - \sum_{S : 1 \le |S| \le s} \int_{\prod_{k \in S}[0, x_k) \setminus \{\zerovec\}} \int_{\prod_{k \in S}[s_k, x_k)} 1 \, d(t_k, k \in S) \, d\nu_S(s_k, k \in S) \\
    &= \beta_0 + \sum_{S : 1 \le |S| \le s} \beta_S \prod_{k \in S} x_k - \sum_{S : 1 \le |S| \le s} \int_{[0, 1)^{|S|} \setminus \{\zerovec\}} \prod_{k \in S} (x_k - s_k)_+  \, d\nu_S(s_k, k \in S),
\end{align*}
\endgroup
which proves that $f \in \tcds$.
\end{proof}

\begin{proof}[Proof of Lemma \ref{lem:version-aist-dick}]
Since $g$ satisfies the conditions of Theorem \ref{thm:aist-dick}, there exists a finite measure $\widebar{\nu}$ on $[0, 1]^m \setminus \{\zerovec\}$ such that
\begin{equation*}
    g(x_1, \dots, x_m) - g(0, \dots, 0) = \widebar{\nu}\bigg(\prod_{k = 1}^{m}[0, x_k] \setminus \{\zerovec\}\bigg)
\end{equation*}
for every $\mathbf{x} = (x_1, \dots, x_m) \in [0, 1]^m$.
Let $\nu$ be the measure obtained by restricting $\widebar{\nu}$ to $[0, 1)^m \setminus \{\zerovec\}$.
We show that $\nu$ is a desired measure, i.e., $\nu$ satisfies the equation \eqref{eq:version-aist-dict} for every $\mathbf{x} \in [0, 1]^m$.
For $\mathbf{x} \in [0, 1)^m$, \eqref{eq:version-aist-dict} is clear from the fact that
\begin{equation*}
    \widebar{\nu}\bigg(\prod_{k = 1}^{m}[0, x_k] \setminus \{\zerovec\}\bigg) = \widebar{\nu}\bigg(\prod_{k = 1}^{m}[0, x_k] \cap \big([0, 1)^m \setminus \{\zerovec\}\big)\bigg).
\end{equation*}
Hence, we only need to prove \eqref{eq:version-aist-dict} for $\mathbf{x} \in [0, 1]^m \setminus [0, 1)^m$.
Without loss of generality, let $\mathbf{x} = (1, \dots, 1, x_{l + 1}, \dots, x_m)$ where $x_{l + 1}, \dots, x_m < 1$.
Then, using the coordinate-wise left-continuity of $g$ at $\mathbf{x}$, we can show that
\begingroup
\allowdisplaybreaks
\begin{align*}
    &g(x_1, \dots, x_m) - g(0, \dots, 0) = \lim_{y_1 \uparrow 1} \dots \lim_{y_l \uparrow 1} g(y_1, \dots, y_l, x_{l + 1}, \dots, x_m) - g(0, \dots, 0) \\
    &\qquad= \lim_{y_1 \uparrow 1} \dots \lim_{y_l \uparrow 1} \widebar{\nu}\bigg(\prod_{k = 1}^{l}[0, y_k] \times \prod_{k = l + 1}^{m} [0, x_k] \setminus \{\zerovec\}\bigg) 
    = \widebar{\nu}\bigg([0, 1)^l \times \prod_{k = l + 1}^{m} [0, x_k] \setminus \{\zerovec\}\bigg) \\
    &\qquad= \widebar{\nu}\bigg(\prod_{k = 1}^{m} [0, x_k] \cap \big([0, 1)^m \setminus \{\zerovec\}\big)\bigg)
    = \nu\bigg(\prod_{k = 1}^{m} [0, x_k] \cap \big([0, 1)^m \setminus \{\zerovec\}\big)\bigg),
\end{align*}
\endgroup
as desired.
\end{proof}

\subsubsection{Proof of Lemma \ref{lem:tc-weak-cond}}\label{pf:tc-weak-cond}
\begin{proof}[Proof of Lemma \ref{lem:tc-weak-cond}]
We prove that the divided difference of $f$ of order $\mathbf{p}$ is nonpositive on $[0, 1]^d$ for every $\mathbf{p} \in \{0, 1, 2\}^d$ with $\max_k p_k = 2$ by induction on $|\mathbf{p}|:= |\{k: p_k \neq 0\}|$.
It is clear that the claim is true when $|\mathbf{p}| = d$, since $N^{(\mathbf{p})} = [0, 1]^d$ in this case.
Suppose that the claim is true when $|\mathbf{p}| = m + 1$ for some $m < d$ and fix a nonnegative integer vector $\mathbf{p} \in \{0, 1, 2\}^d$ with $\max_k p_k = 2$ and $|\mathbf{p}| = m$.
Without loss of generality, we assume that $\{k: p_k \neq 0\} = [m]$.
Then, for every $0 \le x_1^{(k)} < \cdots < x_{p_k + 1}^{(k)} \le 1$, $k = 1, \dots, d$, we have
\begingroup
\allowdisplaybreaks
\begin{align*}
    \begin{bmatrix}
        x_1^{(1)}, \dots, x_{p_1 + 1}^{(1)} & \\ 
        \vdots                              & \\
        x_1^{(m)}, \dots, x_{p_m + 1}^{(m)} & \\
        x_1^{(m + 1)}                       &; \ f \\
        \vdots                              & \\
        x_1^{(d - 1)}                       & \\
        x_1^{(d)}                           & 
    \end{bmatrix}
    &= x_1^{(d)} \cdot
    \begin{bmatrix}
        x_1^{(1)}, \dots, x_{p_1 + 1}^{(1)} & \\ 
        \vdots                              & \\
        x_1^{(m)}, \dots, x_{p_m + 1}^{(m)} & \\
        x_1^{(m + 1)}                       &; \ f \\
        \vdots                              & \\
        x_1^{(d - 1)}                       & \\
        x_1^{(d)}, 0                        & 
    \end{bmatrix}
    + 
    \begin{bmatrix}
        x_1^{(1)}, \dots, x_{p_1 + 1}^{(1)} & \\ 
        \vdots                              & \\
        x_1^{(m)}, \dots, x_{p_m + 1}^{(m)} & \\
        x_1^{(m + 1)}                       &; \ f \\
        \vdots                              & \\
        x_1^{(d - 1)}                       & \\
        0                                   & 
    \end{bmatrix} \\
    &\le 
    \begin{bmatrix}
        x_1^{(1)}, \dots, x_{p_1 + 1}^{(1)} & \\ 
        \vdots                              & \\
        x_1^{(m)}, \dots, x_{p_m + 1}^{(m)} & \\
        x_1^{(m + 1)}                       &; \ f \\
        \vdots                              & \\
        x_1^{(d - 1)}                       & \\
        0                                   & 
    \end{bmatrix},
\end{align*}
\endgroup
where the inequality is due to the fact that the divided difference of $f$ of order $(p_1, \dots, p_m, 0, \dots, 0, 1)$ is nonpositive because $|(p_1, \dots, p_m, 0, \dots, 0, 1)| = m + 1$.
If we repeat this argument with the induction hypothesis that the divided differences of $f$ of order $(p_1, \dots, p_m, 0, \dots, 0, 1, 0), \dots, (p_1, \dots, p_m, 1, 0, \dots, 0)$ are nonpositive, we can show that 
\begin{equation*}
    \begin{bmatrix}
        x_1^{(1)}, \dots, x_{p_1 + 1}^{(1)} & \multirow{6}{*}{; \ $f$ } \\ 
        \vdots                              & \\
        x_1^{(m)}, \dots, x_{p_m + 1}^{(m)} & \\
        x_1^{(m + 1)}                       & \\
        \vdots                              & \\
        x_1^{(d)}                           & 
    \end{bmatrix}
    \le
    \begin{bmatrix}
        x_1^{(1)}, \dots, x_{p_1 + 1}^{(1)} & \multirow{6}{*}{; \ $f$ } \\ 
        \vdots                              & \\
        x_1^{(m)}, \dots, x_{p_m + 1}^{(m)} & \\
        0                                   & \\
        \vdots                              & \\
        0                                   & 
    \end{bmatrix}.
\end{equation*}
Since the divided difference of $f$ of order $(p_1, \dots, p_m, 0, \dots, 0)$ is nonpositive on $N^{(p_1, \dots, p_m, 0, \dots, 0)} = [0, 1]^m \times \{0\} \times \cdots \times \{0\}$ by the assumption, the right-hand side of this inequality is nonpositive. 
This proves that the divided difference of $f$ of order $\mathbf{p}$ is nonpositive on $[0, 1]^d$ and hence concludes the proof.
\end{proof}

\subsubsection{Proof of Lemma \ref{lem:div-diff-mono}}\label{pf:div-diff-mono}
\begin{proof}[Proof of Lemma \ref{lem:div-diff-mono}]
The last part is a direct consequence of the first two.
This is because
\begin{equation*}
    \begin{bmatrix}
        x_1^{(1)}, x_2^{(1)} & \\ 
        \vdots               &; \ f \\
        x_1^{(d)}, x_2^{(d)} &
    \end{bmatrix} \ge
    \begin{bmatrix}
        x_1^{(1)}, y_2^{(1)} & \\ 
        \vdots               &; \ f \\
        x_1^{(d)}, y_2^{(d)} &
    \end{bmatrix} \ge
    \begin{bmatrix}
        y_1^{(1)}, y_2^{(1)} & \\ 
        \vdots               &; \ f \\
        y_1^{(d)}, y_2^{(d)} &
    \end{bmatrix}
\end{equation*}
since $x_1^{(k)} \le y_1^{(k)}$ and $x_2^{(k)} \le y_2^{(k)}$ for $k = 1, \dots, d$.

Suppose we are given $x_1^{(k)} < x_2^{(k)} \le x_3^{(k)}$ for $k = 1, \dots, d$.
Since the divided difference of $f$ of order $(1, \dots, 1, 2)$ is nonpositive, we have the inequality
\begingroup
\allowdisplaybreaks
\begin{align*}
    \begin{bmatrix}
        x_1^{(1)}, x_2^{(1)}         & \multirow{4}{*}{; \ $f$ } \\ 
        \vdots                       & \\
        x_1^{(d - 1)}, x_2^{(d - 1)} & \\
        x_1^{(d)}, x_2^{(d)}         &
    \end{bmatrix} 
    &= -\big(x_3^{(d)} - x_2^{(d)}\big) \cdot
    \begin{bmatrix}
        x_1^{(1)}, x_2^{(1)}            & \multirow{4}{*}{; \ $f$ } \\ 
        \vdots                          & \\
        x_1^{(d - 1)}, x_2^{(d - 1)}    & \\
        x_1^{(d)}, x_2^{(d)}, x_3^{(d)} &
    \end{bmatrix} 
    + 
    \begin{bmatrix}
        x_1^{(1)}, x_2^{(1)}         & \multirow{4}{*}{; \ $f$ } \\ 
        \vdots                       & \\
        x_1^{(d - 1)}, x_2^{(d - 1)} & \\
        x_1^{(d)}, x_3^{(d)}         &
    \end{bmatrix} \\
    &\ge 
    \begin{bmatrix}
        x_1^{(1)}, x_2^{(1)}         & \multirow{4}{*}{; \ $f$ } \\ 
        \vdots                       & \\
        x_1^{(d - 1)}, x_2^{(d - 1)} & \\
        x_1^{(d)}, x_3^{(d)}         &
    \end{bmatrix}.
\end{align*}
\endgroup
Applying the same argument to each coordinate in turn and leveraging the assumption that the divided differences of $f$ of order $(1, \dots, 1, 2, 1), \dots, (2, 1, \dots, 1)$ are nonpositive, 
we can derive that
\begin{equation*}
    \begin{bmatrix}
        x_1^{(1)}, x_2^{(1)}         & \multirow{4}{*}{; \ $f$ } \\ 
        \vdots                       & \\
        x_1^{(d - 1)}, x_2^{(d - 1)} & \\
        x_1^{(d)}, x_2^{(d)}         &
    \end{bmatrix} 
    \ge
    \begin{bmatrix}
        x_1^{(1)}, x_2^{(1)}         & \multirow{4}{*}{; \ $f$ } \\ 
        \vdots                       & \\
        x_1^{(d - 1)}, x_2^{(d - 1)} & \\
        x_1^{(d)}, x_3^{(d)}         &
    \end{bmatrix}
    \ge 
    \cdots
    \ge
    \begin{bmatrix}
        x_1^{(1)}, x_3^{(1)}         & \multirow{4}{*}{; \ $f$ } \\ 
        \vdots                       & \\
        x_1^{(d - 1)}, x_3^{(d - 1)} & \\
        x_1^{(d)}, x_3^{(d)}         &
    \end{bmatrix},
\end{equation*}
as desired. 
By the same argument, we can show that
\begin{equation*}
        \begin{bmatrix}
            x_1^{(1)}, x_3^{(1)} & \\ 
            \vdots               &; \ f \\
            x_1^{(d)}, x_3^{(d)} &
        \end{bmatrix} \ge
        \begin{bmatrix}
            x_2^{(1)}, x_3^{(1)} & \\ 
            \vdots               &; \ f \\
            x_2^{(d)}, x_3^{(d)} &
        \end{bmatrix}
    \end{equation*}
    when we are given $x_1^{(k)} \le x_2^{(k)} < x_3^{(k)}$ for $k = 1, \dots, d$.
\end{proof}

\subsubsection{Proof of Lemma \ref{lem:div-diff-first-order-ineq}}\label{pf:div-diff-first-order-ineq}
\begin{proof}[Proof of Lemma \ref{lem:div-diff-first-order-ineq}]
Without loss of generality, we assume that $\mathbf{p} = (1, \dots, 1, 0, \dots, 0)$ and $m = |\{k: p_k \neq 0\}|$.
Fix a sufficiently small $\epsilon > 0$ and, for each $k = 1, \dots, d$, let $y_1^{(k)} = z_1^{(k)} = x_1^{(k)}$, $y_2^{(k)} = z_2^{(k)} = x_1^{(k)} + \epsilon$ and, if $p_k = 1$, further let $z_3^{(k)} = x_2^{(k)} - \epsilon$ and $y_3^{(k)} = z_4^{(k)} = x_2^{(k)}$.
Since the divided difference of $f$ of order $(p_1 + 1, \dots, p_d + 1) = (2, \dots, 2, 1, \dots, 1)$ is nonpositive, we have the inequality
\begingroup
\allowdisplaybreaks
\begin{align}\label{eq:first-order-ineq-first}
    0 &\ge 
    \begin{bmatrix}
        y_1^{(1)}, y_2^{(1)}, y_3^{(1)} & \multirow{6}{*}{; \ $f$ } \\ 
        \vdots                          & \\
        y_1^{(m)}, y_2^{(m)}, y_3^{(m)} & \\ 
        y_1^{(m + 1)}, y_2^{(m + 1)}    & \\
        \vdots                          & \\
        y_1^{(d)}, y_2^{(d)}            &
    \end{bmatrix} \nonumber \\ 
    &= \sum_{i_1 = 1}^{3} \cdots \sum_{i_m = 1}^{3} \sum_{i_{m + 1} = 1}^{2} \cdots \sum_{i_d = 1}^{2} \frac{1}{\prod_{j_1 \neq i_1} (y_{i_1}^{(1)} - y_{j_1}^{(1)})} \times \cdots \times \frac{1}{\prod_{j_m \neq i_m} (y_{i_m}^{(m)} - y_{j_m}^{(m)})} \nonumber \\
    &\qquad \qquad \qquad \qquad \qquad \qquad \quad \cdot \frac{1}{\epsilon^{d - m}} \cdot (-1)^{\sum_{k = m + 1}^{d} i_k} \cdot f\big(y_{i_1}^{(1)}, \dots, y_{i_d}^{(d)}\big) \nonumber \\
    &= \frac{1}{\prod_{k = 1}^{m} (y_{3}^{(k)} - y_{1}^{(k)})} \cdot \frac{1}{\epsilon^{d - m}} \sum_{\delta \in \{0, 1\}^m} (-1)^{\sum_{k = 1}^{m} \delta_k} \cdot \frac{1}{\prod_{k = 1}^{m} (y_{3 - \delta_k}^{(k)} - y_{2 - \delta_k}^{(k)})} \nonumber \\
    &\qquad \quad \cdot \sum_{i_1 = 2 - \delta_1}^{3 - \delta_1} \cdots \sum_{i_m = 2 - \delta_m}^{3 - \delta_m} \sum_{i_{m + 1} = 1}^{2} \cdots \sum_{i_d = 1}^{2} (-1)^{\sum_{k = 1}^{m} (i_k + \delta_k - 1)}  \cdot (-1)^{\sum_{k = m + 1}^{d} i_k} \cdot f\big(y_{i_1}^{(1)}, \dots, y_{i_d}^{(d)}\big) \nonumber \\
    \begin{split}
    &= \frac{1}{\prod_{k = 1}^{m} (y_{3}^{(k)} - y_{1}^{(k)})} \cdot \frac{1}{\epsilon^{d - m}} \sum_{\delta \in \{0, 1\}^m} (-1)^{\sum_{k = 1}^{m} \delta_k} \cdot \frac{1}{\prod_{k = 1}^{m} (y_{3 - \delta_k}^{(k)} - y_{2 - \delta_k}^{(k)})} \sum_{i_1 = 2 - \delta_1}^{3 - \delta_1} \cdots \sum_{i_m = 2 - \delta_m}^{3 - \delta_m} \\
    &\qquad \qquad \qquad \sum_{i_{m + 1} = 1}^{2} \cdots \sum_{i_d = 1}^{2} (-1)^{\sum_{k = 1}^{m} (i_k + \delta_k - 1)}  \cdot (-1)^{\sum_{k = m + 1}^{d} i_k} \cdot f\big(y_{i_1}^{(1)}, \dots, y_{i_{m}}^{(m)}, z_{i_{m + 1}}^{(m + 1)}, \dots, z_{i_{d}}^{(d)}\big).
    \end{split}
\end{align}
\endgroup
Also, we have
\begin{equation}\label{eq:first-order-ineq-second}
    \begin{split}
    \eqref{eq:first-order-ineq-first} &\ge \frac{1}{\prod_{k = 1}^{m} (y_{3}^{(k)} - y_{1}^{(k)})} \cdot \frac{1}{\epsilon^{d - m}} \sum_{\delta \in \{0, 1\}^m} (-1)^{\sum_{k = 1}^{m} \delta_k} \cdot \frac{1}{\prod_{k = 1}^{m - 1} (y_{3 - \delta_k}^{(k)} - y_{2 - \delta_k}^{(k)})} \cdot \frac{1}{z_{4 - 2\delta_m}^{(m)} - z_{3 - 2\delta_m}^{(m)}}\\
    &\qquad \cdot \sum_{i_1 = 2 - \delta_1}^{3 - \delta_1} \cdots \sum_{i_{m - 1} = 2 - \delta_{m - 1}}^{3 - \delta_{m - 1}} \sum_{i_m = 3 - 2\delta_m}^{4 - 2\delta_m} \sum_{i_{m + 1} = 1}^{2} \cdots \sum_{i_d = 1}^{2} \\
    &\qquad \qquad \qquad \qquad \qquad \qquad (-1)^{\sum_{k = 1}^{m - 1} (i_k + \delta_k - 1)}  \cdot (-1)^{\sum_{k = m}^{d} i_k} \cdot f\big(y_{i_1}^{(1)}, \dots, y_{i_{m - 1}}^{(m - 1)}, z_{i_m}^{(m)}, \dots, z_{i_{d}}^{(d)}\big),
    \end{split}  
\end{equation}
which is because
\begingroup
\allowdisplaybreaks
\begin{align*}
    \eqref{eq:first-order-ineq-second} - \eqref{eq:first-order-ineq-first} 
    &= \frac{z_{3}^{(m)} - z_{2}^{(m)}}{y_{3}^{(m)} - y_{1}^{(m)}} \cdot \frac{1}{\prod_{k = 1}^{m - 1} (y_{3}^{(k)} - y_{1}^{(k)})} \cdot \frac{1}{z_{4}^{(m)} - z_{2}^{(m)}} \cdot \frac{1}{\epsilon^{d - m}} \sum_{\delta \in \{0, 1\}^m} (-1)^{\sum_{k = 1}^{m} \delta_k} \\
    &\quad \cdot \frac{1}{\prod_{k = 1}^{m - 1} (y_{3 - \delta_k}^{(k)} - y_{2 - \delta_k}^{(k)})} \cdot \frac{1}{z_{4 - \delta_m}^{(m)} - z_{3 - \delta_m}^{(m)}} \cdot \sum_{i_1 = 2 - \delta_1}^{3 - \delta_1} \cdots \sum_{i_{m - 1} = 2 - \delta_{m - 1}}^{3 - \delta_{m - 1}} \sum_{i_m = 3 - \delta_m}^{4 - \delta_m} \sum_{i_{m + 1} = 1}^{2} \cdots \sum_{i_d = 1}^{2} \\
    &\qquad \qquad (-1)^{\sum_{k = 1}^{m - 1} (i_k + \delta_k - 1)}  \cdot (-1)^{i_m + \delta_m} \cdot (-1)^{\sum_{k = m + 1}^{d} i_k} \cdot f\big(y_{i_1}^{(1)}, \dots, y_{i_{m - 1}}^{(m - 1)}, z_{i_m}^{(m)}, \dots, z_{i_{d}}^{(d)}\big) \\
    &= \frac{z_{3}^{(m)} - z_{2}^{(m)}}{y_{3}^{(m)} - y_{1}^{(m)}} \cdot
    \begin{bmatrix}
        y_1^{(1)}, y_2^{(1)}, y_3^{(1)} & \\ 
        \vdots                          & \\
        y_1^{(m - 1)}, y_2^{(m - 1)}, y_3^{(m - 1)} &\\ 
        z_2^{(m)}, z_3^{(m)}, z_4^{(m)} &; \ f \\ 
        y_1^{(m + 1)}, y_2^{(m + 1)}    & \\
        \vdots                          & \\
        y_1^{(d)}, y_2^{(d)} &
    \end{bmatrix}
    \le 0.
\end{align*}
\endgroup
Since $z_4^{(m)} - z_3^{(m)} = z_2^{(m)} - z_1^{(m)} = \epsilon$, it follows that 
\begin{align*}
    0 \ge \eqref{eq:first-order-ineq-second} &= \frac{1}{\prod_{k = 1}^{m} (y_{3}^{(k)} - y_{1}^{(k)})} \cdot \frac{1}{\epsilon^{d - m + 1}} \sum_{\delta \in \{0, 1\}^m} (-1)^{\sum_{k = 1}^{m} \delta_k} \cdot \frac{1}{\prod_{k = 1}^{m - 1} (y_{3 - \delta_k}^{(k)} - y_{2 - \delta_k}^{(k)})} \\
    &\qquad \cdot \sum_{i_1 = 2 - \delta_1}^{3 - \delta_1} \cdots \sum_{i_{m - 1} = 2 - \delta_{m - 1}}^{3 - \delta_{m - 1}} \sum_{i_m = 3 - 2\delta_m}^{4 - 2\delta_m} \sum_{i_{m + 1} = 1}^{2} \cdots \sum_{i_d = 1}^{2} \\
    &\qquad \qquad \qquad \qquad \quad (-1)^{\sum_{k = 1}^{m - 1} (i_k + \delta_k - 1)}  \cdot (-1)^{\sum_{k = m}^{d} i_k} \cdot f\big(y_{i_1}^{(1)}, \dots, y_{i_{m - 1}}^{(m - 1)}, z_{i_m}^{(m)}, \dots, z_{i_{d}}^{(d)}\big).
\end{align*}
If we apply the same argument to the $(m - 1)^{\text{th}}, \dots, 1^{\text{st}}$ coordinates sequentially, we obtain that 
\begin{align*}
    0 &\ge \frac{1}{\prod_{k = 1}^{m} (y_{3}^{(k)} - y_{1}^{(k)})} \cdot \frac{1}{\epsilon^{d}} \sum_{\delta \in \{0, 1\}^m} (-1)^{\sum_{k = 1}^{m} \delta_k} \cdot \sum_{i_1 = 3 - 2\delta_1}^{4 - 2\delta_1} \cdots \sum_{i_m = 3 - 2\delta_m}^{4 - 2\delta_m} \sum_{i_{m + 1} = 1}^{2} \cdots \sum_{i_d = 1}^{2} f\big(z_{i_1}^{(1)}, \dots, z_{i_{d}}^{(d)}\big) \\
    &= \frac{1}{\prod_{k = 1}^{m} (x_{2}^{(k)} - x_{1}^{(k)})} \cdot \frac{1}{\epsilon^{d}} \sum_{\delta \in \{0, 1\}^m} (-1)^{\sum_{k = 1}^{m} \delta_k} \triangle f \bigg(\prod_{k = 1}^{m} \big[z_{3 - 2\delta_k}^{(k)}, z_{4 - 2\delta_k}^{(k)}\big] \times \prod_{k = m + 1}^{d} \big[z_{1}^{(k)}, z_{2}^{(k)}\big] \bigg).
\end{align*}
If we take the limit $\epsilon \rightarrow 0$, this gives that 
\begingroup
\allowdisplaybreaks
\begin{align*}
    0 &\ge \frac{1}{\prod_{k = 1}^{m} (x_{2}^{(k)} - x_{1}^{(k)})} \sum_{i_1 = 1}^{2} \cdots \sum_{i_m = 1}^{2} (-1)^{\sum_{k = 1}^{m} i_k} D \triangle f \big(x_{i_1}^{(1)}, \dots, x_{i_{m}}^{(m)}, x_{1}^{(m + 1)}, \dots, x_{1}^{(d)}\big) \\
    &= \begin{bmatrix}
        x_1^{(1)}, x_{2}^{(1)} & \multirow{6}{*}{; \ $D\triangle f$ } \\ 
        \vdots                 & \\
        x_1^{(m)}, x_{2}^{(m)} & \\ 
        x_1^{(m + 1)}          & \\ 
        \vdots                 & \\
        x_1^{(d)}              &
    \end{bmatrix},
\end{align*}
\endgroup
as desired.
\end{proof}

\subsection{Proofs of Lemmas in Appendix \ref{pf:rates}}
\subsubsection{Proof of Lemma \ref{lem:bounds-finite-derivatives}}\label{pf:bounds-finite-derivatives}
\begin{proof}[Proof of Lemma \ref{lem:bounds-finite-derivatives}]
The following lemma will be repeatedly used in our proof of Lemma \ref{lem:bounds-finite-derivatives}.
This lemma can be viewed as a generalization of Lemma \ref{lem:div-diff-mono}, and it can be proved similarly to Lemma \ref{lem:div-diff-mono}.
    
\begin{lemma}\label{lem:div-diff-mono-general}
    Suppose $f \in \tcp$. 
    Then, for every nonempty subset $S \subseteq [d]$, we have 
    \begin{equation*}
        \begin{bmatrix}
            v_1^{(k)}, v_2^{(k)}, & k \in S    & \multirow{2}{*}{; \ $f$ }\\ 
            v_1^{(k)},            & k \notin S &
        \end{bmatrix} \ge
        \begin{bmatrix}
            w_1^{(k)}, w_2^{(k)}, & k \in S    & \multirow{2}{*}{; \ $f$ }\\ 
            w_1^{(k)},            & k \notin S &
        \end{bmatrix}
    \end{equation*}
    provided that $0 \le v_1^{(k)} \le w_1^{(k)} \le 1$ and $0 \le v_2^{(k)} \le w_2^{(k)} \le 1$ for $k \in S$, and $0 \le v_1^{(k)} = w_1^{(k)} \le 1$ for $k \notin S$.
\end{lemma}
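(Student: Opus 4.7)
The plan is to reduce Lemma \ref{lem:div-diff-mono-general} to the already-established Lemma \ref{lem:div-diff-mono} by restricting $f$ to an axis-aligned slice. Since the hypothesis forces $v_1^{(k)} = w_1^{(k)}$ for every $k \notin S$, we may fix that common value in each such coordinate and define the slice $\tilde{f}: [0,1]^{|S|} \to \R$ by
\[
  \tilde{f}\big((y^{(k)})_{k \in S}\big) := f(\mathbf{z}), \qquad
  z^{(k)} = y^{(k)} \text{ for } k \in S, \quad z^{(k)} = v_1^{(k)} \text{ for } k \notin S.
\]
Under this definition, the divided differences appearing on the left- and right-hand sides of the target inequality become, respectively, the order-$(1,\dots,1)$ divided differences of $\tilde{f}$ at the points $(v_1^{(k)}, v_2^{(k)})_{k \in S}$ and $(w_1^{(k)}, w_2^{(k)})_{k \in S}$, since in every coordinate $k \notin S$ there is a single evaluation point common to both sides.

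The next step is to verify that $\tilde{f}$ satisfies the hypotheses of Lemma \ref{lem:div-diff-mono} with $d$ replaced by $|S|$. Any divided difference of $\tilde{f}$ of order $(p_k)_{k \in S}$ is equal, by the very definition of $\tilde{f}$, to the divided difference of $f$ of order $(p_1, \dots, p_d)$ with $p_k$ as prescribed for $k \in S$ and $p_k = 0$ for $k \notin S$, evaluated at the same $S$-points and at $v_1^{(k)}$ in the remaining coordinates. In particular, whenever $(p_k)_{k \in S}$ has exactly one entry equal to $2$ and the rest equal to $1$, the associated $d$-dimensional order vector satisfies $\max_j p_j = 2$, and the assumption $f \in \tcp$ directly yields non-positivity.

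Finally, applying the last part of Lemma \ref{lem:div-diff-mono} to $\tilde{f}$, with the identification $x_i^{(k)} \mapsto v_i^{(k)}$ and $y_i^{(k)} \mapsto w_i^{(k)}$ for $i \in \{1,2\}$ and $k \in S$, yields exactly the desired inequality once the divided differences of $\tilde{f}$ are translated back into divided differences of $f$. The hypotheses $0 \le v_1^{(k)} \le w_1^{(k)} \le 1$ and $0 \le v_2^{(k)} \le w_2^{(k)} \le 1$ for $k \in S$ coincide with what that part of Lemma \ref{lem:div-diff-mono} requires. Since that lemma is already available, no genuine obstacle arises: the proof amounts to a bookkeeping argument confirming that slicing $f$ along the coordinates in $[d] \setminus S$ preserves the relevant non-positivity properties of divided differences inherited from total concavity in the sense of Popoviciu.
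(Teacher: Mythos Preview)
Your proposal is correct and takes essentially the same approach as the paper. The paper does not spell out a detailed proof but states that Lemma \ref{lem:div-diff-mono-general} ``can be proved similarly to Lemma \ref{lem:div-diff-mono}''; your slicing reduction---fixing the coordinates outside $S$ at the common value $v_1^{(k)} = w_1^{(k)}$ and applying Lemma \ref{lem:div-diff-mono} to the resulting $|S|$-variate restriction---is a clean and economical way to formalize exactly that, since the required nonpositivity of the order-$(p_k)_{k\in S}$ divided differences of $\tilde f$ (with one entry $2$, the rest $1$) is inherited directly from $f\in\tcp$.
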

    
Also, note that since $g^*$ satisfies the interaction restriction condition \eqref{int-rest-cond} (recall \eqref{int-rest-cond-restated}) for every subset $S \subseteq [d]$ with $|S| > s$, in the definition \eqref{sup-of-first-derivatives} of $M(g^*)$, we can equivalently take the maximum over all nonempty subsets $S \subseteq [d]$ as follows:
\begin{equation}\label{eq:redefine-M-g*}
\begin{split}
    M = M(g^*) = \max_{\emptyset \neq S \subseteq [d]} \max \bigg(
    &\sup \bigg\{
    \begin{bmatrix}
        0, t_k, & k \in S    & \multirow{2}{*}{; \ $g^*$ } \\ 
        0,      & k \notin S &
    \end{bmatrix}
    : t_k > 0 \ \text{ for } k \in S \bigg\}, \\
    &\quad -\inf \bigg\{
    \begin{bmatrix}
        t_k, 1, & k \in S    & \multirow{2}{*}{; \ $g^*$ } \\ 
        0,      & k \notin S &
    \end{bmatrix}
    : t_k < 1 \ \text{ for } k \in S \bigg\}
    \bigg).
\end{split}
\end{equation}
    
Before proving \eqref{eq:finite-derivative-lower-bound} and \eqref{eq:finite-derivative-upper-bound}, we show that
there exists some constant $B_d$ depending on $d$ such that
\begin{equation}\label{eq:bdd-on-first-derivative-g*}
    \bigg|
    \begin{bmatrix}
        v_1^{(1)}, \dots, v_{p_1 + 1}^{(1)} & \\ 
        \vdots                              &; g^* \\
        v_1^{(d)}, \dots, v_{p_d + 1}^{(d)} &
    \end{bmatrix}
    \bigg|
    \le B_d M
\end{equation}
for every $\mathbf{p} = (p_1, \dots, p_d) \in \{0, 1\}^d \setminus \{\zerovec\}$ and $0 \le v_1^{(k)} < \dots < v_{p_k + 1}^{(k)} \le 1$ for $k = 1, \dots, d$.
Since $\theta^* = (g^*(\mathbf{x}^{(\mathbf{i})}), \mathbf{i} \in I_0)$, this result directly implies that  
\begin{equation}\label{eq:bound-on-discrete-differences}
    \bigg|\bigg(\prod_{k = 1}^{d} n_k^{p_k}\bigg) \cdot (D^{(\mathbf{p})} \theta^*)_{\mathbf{i}} \bigg| \le B_d M
\end{equation}
for every $\mathbf{p} = (p_1, \dots, p_d) \in \{0, 1\}^d \setminus \{\zerovec\}$ and $\mathbf{i} = (i_1, \dots, i_d) \in I_0$ with $i_k \ge p_k$ for $k = 1, \dots, d$.

We prove \eqref{eq:bdd-on-first-derivative-g*} by induction on $|\mathbf{p}| := |\{k: p_k \neq 0\}|$.
When $|\mathbf{p}| = d$ ($\mathbf{p} = (1, \dots, 1)$), it directly follows from Lemma \ref{lem:div-diff-mono-general} and \eqref{eq:redefine-M-g*} that the inequality \eqref{eq:bdd-on-first-derivative-g*} holds with $B_d = 1 =: B_{d, d}$. 
Suppose that the inequality \eqref{eq:bdd-on-first-derivative-g*} holds with $B_d = B_{d, m + 1}$ for every $\mathbf{p} = (p_1, \dots, p_d) \in \{0, 1\}^d$ with $|\mathbf{p}| = m + 1$, and fix $\mathbf{p} = (p_1, \dots, p_d) \in \{0, 1\}^d$ with $|\mathbf{p}| = m$.
Without loss of generality, let $\mathbf{p} = (1, \dots, 1, 0, \dots, 0)$.
Observe that
\begingroup
\allowdisplaybreaks
\begin{align*}
    \bigg|
    \begin{bmatrix}
        v_1^{(1)}, v_2^{(1)} & \multirow{6}{*}{; \ $g^*$ }\\ 
        \vdots               & \\
        v_1^{(m)}, v_2^{(m)} & \\
        v_1^{(m + 1)}        & \\
        \vdots               & \\
        v_1^{(d)}            &
    \end{bmatrix}
    \bigg|
    &\le 
    \bigg|
    \begin{bmatrix}
        v_1^{(1)}, v_2^{(1)} & \\ 
        \vdots               & \\
        v_1^{(m)}, v_2^{(m)} & \\
        0                    &; g^* \\
        v_1^{(m + 2)}        & \\
        \vdots               & \\
        v_1^{(d)}            &
    \end{bmatrix}
    \bigg|
    + v_1^{(m + 1)} \cdot
    \bigg|
    \begin{bmatrix}
        v_1^{(1)}, v_2^{(1)} & \\ 
        \vdots               & \\
        v_1^{(m)}, v_2^{(m)} & \\
        0, v_1^{(m + 1)}     &; g^* \\
        v_1^{(m + 2)}        & \\
        \vdots               & \\
        v_1^{(d)}            &
    \end{bmatrix}
    \bigg| \\
    &\le 
    \bigg|
    \begin{bmatrix}
        v_1^{(1)}, v_2^{(1)} & \\ 
        \vdots               & \\
        v_1^{(m)}, v_2^{(m)} & \\
        0                    &; g^* \\
        v_1^{(m + 2)}        & \\
        \vdots               & \\
        v_1^{(d)}            &
    \end{bmatrix}
    \bigg| 
    + B_{d, m + 1} M.
\end{align*}
\endgroup
Repeating this argument with the $(m + 2)^{\text{th}}, \dots, d^{\text{th}}$ coordinates, we obtain 
\begingroup
\allowdisplaybreaks
\begin{align*}
    \bigg|
    \begin{bmatrix}
        v_1^{(1)}, v_2^{(1)} & \multirow{6}{*}{; \ $g^*$ } \\ 
        \vdots               & \\
        v_1^{(m)}, v_2^{(m)} & \\
        v_1^{(m + 1)}        & \\
        \vdots               & \\
        v_1^{(d)}            &
    \end{bmatrix}
    \bigg|
    &\le 
    \bigg|
    \begin{bmatrix}
        v_1^{(1)}, v_2^{(1)} & \multirow{6}{*}{; \ $g^*$ } \\ 
        \vdots               & \\
        v_1^{(m)}, v_2^{(m)} & \\
        0                    & \\
        \vdots               & \\
        0                    &
    \end{bmatrix}
    \bigg|
    + (d - m) \cdot B_{d, m + 1} M \\
    &\le M + (d - m) \cdot B_{d, m + 1} M 
    = \big(1 + (d - m) \cdot B_{d, m + 1} \big) M,
\end{align*}
\endgroup
where the second inequality is again from Lemma \ref{lem:div-diff-mono-general} and \eqref{eq:redefine-M-g*}. 
This proves that the inequality \eqref{eq:bdd-on-first-derivative-g*} holds with $B_d = 1 + (d - m) \cdot B_{d, m + 1} =: B_{d, m}$ for every $\mathbf{p} = (p_1, \dots, p_d) \in \{0, 1\}^d$ with $|\mathbf{p}| = m$.
Consequently, the inequality \eqref{eq:bdd-on-first-derivative-g*} holds for every $\mathbf{p} \in \{0, 1\}^d \setminus \{\zerovec\}$, if $B_d = \max_{m = 1, \dots, d} B_{d, m}$.

Now, we are ready to prove \eqref{eq:finite-derivative-lower-bound} and \eqref{eq:finite-derivative-upper-bound}. 
We also prove them by induction on $|\mathbf{p}|$.
Let us first consider the case $|\mathbf{p}| = d$ ($\mathbf{p} = (1, \dots, 1)$).
Suppose, for contradiction, that there exist $\mathbf{r} = (r_1, \dots, r_d) \in \prod_{k = 1}^{d} \{1, \dots, R_k\}$ and $\mathbf{i} = (i_1, \dots, i_d) \in L_{\mathbf{r}}$ such that
\begin{equation*}
    (D^{(\mathbf{p})}\theta)_{\mathbf{i}} < \frac{C_d}{\prod_{k = 1}^{d} n_k} \cdot \Big[- M - t \Big(\frac{2^{r_+}}{n}\Big)^{1/2} \cdot 2^{\sum_{k = 1}^{d} r_k}\Big],
\end{equation*}
where $C_d = \max((2\sqrt{3})^d, B_d) =: C^{L}_{d, d}$.
Then, by the definition \eqref{discrete-differences} of discrete differences and Lemma \ref{lem:div-diff-mono-general}, for every $\mathbf{j} = (j_1, \dots, j_d) \in I_0$ with $j_k \ge i_k$ for $k = 1, \dots, d$, we have
\begin{align*}
    (D^{(\mathbf{p})} \theta)_{\mathbf{j}} 
    &\le (D^{(\mathbf{p})} \theta)_{\mathbf{i}} 
    < \frac{C_d}{\prod_{k = 1}^{d} n_k} \cdot \Big[- M - t \Big(\frac{2^{r_+}}{n}\Big)^{1/2} \cdot 2^{\sum_{k = 1}^{d} r_k}\Big] \\
    &\le (D^{(\mathbf{p})} \theta^*)_{\mathbf{j}} - \frac{C_d}{\prod_{k = 1}^{d} n_k} \cdot t \Big(\frac{2^{r_+}}{n}\Big)^{1/2} \cdot 2^{\sum_{k = 1}^{d} r_k}.
\end{align*}
Here, for the last inequality, we use \eqref{eq:bound-on-discrete-differences} and the fact that $C_d \ge B_d$.
It thus follows that if $i_1 - 1 \le j'_1 < j_1, \dots, i_d - 1 \le j'_d < j_d$, then
\begingroup
\allowdisplaybreaks
\begin{align*}
    &\sum_{\delta \in \{0, 1\}^d} (\theta - \theta^*)^2_{(1 - \delta_1) j_1 + \delta_1 j'_1, \dots, (1 - \delta_d) j_d + \delta_d j'_d} \\
    &\qquad \ge \frac{1}{2^d} \cdot \bigg(\sum_{\delta \in \{0, 1\}^d} (-1)^{\delta_1 + \cdots + \delta_d} \cdot (\theta - \theta^*)_{(1 - \delta_1) j_1 + \delta_1 j'_1, \dots, (1 - \delta_d) j_d + \delta_d j'_d} \bigg)^2 \\
    &\qquad = \frac{1}{2^d} \cdot \bigg(\sum_{l_1 = j'_1 + 1}^{j_1} \cdots \sum_{l_d = j'_d + 1}^{j_d} \big(D^{(\mathbf{p})} \theta - D^{(\mathbf{p})} \theta^*\big)_{l_1, \dots, l_d} \bigg)^2 \\
    &\qquad > (j_1 - j'_1)^2 \times \cdots \times (j_d - j'_d)^2 \cdot \frac{C_d^2 / 2^d}{\prod_{k = 1}^{d} n_k^2} \cdot t^2 \Big(\frac{2^{r_+}}{n}\Big) \cdot 2^{\sum_{k = 1}^{d} 2r_k}.
\end{align*}
\endgroup
This implies that
\begingroup
\allowdisplaybreaks
\begin{align*}
    t^2 &\ge \|\theta - \theta^*\|_2^2 \ge \sum_{(i_1 + n_1 - 2)/2 < j_1 \le n_1 - 1} \cdots \sum_{(i_d + n_d - 2)/2 < j_d \le n_d - 1} \\
    &\qquad \qquad \qquad \qquad \qquad \quad \sum_{\delta \in \{0, 1\}^d} (\theta - \theta^*)^2_{(1 - \delta_1)j_1 + \delta_1 (i_1 + n_1 - 2 - j_1), \dots, (1 - \delta_d)j_d + \delta_d (i_d + n_d - 2 - j_d)} \\
    &> \sum_{(i_1 + n_1 - 2)/2 < j_1 \le n_1 - 1} \cdots \sum_{(i_d + n_d - 2)/2 < j_d \le n_d - 1} \\
    &\qquad \qquad \qquad (2j_1 - i_1 - n_1 + 2)^2 \times \cdots \times (2j_d - i_d - n_d + 2)^2 \cdot \frac{C_d^2 / 2^d}{\prod_{k = 1}^{d} n_k^2} \cdot t^2 \Big(\frac{2^{r_+}}{n}\Big) \cdot 2^{\sum_{k = 1}^{d} 2r_k} \\
    &= \prod_{k = 1}^{d} \Big[\frac{1}{6}(n_k - i_k)(n_k - i_k + 1)(n_k - i_k + 2)\Big] \cdot \frac{C_d^2 / 2^d}{\prod_{k = 1}^{d} n_k^2} \cdot t^2 \Big(\frac{2^{r_+}}{n}\Big) \cdot 2^{\sum_{k = 1}^{d} 2r_k} \\
    &> \prod_{k = 1}^{d} (n_k - i_k)^3 \cdot \frac{C_d^2 / 12^d}{\prod_{k = 1}^{d} n_k^2} \cdot t^2 \Big(\frac{2^{r_+}}{n}\Big) \cdot 2^{\sum_{k = 1}^{d} 2r_k} \\
    &> \prod_{k = 1}^{d} \Big(\frac{n_k}{2^{r_k}}\Big)^3 \cdot \frac{C_d^2 / 12^d}{\prod_{k = 1}^{d} n_k^2} \cdot t^2 \Big(\frac{2^{r_+}}{n}\Big) \cdot 2^{\sum_{k = 1}^{d} 2r_k} 
    = \frac{C_d^2}{12^d} \cdot t^2 \ge t^2, 
\end{align*}
\endgroup
which leads to contradiction.
Here, the second-to-last inequality is from the fact that, for each $k$, 
\begin{equation*}
    n_k - i_k \ge n_k - \frac{n_k}{2^{r_k}} + 1 > \frac{n_k}{2^{r_k}},
\end{equation*}
since $r_k \ge 1$.
    
Next, suppose, for contradiction, that there exist $\mathbf{r} = (r_1, \dots, r_d) \in \prod_{k = 1}^{d} \{0, 1, \dots, R_k\}$ and $\mathbf{i} = (i_1, \dots, i_d) \in L_{\mathbf{r}}$ with $i_k \ge 1 = p_k$ for $k = 1, \dots, d$ such that 
\begin{equation*}
    (D^{(\mathbf{p})}\theta)_{\mathbf{i}} > \frac{C_d}{\prod_{k = 1}^{d} n_k} \cdot \Big[M + t \Big(\frac{2^{r_+}}{n}\Big)^{1/2} \cdot 2^{\sum_{k = 1}^{d} r_k}\Big],
\end{equation*}
where $C_d = \max((8\sqrt{2})^d, B_d) =: C^{U}_{d, d}$. 
Then, for every $\mathbf{j} = (j_1, \dots, j_d) \in I_0$ with $j_k \le i_k$, we have
\begin{equation*}
    (D^{(\mathbf{p})} \theta)_{\mathbf{j}} \ge (D^{(\mathbf{p})} \theta)_{\mathbf{i}} > \frac{C_d}{\prod_{k = 1}^{d} n_k} \cdot \Big[M + t \Big(\frac{2^{r_+}}{n}\Big)^{1/2} \cdot 2^{\sum_{k = 1}^{d} r_k}\Big] \ge (D^{(\mathbf{p})} \theta^*)_{\mathbf{j}} + \frac{C_d}{\prod_{k = 1}^{d} n_k} \cdot t \Big(\frac{2^{r_+}}{n}\Big)^{1/2} \cdot 2^{\sum_{k = 1}^{d} r_k}.
\end{equation*}
By the same argument as above, we can show that  
\begin{equation*}
    \sum_{\delta \in \{0, 1\}^d} (\theta - \theta^*)^2_{(1 - \delta_1) j_1 + \delta_1 j'_1, \dots, (1 - \delta_d) j_d + \delta_d j'_d} 
    > (j_1 - j'_1)^2 \times \cdots \times (j_d - j'_d)^2 \cdot \frac{C_d^2 / 2^d}{\prod_{k = 1}^{d} n_k^2} \cdot t^2 \Big(\frac{2^{r_+}}{n}\Big) \cdot 2^{\sum_{k = 1}^{d} 2r_k} 
\end{equation*}
provided that $0 \le j'_1 < j_1 \le i_1, \dots, 0 \le j'_d < j_d \le i_d$.
It thus follows that
\begingroup
\allowdisplaybreaks
\begin{align*}
    t^2 &\ge \|\theta - \theta^*\|_2^2 \ge \sum_{i_1/2 < j_1 \le i_1} \cdots \sum_{i_d/2 < j_d \le i_d} \sum_{\delta \in \{0, 1\}^d} (\theta - \theta^*)^2_{(1 - \delta_1)j_1 + \delta_1 (i_1 - j_1), \dots, (1 - \delta_d)j_d + \delta_d (i_d - j_d)} \\
    &> \sum_{i_1/2 < j_1 \le i_1} \cdots \sum_{i_d/2 < j_d \le i_d} (2j_1 - i_1)^2 \times \cdots \times (2j_d - i_d)^2 \cdot \frac{C_d^2 / 2^d}{\prod_{k = 1}^{d} n_k^2} \cdot t^2 \Big(\frac{2^{r_+}}{n}\Big) \cdot 2^{\sum_{k = 1}^{d} 2r_k} \\
    &= \prod_{k = 1}^{d} \Big[\frac{1}{6}i_k(i_k + 1)(i_k + 2)\Big] \cdot \frac{C_d^2 / 2^d}{\prod_{k = 1}^{d} n_k^2} \cdot t^2 \Big(\frac{2^{r_+}}{n}\Big) \cdot 2^{\sum_{k = 1}^{d} 2r_k} \\
    &\ge \prod_{k = 1}^{d} (i_k + 1)^3 \cdot \frac{C_d^2 / 16^d}{\prod_{k = 1}^{d} n_k^2} \cdot t^2 \Big(\frac{2^{r_+}}{n}\Big) \cdot 2^{\sum_{k = 1}^{d} 2r_k} \\
    &> \prod_{k = 1}^{d} \Big(\frac{n_k}{2^{r_k + 1}}\Big)^3 \cdot \frac{C_d^2 / 16^d}{\prod_{k = 1}^{d} n_k^2} \cdot t^2 \Big(\frac{2^{r_+}}{n}\Big) \cdot 2^{\sum_{k = 1}^{d} 2r_k} 
    = \frac{C_d^2}{128^d} \cdot t^2 \ge t^2,
\end{align*}
\endgroup
which leads to contradiction.
Here, for the fourth inequality, we use 
\begin{equation*}
    \frac{4}{3} i_k(i_k + 1)(i_k + 2) \ge (i_k + 1)^3,
\end{equation*}
which holds since $i_k \ge 1$.
    
Now, we assume that \eqref{eq:finite-derivative-lower-bound} and \eqref{eq:finite-derivative-upper-bound} hold with $C_d = C^{L}_{d, m + 1}$ and $C_d = C^{U}_{d, m + 1}$, respectively, for every $\mathbf{p} \in \{0, 1\}^d$ with $|\mathbf{p}| = m + 1$, 
and prove that they also hold with some $C_d = C^{L}_{d, m}$ and $C_d = C^{U}_{d, m}$ for every $\mathbf{p} \in \{0, 1\}^d$ with $|\mathbf{p}| = m$.
For notational convenience, here, we only consider the case where $\mathbf{p} = (1, \dots, 1, 0, \dots, 0) := \mathbf{1}_m$, but the same argument applies to other $\mathbf{p} \in \{0, 1\}^d$ with $|\mathbf{p}| = m$.
    
Suppose, for contradiction, that there exist $\mathbf{r} = (r_1, \dots, r_d) \in \prod_{k = 1}^{d} \{1, \dots, R_k\}$ and $\mathbf{i} = (i_1, \dots, i_d) \in L_{\mathbf{r}}$ such that
\begin{equation*}
    (D^{(\mathbf{1}_m)}\theta)_{\mathbf{i}} < \frac{C_d}{\prod_{k = 1}^{m} n_k} \cdot \Big[- M - t \Big(\frac{2^{r_+}}{n}\Big)^{1/2} \cdot 2^{\sum_{k = 1}^{m} r_k}\Big],
\end{equation*}
where 
\begin{equation*}
    C_d =  \frac{3}{2} \cdot (d - m) C^{U}_{d, m + 1} + \max((2\sqrt{3})^m, B_d) =: C^{L}_{d, m}.
\end{equation*}
Then, by the induction hypothesis,  
for $\mathbf{j} = (j_1, \dots, j_d) \in I_0$ with $j_1 \ge i_1, \dots, j_m \ge i_m$, and $i_{m + 1} \le j_{m + 1} \le b^{(m + 1)}_{r_{m + 1} - 1}, \dots, i_{d} \le j_{d} \le b^{(d)}_{r_{d} - 1}$, we have
\begingroup
\allowdisplaybreaks
\begin{align*}
    (D^{(\mathbf{1}_m)} \theta)_{j_1, \dots, j_d} &= (D^{(\mathbf{1}_m)} \theta)_{j_1, \dots, j_m, i_{m + 1}, j_{m + 2}, \dots, j_{d}} + \sum_{l_{m + 1} = i_{m + 1} + 1}^{j_{m + 1}} (D^{(\mathbf{1}_{m + 1})} \theta)_{j_1, \dots, j_m, l_{m + 1}, j_{m + 2}, \dots, j_{d}} \\
    &\le (D^{(\mathbf{1}_m)} \theta)_{j_1, \dots, j_m, i_{m + 1}, j_{m + 2}, \dots, j_{d}} + (j_{m + 1} - i_{m + 1}) \cdot \frac{C^{U}_{d, m + 1}}{\prod_{k = 1}^{m + 1} n_k} \cdot \Big[M + t \Big(\frac{2^{r_+}}{n}\Big)^{1/2} \cdot 2^{\sum_{k = 1}^{m + 1} r_k}\Big] \\
    &\le (D^{(\mathbf{1}_m)} \theta)_{j_1, \dots, j_m, i_{m + 1}, j_{m + 2}, \dots, j_{d}} + \frac{3}{2} \cdot \frac{C^{U}_{d, m + 1}}{\prod_{k = 1}^{m} n_k} \cdot \bigg[\frac{M}{2^{r_{m + 1}}} + t \Big(\frac{2^{r_+}}{n}\Big)^{1/2} \cdot 2^{\sum_{k = 1}^{m} r_k}\bigg],
\end{align*}
\endgroup
where $\mathbf{1}_{m + 1} = (1, \dots, 1, 0, \dots, 0)$ with $|\mathbf{1}_{m + 1}| = m + 1$.
Here, the second inequality is because
\begin{equation*}
    j_{m + 1} - i_{m + 1} \le b^{(m + 1)}_{r_{m + 1} - 1} - i_{m + 1} < \frac{n_{m + 1}}{2^{r_{m + 1} - 1}} - 1 - \Big(\frac{n_{m + 1}}{2^{r_{m + 1} + 1}} - 1\Big) = \frac{3}{2} \cdot \frac{n_{m + 1}}{2^{r_{m + 1}}}.
\end{equation*}
Repeating this argument with the $(m + 2)^{\text{th}}, \dots, d^{\text{th}}$ coordinates in turn, we can derive that
\begingroup
\allowdisplaybreaks
\begin{align*}
    &(D^{(\mathbf{1}_m)} \theta)_{j_1, \dots, j_d} \\
    &\qquad \le (D^{(\mathbf{1}_m)} \theta)_{j_1, \dots, j_m, i_{m + 1}, \dots, i_{d}} + \frac{3}{2} \cdot \frac{C^{U}_{d, m + 1}}{\prod_{k = 1}^{m} n_k} \cdot \bigg[\bigg(\sum_{k = m + 1}^{d} \frac{1}{2^{r_k}}\bigg) \cdot M + (d - m) \cdot t \Big(\frac{2^{r_+}}{n}\Big)^{1/2} \cdot 2^{\sum_{k = 1}^{m} r_k}\bigg] \\
    &\qquad \le (D^{(\mathbf{1}_m)} \theta)_{j_1, \dots, j_m, i_{m + 1}, \dots, i_{d}} + \frac{3}{2} \cdot (d - m) \cdot \frac{C^{U}_{d, m + 1}}{\prod_{k = 1}^{m} n_k} \cdot \bigg[M + t \Big(\frac{2^{r_+}}{n}\Big)^{1/2} \cdot 2^{\sum_{k = 1}^{m} r_k}\bigg] \\
    &\qquad \le (D^{(\mathbf{1}_m)} \theta)_{i_1, \dots, i_m, i_{m + 1}, \dots, i_{d}} + \frac{3}{2} \cdot (d - m) \cdot \frac{C^{U}_{d, m + 1}}{\prod_{k = 1}^{m} n_k} \cdot \bigg[M + t \Big(\frac{2^{r_+}}{n}\Big)^{1/2} \cdot 2^{\sum_{k = 1}^{m} r_k}\bigg] \\
    &\qquad < -\Big(C_d - \frac{3}{2} \cdot (d - m) C^{U}_{d, m + 1} \Big) \cdot \frac{1}{\prod_{k = 1}^{m} n_k} \cdot \bigg[M + t \Big(\frac{2^{r_+}}{n}\Big)^{1/2} \cdot 2^{\sum_{k = 1}^{m} r_k}\bigg] \\
    & \qquad \le (D^{(\mathbf{1}_m)} \theta^*)_{j_1, \dots, j_d} - \Big(C_d - \frac{3}{2} \cdot (d - m) C^{U}_{d, m + 1} \Big) \cdot \frac{1}{\prod_{k = 1}^{m} n_k} \cdot t \Big(\frac{2^{r_+}}{n}\Big)^{1/2} \cdot 2^{\sum_{k = 1}^{m} r_k},
\end{align*}
\endgroup
where the third inequality follows from Lemma \ref{lem:div-diff-mono-general} and the definition \eqref{discrete-differences} of discrete differences, and the last inequality is due to \eqref{eq:bound-on-discrete-differences} and the fact that 
\begin{equation*}
    \widetilde{C}_d := C_d - \frac{3}{2} \cdot (d - m) C^{U}_{d, m + 1} \ge B_d.
\end{equation*}
It thus follows that if $i_1 - 1 \le j'_1 < j_1, \dots, i_m - 1 \le j'_m < j_m$, and if $i_{m + 1} \le j_{m + 1} \le b^{(m + 1)}_{r_{m + 1} - 1}, \dots, i_{d} \le j_{d} \le b^{(d)}_{r_{d} - 1}$, then
\begingroup
\allowdisplaybreaks
\begin{align*}
    &\sum_{\delta \in \{0, 1\}^m} (\theta - \theta^*)^2_{(1 - \delta_1) j_1 + \delta_1 j'_1, \dots, (1 - \delta_m) j_m + \delta_m j'_m, j_{m + 1}, \dots, j_d} \\
    &\qquad \ge \frac{1}{2^m} \cdot \bigg(\sum_{\delta \in \{0, 1\}^m} (-1)^{\delta_1 + \cdots + \delta_m} \cdot (\theta - \theta^*)_{(1 - \delta_1) j_1 + \delta_1 j'_1, \dots, (1 - \delta_m) j_m + \delta_m j'_m, j_{m + 1}, \dots, j_d} \bigg)^2 \\
    &\qquad = \frac{1}{2^m} \cdot \bigg(\sum_{l_1 = j'_1 + 1}^{j_1} \cdots \sum_{l_m = j'_m + 1}^{j_m} \big(D^{(\mathbf{1}_m)} \theta - D^{(\mathbf{1}_m)} \theta^*\big)_{l_1, \dots, l_m, j_{m + 1}, \dots, j_d} \bigg)^2 \\
    &\qquad > (j_1 - j'_1)^2 \times \cdots \times (j_m - j'_m)^2 \cdot \frac{\widetilde{C}_d^2 / 2^m}{\prod_{k = 1}^{m} n_k^2} \cdot t^2 \Big(\frac{2^{r_+}}{n}\Big) \cdot 2^{\sum_{k = 1}^{m} 2r_k}.
\end{align*}
\endgroup 
This implies that
\begingroup
\allowdisplaybreaks
\begin{align*}
    t^2 &\ge \|\theta - \theta^*\|_2^2 \ge \sum_{(i_1 + n_1 - 2)/2 < j_1 \le n_1 - 1} \cdots \sum_{(i_m + n_m - 2)/2 < j_m \le n_m - 1} \sum_{j_{m + 1} = i_{m + 1}}^{b^{(m + 1)}_{r_{m + 1} - 1}} \cdots \sum_{j_d = i_d}^{b^{(d)}_{r_{d} - 1}} \\
    &\qquad \qquad \qquad \qquad \qquad \sum_{\delta \in \{0, 1\}^m} (\theta - \theta^*)^2_{(1 - \delta_1)j_1 + \delta_1 (i_1 + n_1 - 2 - j_1), \dots, (1 - \delta_m)j_m + \delta_m (i_m + n_m - 2 - j_m), j_{m + 1}, \dots, j_d} \\
    &> \sum_{(i_1 + n_1 - 2)/2 < j_1 \le n_1 - 1} \cdots \sum_{(i_m + n_m - 2)/2 < j_m \le n_m - 1} \sum_{j_{m + 1} = i_{m + 1}}^{b^{(m + 1)}_{r_{m + 1} - 1}} \cdots \sum_{j_{d} = i_d}^{b^{(d)}_{r_{d} - 1}} \\
    &\qquad \qquad \qquad (2j_1 - i_1 - n_1 + 2)^2 \times \cdots \times (2j_m - i_m - n_m + 2)^2 \cdot \frac{\widetilde{C}_d^2 / 2^m}{\prod_{k = 1}^{m} n_k^2} \cdot t^2 \Big(\frac{2^{r_+}}{n}\Big) \cdot 2^{\sum_{k = 1}^{m} 2r_k} \\
    &= \prod_{k = 1}^{m} \Big[\frac{1}{6}(n_k - i_k)(n_k - i_k + 1)(n_k - i_k + 2)\Big] \cdot \prod_{k = m + 1}^{d} \big(b^{(k)}_{r_{k} - 1} - i_k + 1\big) \cdot \frac{\widetilde{C}_d^2 / 2^m}{\prod_{k = 1}^{m} n_k^2} \cdot t^2 \Big(\frac{2^{r_+}}{n}\Big) \cdot 2^{\sum_{k = 1}^{m} 2r_k} \\
    &> \prod_{k = 1}^{m} (n_k - i_k)^3 \cdot \prod_{k = m + 1}^{d} \big(b^{(k)}_{r_{k} - 1} - i_k + 1\big) \cdot \frac{\widetilde{C}_d^2 / 12^m}{\prod_{k = 1}^{m} n_k^2} \cdot t^2 \Big(\frac{2^{r_+}}{n}\Big) \cdot 2^{\sum_{k = 1}^{m} 2r_k} \\
    &> \prod_{k = 1}^{m} \Big(\frac{n_k}{2^{r_k}}\Big)^3 \cdot \prod_{k = m + 1}^{d} \frac{n_k}{2^{r_k}} \cdot \frac{\widetilde{C}_d^2 / 12^m}{\prod_{k = 1}^{m} n_k^2} \cdot t^2 \Big(\frac{2^{r_+}}{n}\Big) \cdot 2^{\sum_{k = 1}^{m} 2r_k} 
    = \frac{\widetilde{C}_d^2}{12^m} \cdot t^2 \ge t^2, 
\end{align*}
\endgroup
which leads to contradiction.
Here, the second-to-last inequality is because 
\begin{equation*}
    b^{(k)}_{r_{k} - 1} - i_k + 1 > \frac{n_k}{2^{r_k - 1}} - 2 - \Big( \frac{n_k}{2^{r_k}} - 1\Big) + 1 = \frac{n_k}{2^{r_k}}
\end{equation*}
for each $k = m + 1, \dots, d$. 
    
Next, suppose, for contradiction, that there exist $\mathbf{r} = (r_1, \dots, r_d) \in \prod_{k = 1}^{d} \{0, 1, \dots, R_k\}$ and $\mathbf{i} = (i_1, \dots, i_d) \in L_{\mathbf{r}}$ with $i_k \ge 1 = p_k$ for $k = 1, \dots, d$ such that 
\begin{equation*}
    (D^{(\mathbf{1}_m)}\theta)_{\mathbf{i}} > \frac{C_d}{\prod_{k = 1}^{m} n_k} \cdot \Big[M + t \Big(\frac{2^{r_+}}{n}\Big)^{1/2} \cdot 2^{\sum_{k = 1}^{m} r_k}\Big],
\end{equation*}
where 
\begin{equation*}
    C_d =  (d - m) C^{U}_{d, m + 1} + \max((4\sqrt{2})^m \cdot 2^d, B_d) =: C^{U}_{d, m}.
\end{equation*}
It then follows that, for every $\mathbf{j} = (j_1, \dots, j_d) \in I_0$ with $j_1 \le i_1, \dots, j_m \le i_m$, and $a^{(m + 1)}_{r_{m + 1} + 1} \le j_{m + 1} \le i_{m + 1}, \dots, a^{(d)}_{r_{d} + 1} \le j_{d} \le i_{d}$ (if $r_k = R_k$, let $a^{(k)}_{R_k + 1} = 0$),
\begingroup
\allowdisplaybreaks
\begin{align*}
    (D^{(\mathbf{1}_m)} \theta)_{j_1, \dots, j_d} 
    &\ge (D^{(\mathbf{1}_m)} \theta)_{i_1, \dots, i_m, j_{m + 1}, \dots, j_d} \\
    &= (D^{(\mathbf{1}_m)} \theta)_{i_1, \dots, i_m, i_{m + 1}, j_{m + 2}, \dots, j_{d}} - \sum_{l_{m + 1} = j_{m + 1} + 1}^{i_{m + 1}} (D^{(\mathbf{1}_{m + 1})} \theta)_{i_1, \dots, i_m, l_{m + 1}, j_{m + 2}, \dots, j_{d}} \\
    &\ge (D^{(\mathbf{1}_m)} \theta)_{i_1, \dots, i_m, i_{m + 1}, j_{m + 2}, \dots, j_{d}} - (i_{m + 1} - j_{m + 1}) \cdot \frac{C^{U}_{d, m + 1}}{\prod_{k = 1}^{m + 1} n_k} \cdot \Big[M + t \Big(\frac{2^{r_+}}{n}\Big)^{1/2} \cdot 2^{\sum_{k = 1}^{m + 1} r_k}\Big] \\
    &\ge (D^{(\mathbf{1}_m)} \theta)_{i_1, \dots, i_m, i_{m + 1}, j_{m + 2}, \dots, j_{d}} - \frac{C^{U}_{d, m + 1}}{\prod_{k = 1}^{m} n_k} \cdot \bigg[\frac{M}{2^{r_{m + 1}}} + t \Big(\frac{2^{r_+}}{n}\Big)^{1/2} \cdot 2^{\sum_{k = 1}^{m} r_k}\bigg],
\end{align*}
\endgroup
where the last inequality is from the fact that
\begin{equation*}
    i_{m + 1} - j_{m + 1} \le i_{m + 1} - a^{(m + 1)}_{r_{m + 1} + 1}
    < \frac{n_{m + 1}}{2^{r_{m + 1}}} - 1 - \Big(\frac{n_{m + 1}}{2^{r_{m + 1} + 2}} - 1 \Big) = \frac{3}{4} \cdot \frac{n_{m + 1}}{2^{r_{m + 1}}}
\end{equation*}
when $r_{m + 1} \neq R_{m + 1}$, and
\begin{equation*}
    i_{m + 1} - j_{m + 1} \le i_{m + 1}
    \le \frac{n_{m + 1}}{2^{r_{m + 1}}} - 1 < \frac{n_{m + 1}}{2^{r_{m + 1}}}
\end{equation*}
when $r_{m + 1} = R_{m + 1}$.
If we apply the same argument to the $(m + 2)^{\text{th}}, \dots, d^{\text{th}}$ coordinates in turn, we obtain 
\begingroup
\allowdisplaybreaks
\begin{align*}
    (D^{(\mathbf{1}_m)} \theta)_{j_1, \dots, j_d} &\ge (D^{(\mathbf{1}_m)} \theta)_{i_1, \dots, i_{d}} - (d - m) \cdot \frac{C^{U}_{d, m + 1}}{\prod_{k = 1}^{m} n_k} \cdot \bigg[M + t \Big(\frac{2^{r_+}}{n}\Big)^{1/2} \cdot 2^{\sum_{k = 1}^{m} r_k}\bigg] \\ 
    &> \big(C_d - (d - m) C^{U}_{d, m + 1}\big) \cdot \frac{1}{\prod_{k = 1}^{m} n_k} \cdot \bigg[M + t \Big(\frac{2^{r_+}}{n}\Big)^{1/2} \cdot 2^{\sum_{k = 1}^{m} r_k}\bigg] \\
    &\ge (D^{(\mathbf{1}_m)} \theta^*)_{j_1, \dots, j_d} + \big(C_d - (d - m) C^{U}_{d, m + 1}\big) \cdot \frac{1}{\prod_{k = 1}^{m} n_k} \cdot t \Big(\frac{2^{r_+}}{n}\Big)^{1/2} \cdot 2^{\sum_{k = 1}^{m} r_k}.
\end{align*}
\endgroup
Here, the last inequality is attributed to \eqref{eq:bound-on-discrete-differences} and the fact that 
\begin{equation*}
    \widetilde{C}_d := C_d - (d - m) C^{U}_{d, m + 1} \ge B_d.
\end{equation*}
Hence, by Cauchy inequality, if $0 \le j'_1 < j_1 \le i_1, \dots, 0 \le j'_m < j_m \le i_m$, and if $a^{(m + 1)}_{r_{m + 1} + 1} \le j_{m + 1} \le i_{m + 1}, \dots, a^{(d)}_{r_{d} + 1} \le j_{d} \le i_{d}$, then 
\begingroup
\allowdisplaybreaks
\begin{align*}
    &\sum_{\delta \in \{0, 1\}^m} (\theta - \theta^*)^2_{(1 - \delta_1) j_1 + \delta_1 j'_1, \dots, (1 - \delta_m) j_m + \delta_m j'_m, j_{m + 1}, \dots, j_d} \\
    &\qquad > (j_1 - j'_1)^2 \times \cdots \times (j_m - j'_m)^2 \cdot \frac{\widetilde{C}_d^2 / 2^m}{\prod_{k = 1}^{m} n_k^2} \cdot t^2 \Big(\frac{2^{r_+}}{n}\Big) \cdot 2^{\sum_{k = 1}^{m} 2r_k},
\end{align*}
\endgroup 
as above.
As a result, 
\begingroup
\allowdisplaybreaks
\begin{align*}
    t^2 &\ge \|\theta - \theta^*\|_2^2 \ge \sum_{i_1/2 < j_1 \le i_1} \cdots \sum_{i_m/2 < j_m \le i_m} \sum_{j_{m + 1} = a^{(m + 1)}_{r_{m + 1} + 1}}^{i_{m + 1}} \cdots \sum_{j_{d} = a^{(d)}_{r_{d} + 1}}^{i_{d}} \\
    &\qquad \qquad \qquad \qquad \qquad \sum_{\delta \in \{0, 1\}^m} (\theta - \theta^*)^2_{(1 - \delta_1)j_1 + \delta_1 (i_1 - j_1), \dots, (1 - \delta_m)j_m + \delta_m (i_m - j_m), j_{m + 1}, \dots, j_d} \\
    &> \sum_{i_1/2 < j_1 \le i_1} \cdots \sum_{i_m/2 < j_m \le i_m} \sum_{j_{m + 1} = a^{(m + 1)}_{r_{m + 1} + 1}}^{i_{m + 1}} \cdots \sum_{j_{d} = a^{(d)}_{r_{d} + 1}}^{i_{d}} \\
    &\qquad \qquad \qquad \qquad \qquad (2j_1 - i_1)^2 \times \cdots \times (2j_m - i_m)^2 \cdot \frac{\widetilde{C}_d^2 / 2^m}{\prod_{k = 1}^{m} n_k^2} \cdot t^2 \Big(\frac{2^{r_+}}{n}\Big) \cdot 2^{\sum_{k = 1}^{m} 2r_k} \\
    &= \prod_{k = 1}^{m} \Big[\frac{1}{6}i_k(i_k + 1)(i_k + 2)\Big] \cdot \prod_{k = m + 1}^{d} \big(i_k - a^{(k)}_{r_{k} + 1} + 1\big) \cdot \frac{\widetilde{C}_d^2 / 2^m}{\prod_{k = 1}^{m} n_k^2} \cdot t^2 \Big(\frac{2^{r_+}}{n}\Big) \cdot 2^{\sum_{k = 1}^{m} 2r_k} \\
    &\ge \prod_{k = 1}^{m} (i_k + 1)^3 \cdot \prod_{k = m + 1}^{d} \big(i_k - a^{(k)}_{r_{k} + 1} + 1\big) \cdot \frac{\widetilde{C}_d^2 / 16^m}{\prod_{k = 1}^{m} n_k^2} \cdot t^2 \Big(\frac{2^{r_+}}{n}\Big) \cdot 2^{\sum_{k = 1}^{m} 2r_k} \\
    &> \prod_{k = 1}^{m} \Big(\frac{n_k}{2^{r_k + 1}}\Big)^3 \cdot \prod_{k = m + 1}^{d} \frac{n_k}{2^{r_k + 2}} \cdot \frac{\widetilde{C}_d^2 / 16^m}{\prod_{k = 1}^{m} n_k^2} \cdot t^2 \Big(\frac{2^{r_+}}{n}\Big) \cdot 2^{\sum_{k = 1}^{m} 2r_k} 
    = \frac{\widetilde{C}_d^2}{32^m \cdot 4^d} \cdot t^2 \ge t^2, 
\end{align*}
\endgroup
which leads to contradiction.
Here, for the last inequality, we use
\begin{equation*}
    i_k - a^{(k)}_{r_{k} + 1} + 1 > \frac{n_k}{2^{r_k + 1}} - 1 - \frac{n_k}{2^{r_k + 2}} + 1 = \frac{n_k}{2^{r_k + 2}}
\end{equation*}
for each $k = m + 1, \dots, d$.
    
From these results, we can conclude that \eqref{eq:finite-derivative-lower-bound} and \eqref{eq:finite-derivative-upper-bound} hold with $C_d = C^{L}_{d, m}$ and $C_d = C^{U}_{d, m}$, respectively, for every $\mathbf{p} \in \{0, 1\}^d$ with $|\mathbf{p}| = m$. 
Thus, by induction, \eqref{eq:finite-derivative-lower-bound} and \eqref{eq:finite-derivative-upper-bound} hold for every $\mathbf{p} \in \{0, 1\}^d \setminus \{\zerovec\}$, if we define $C_d = \max(\max_{m = 1, \dots, d} C^{L}_{d, m}, \max_{m = 1, \dots, d} C^{U}_{d, m})$.
\end{proof}

\subsubsection{Proof of Lemma \ref{lem:bounds-finite-derivatives-cont}}\label{pf:bounds-finite-derivatives-cont}
\begin{proof}[Proof of Lemma \ref{lem:bounds-finite-derivatives-cont}]
The proof proceeds similarly to that of Lemma 
\ref{lem:bounds-finite-derivatives}.
We prove \eqref{eq:bounds-finite-derivatives-cont-lower} and 
\eqref{eq:bounds-finite-derivatives-cont-upper} by induction on $|S|$, and 
Lemma \ref{lem:div-diff-mono-general} will be repeatedly used for the proof.

We first consider the case where $|S| = d$ ($S = [d] = \{1, \dots, d\}$).
Suppose, for contradiction, that there exist 
$\mathbf{r} = (r_1, \dots, r_d) \in \mathbb{N}^d$ and 
$1/2^{r_k + 1} \le s_k < t_k \le 1/2^{r_k}$ for $k \in [d]$ such that
\begin{equation*}
    \begin{bmatrix}
        s_1, t_1 & \multirow{3}{*}{; \ $f$ } \\ 
        \vdots             & \\
        s_d, t_d &
    \end{bmatrix}
    < - C_d t \cdot 2^{r_+/2} \cdot 2^{\sum_{k = 1}^{d} r_k},
\end{equation*}
where $C_d = (2\sqrt{3})^d =: C^L_{d, d}$.
By Lemma \ref{lem:div-diff-mono-general}, we have
\begin{equation*}
    \begin{bmatrix}
        u_1, v_1 & \multirow{3}{*}{; \ $f$ } \\ 
        \vdots             & \\
        u_d, v_d &
    \end{bmatrix}
    \le
    \begin{bmatrix}
        s_1, t_1 & \multirow{3}{*}{; \ $f$ } \\ 
        \vdots             & \\
        s_d, t_d &
    \end{bmatrix}
    < - C_d t \cdot 2^{r_+/2} \cdot 2^{\sum_{k = 1}^{d} r_k}
\end{equation*}
for every $t_k \le u_k < v_k$ for $k \in [d]$.
Applying Cauchy inequality, we thus obtain
\begin{align*}
    &\sum_{\delta \in \{0, 1\}^d} \Big(f\big((1 - \delta_1)v_1 + \delta_1 
    u_1, \dots, (1 - \delta_d)v_d + \delta_d u_d\big) \Big)^2 \\
    &\qquad \ge \frac{1}{2^d} \bigg(\sum_{\delta \in \{0, 1\}^d} 
    (-1)^{\delta_1 + \cdots + \delta_d} \cdot 
    f\big((1 - \delta_1)v_1 + \delta_1 u_1, \dots, (1 - \delta_d)v_d 
    + \delta_d u_d\big) \bigg)^2 \\
    &\qquad= \frac{1}{2^d} \prod_{k = 1}^d (v_k - u_k)^2 \cdot
    \begin{bmatrix}
        u_1, v_1 & \multirow{3}{*}{; \ $f$ } \\ 
        \vdots             & \\
        u_d, v_d &
    \end{bmatrix}^2
    > \prod_{k = 1}^d (v_k - u_k)^2 \cdot \frac{C_d^2}{2^d} \cdot t^2 
    \cdot 2^{r_+} \cdot 2^{2\sum_{k = 1}^d r_k}.
\end{align*}
Since $t_k \le x_k < t_k + 1 - x_k$ whenever $x_k \in [t_k, (t_k + 1)/2)$,
it follows that
\begin{align*}
    t^2 &\ge \|f\|_2^2 
    \ge \int_{\prod_{k = 1}^d [t_k, (t_k + 1)/2)} 
    \sum_{\delta \in \{0, 1\}^d} \Big(f\big((1 - \delta_1)
    (t_1 + 1 - x_1) + \delta_1 x_1, \dots, (1 - \delta_d) (t_d + 1 - x_d) 
    + \delta_d x_d\big) \Big)^2 \, dx \\
    &> \frac{C_d^2}{2^d} \cdot t^2 
    \cdot 2^{r_+} \cdot 2^{2\sum_{k = 1}^d r_k} \cdot\prod_{k = 1}^d 
    \int_{t_k}^{(t_k + 1)/2} (t_k + 1 - 2x_k)^2 \, dx_k \\
    &= \frac{C_d^2}{12^{d}} \cdot t^2 \cdot 2^{r_+} \cdot 
    2^{2\sum_{k = 1}^d r_k} \cdot \prod_{k = 1}^d (1 - t_k)^3
    \ge \frac{C_d^2}{12^{d}} \cdot t^2
    = t^2,
\end{align*}
which is a contradiction.
Here, the last inequality follows from
\begin{equation*}
    1 - t_k \ge 1 - \frac{1}{2^{r_k}} \ge \frac{1}{2^{r_k}},
\end{equation*}
which holds since $r_k \ge 1$.
    
Next, suppose, for contradiction, that there exist 
$\mathbf{r} = (r_1, \dots, r_d) \in \mathbb{Z}_{\ge 0}^d$ and 
$1/2^{r_k + 1} \le s_k < t_k \le 1/2^{r_k}$ for $k \in [d]$ such that
\begin{equation*}
    \begin{bmatrix}
        s_1, t_1 & \multirow{3}{*}{; \ $f$ } \\ 
        \vdots             & \\
        s_d, t_d &
    \end{bmatrix}
    > C_d t \cdot 2^{r_+/2} \cdot 2^{\sum_{k = 1}^{d} r_k},
\end{equation*}
where $C_d = (4\sqrt{6})^d =: C^U_{d, d}$.
By Lemma \ref{lem:div-diff-mono-general}, if $u_k < v_k \le s_k$ for 
$k \in [d]$, then
\begin{equation*}
    \begin{bmatrix}
        u_1, v_1 & \multirow{3}{*}{; \ $f$ } \\ 
        \vdots             & \\
        u_d, v_d &
    \end{bmatrix}
    \ge
    \begin{bmatrix}
        s_1, t_1 & \multirow{3}{*}{; \ $f$ } \\ 
        \vdots          & \\
        s_d, t_d &
    \end{bmatrix}
    > C_d t \cdot 2^{r_+/2} \cdot 2^{\sum_{k = 1}^{d} r_k}.
\end{equation*}
As in the previous case, applying Cauchy inequality gives
\begin{equation*}
    \sum_{\delta \in \{0, 1\}^d} \Big(f\big((1 - \delta_1)v_1 + \delta_1 
    u_1, \dots, (1 - \delta_d)v_d + \delta_d u_d\big) \Big)^2
    > \prod_{k = 1}^d (v_k - u_k)^2 \cdot \frac{C_d^2}{2^d} \cdot t^2 
    \cdot 2^{r_+} \cdot 2^{2\sum_{k = 1}^d r_k}.
\end{equation*}
Since $x_k < s_k - x_k \le s_k$ for all $x_k \in [0, s_k/2)$, it follows that
\begin{align*}
    t^2 &\ge \|f\|_2^2 
    \ge \int_{\prod_{k = 1}^d [0, s_k/2)} \sum_{\delta \in \{0, 1\}^d} 
    \Big(f\big((1 - \delta_1) (s_1 - x_1) + \delta_1 x_1, \dots, 
    (1 - \delta_d) (s_d - x_d) + \delta_d x_d\big) \Big)^2 \, dx \\
    &> \frac{C_d^2}{2^d} \cdot t^2 
    \cdot 2^{r_+} \cdot 2^{2\sum_{k = 1}^d r_k} \cdot \prod_{k = 1}^d 
    \int_{0}^{s_k/2} (s_k - 2x_k)^2 \, dx_k 
    = \frac{C_d^2}{12^{d}} \cdot t^2 \cdot 2^{r_+} \cdot 
    2^{2\sum_{k = 1}^d r_k} \cdot \prod_{k = 1}^d s_k^3
    \ge \frac{C_d^2}{96^{d}} \cdot t^2
    = t^2,
\end{align*}
which leads to contradiction.
Here, the last inequality is due to the fact that $s_k \ge 1/2^{r_k + 1}$.
    
Now, we assume that \eqref{eq:bounds-finite-derivatives-cont-lower} and 
\eqref{eq:bounds-finite-derivatives-cont-upper} hold with 
$C_d = C^L_{d, m + 1}$ and $C_d = C^U_{d, m + 1}$, 
respectively, for every subset $S \subseteq [d]$ with $|S| = m + 1$, 
and prove that they also hold with some $C_d$ for every subset 
$S \subseteq [d]$ with $|S| = m$. For notational convenience, we present 
the proof only for the case $S = [m] = \{1, \dots, m\}$; the other cases 
follow analogously.
    
Suppose, for contradiction, that there exist 
$\mathbf{r} = (r_1, \dots, r_d) \in \mathbb{N}^d$, 
$1/2^{r_k + 1} \le s_k < t_k \le 1/2^{r_k}$ for $k = 1, \dots, m$, and
$t_k \in [1/2^{r_k + 1}, 1/2^{r_k}]$ for $k = m + 1, \dots, d$ such that
\begin{equation*}
    \begin{bmatrix}
        s_1, t_1  & \multirow{6}{*}{; \ $f$ } \\ 
        \vdots    & \\ 
        s_m, t_m  & \\ 
        t_{m + 1} & \\ 
        \vdots    & \\ 
        t_d       & \\ 
    \end{bmatrix}
    < - C_d t \cdot 2^{r_+/2} \cdot 2^{\sum_{k = 1}^{m} r_k},
\end{equation*}
where $C_d = (3/2)(d - m)C^U_{d, m + 1} + (2\sqrt{3})^m =: C^L_{d, m}$.
If $t_k \le u_k < v_k$ for $k = 1, \dots, m$ and 
$t_k \le u_k \le 1/2^{r_k - 1}$ for $k = m + 1, \dots, d$, then we have
\begingroup
\allowdisplaybreaks
\begin{align*}
    \begin{bmatrix}
        u_1, v_1  & \multirow{6}{*}{; \ $f$ } \\ 
        \vdots    & \\ 
        u_m, v_m  & \\ 
        u_{m + 1} & \\ 
        \vdots    & \\ 
        u_d       & \\ 
    \end{bmatrix}
    &=
    \begin{bmatrix}
        u_1, v_1  & \\ 
        \vdots    & \\ 
        u_m, v_m  & \\ 
        t_{m + 1} &; \ f \\ 
        u_{m + 2} & \\ 
        \vdots    & \\ 
        u_d       & \\ 
    \end{bmatrix}
    + (u_{m + 1} - t_{m + 1}) 
    \begin{bmatrix}
        u_1, v_1  & \\ 
        \vdots    & \\ 
        u_m, v_m  & \\ 
        t_{m + 1}, u_{m + 1} &; \ f \\ 
        u_{m + 2} & \\ 
        \vdots    & \\ 
        u_d       & \\ 
    \end{bmatrix}
    \\
    &\le 
    \begin{bmatrix}
        u_1, v_1  & \\ 
        \vdots    & \\ 
        u_m, v_m  & \\ 
        t_{m + 1} &; \ f \\ 
        u_{m + 2} & \\ 
        \vdots    & \\ 
        u_d       & \\ 
    \end{bmatrix} 
    + \frac{3}{2^{r_{m + 1} + 1}} \cdot
    C^U_{d, m + 1} t \cdot 2^{r_+/2} \cdot 2^{\sum_{k = 1}^{m + 1} r_k},
\end{align*}
\endgroup
where the inequality follows from Lemma
\ref{lem:div-diff-mono-general}, the induction hypothesis, and the fact that 
\begin{equation*}
    u_{m + 1} - t_{m + 1} 
    \le \frac{1}{2^{r_{m + 1} - 1}} - \frac{1}{2^{r_{m + 1} + 1}} 
    \le \frac{3}{2^{r_{m + 1} + 1}}.
\end{equation*}
Iterating this argument for the 
$(m + 2)^{\text{th}}, \dots, d^{\text{th}}$ coordinates yields
\begingroup
\allowdisplaybreaks
\begin{align*}
    \begin{bmatrix}
        u_1, v_1 & \multirow{6}{*}{; \ $f$ } \\ 
        \vdots & \\ 
        u_m, v_m & \\ 
        u_{m + 1} & \\ 
        \vdots & \\ 
        u_d & \\ 
    \end{bmatrix}
    &\le 
    \begin{bmatrix}
        u_1, v_1 & \multirow{6}{*}{; \ $f$ } \\ 
        \vdots & \\ 
        u_m, v_m & \\ 
        t_{m + 1} & \\ 
        \vdots & \\ 
        t_d & \\ 
    \end{bmatrix} 
    + \frac{3}{2} \cdot (d - m) 
    C^U_{d, m + 1} t \cdot 2^{r_+/2} \cdot 2^{\sum_{k = 1}^{m} r_k} \\
    &\le 
    \begin{bmatrix}
        s_1, t_1 & \multirow{6}{*}{; \ $f$ } \\ 
        \vdots & \\ 
        s_m, t_m & \\ 
        t_{m + 1} & \\ 
        \vdots & \\ 
        t_d & \\ 
    \end{bmatrix} 
    + \frac{3}{2} \cdot (d - m) 
    C^U_{d, m + 1} t \cdot 2^{r_+/2} \cdot 2^{\sum_{k = 1}^{m} r_k} \\
    &< -\Big(C_d - \frac{3}{2} \cdot (d - m) C^U_{d, m + 1}\Big) 
    t \cdot 2^{r_+/2} \cdot 2^{\sum_{k = 1}^{m} r_k}.
\end{align*}
\endgroup
By Cauchy inequality, it follows that
\begin{align*}
    &\sum_{\delta \in \{0, 1\}^m} \Big(f\big((1 - \delta_1)v_1 + \delta_1 
    u_1, \dots, (1 - \delta_m)v_m + \delta_m u_m, u_{m + 1}, \dots, u_d\big) 
    \Big)^2 \\
    &\qquad> \prod_{k = 1}^m (v_k - u_k)^2 \cdot \frac{\widetilde{C}_d^2}{2^m} \cdot t^2 
    \cdot 2^{r_+} \cdot 2^{2\sum_{k = 1}^m r_k},
\end{align*}
where 
\begin{equation*}
    \widetilde{C}_d := C_d - \frac{3}{2} \cdot (d - m) 
    C^U_{d, m + 1} = (2\sqrt{3})^m.
\end{equation*}
Hence, integrating as before, we obtain
\begingroup
\allowdisplaybreaks
\begin{align*}
    t^2 &\ge \|f\|_2^2 
    \ge \int_{\prod_{k = 1}^m [t_k, (t_k + 1)/2) \times 
    \prod_{k = m + 1}^d [t_k, 1/2^{r_k - 1}]} \sum_{\delta \in \{0, 1\}^m} \\
    &\qquad \qquad \qquad 
    \Big(f\big((1 - \delta_1) (t_1 + 1 - x_1) + \delta_1 x_1, \dots,
    (1 - \delta_m) (t_m + 1 - x_m) + \delta_m x_m, 
    x_{m + 1}, \dots, x_d \big) \Big)^2 \, dx \\
    &> \frac{\widetilde{C}_d^2}{2^m} \cdot t^2 
    \cdot 2^{r_+} \cdot 2^{2\sum_{k = 1}^m r_k} \cdot \prod_{k = 1}^m 
    \int_{t_k}^{(t_k + 1)/2} (t_k + 1 - 2x_k)^2 \, dx_k  \cdot 
    \prod_{k = m + 1}^d \Big(\frac{1}{2^{r_k - 1}} - t_k\Big) \\
    &\ge \frac{\widetilde{C}_d^2}{12^{m}} \cdot t^2 \cdot 
    2^{r_+} \cdot 2^{2\sum_{k = 1}^m r_k} \cdot \prod_{k = 1}^m (1 - t_k)^3 
    \cdot \prod_{k = m + 1}^d \frac{1}{2^{r_k}}
    \ge \frac{\widetilde{C}_d^2}{12^{m}} \cdot t^2
    = t^2,
\end{align*}
\endgroup  
which is a contradiction.
Here, the second-to-last inequality uses the fact that
\begin{equation*}
    \frac{1}{2^{r_k - 1}} - t_k 
    \ge \frac{1}{2^{r_k - 1}} - \frac{1}{2^{r_k}} 
    = \frac{1}{2^{r_k}}
\end{equation*}
for $k = m + 1, \dots, d$.

Next, suppose, for contradition, that there exist 
$\mathbf{r} = (r_1, \dots, r_d) \in \mathbb{Z}_{\ge 0}^d$, 
$1/2^{r_k + 1} \le s_k < t_k \le 1/2^{r_k}$ for $k = 1, \dots, m$, and
$t_k \in [1/2^{r_k + 1}, 1/2^{r_k}]$ for $k = m + 1, \dots, d$ such that
\begin{equation*}
    \begin{bmatrix}
        s_1, t_1 & \multirow{6}{*}{; \ $f$ } \\ 
        \vdots & \\ 
        s_m, t_m & \\ 
        t_{m + 1} & \\ 
        \vdots & \\ 
        t_d & \\ 
    \end{bmatrix}
    > C_d t \cdot 2^{r_+/2} \cdot 2^{\sum_{k = 1}^{m} r_k},
\end{equation*}
where $C_d = (3/2)(d - m)C^U_{d, m + 1} + (2\sqrt{6})^m \cdot 2^d 
=: C^U_{d, m}$.
For $u_k < v_k \le s_k$ for $k = 1, \dots, m$ and 
$1/2^{r_k + 2} \le u_k \le t_k$ for $k = m + 1, \dots, d$, we have
\begingroup
\allowdisplaybreaks
\begin{align*}
    \begin{bmatrix}
        u_1, v_1 & \multirow{6}{*}{; \ $f$ } \\ 
        \vdots & \\ 
        u_m, v_m & \\ 
        u_{m + 1} & \\ 
        \vdots & \\ 
        u_d & \\ 
    \end{bmatrix}
    &=
    \begin{bmatrix}
        u_1, v_1  & \\ 
        \vdots     & \\ 
        u_m, v_m  & \\ 
        t_{m + 1} &; \ f \\ 
        u_{m + 2} & \\ 
        \vdots     & \\ 
        u_d       & \\ 
    \end{bmatrix}
    - (t_{m + 1} - u_{m + 1}) 
    \begin{bmatrix}
        u_1, v_1  & \\ 
        \vdots     & \\ 
        u_m, v_m  & \\ 
        u_{m + 1}, t_{m + 1} &; \ f \\ 
        u_{m + 2} & \\ 
        \vdots     & \\ 
        u_d       & \\ 
    \end{bmatrix}
    \\
    &\ge 
    \begin{bmatrix}
        u_1, v_1  & \\ 
        \vdots     & \\ 
        u_m, v_m  & \\ 
        t_{m + 1} &; \ f \\ 
        u_{m + 2} & \\ 
        \vdots     & \\ 
        u_d       & \\ 
    \end{bmatrix} 
    - \frac{3}{2^{r_{m + 1} + 2}} \cdot
    C^U_{d, m + 1} t \cdot 2^{r_+/2} \cdot 2^{(\sum_{k = 1}^{m + 1} r_k) + 1},
\end{align*}
\endgroup
where the inequality is due to Lemma \ref{lem:div-diff-mono-general}, the 
induction hypothesis, and the fact that
\begin{equation*}
    t_{m + 1} - u_{m + 1} 
    \le \frac{1}{2^{r_{m + 1}}} - \frac{1}{2^{r_{m + 1} + 2}} 
    \le \frac{3}{2^{r_{m + 1} + 2}}.
\end{equation*}
Repeating this argument with the $(m + 2)^{\text{th}}, \dots, d^{\text{th}}$ 
coordinates yields
\begingroup
\allowdisplaybreaks
\begin{align*}
    \begin{bmatrix}
        u_1, v_1 & \multirow{6}{*}{; \ $f$ } \\ 
        \vdots & \\ 
        u_m, v_m & \\ 
        u_{m + 1} & \\ 
        \vdots & \\ 
        u_d & \\ 
    \end{bmatrix}
    &\ge 
    \begin{bmatrix}
        u_1, v_1 & \multirow{6}{*}{; \ $f$ } \\ 
        \vdots & \\ 
        u_m, v_m & \\ 
        t_{m + 1} & \\ 
        \vdots & \\ 
        t_d & \\ 
    \end{bmatrix} 
    - \frac{3}{2} \cdot (d - m) 
        C^U_{d, m + 1} t \cdot 2^{r_+/2} \cdot 2^{\sum_{k = 1}^{m} r_k} \\
    &\ge
    \begin{bmatrix}
        s_1, t_1 & \multirow{6}{*}{; \ $f$ } \\ 
        \vdots & \\ 
        s_m, t_m & \\ 
        t_{m + 1} & \\ 
        \vdots & \\ 
        t_d & \\ 
    \end{bmatrix} 
    - \frac{3}{2} \cdot (d - m) 
    C^U_{d, m + 1} t \cdot 2^{r_+/2} \cdot 2^{\sum_{k = 1}^{m} r_k} \\
    &> \Big(C_d - \frac{3}{2} \cdot (d - m) C^U_{d, m + 1}\Big) 
    t \cdot 2^{r_+/2} \cdot 2^{\sum_{k = 1}^{m} r_k}.
\end{align*}
\endgroup
If we let
\begin{equation*}
    \widetilde{C}_d := C_d - \frac{3}{2} \cdot (d - m) 
    C^U_{d, m + 1} 
    = (2\sqrt{6})^m \cdot 2^d,
\end{equation*}
then we have
\begingroup
\allowdisplaybreaks
\begin{align*}
    t^2 &\ge \|f\|_2^2 
    \ge \int_{\prod_{k = 1}^m [0, s_k/2) \times 
    \prod_{k = m + 1}^d [1/2^{r_k + 2}, t_k]} \sum_{\delta \in \{0, 1\}^m} \\
    &\qquad \qquad \qquad 
    \Big(f\big((1 - \delta_1) (s_1 - x_1) + \delta_1 x_1, \dots,
    (1 - \delta_m) (s_m - x_m) + \delta_m x_m, 
    x_{m + 1}, \dots, x_d \big) \Big)^2 \, dx \\
    &> \frac{\widetilde{C}_d^2}{2^m} \cdot t^2 
    \cdot 2^{r_+} \cdot 2^{2\sum_{k = 1}^m r_k} \cdot \prod_{k = 1}^m 
    \int_{0}^{s_k/2} (s_k - 2x_k)^2 \, dx_k  \cdot 
    \prod_{k = m + 1}^d \Big(t_k - \frac{1}{2^{r_k + 2}}\Big) \\
    &\ge \frac{\widetilde{C}_d^2}{12^{m}} \cdot t^2 \cdot 
    2^{r_+} \cdot 2^{2\sum_{k = 1}^m r_k} \cdot \prod_{k = 1}^m s_k^3 
    \cdot \prod_{k = m + 1}^d \frac{1}{2^{r_k + 2}}
    \ge \frac{\widetilde{C}_d^2}{24^{m} \cdot 4^d} \cdot t^2
    = t^2,
\end{align*}
\endgroup
which is a contradiction.

For these reasons, \eqref{eq:bounds-finite-derivatives-cont-lower} and 
\eqref{eq:bounds-finite-derivatives-cont-upper} hold with 
$C_d = C^L_{d, m}$ and $C_d = C^U_{d, m}$, 
respectively, for every subset $S \subseteq [d]$ with $|S| = m$. 
By induction, it follows that \eqref{eq:bounds-finite-derivatives-cont-lower} and 
\eqref{eq:bounds-finite-derivatives-cont-upper} hold for every 
$\emptyset \neq S \subseteq [d]$ with 
$C_d = \max(\max_{m = 1, \dots, d} C^L_{d, m}, 
\max_{m = 1, \dots, d} C^U_{d, m})$.
\end{proof}

\subsubsection{Proof of Lemma \ref{lem:fvm-bracketing-entropy}}\label{pf:fvm-bracketing-entropy}
\begin{proof}[Proof of Lemma \ref{lem:fvm-bracketing-entropy}]
The proof of this lemma closely follows that of Lemma 11.18 of 
\cite{ki2024mars}. We therefore only describe the modifications required at 
the beginning of their proof in order to establish Lemma 
\ref{lem:fvm-bracketing-entropy}.

Suppose $f \in D(V, t)$ is of the form \eqref{intermodds}. 
For notational convenience, write $\beta_0 = \beta_{\emptyset}$.
Here, we prove that there exists a constant $C_d > 0$ such that
\begin{equation*}
    |\beta_T| \le C_d (V + t)
\end{equation*}
for every subset $T \subseteq [d]$ with $|T| \le s$.
Once this bound is established, the remainder of the proof follows exactly as 
in the proof of Lemma 11.18 of \cite{ki2024mars}.

Since $V(f) \le V$, we have
\begin{equation*}
    t \ge \|f\|_2 
    \ge \bigg\{\int_{[0, 1]^d} \bigg(\sum_{S : 0 \le |S| \le s} 
    \beta_S \prod_{j \in S} x_j \bigg)^2 \, dx\bigg\}^{1/2} - V,
\end{equation*}
which implies
\begin{equation}\label{eq:integral-bound-affine-squared}
    \int_{[0, 1]^d} \bigg(\sum_{S : 0 \le |S| \le s} 
    \beta_S \prod_{j \in S} x_j \bigg)^2 \, dx
    \le (V + t)^2.
\end{equation}
Now, fix $\delta \in (0, 1)$, and define 
$A = \sum_{S: 0 \le |S| \le s} \beta_S^2$.
Restricting the integral \eqref{eq:integral-bound-affine-squared} to 
$[0, \delta]^d$ and applying Cauchy inequality 
repeatedly, we obtain
\begin{align*}
    (V + t)^2 &\ge \int_{[0, \delta]^d} \bigg(\sum_{S : 0 \le |S| \le s} 
    \beta_S \prod_{j \in S} x_j \bigg)^2 \, dx 
    \ge \frac{1}{2} \beta_{\emptyset}^2 \delta^d 
    - \int_{[0, \delta]^d} \bigg(\sum_{S : 0 < |S| \le s} 
    \beta_S \prod_{j \in S} x_j \bigg)^2 \, dx \\
    &\ge \frac{1}{2} \beta_{\emptyset}^2 \delta^d 
    - A \int_{[0, \delta]^d} \sum_{S : 0 < |S| \le s} 
    \prod_{j \in S} x_j^2 \, dx 
    = \frac{1}{2} \beta_{\emptyset}^2 \delta^d - C_d \delta^{d + 2} \cdot A,
\end{align*}
and therefore,
\begin{equation*}
    \beta_{\emptyset}^2 
    \le 2(V + t)^2 \delta^{-d} + C_d \delta^2 \cdot A.
\end{equation*}

Next, assume that for every $T \subseteq [d]$ with $|T| < m \le s$, we have 
\begin{equation}\label{eq:bound-on-beta-T-squared}
    \beta_T^2 \le C_d (V + t)^2 \delta^{-d} + C_d \delta^2 \cdot A.
\end{equation}
We now show that the same bound, possibly with a different constant $C_d$, 
also holds for all subsets $T \subseteq [d]$ with $|T| = m$. 
Without loss of generality, it suffices to consider the case 
$T = [m] = \{1, \dots, m\}$, since the argument is identical for any
other subset of size $m$.

Restricting the integral \eqref{eq:integral-bound-affine-squared} 
to $[0, 1]^m \times [0, \delta]^{d - m}$ and applying Cauchy inequality 
yields
\begingroup
\allowdisplaybreaks
\begin{align*}
    (V + t)^2 &\ge \int_{[0, 1]^m \times [0, \delta]^{d - m}} 
    \bigg(\sum_{S : 0 \le |S| \le s} \beta_S \prod_{j \in S} x_j \bigg)^2 \, dx \\
    &\ge \frac{1}{3} \beta_{[m]}^2 \delta^{d - m} 
    - \int_{[0, 1]^m \times [0, \delta]^{d - m}}  
    \bigg(\sum_{S \subsetneq [m]} \beta_S \prod_{j \in S} x_j \bigg)^2
    + \bigg(\sum_{\substack{S: 0 \le |S| \le s \\ S \nsubseteq [m]}} 
    \beta_S \prod_{j \in S} x_j \bigg)^2 \, dx \\
    &\ge \frac{1}{3} \beta_{[m]}^2 \delta^{d - m} 
    - \bigg(\sum_{S \subsetneq [m]}
    \beta_S^2 \bigg) \cdot \int_{[0, 1]^m \times [0, \delta]^{d - m}}  
    \sum_{S \subsetneq [m]} \prod_{j \in S} x_j^2 \, dx \\
    &\qquad - A \int_{[0, 1]^m \times [0, \delta]^{d - m}}  
    \sum_{\substack{S: 0 \le |S| \le s \\ S \nsubseteq [m]}} 
    \prod_{j \in S} x_j^2 \, dx.
\end{align*}
\endgroup
By the induction hypothesis, we thus have
\begin{align*}
    (V + t)^2
    &\ge \frac{1}{3} \beta_{[m]}^2 \delta^{d - m} 
    - \sum_{S \subsetneq [m]} \big(C_d (V + t)^2 \delta^{-d} 
    + C_d \delta^2 \cdot A \big) \cdot \delta^{d - m} 
    - C_d \delta^{d - m + 2} \cdot A \\
    &\ge \frac{1}{3} \beta_{[m]}^2 \delta^{d - m} 
    - C_d (V + t)^2 \delta^{-m} - C_d \delta^{d - m + 2} \cdot A.
\end{align*}
Rearranging terms, we obtain
\begin{equation*}
    \beta_{[m]}^2 \le 3(V + t)^2 \delta^{m - d} + C_d (V + t)^2 \delta^{-d} 
    + C_d \delta^2 \cdot A 
    \le  C_d (V + t)^2 \delta^{-d} 
    + C_d \delta^2 \cdot A.
\end{equation*}
This proves that the bound \eqref{eq:bound-on-beta-T-squared} holds for $T = [m]$, 
and hence, for all $T \subseteq [d]$ with $|T| = m$.
By induction, it follows that the bound \eqref{eq:bound-on-beta-T-squared} 
holds for every $T \subseteq [d]$ with $|T| \le s$.

Summing the bounds \eqref{eq:bound-on-beta-T-squared} over all subsets 
$T \subseteq [d]$ with $|T| \le s$, we obtain
\begin{equation*}
    A \le C_d (V + t)^2 \delta^{-d} 
    + C_d \delta^2 \cdot A. 
\end{equation*}
Hence,
\begin{equation*}
    A \le \frac{C_d(V + t)^2 \delta^{-d}}{1 - C_d \delta^2}
\end{equation*}
provided $C_d \delta^2 < 1$.
Choosing $\delta \in (0, 1)$ sufficiently small and substituting this bound 
back into \eqref{eq:bound-on-beta-T-squared}, we obtain
\begin{equation*}
    \beta_T^2 \le C_d (V + t)^2 
\end{equation*}
for all $T \subseteq [d]$ with $|T| \le s$, as desired.
\end{proof}

\end{document}